\documentclass[11pt,a4paper]{amsart}

\RequirePackage[OT1]{fontenc}
\RequirePackage{amsthm,amsmath}
\RequirePackage[numbers]{natbib}
\RequirePackage[colorlinks,citecolor=blue,urlcolor=blue]{hyperref}

\usepackage{graphicx}
\usepackage{tikz}
\usepackage{enumitem}

\usepackage{graphicx}
\newtheorem{theorem}{Theorem}
\newtheorem{lemma}[theorem]{Lemma}
\newtheorem*{lemma*}{Lemma}
\newtheorem{proposition}[theorem]{Proposition}
\newtheorem{corollary}[theorem]{Corollary}

\newtheorem{definition}[theorem]{Definition}
\newtheorem{remark}[theorem]{Remark}

\newtheorem*{fact*}{Fact}

\usepackage[utf8]{inputenc}
\usepackage[english]{babel}
\usepackage{latexsym}
\usepackage{amssymb}
\usepackage{amsmath}

\newcommand{\T}{\mathbb{T}}

\newcommand{\Z}{\mathbb{Z}}
\newcommand{\R}{\mathbb{R}}
\newcommand{\C}{\mathbb{C}}
\newcommand{\E}{\mathbb{E}}
\newcommand{\G}{\mathbb{G}}
\newcommand{\Q}{\mathbb{Q}}
\newcommand{\Prob}{\mathbb{P}}

\newcommand{\Tr}{\mathrm{Tr}}

\numberwithin{equation}{section}
\numberwithin{theorem}{section}

\usepackage{fullpage}

\author[M. Nikula]{Miika Nikula}
\address{Department of Computer Science, Aalto University,
P.O.Box 15400, 00076 Aalto, Finland}
\email{miika.nikula@aalto.fi}

\author[E. Saksman]{Eero Saksman}
\address{Department of Mathematical Sciences, Norwegian University of Science and Technology (NTNU), NO-7491 Trondheim, Norway}
\address{University of Helsinki, Department of Mathematics and Statistics,
         P.O. Box 68 , FIN-00014 University of Helsinki, Finland}
\email{eero.saksman@helsinki.fi}

\author[C. Webb]{Christian Webb}
\address{Department of mathematics and systems analysis, Aalto University, P.O.
Box 11000, 00076 Aalto, Finland}
\email{christian.webb@aalto.fi}

\title[Multiplicative chaos and the CUE]{Multiplicative chaos and the characteristic polynomial of the CUE: the $L^1$-phase}

\begin{document}

\begin{abstract} In this note we prove that suitable positive powers of the absolute value of the characteristic polynomial of a Haar distributed random unitary matrix converge in law, as the size of the matrix tends to infinity, to a Gaussian multiplicative chaos measure once correctly normalized. We prove this in the whole $L^1$- or subcritical phase of the chaos measure. 
\end{abstract}

\maketitle

\section{Introduction}
In this note we consider the characteristic polynomial of a Haar distributed random unitary matrix. Our main result is the following theorem.

\begin{theorem}\label{th:main}
Let $U_N$ be a Haar distributed $N\times N$ random unitary matrix. For fixed $\beta\in[\sqrt{2},2)$, as $N\to\infty$, the sequence

\begin{equation*}
\frac{|\det(I-e^{-i\theta}U_N)|^\beta}{\E |\det(I-e^{-i\theta}U_N)|^\beta}\frac{d\theta}{2\pi},
\end{equation*}

\noindent viewed as a sequence of measures on the unit circle $\T$, converges in law with respect to the topology of weak convergence of measures. The limit is a Gaussian multiplicative chaos measure which can be written formally as $\mu_{\beta}(d\theta)=e^{\beta X(\theta)-\frac{\beta^2}{2}\E X(\theta)^2}\frac{d\theta}{2\pi}$, where $X$ is a centered Gaussian field with covariance kernel

\begin{equation}\label{eq:Xcov}
\E X(\theta) X(\theta')=-\frac{1}{2}\log |e^{i\theta}-e^{i\theta'}|.
\end{equation}
\end{theorem}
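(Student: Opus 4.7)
The plan is to establish, for every nonnegative continuous test function $f\in C(\T)$, convergence in law of the scalar random variables
$$\mu_N(f) := \int_\T f(\theta) \frac{|\det(I-e^{-i\theta}U_N)|^\beta}{\E|\det(I-e^{-i\theta}U_N)|^\beta}\,\frac{d\theta}{2\pi}$$
towards $\mu_\beta(f)$. Tightness of the sequence of random measures $(\mu_N)$ in the weak topology is immediate from $\E \mu_N(\T)=1$, so the task reduces to identifying every subsequential limit with $\mu_\beta$.

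The starting point is the formal Fourier expansion
$$\log|\det(I - e^{-i\theta}U_N)| = -\Re \sum_{k\geq 1} \frac{1}{k}\, e^{-ik\theta}\, \Tr U_N^k,$$
together with the Diaconis--Shahshahani theorem: for any fixed $K$, the traces $\Tr U_N^k$, $1\leq k\leq K$, are jointly asymptotically independent complex Gaussians with $\Tr U_N^k\approx \sqrt{k}\,\zeta_k$. I would construct on a common probability space a comparison Gaussian field
$$Y_N(\theta) := -\Re \sum_{k=1}^N \frac{1}{\sqrt{k}}\, e^{-ik\theta}\, \zeta_k,$$
whose covariance converges to the kernel \eqref{eq:Xcov} as $N\to\infty$, together with the Gaussian multiplicative chaos approximation
$$\nu_N(d\theta) := \frac{e^{\beta Y_N(\theta)}}{\E e^{\beta Y_N(\theta)}}\,\frac{d\theta}{2\pi}.$$
Classical Gaussian multiplicative chaos theory (Kahane, as revisited by Berestycki for the full subcritical regime) gives $\nu_N\to\mu_\beta$ in probability in the weak topology for every $\beta\in[0,2)$, so it suffices to prove $\mu_N(f)-\nu_N(f)\to 0$ in probability for each $f\in C(\T)$.

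The strategy for this comparison is a multiscale coupling. I would split the Fourier series at a large but fixed cutoff $K$ into a coarse part $X_N^{\leq K}$ and a fine part $X_N^{(K,N]}$, and similarly for $Y_N$. Joint asymptotic normality of the first $K$ traces couples the coarse parts with error vanishing as $N\to\infty$. To handle the fine parts outside the $L^2$-phase, I would introduce a multiscale ``barrier'' good event $G_N^K$ built from the partial sums $X_N^{\leq j}$, $K\leq j\leq N$, chosen so that $\Prob(G_N^K)\to 1$ as $K\to\infty$ and so that on $G_N^K$ the restricted measure $\ind_{G_N^K}\mu_N$ can be compared to $\ind_{G_N^K}\nu_N$ by a conditional second-moment argument. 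This is the CUE counterpart of the Berestycki-type truncation used in the $L^1$-convergence proof for standard subcritical GMC; the contribution of $(G_N^K)^c$ is then controlled by a simple first-moment bound since $\E\mu_N(\T)=1$.

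The decisive step, and the main technical obstacle, is the conditional second-moment estimate on $G_N^K$. Uniformly in large $N$, one needs two-point asymptotics of the form
$$\frac{\E\,|\det(I-e^{-i\theta}U_N)|^\beta\,|\det(I-e^{-i\theta'}U_N)|^\beta}{\E|\det(I-e^{-i\theta}U_N)|^\beta \cdot \E|\det(I-e^{-i\theta'}U_N)|^\beta}\asymp |e^{i\theta}-e^{i\theta'}|^{-\beta^2/2}$$
valid uniformly down to the diagonal, sharp enough to integrate against $f(\theta)f(\theta')\ind_{G_N^K}$. Such asymptotics come from Fisher--Hartwig / Riemann--Hilbert analysis of Toeplitz determinants with two merging symbol singularities, in the spirit of Deift--Its--Krasovsky. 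The role of the truncation $\ind_{G_N^K}$ is to remove the rare configurations in which the two-point function is too singular to integrate, which is precisely what obstructs the naive $L^2$-method once $\beta\geq\sqrt 2$. Granting this estimate, sending $N\to\infty$ and then $K\to\infty$ yields $\mu_N(f)\to\mu_\beta(f)$ in law for every $f\in C(\T)$, which combined with tightness completes the proof.
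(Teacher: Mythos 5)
You have correctly identified the architecture, which is in fact essentially the paper's: a Berestycki/Lambert--Ostrovsky--Simm multiscale truncation, a barrier event, a second-moment estimate restricted to the barrier, convergence of the coarse traces to Gaussians, and Riemann--Hilbert input for the moment estimates. What is mislocated is the main technical obstacle. The two-point Toeplitz ratio asymptotics you single out --- the uniform Deift--Its--Krasovsky and Claeys--Krasovsky merging-singularity estimates --- are indeed used (they are the paper's Theorems \ref{th:dik} and \ref{th:ck}), but they are essentially available in the literature and control only the prefactor $|e^{i\theta}-e^{i\theta'}|^{-\beta^2/2}$. For $\beta\geq\sqrt2$ this prefactor is not integrable, and the point of the barrier is to multiply it by the probability, under the Fisher--Hartwig-tilted measure $\propto e^{\beta X_N(\theta)+\beta X_N(\theta')}\,d\Prob$, that the truncated fields $X_{N,2^k}(\theta), X_{N,2^k}(\theta')$ stay below a linear barrier at all dyadic scales $2^k$ up to $N^{1-\delta}$. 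Stopping the barrier at a fixed cutoff $K$ does not help: the naive second moment stays infinite no matter how many low modes are removed, so the barrier must run nearly to scale $N$. To bound the tilted barrier probability one needs exponential-moment estimates of the form $\E[e^{\alpha X_{N,K}(\theta)}e^{\beta X_N(\theta)+\beta X_N(\theta')}]/\E[e^{\beta X_N(\theta)+\beta X_N(\theta')}]$, uniformly in $\theta,\theta'$ and in $K\leq N^{1-\delta}$; this is the paper's Proposition \ref{pr:testimate} and occupies the bulk of the technical work. The Diaconis--Shahshahani CLT that you invoke gives Gaussianity of $(\Tr U_N^j)_{j\leq K}$ for fixed $K$ and unbiased Haar measure only; it says nothing quantitative about $j$ growing like a power of $N$, nor about the biased measure, and the Riemann--Hilbert analysis (with carefully tracked uniformity in the exponents and in the separation $|\theta-\theta'|$) is exactly what fills that gap. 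A secondary imprecision: the barrier event must be a $\theta$-dependent random subset of the circle appearing inside the $d\theta$-integral, not a single good event $G_N^K$ with $\Prob(G_N^K)\to1$, since the obstruction is local on the diagonal $\{\theta=\theta'\}$.
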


\noindent We point out that the corresponding result for $\beta\in(-\frac{1}{2},\sqrt{2})$ was proven in \cite[Theorem 2.5]{W}, which is the reason for focusing on $\beta\geq\sqrt{2}$. Moreover, as we briefly discuss below, the limiting object is likely to be zero for $\beta\geq 2$, which is the reason to focus on $\beta<2$.

In the remainder of this introduction, we first discuss some background and motivation for this result and then briefly outline the remainder of the article.

\subsection{Background and motivation}

In the past two decades, the role of a class of stochastic processes, known as log-correlated fields, has gradually emerged in the setting of random matrix theory -- see e.g. \cite{HKO,RidVir,FKS}. More precisely, in basic models of random matrix theory, such as Haar distributed random unitary matrices, the complex Ginibre ensemble, or the GUE, the logarithm of the characteristic polynomial of a large random matrix behaves roughly like a Gaussian process whose covariance has a logarithmic singularity on the diagonal -- this is essentially equivalent to well known results concerning Gaussian fluctuations of global linear statistics. Due to the singularity in the covariance, such an object can't of course be realized as a random function, but can be viewed as a random generalized function. Such objects are known to show up in various other models of modern probability and mathematical physics as well -- e.g. in probabilistic models of combinatorics \cite{IG}, lattice models of statistical mechanics \cite{Kenyon}, the construction of conformally invariant random planar curves such as stochastic Loewner evolution \cite{AJKS,S}, and stochastic growth models \cite{BF}.

While these log-correlated fields are wild objects, random generalized functions instead of random functions, they are known to exhibit non-trivial fractal geometric behavior and the key object in studying such geometric properties of these fields is a random measure which is formally the exponential of this field -- rigorously defined through a suitable renormalization procedure. Such a random measure is known as a multiplicative chaos measure, and their study goes back to the work of Kahane \cite{K}. For a recent review, we refer to \cite{RV} as well as the short and elegant proof of existence and uniqueness in \cite{berestycki}. These multiplicative chaos measures have been used to give meaning to e.g. how the maximum of such a log-correlated field behaves and what the level sets of such a field look like. A basic result of the theory could be something of the following flavor: if $X_\epsilon$ is a nice regularization of a Gaussian field $X(x)$ on say $\R$ whose covariance is of the form $\E X(x)X(y)=-\frac{1}{2}\log |x-y|+g(x,y)$\footnote{Contrary to the typical normalization in the log-correlated literature, where one does not include the $\frac{1}{2}$ in front of the log, we have chosen this normalization to be in line with what occurs in the setting of random matrix theory.}, where $g$ is continuous and bounded, then the family of measures 

$$e^{\beta X_\epsilon(x)-\frac{\beta^2}{2}\E X_\epsilon(x)^2}dx$$

\noindent converges in probability (say with respect to the topology of vague convergence of measures on $\R$) to a non-trivial random measure for $\beta\in(-2,2)$, and the limiting measure lives on the random (fractal) set of points $x$ for which $\lim_{\epsilon\to 0}X_\epsilon(x)/\E X_\epsilon(x)^2=\beta$. Moreover, for any fixed $r>0$, $\lim_{\epsilon\to 0}\max_{x\in B(0,r)}X_\epsilon(x)/\E X_\epsilon(x)^2=2$. 

It is then a natural question to ask to which extent can one prove similar results in the setting of random matrix theory or models where log-correlated fields arise. There have indeed been significant developments in this direction in the past few years. First on a heuristic level, in particular motivated by the conjectured connection between characteristic polynomials of random matrices and the Riemann zeta function, the role of basic results of the geometry of log-correlated fields in the setting of random matrix theory (and the zeta function) was explored in \cite{FHK,FK}. We also refer to the recent numerical study \cite{FGK} on related topics. As already mentioned, in \cite{W}, the analogue of Theorem \ref{th:main} was proven for $-\frac{1}{2}<\beta<\sqrt{2}$. For this range of parameters, one is in the so-called $L^2$-phase of multiplicative chaos, where proving convergence is relatively simple. In \cite{los}, whose approach will serve as the foundation of our proof of Theorem \ref{th:main}, arguments of \cite{berestycki} were generalized to prove that certain smoothed versions of the "log-correlated fields of random matrix theory" give rise to multiplicative chaos measures in the whole $L^1$-phase of multiplicative chaos -- namely with our normalization for $-2<\beta<2$. Then in \cite{BWW} the result of \cite{W} was extended to a large class of unitary invariant models of random hermitian matrices. There have also been significant developments in studying geometric properties of these log-correlated fields coming from random matrices. Here we refer the interested reader to \cite{abb,pz,cmn,LP}, where estimates for the maximum of the field at various precision and in various models have been obtained. In fact, geometric properties as well as connections to multiplicative chaos have also been observed for the Riemann zeta function see e.g. \cite{ABBRS,N,SW}.

We also mention another direction in which there have been recent and exciting developments regarding properties of multiplicative chaos. Recently multiplicative chaos measures have played a significant role in two-dimensional quantum gravity \cite{DupShef,MS,KRV}. In particular, using ideas from \cite{KRV}, a conjecture of Fyodorov and Bouchaud \cite{BouFyo} characterizing the total mass of the limiting measure of Theorem \ref{th:main} was recently proven in \cite[Theorem 1.1]{R}. In view of this result, the following corollary follows immediately from combining Theorem \ref{th:main} with \cite[Theorem 2.5]{W} (as well as a slight reformulation of \cite[Theorem 1.1]{R}) -- we omit further details.

\begin{corollary}
For $0\leq \beta<2$, as $N\to\infty$

$$
\int_0^{2\pi}\frac{|\det(I-e^{-i\theta}U_N)|^\beta}{\E|\det(I-e^{-i\theta}U_N)|^\beta}\frac{d\theta}{2\pi}\stackrel{d}{\to}\frac{Y^{-\frac{\beta^2}{4}}}{\Gamma(1-\frac{\beta^2}{4})},
$$

\noindent where $Y$ is exponentially distributed with mean $1$.
\end{corollary}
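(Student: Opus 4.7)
The plan is to combine the three ingredients already in hand: Theorem \ref{th:main}, \cite[Theorem 2.5]{W}, and \cite[Theorem 1.1]{R}. First, by \cite[Theorem 2.5]{W} for $\beta\in[0,\sqrt{2})$ together with Theorem \ref{th:main} for $\beta\in[\sqrt{2},2)$, the random measures
\[
\mu_N^{(\beta)}(d\theta):=\frac{|\det(I-e^{-i\theta}U_N)|^\beta}{\E|\det(I-e^{-i\theta}U_N)|^\beta}\frac{d\theta}{2\pi}
\]
converge in law, in the weak topology on finite positive measures on $\T$, to the GMC measure $\mu_\beta$ associated with the Gaussian field $X$ of covariance \eqref{eq:Xcov}, for every $\beta\in[0,2)$.

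Second, since $\T$ is compact and the constant function $1$ is continuous on $\T$, the evaluation map $\mu\mapsto\mu(\T)=\int_\T 1\,d\mu$ is continuous in the weak topology of finite measures. Applying the continuous mapping theorem gives
\[
\int_0^{2\pi}\mu_N^{(\beta)}(d\theta)=\mu_N^{(\beta)}(\T)\stackrel{d}{\to}\mu_\beta(\T),
\]
reducing the corollary to identifying the law of the total mass $\mu_\beta(\T)$.

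Third, I would invoke \cite[Theorem 1.1]{R}, which proves the Fyodorov--Bouchaud formula for GMC on the circle: for a Gaussian field $\tilde X$ of covariance $-\log|e^{i\theta}-e^{i\theta'}|$ and $\gamma\in(0,2)$, the total mass of $e^{\gamma\tilde X-\frac{\gamma^2}{2}\E\tilde X^2}\frac{d\theta}{2\pi}$ equals $\Gamma(1-\gamma^2/2)^{-1}Y^{-\gamma^2/2}$ in law, with $Y\sim\mathrm{Exp}(1)$. The only point to check is the normalization: since our $X$ has exactly half the covariance of $\tilde X$ we have $X\eqlaw\tilde X/\sqrt{2}$, so that
\[
e^{\beta X-\tfrac{\beta^2}{2}\E X^2}\eqlaw e^{\gamma\tilde X-\tfrac{\gamma^2}{2}\E\tilde X^2}\quad\text{with}\ \gamma=\beta/\sqrt{2}.
\]
Substituting $\gamma^2/2=\beta^2/4$ then yields $\mu_\beta(\T)\eqlaw Y^{-\beta^2/4}/\Gamma(1-\beta^2/4)$, as claimed. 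The only real obstacle here is this last normalization bookkeeping; the rest is immediate from the continuous mapping theorem applied to the two convergence results.
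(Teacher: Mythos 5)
Your proposal is correct and follows essentially the same route as the paper: combine Theorem~\ref{th:main} (for $\beta\in[\sqrt{2},2)$) with \cite[Theorem 2.5]{W} (for $\beta\in[0,\sqrt{2})$) to obtain convergence in law of $\mu_{N,\beta}$ to $\mu_\beta$ for all $\beta\in[0,2)$, pass to the total mass via the continuous mapping theorem, and identify the limiting law by \cite[Theorem 1.1]{R} after the covariance-normalization substitution $\gamma=\beta/\sqrt 2$, $\gamma^2/2=\beta^2/4$ (this is exactly the ``slight reformulation'' the paper alludes to). The paper leaves all these details implicit; your write-up supplies them accurately.
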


\noindent This confirms  a conjecture implicit in \cite{FK} -- see e.g. \cite[(12)]{FK}, which has a very similar flavor as this.

We conclude this discussion about the background of Theorem \ref{th:main} with some remarks about extending this result. First of all, we note that $|\det(I-e^{-i\theta}U_N)|^\beta=e^{\beta \mathrm{Re}\log (\det(I-e^{-i\theta}U_N))}$. We expect that essentially our whole proof would work as it is if we replaced $\mathrm{Re}\log (\det(I-e^{-i\theta}U_N))$ by $\mathrm{Im}\log (\det(I-e^{-i\theta}U_N))$, where for the suitable interpretation of the imaginary part of the logarithm, see e.g. \cite{HKO}. Secondly, we expect that using ideas from \cite{BWW}, such a result can also be proven for other models of random matrices such as the GUE. Finally we mention that for $\beta=2$, one expects from the theory of multiplicative chaos that $\frac{|\det(e^{i\theta}-U_N)|^2}{\E |\det(e^{i\theta}-U_N)|^2}d\theta$ converges to the zero measure as $N\to\infty$. Again from general multiplicative chaos results, see e.g. \cite{DRSV}, one would expect that multiplying by a suitable deterministic diverging factor, or a suitable stochastic quantity, one can construct a non-trivial limit for $\beta=2$. While this does not follow from our results directly, we expect that some of the technical estimates we prove should be of use in proving such a result. In fact, this limiting object for $\beta=2$ should play an important role in verifying conjectures of \cite{FK} concerning the precise behavior of the maximum of the characteristic polynomial of a CUE matrix, so our main result can be viewed as a step in this direction.

\subsection{Outline of the article}

Our proof relies on recent ideas of Lambert, Ostrovsky, and Simm \cite{los} who generalized ideas of Berestycki \cite{berestycki} to develop a general approach of how to prove that something converges to Gaussian multiplicative chaos in the full $L^1$-phase. With this article we illustrate some of the difficulties that can stem from a complicated local structure of the approximation of a Gaussian field. In particular, as shown in \cite{los}, proof of convergence relies on suitably strong estimates for exponential moments of the (approximately) log-correlated field. In our setting, such objects can be expressed as Toeplitz determinants with Fisher-Hartwig singularities. While these determinants have been studied extensively in the literature, particularly efficiently through a connection to Riemann-Hilbert problems -- see e.g. \cite{ck,dik1,dik2}, the ones we encounter have some additional complications, and the Riemann-Hilbert analysis we require is rather complicated and has not been carried out elsewhere, though we rely heavily on results and ideas from \cite{abb,ck,dik2}.

The outline of this paper is the following. In Section \ref{sec:defs}, we'll introduce the relevant definitions and state general estimates we'll need for proving Theorem \ref{th:main}. After that, in Section \ref{sec:mainproof} we'll apply (a slight variant of) the argument of \cite{los} to prove Theorem \ref{th:main} assuming these estimates. The remainder of the paper is devoted to establishing the connection between our main technical estimate (namely Proposition \ref{pr:testimate}) and a suitable Riemann-Hilbert problem, along with the asymptotic analysis of this problem. As stated above, our analysis of this problem can be seen as a combination of ideas from \cite{abb,ck,dik1,dik2}. While this would certainly be routine for experts, we give a fairly detailed presentation of the approach as we are non-experts and wish that this article is accessible to other non-experts as well.

\vspace{0.3cm}

{\bf Acknowledgements:} We wish to thank Gaultier Lambert, Tom Claeys, and Tuomas Orponen for helpful discussions. E.S. was supported by the Academy of Finland CoE ‘Analysis and Dynamics’, as well as the Academy of Finland Project ‘Conformal methods in analysis and random geometry’. C.W. was supported by the Academy of Finland grants 288318 and 308123.

\section{Definitions and required estimates}\label{sec:defs}

We begin by giving a name to the logarithm of the absolute value of the characteristic polynomial as well as the measure we are considering. 

\begin{definition}\label{def:fieldmass}
Let $N\geq 1$ be a positive integer and let $U_N$ be a $N\times N$ random unitary matrix whose law is the Haar measure on the unitary group $\mathrm{U}(N)$. Consider the random function $X_N:[0,2\pi]\to [-\infty,\infty)$

\begin{equation*}
X_N(\theta)=\log |\det(U_N-e^{i\theta})|=-\frac{1}{2}\sum_{k=1}^\infty \frac{1}{k}\left[e^{-ik\theta}\Tr U_N^k+e^{ik\theta}\Tr U_N^{-k}\right]
\end{equation*}

\noindent and for $\beta>0$ the sequence of random measures

\begin{equation*}
\mu_{N,\beta}(d\theta)=\frac{e^{\beta X_N(\theta)}}{\E e^{\beta X_N(\theta)}}\frac{d\theta}{2\pi}=\frac{|\det(U_N-e^{i\theta})|^\beta}{\E |\det(U_N-e^{i\theta})|^\beta}\frac{d\theta}{2\pi}
\end{equation*}

\noindent understood as measures on the unit circle $\T$.
\end{definition}

Of critical importance in the approach of \cite{berestycki,los} are suitable approximations to the field $X_N$. Our approximation will be a truncation (though other ones could work just as well) and we introduce the following notation for it.

\begin{definition}\label{def:fieldtrunc}
For $N,M$ positive integers and $\beta>0$, write 

\begin{equation}\label{eq:fieldtrunc}
X_{N,M}(\theta)=-\frac{1}{2}\sum_{k=1}^M \frac{1}{k}\left[e^{-ik\theta}\Tr U_N^k+e^{ik\theta}\Tr U_N^{-k}\right]
\end{equation}

\noindent and 

\begin{equation}\label{eq:meastrunc}
\mu_{N,\beta}^{(M)}(d\theta)=\frac{e^{\beta X_{N,M}(\theta)}}{\E e^{\beta X_{N,M}(\theta)}}\frac{d\theta}{2\pi}
\end{equation}

\noindent interpreted as a measure on $\T$.
\end{definition}

The role such truncations play is that we'll be interested in for example how the process $M\mapsto X_{N,M}(\theta)$ behaves with respect to the probability measure $\frac{e^{\beta X_N(\theta)}}{\E e^{\beta X_N(\theta)}}\Prob(d\omega)$ for a fixed $\theta$. Here $\Prob(d\omega)$ denotes the law of the CUE random matrix $U_N$. What ends up being relevant are suitable exponential moments. As suggested earlier, the relevant estimates for these exponential moments will be obtained through Riemann-Hilbert methods relying heavily on results of \cite{dik2,ck}. Our main technical result is the following one, and its proof relies critically on ideas from \cite{abb}. 

\begin{proposition}\label{pr:testimate}
Let $\alpha_1,\alpha_2\in \R$, $\beta_1,\beta_2\geq 0$, $\delta>0$, $M\in \Z_+$, be independent of $N$. Moreover,   let $\mathcal{T}:\C\setminus \lbrace 0\rbrace\to \C$, $\mathcal{T}(z)=\sum_{0<|k|\leq M}\mathcal{T}_{k} z^k$ be independent of $N$ and real-valued. Then as $N\to\infty$, 

\begin{align*}
&\frac{\E e^{\beta_1 X_N(\theta)+\beta_2 X_N(\theta')+\alpha_1 X_{N,K_1}(\theta)+\alpha_2 X_{N,K_2}(\theta)+\mathrm{Tr}\mathcal{T}(U_N)}}{\E e^{\beta_1 X_N(\theta)+\beta_2 X_N(\theta')}}\\
&=(1+o(1))e^{\frac{\alpha_1^2+2\alpha_1\alpha_2+2\alpha_1\beta_1}{4}\sum_{k=1}^{K_1}\frac{1}{k}+\frac{\alpha_2^2+2\alpha_2\beta_1}{4}\sum_{k=1}^{K_2}\frac{1}{k}+\frac{\alpha_1\beta_2}{2}\sum_{k=1}^{K_1}\frac{\cos k(\theta-\theta')}{k}+\frac{\alpha_2\beta_2}{2}\sum_{k=1}^{K_1}\frac{\cos k(\theta-\theta')}{k}}\\
&\quad \times e^{\sum_{k=1}^Mk \mathcal{T}_k \mathcal{T}_{-k}-\frac{\alpha_1}{2}\sum_{k=1}^{\min(K_1,M)}\left(\mathcal{T}_k e^{ik\theta}+\mathcal{T}_{-k}e^{-ik\theta}\right)-\frac{\alpha_2}{2}\sum_{k=1}^{\min(K_2,M)}\left(\mathcal{T}_k e^{ik\theta}+\mathcal{T}_{-k}e^{-ik\theta}\right)-\frac{\beta_1}{2}\mathcal{T}(e^{i\theta})-\frac{\beta_2}{2}\mathcal{T}(e^{i\theta'})}
\end{align*}

\noindent uniformly in $\theta,\theta'\in[0,2\pi]$ and uniformly in $1\leq K_1\leq K_2\leq N^{1-\delta}$. Moreover, if we restrict $(\mathcal{T}_k)_{k=1}^M$ to lie in some compact subset of $\C^M$, then the error $o(1)$ is uniform on this compact set.
\end{proposition}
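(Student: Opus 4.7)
The plan is to realise the ratio in the proposition as a ratio of Toeplitz determinants with Fisher--Hartwig (FH) symbols and then extract sharp asymptotics via a Riemann--Hilbert analysis. By the Heine--Szegő / Weyl integration formula on $\mathrm{U}(N)$ one has
\begin{equation*}
\E\, e^{\beta_1 X_N(\theta)+\beta_2 X_N(\theta')+\alpha_1 X_{N,K_1}(\theta)+\alpha_2 X_{N,K_2}(\theta)+\Tr \mathcal{T}(U_N)}=D_{N-1}[V],
\end{equation*}
where $V(e^{i\phi})=|e^{i\phi}-e^{i\theta}|^{\beta_1}|e^{i\phi}-e^{i\theta'}|^{\beta_2}e^{g(e^{i\phi})}$ and
\begin{equation*}
g(e^{i\phi})=-\alpha_1\sum_{k=1}^{K_1}\frac{\cos k(\phi-\theta)}{k}-\alpha_2\sum_{k=1}^{K_2}\frac{\cos k(\phi-\theta)}{k}+\mathcal{T}(e^{i\phi})
\end{equation*}
is a real-valued trigonometric polynomial of vanishing mean and degree at most $\max(K_1,K_2,M)$. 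The denominator equals $D_{N-1}[V_0]$ with $V_0=Ve^{-g}$, so the proposition reduces to establishing the uniform asymptotic
\begin{equation*}
\frac{D_{N-1}[V]}{D_{N-1}[V_0]}=(1+o(1))\exp\Bigl(\sum_{k\ge 1} k\,g_k g_{-k}-\tfrac{\beta_1}{2}\,g(e^{i\theta})-\tfrac{\beta_2}{2}\,g(e^{i\theta'})\Bigr),
\end{equation*}
uniformly in $\theta,\theta'\in[0,2\pi]$, in $1\le K_1\le K_2\le N^{1-\delta}$, and on compact subsets for the parameters $(\mathcal{T}_k)_{k=1}^M$.

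The central step is to prove this asymptotic via the Deift nonlinear steepest descent method applied to the Riemann--Hilbert problem for orthogonal polynomials on the unit circle with weight $V$, following \cite{dik2} in the case of two separated FH singularities and \cite{ck} when $|\theta-\theta'|$ is small. After the usual normalisation and lens opening, the analysis reduces to constructing a global parametrix from the Szegő factorisation $e^g=b_+ b_+^*$ together with local parametrices near $e^{i\theta}$ and $e^{i\theta'}$ built from confluent hypergeometric functions (which merge into a single coalescing parametrix in the near-diagonal regime). Assembling the contributions, the Szegő-type factor $\exp(\sum_k k\,g_k g_{-k})$ appears via the global parametrix, whereas each local parametrix produces a factor $\exp(-\tfrac{\beta_j}{2}\,g(e^{i\theta_j}))$ coming from $b_+(z_j)b_+^*(z_j)=e^{g(z_j)}$. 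Taking the ratio with $D_{N-1}[V_0]$ eliminates the $N^{\beta_1^2/4+\beta_2^2/4}$ power, the Barnes $G$-function constants and the two-point FH interaction that depends only on $|e^{i\theta}-e^{i\theta'}|$, leaving exactly the displayed exponential.

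The explicit form in the statement is recovered by expanding this exponent in terms of the Fourier coefficients
\begin{equation*}
g_k=-\tfrac{\alpha_1}{2k}e^{-ik\theta}\mathbf{1}_{\{k\le K_1\}}-\tfrac{\alpha_2}{2k}e^{-ik\theta}\mathbf{1}_{\{k\le K_2\}}+\mathcal{T}_k\mathbf{1}_{\{1\le k\le M\}},\qquad g_{-k}=\overline{g_k},
\end{equation*}
together with the values of $g(e^{i\theta})$ and $g(e^{i\theta'})$. A direct computation of $\sum_k k\,g_k g_{-k}$ then produces the $\frac14\sum\frac1k$ cross terms, the diagonal $\sum k\,\mathcal{T}_k\mathcal{T}_{-k}$ and the linear-in-$\mathcal{T}_k$ terms, while $-\tfrac{\beta_j}{2}\,g(e^{i\theta_j})$ produces both the $\frac{\alpha\beta_j}{2}\cos k(\theta-\theta')$ interactions and the $-\tfrac{\beta_j}{2}\,\mathcal{T}(e^{i\theta_j})$ terms. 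I expect the main obstacle to be the uniformity of the RH analysis: on the one hand the singularities may coalesce, forcing use of the merging parametrix of \cite{ck} with error bounds that remain valid as $|\theta-\theta'|\to 0$; on the other hand, the smooth part $g$ is itself $N$-dependent with Fourier support possibly of order $N^{1-\delta}$, so the $g$-dependence of every error estimate in the Deift scheme must be tracked explicitly. This is where the ideas of \cite{abb} become crucial: they provide the effective control on $b_+, b_+^*$ and the small-norm bounds for the error matrix $R$ needed to cover the entire range of $K_1,K_2$.
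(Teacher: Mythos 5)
Your high-level plan is correct and agrees with the paper: reduce via Heine--Szeg\H{o} to a ratio of Toeplitz determinants with Fisher--Hartwig symbols, split the symbol into the singular $V_0$ and the smooth part $g$ (with $N$-dependent degree), and run a Riemann--Hilbert analysis with a global Szeg\H{o}-type parametrix and confluent-hypergeometric/merging local parametrices, using \cite{abb}-style estimates to keep track of the $g$-dependence uniformly in $K_1,K_2\leq N^{1-\delta}$. Your final claimed exponent $\sum_k k g_k g_{-k}-\tfrac{\beta_1}{2}g(e^{i\theta})-\tfrac{\beta_2}{2}g(e^{i\theta'})$ is precisely what the paper obtains (with $g=V$ in the paper's notation), and the combinatorial expansion matching the proposition's displayed formula is correct.

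However, there is a genuine gap in the logical chain. The nonlinear steepest descent method gives you asymptotics for the matrix function $Y(z)$ (and hence for the orthogonal polynomials and recurrence coefficients), but it does \emph{not} directly give asymptotics for the determinant $D_{N-1}$. "Assembling the contributions" from the global and local parametrices is not an inference: you need a concrete mechanism to convert knowledge of $Y$ on a contour into an asymptotic for $\log D_{N-1}[V]/D_{N-1}[V_0]$. The paper's mechanism is a $t$-deformation $V_t=\log(1-t+te^{g})$ interpolating the smooth part from $0$ to $g$, combined with the Deift--Its--Krasovsky differential identity
\[
\frac{\partial}{\partial t}\log D_{N-1}(f_t)=\frac{1}{2\pi i}\oint_{\T} z^{-N}\Bigl(Y_{11}\partial_zY_{21}-Y_{21}\partial_zY_{11}\Bigr)\,\partial_t f_t\,dz
\]
(Proposition~\ref{pr:di}). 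One then substitutes the parametrix asymptotics for $Y$, deforms the contour off $\T$, and integrates in $t$ from $0$ to $1$ --- this is the content of Section~\ref{sec:intdi}, and it is where the asserted exponential actually emerges. Without this step the argument does not close. Moreover your attribution of the $-\tfrac{\beta_j}{2}g(e^{i\theta_j})$ terms to the local parametrices (via $b_+(z_j)b_+^*(z_j)=e^{g(z_j)}$) is not how these terms arise: in the integrated differential identity they come from the residues at $e^{\pm iu}$ of the logarithmic derivative of the Szeg\H{o} function $\mathcal{D}_t$ appearing in the \emph{global} parametrix (Lemma~\ref{le:FH}); the local parametrices enter only to show the error matrix $R$ is small uniformly. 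Finally, patching the dik2-style (separated singularities) and ck-style (merging singularities) regimes requires a unified small-norm argument across the crossover scale; the paper handles this by a single $t$-differential identity with two alternative lens-openings (small-$u$ and large-$u$) and a common error estimate, rather than by quoting two external theorems and gluing.
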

We actually apply this proposition in a few different ways, where we specialize to a situation where e.g. $\beta_1=0$, or $\mathcal{T}=0$. To save the reader some effort in following our argument later on, we state explicitly the different estimates that follow from this and will be used in our proof. The proof of the following corollary is immediate and we omit the details.
\begin{corollary}\label{cor:estimates}
Let $\alpha\in \R$, $\beta>0$, and $\delta\in(0,1)$ be fixed. Then as $N\to\infty$, 

\begin{equation}\label{eq:1ptest}
\frac{\E e^{\alpha X_{N,K}(0)}e^{\beta X_N(0)}}{\E e^{\beta X_N(0)}}=e^{\left(\frac{\alpha^2}{2}+\alpha\beta\right)\frac{1}{2}\sum_{k=1}^{K}\frac{1}{k}}(1+o(1)),
\end{equation}

\noindent uniformly in $K\leq N^{1-\delta}$.

Moreover, for fixed $M\in \Z_+$, $\alpha_1,\alpha_2\in \R$, $\beta>0$, and $\delta\in(0,1):$ as $N\to\infty$

\begin{align}\label{eq:2ptest1}
\frac{\E e^{\beta X_N(\theta)+\beta X_{N,M}(\theta')+\alpha_1 X_{N,K_1}(\theta)+\alpha_2 X_{N,K_2}(\theta)}}{\E e^{\beta X_N(\theta)+\beta X_{N,M}(\theta')}}&=(1+o(1))e^{\sum_{j=1}^{K_1}\frac{1}{j}\left[\frac{\alpha_1^2}{4}+\frac{\alpha_1\beta}{2}+\frac{\alpha_1\alpha_2}{2}\right]+\sum_{j=1}^{K_2}\frac{1}{j}\left[\frac{\alpha_2^2}{4}+\frac{\alpha_2\beta}{2}\right]}\\
\notag &\quad \times e^{\frac{\alpha_1\beta}{2}\sum_{j=1}^{\min(K_1,M)}\frac{\cos j(\theta-\theta')}{j}+\frac{\alpha_2\beta}{2}\sum_{j=1}^{\min(K_2,M)}\frac{\cos j(\theta-\theta')}{j}}
\end{align} 

\noindent uniformly in $\theta,\theta'\in[0,2\pi]$ and uniformly in $1\leq K_1\leq K_2\leq N^{1-\delta}$.

Finally let $\alpha_1,\alpha_2\in \R$, $\beta>0$, and $\delta\in(0,1)$ be fixed. Then as $N\to\infty$

\begin{align}\label{eq:2ptest2}
\frac{\E e^{\beta X_N(\theta)+\beta X_N(\theta')+\alpha_1 X_{N,K_1}(\theta)+\alpha_2 X_{N,K_2}(\theta)}}{\E e^{\beta X_N(\theta)+\beta X_N(\theta')}}&=e^{\sum_{j=1}^{K_1}\frac{1}{j}\left[\frac{\alpha_1^2}{4}+\frac{\alpha_1\beta}{2}(\cos j(\theta-\theta')+1)+\frac{\alpha_1\alpha_2}{2}\right]}\\
\notag &\quad \times e^{\sum_{j=1}^{K_2}\frac{1}{j}\left[\frac{\alpha_2^2}{4}+\frac{\alpha_2\beta}{2}(\cos j(\theta-\theta')+1)\right]}(1+o(1))
\end{align} 

\noindent uniformly in $\theta,\theta'\in[0,2\pi]$ and uniformly in $1\leq K_1\leq K_2\leq N^{1-\delta}$. 
\end{corollary}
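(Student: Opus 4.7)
The plan is to derive each of the three identities as a direct specialization of Proposition~\ref{pr:testimate}; this is exactly what the authors mean by calling the proof immediate. No new ingredient is needed beyond carefully choosing the parameters $\beta_1,\beta_2,\alpha_1,\alpha_2$ and, in one case, the Laurent polynomial $\mathcal{T}$.

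For the one-point estimate \eqref{eq:1ptest} I would take $\theta=0$, $\beta_1=\beta$, $\beta_2=0$, $\alpha_1=\alpha$, $\alpha_2=0$, $K_1=K$, and $\mathcal{T}\equiv 0$; then $\theta'$ plays no role because every term involving $\beta_2$ is killed. Every remaining cross-term in the exponent of Proposition~\ref{pr:testimate} involving a vanishing parameter also drops out, so only the diagonal $\frac{\alpha^2+2\alpha\beta}{4}\sum_{k=1}^K \frac{1}{k}$ survives, which is the claimed expression. For the two-point estimate \eqref{eq:2ptest2}, where the denominator is $\E e^{\beta X_N(\theta)+\beta X_N(\theta')}$, I would take $\beta_1=\beta_2=\beta$ and $\mathcal{T}\equiv 0$; the cosine sums produced by the proposition then combine with the diagonal $\frac{\alpha_j\beta}{2}$ constants via the identity $\frac{\alpha_j\beta}{2}+\frac{\alpha_j\beta}{2}\cos k(\theta-\theta')=\frac{\alpha_j\beta}{2}(1+\cos k(\theta-\theta'))$ to yield the advertised exponent.

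The only case requiring a nontrivial choice is \eqref{eq:2ptest1}, where the denominator contains $\beta X_{N,M}(\theta')$, which is not of the form $\beta_2 X_N(\theta')$. The trick is to absorb this extra term into the $\Tr \mathcal{T}(U_N)$ slot of the proposition by setting
\[
\mathcal{T}_k=-\tfrac{\beta}{2k}e^{-ik\theta'},\qquad \mathcal{T}_{-k}=-\tfrac{\beta}{2k}e^{ik\theta'}\qquad (1\leq k\leq M),
\]
so that $\mathcal{T}$ is real-valued on $\T$ and $\Tr \mathcal{T}(U_N)=\beta X_{N,M}(\theta')$. I would then apply Proposition~\ref{pr:testimate} twice with this same $\mathcal{T}$, both with $\beta_1=\beta$ and $\beta_2=0$: once with the given $\alpha_j$ (for the numerator of \eqref{eq:2ptest1}) and once with $\alpha_1=\alpha_2=0$ (for the denominator). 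Taking the ratio, the terms $\sum_k k\mathcal{T}_k\mathcal{T}_{-k}$ and $-\frac{\beta}{2}\mathcal{T}(e^{i\theta})$ appear identically in both and cancel; what remains of the $\mathcal{T}$-dependent exponent is
\[
-\tfrac{\alpha_j}{2}\sum_{k=1}^{\min(K_j,M)}\bigl[\mathcal{T}_k e^{ik\theta}+\mathcal{T}_{-k}e^{-ik\theta}\bigr]=\tfrac{\alpha_j\beta}{2}\sum_{k=1}^{\min(K_j,M)}\tfrac{\cos k(\theta-\theta')}{k},
\]
which combines with the diagonal $\sum_{k=1}^{K_j}\frac{1}{k}$ contributions to reproduce \eqref{eq:2ptest1} exactly.

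The only potential stumbling block is bookkeeping — keeping track of which sums run to $K_1$, $K_2$, or $\min(K_j,M)$, and making sure the $\mathcal{T}$-only factors truly cancel between numerator and denominator — but there is no analytic difficulty: the uniform convergence claim in the corollary is inherited directly from the uniform convergence claim in Proposition~\ref{pr:testimate}, and the compact-set uniformity there guarantees that this same $\mathcal{T}$ (whose coefficients are bounded in $\theta'$) can be used uniformly in $\theta,\theta'\in[0,2\pi]$.
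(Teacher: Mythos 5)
Your proposal is correct and matches the paper's intent: the paper declares the proof immediate and omits it, and your three specializations of Proposition~\ref{pr:testimate} are exactly the right ones. In particular, the choice of $\mathcal{T}$ absorbing $\beta X_{N,M}(\theta')$ is precisely the device the authors use later (see the footnote in the proof of Lemma~\ref{le:I2}), and your observation that $|\mathcal{T}_k|\leq\beta/2$ uniformly in $\theta'$ correctly justifies that the compact-set uniformity in Proposition~\ref{pr:testimate} carries over to the stated uniformity in $\theta,\theta'$.
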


Heuristically, this result is easy to motivate based on the well known central limit theorem going back to Diaconis and Shahshahani \cite{DS,DE} (or actually even further, to the strong Szeg\H{o} theorem), namely that for any fixed $M$, $(\mathrm{Tr}U_N^j)_{j=1}^M$ converges in law to $(\sqrt{j}Z_j)_{j=1}^M$, where $Z_j$ are i.i.d. standard complex Gaussian random variables (i.e. real and imaginary parts are i.i.d. $N(0,1/2)$ random variables). If one were to replace all $\mathrm{Tr}U_N^j$, by $\sqrt{j}Z_j$, then a formal Gaussian calculation would produce Proposition \ref{pr:testimate}. Naturally, $\mathrm{Tr}U_N^j$ are not exactly Gaussians, and Proposition \ref{pr:testimate} can be seen as a result stating that for $j\leq N^{1-\epsilon}$ they are all very close to being Gaussian even when we bias our probability measure by the factor $e^{\beta_1 X_N(\theta)+\beta_2 X_N(\theta')}$.

We need also a weaker version of this result, but in the case where $\mathcal{T}$ is allowed to be complex valued. Luckily what we need follows from classical results concerning asymptotics of Toeplitz determinants with Fisher-Hartwig singularities due to Widom \cite{widom} -- and as such, we do not give a proof here. For the connection between such expectations and Toeplitz determinants, we refer the reader to Section \ref{sec:rmtrhp}.

\begin{proposition}[Widom]\label{pr:2pfourier}
Let $L,M\in \Z_+$, $(s_j)_{j\in \Z_+},(t_j)_{j\in \Z_+},$ and $\theta,\theta'\in[0,2\pi)$, $\theta\neq \theta'$ be independent of $N$ and let $L\leq M$. Then

\begin{align*}
\lim_{N\to\infty}&\frac{\E e^{\sum_{j=1}^{L}\left[\frac{s_j}{\sqrt{j}}\left(\Tr U_N^j+\Tr U_N^{-j}\right)+i\frac{t_j}{\sqrt{j}}\left(\Tr U_N^j-\Tr U_N^{-j}\right)\right] } e^{\beta X_{N,M}(\theta)+\beta X_{N,M}(\theta')}}{\E e^{\beta X_{N,M}(\theta)+\beta X_{N,M}(\theta')}}\\
&=\lim_{N\to\infty}\frac{\E e^{\sum_{j=1}^{L}\left[\frac{s_j}{\sqrt{j}}\left(\Tr U_N^j+\Tr U_N^{-j}\right)+i\frac{t_j}{\sqrt{j}}\left(\Tr U_N^j-\Tr U_N^{-j}\right)\right] }  e^{\beta X_N(\theta)+\beta X_{N,M}(\theta')}}{\E e^{\beta X_N(\theta)+\beta X_{N,M}(\theta')}}\\
&=\lim_{N\to\infty}\frac{\E e^{\sum_{j=1}^{L}\left[\frac{s_j}{\sqrt{j}}\left(\Tr U_N^j+\Tr U_N^{-j}\right)+i\frac{t_j}{\sqrt{j}}\left(\Tr U_N^j-\Tr U_N^{-j}\right)\right] } e^{\beta X_N(\theta)+\beta X_N(\theta')}}{\E e^{\beta X_N(\theta)+\beta X_N(\theta')}}\\
&=e^{-\frac{\beta}{2}\sum_{j=1}^{L}\left[\frac{s_j+it_j}{\sqrt{j}}(e^{ij\theta}+e^{ij\theta'})+\frac{s_j-it_j}{\sqrt{j}}(e^{-ij\theta}+e^{-ij\theta'})\right]}e^{\sum_{j=1}^{L}(s_j^2+t_j^2)}\notag. 
\end{align*}
\end{proposition}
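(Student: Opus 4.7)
The plan is to recognize each of the three ratios as a ratio of $N\times N$ Toeplitz determinants and invoke Szeg\H{o}--Widom asymptotics. By the Heine--Szeg\H{o} identity, for any function $f$ on $\T$, $\E\prod_{k=1}^{N}f(e^{i\lambda_k})=D_N(f):=\det(\hat f_{j-\ell})_{j,\ell=0}^{N-1}$, where $e^{i\lambda_k}$ are the eigenvalues of $U_N$. Each numerator and denominator above is of this multiplicative form, and the resulting symbol is a product of factors of three types: a Fisher--Hartwig factor $|z-e^{i\theta_*}|^{\beta}$ of exponent $\alpha=\beta/2$ for every instance of $e^{\beta X_N(\theta_*)}$; a smooth factor $\exp\bigl(-\beta\sum_{k=1}^{M}\cos k(\phi-\theta_*)/k\bigr)$ for every instance of $e^{\beta X_{N,M}(\theta_*)}$; and the smooth factor $e^{h(\phi)}$ with $h(\phi)=\sum_{j=1}^{L}\bigl(b_je^{ij\phi}+\bar b_je^{-ij\phi}\bigr)$ and $b_j=(s_j+it_j)/\sqrt j$, coming from the linear-statistics exponential. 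Each of the three ratios therefore takes the common form $D_N(e^{h}\psi)/D_N(\psi)$, where $\psi$ is smooth (first ratio), has one Fisher--Hartwig singularity (second), or two (third).

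For the first ratio the strong Szeg\H{o} theorem gives $\log D_N(e^{V})=N\hat V_0+\sum_{k\ge 1}k\hat V_k\hat V_{-k}+o(1)$. Since every relevant $V$ has $\hat V_0=0$, subtracting denominator from numerator leaves
$$\sum_{k\ge 1}k\bigl[\hat h_k\hat h_{-k}+\hat h_k(\hat f_{\theta,-k}+\hat f_{\theta',-k})+\hat h_{-k}(\hat f_{\theta,k}+\hat f_{\theta',k})\bigr],$$
where $\hat f_{\theta_*,k}=-\tfrac{\beta}{2k}e^{-ik\theta_*}$ for $1\le k\le M$ and vanishes otherwise. Because $L\le M$, the cross-sums collapse to $-\tfrac{\beta}{2}\sum_{k=1}^{L}[b_k(e^{ik\theta}+e^{ik\theta'})+\bar b_k(e^{-ik\theta}+e^{-ik\theta'})]$ and the pure $\hat h\hat h$ piece evaluates to $\sum_{k=1}^{L}(s_k^2+t_k^2)$, exactly the stated right-hand side. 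For the second and third ratios one applies Widom's formula for Toeplitz determinants with symbol $e^V\prod_{j=1}^{R}|z-z_j|^{\beta}$: the Fisher--Hartwig-specific prefactors depend only on $N$, $\beta$ and the $z_j$ and therefore cancel out of the ratio, while the $V$-dependence contributes $\exp\bigl(\sum_{k\ge 1}k\hat V_k\hat V_{-k}-\tfrac{\beta}{2}\sum_{j=1}^{R}V(z_j)\bigr)$, the linear piece being the Szeg\H{o} function evaluated at each singularity. Plugging in $V=h$ (plus possibly a leftover truncation $f_{\theta'}$) and using $\hat h_0=0$ reproduces the same expression.

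The main content of the proposition is that the three asymptotics agree, and the hinge is the identity $-\tfrac{\beta}{2}h(e^{i\theta_*})=\sum_{k\ge 1}k\bigl[\hat h_k\hat f_{\theta_*,-k}+\hat h_{-k}\hat f_{\theta_*,k}\bigr]$, expressing the equality of the Widom singularity-coupling of $h$ at $e^{i\theta_*}$ with the strong-Szeg\H{o} smooth cross-coupling between $h$ and the truncation $f_{\theta_*}$. The identity is valid precisely because $L\le M$: if $L>M$, then $\hat f_{\theta_*,k}=0$ for $M<k\le L$ on which $\hat h_k\ne 0$, so the smooth-truncation coupling would miss Fourier modes that the true Fisher--Hartwig singularity registers exactly, and the three limits would differ. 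Everything else is routine Fourier-coefficient bookkeeping; the inputs (strong Szeg\H{o} theorem and Widom's Fisher--Hartwig asymptotics) are classical.
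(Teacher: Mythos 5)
Your proof is correct, and it supplies exactly the argument the paper leaves implicit: the paper does not prove this proposition, stating only that it ``follows from classical results concerning asymptotics of Toeplitz determinants with Fisher--Hartwig singularities due to Widom'' and deferring the Heine--Szeg\H{o} reduction to Section~4. You have filled in the details along precisely those lines: all three numerator/denominator pairs become ratios of Toeplitz determinants (with $0$, $1$, or $2$ Fisher--Hartwig singularities of exponent $\alpha=\beta/2$, all with the same smooth part $e^h$ added in the numerator), the $N$-dependent Barnes/power prefactors and the singularity-interaction factor $|e^{i\theta}-e^{i\theta'}|^{-\beta^2/2}$ cancel identically between numerator and denominator, and what survives is the strong-Szeg\H{o} quadratic form in $h$ plus the linear coupling of $h$ to either the truncated log (via $\sum_k k\hat h_k\hat f_{\theta_*,-k}+\cdots$) or to the genuine singularity (via the Szeg\H{o} function $-\tfrac{\beta}{2}h(e^{i\theta_*})$). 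Your observation that the hypothesis $L\le M$ is what forces these two couplings to agree---so that the three limits coincide---is the real content of the equalities; the bookkeeping you carry out confirms both cross-terms collapse to $-\tfrac{\beta}{2}\sum_{k=1}^{L}[b_k(e^{ik\theta}+e^{ik\theta'})+\bar b_k(e^{-ik\theta}+e^{-ik\theta'})]$ and the pure $\hat h$ piece to $\sum_{k=1}^{L}(s_k^2+t_k^2)$, matching the right-hand side. No gaps.
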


\begin{remark}\label{rem:bias}
Note that we can view the quantities of interest here as Laplace transforms of laws of random variables and the result can be viewed as saying that under the biased probability measure $\frac{e^{\beta X_N(\theta)+\beta X_N(\theta')}}{\E e^{\beta X_N(\theta)+\beta X_N(\theta')}}\Prob(d\omega)$ $($and the similar ones with one or two $X_N$ replaced by $X_{N,M})$, the random vector $(\Tr U_N^j/\sqrt{j})_{j=1}^{L}$ converges in law to the random vector $(-\beta \frac{e^{ij\theta}+e^{ij\theta'}}{2\sqrt{j}}+Z_j)_{j=1}^{L}$, where $Z_j$ are i.i.d. standard complex Gaussians. Again, in the above remark, $\Prob(d\omega)$ denotes the law of the CUE random matrix $U_N$.
\end{remark}

We'll also need asymptotics of the normalizing quantities in Proposition \ref{pr:2pfourier} and Proposition \ref{pr:testimate}. More precisely, we will need the following result, which in a pointwise form goes back to Widom \cite{widom}, but the uniformity is argued e.g. in \cite[Theorem 1.1 and Remark 1.4]{dik2}.

\begin{theorem}[Deift, Its, and Krasovsky]\label{th:dik}
Let $\epsilon>0$ and $\beta_1,\beta_2\geq 0$ be fixed. Also write 

\begin{equation}\label{eq:ddef}
d(\theta,\theta'):=\min(|\theta-\theta'|,2\pi-|\theta-\theta'|). 
\end{equation}

\noindent Then 

\begin{equation*}
\lim_{N\to\infty}\frac{\E e^{\beta_1 X_N(\theta)+\beta_2 X_N(\theta')}}{\E e^{\beta_1 X_N(\theta)}\E e^{\beta_2 X_N(\theta')}}=|e^{i\theta}-e^{i\theta'}|^{-\beta_1\beta_2/2}
\end{equation*}

\noindent uniformly in $\lbrace (\theta,\theta')\in[0,2\pi]^2: d(\theta,\theta')\geq \epsilon\rbrace$. Moreover for a fixed $M\in \Z_+$

\begin{align*}
\lim_{N\to\infty}&\frac{\E e^{\beta_1 X_N(\theta)+\beta_2 X_{N,M}(\theta')}}{\E e^{\beta_1 X_N(\theta)}\E e^{\beta_2 X_{N,M}(\theta')}}=\lim_{N\to\infty}\frac{\E e^{\beta_1 X_{N,M}(\theta)+\beta_2 X_{N,M}(\theta')}}{\E e^{\beta_1 X_{N,M}(\theta)}\E e^{\beta_2 X_{N,M}(\theta')}}=e^{\frac{\beta_1\beta_2}{2}\sum_{j=1}^M \frac{\cos j(\theta-\theta')}{j}}
\end{align*}

\noindent uniformly in $\theta,\theta'\in[0,2\pi]$.
\end{theorem}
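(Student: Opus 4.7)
The plan is to recognize the three expectations as Toeplitz determinants with Fisher--Hartwig symbols and then invoke the uniform Fisher--Hartwig asymptotics of \cite{dik2}. By the Heine--Szeg\H{o} identity applied to class functions on $\mathrm{U}(N)$,
\begin{equation*}
\E\prod_{j=1}^N g(e^{i\phi_j})=\det\bigl(\widehat g_{j-k}\bigr)_{j,k=0}^{N-1}=:D_N(g),
\end{equation*}
where the $\phi_j$ are the eigenangles of $U_N$. Writing $v_M(\phi):=\sum_{k=1}^M k^{-1}\cos k\phi$, so that $v_\infty(\phi)=-\log|2\sin(\phi/2)|$, the three numerators in the theorem equal $D_N(g)$ for the symbols
\begin{equation*}
|e^{i\phi}-e^{i\theta}|^{\beta_1}|e^{i\phi}-e^{i\theta'}|^{\beta_2},\quad |e^{i\phi}-e^{i\theta}|^{\beta_1}e^{-\beta_2 v_M(\phi-\theta')},\quad e^{-\beta_1 v_M(\phi-\theta)-\beta_2 v_M(\phi-\theta')},
\end{equation*}
respectively. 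The first two are Fisher--Hartwig symbols with smooth, strictly positive multiplicative perturbation $e^V$ and pure ``$\alpha$-type'' root singularities at $e^{i\theta}$ and/or $e^{i\theta'}$ with exponents $\alpha_j=\beta_j/2$; the third is analytic and strictly positive.

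Next I would apply the Fisher--Hartwig asymptotic in the form (pure $\alpha$-singularities with smooth perturbation)
\begin{equation*}
D_N(g)=\Bigl(\prod_j\frac{G(1+\alpha_j)^2}{G(1+2\alpha_j)}\Bigr)N^{\sum_j\alpha_j^2}\prod_{j<k}|e^{i\theta_j}-e^{i\theta_k}|^{-2\alpha_j\alpha_k}\exp\Bigl(NV_0+\sum_{n\geq 1}nV_nV_{-n}-\sum_j\alpha_j\bigl(V_+(e^{i\theta_j})+V_-(e^{i\theta_j})\bigr)\Bigr)(1+o(1)),
\end{equation*}
with $V_{\pm}$ the analytic inside/outside Fourier parts of $V$; the purely smooth case is the Strong Szeg\H{o} theorem. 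Dividing by the one-point asymptotics $\E e^{\beta X_N(\theta)}\sim N^{\beta^2/4}G(1+\beta/2)^2/G(1+\beta)$ and $\E e^{\beta X_{N,M}(\theta')}\to\exp\bigl(\tfrac{\beta^2}{4}\sum_{k=1}^M k^{-1}\bigr)$, a short Fourier computation shows that all $N$-dependent and Barnes $G$-function factors cancel, as do the bulk $V$-factors $\exp(NV_0+\sum_n nV_nV_{-n})$. What remains is exactly $|e^{i\theta}-e^{i\theta'}|^{-\beta_1\beta_2/2}$ in the two-singularity case (from the product $\prod_{j<k}$), and $\exp\bigl(\tfrac{\beta_1\beta_2}{2}\sum_{k=1}^M k^{-1}\cos k(\theta-\theta')\bigr)$ in both the mixed and smooth cases --- arising respectively from the cross term $-\alpha_1\bigl(V_+(e^{i\theta})+V_-(e^{i\theta})\bigr)$ with $V=-\beta_2 v_M(\cdot-\theta')$, and from the cross term in $\sum_n n|V_n|^2$ when $V$ carries both $\theta$- and $\theta'$-contributions.

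The main obstacle is not the cancellation identity, which is elementary, but the uniformity of the asymptotic. For the mixed and purely smooth cases, uniformity over all $(\theta,\theta')\in[0,2\pi]^2$ is automatic, since the data $V$ depend smoothly on the parameters and there is at most one singularity. For the two-singularity case one must allow both singular points to vary, and Widom's original pointwise argument \cite{widom} does not suffice; this is the content of \cite{dik2}, where the Fisher--Hartwig formula is derived via the Deift--Zhou steepest descent analysis of the $2\times 2$ Riemann--Hilbert problem for the orthogonal polynomials on $\T$ with weight $g$. Local parametrices built from confluent hypergeometric functions are inserted in disjoint disks around $e^{i\theta}$ and $e^{i\theta'}$, and the hypothesis $d(\theta,\theta')\geq\epsilon$ is exactly what ensures that these disks have uniform radius and stay disjoint --- the input needed for a uniform small-norm estimate on the global parametrix. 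The continuous dependence of the main-term constants on $(\theta,\theta')$ then upgrades this into a uniform $o(1)$ in the final ratio, which proves the theorem.
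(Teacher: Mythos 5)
Your proposal is correct and matches the paper's approach: the paper provides no independent proof of this theorem, but instead cites Widom for the pointwise Fisher--Hartwig asymptotics and Deift--Its--Krasovsky \cite[Theorem 1.1 and Remark 1.4]{dik2} for uniformity, exactly as you do. Your additional commentary on identifying the three symbols, the cancellation after dividing by the one-point normalizations, and why $d(\theta,\theta')\geq\epsilon$ is needed for disjoint local parametrices is all accurate and consistent with the standard Riemann--Hilbert derivation.
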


We will also need to control the behavior of $\E e^{\beta X_N(\theta)+\beta X_N(\theta')}$ for $\theta$ close to $\theta'$ (note that we need this only for $\beta_1=\beta_2=\beta$), and to do this, we make use of the following result due to Claeys and Krasovsky (a combination of \cite[Theorem 1.11 and Theorem 1.15]{ck}).

\begin{theorem}[Claeys and Krasovsky]\label{th:ck}
For $\theta,\theta'\in[0,2\pi]$, let $d(\theta,\theta')$ be as in \eqref{eq:ddef}. For fixed $\epsilon,\delta\in(0,1)$ and $\beta\geq \sqrt{2}$, the following asymptotics hold

\begin{itemize}[leftmargin=0.5cm]
\item[1.] For $N^{\delta-1}\leq d(\theta,\theta')\leq \epsilon$, as $N\to\infty$

\begin{equation}\label{eq:CK1}
\frac{\E e^{\beta X_N(\theta)+\beta X_N(\theta')}}{\E e^{\beta X_N(\theta)} \E e^{\beta X_N(\theta')}}=\mathcal{O}\left(d(\theta,\theta')^{-\frac{\beta^2}{2}}\right),
\end{equation}

\noindent where the implied constant is uniform in $\lbrace (\theta,\theta')\in [0,2\pi]^2: N^{\delta-1}\leq d(\theta,\theta')\leq \epsilon\rbrace.$

\item[2.] As $N\to\infty$

\begin{equation}\label{eq:CK2}
\int_{d(\theta,\theta')\leq N^{\delta-1}}\frac{\E e^{\beta X_N(\theta)+\beta X_N(\theta')}}{\E e^{\beta X_N(\theta)}\E e^{\beta X_N(\theta')}}\frac{d\theta}{2\pi}\frac{d\theta'}{2\pi}=\mathcal{O}\left(N^{\frac{\beta^2}{2}-1}\log N\right).
\end{equation}

\end{itemize}

\end{theorem}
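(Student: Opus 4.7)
The plan is to reduce the two-point exponential moment to a Toeplitz determinant and then carry out a Deift-Zhou Riemann-Hilbert steepest descent analysis. By the Heine-Szegő identity,
$$\E e^{\beta X_N(\theta)+\beta X_N(\theta')} = \det T_N(f_{\theta,\theta'}),$$
where $f_{\theta,\theta'}(e^{i\varphi}) = |e^{i\varphi}-e^{i\theta}|^{\beta}\,|e^{i\varphi}-e^{i\theta'}|^{\beta}$ is a symbol with two Fisher-Hartwig singularities, each of absolute-value type with parameter $\beta/2$, located at the points $e^{i\theta}$ and $e^{i\theta'}$. The determinant can be expressed in terms of the leading coefficients of the orthogonal polynomials on the unit circle with respect to this weight, which themselves solve the Fokas-Its-Kitaev $2\times 2$ Riemann-Hilbert problem. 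Thus both estimates reduce to asymptotic analysis of this RH problem, uniformly in the location of the two singularities.

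For part 1 (the separated regime $N^{\delta-1}\leq d(\theta,\theta')\leq \epsilon$), the strategy is the standard Deift-Its-Krasovsky scheme: open lenses, introduce a global parametrix built from the Szegő function for the smooth part of the symbol together with explicit Fisher-Hartwig factors, and then install local parametrices at $e^{i\theta}$ and $e^{i\theta'}$ modelled by confluent hypergeometric functions. Reading off the leading asymptotics one recovers the Widom-type prefactor, and when one divides by $\E e^{\beta X_N(\theta)}\E e^{\beta X_N(\theta')}$ the Barnes-$G$ factors coming from each singularity cancel, leaving a contribution $|e^{i\theta}-e^{i\theta'}|^{-\beta^2/2}$ and a bounded multiplicative correction. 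The nontrivial point is uniformity all the way down to the scale $d(\theta,\theta')\sim N^{\delta-1}$, which amounts to quantifying how the jump matrices of the small-norm error problem depend on the separation; one must show that the error is $o(1)$ as long as the two local disks around $e^{i\theta}$ and $e^{i\theta'}$ stay disjoint and of radius $\gg 1/N$.

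For part 2 (the merging regime $d(\theta,\theta')\leq N^{\delta-1}$), the two local parametrices collide and must be replaced by a single merging parametrix expressed in terms of a Painlev\'e~V transcendent, as in \cite{ck}. This parametrix controls the transition between two coalescing Fisher-Hartwig singularities and yields, after dividing by the one-point normalizers, an upper bound of order $(Nd(\theta,\theta'))^{\beta^2/2}$ times a $\sigma$-function prefactor which is uniformly bounded for $\beta\geq\sqrt{2}$ (the value $\beta=2$ being the critical case beyond which the analysis degenerates). Integrating this pointwise bound over the strip $\{d(\theta,\theta')\leq N^{\delta-1}\}$, whose two-dimensional measure is $\mathcal{O}(N^{\delta-1})$, and performing the radial integral $\int_0^{N^{\delta-1}} r^{\beta^2/2}\,r^{-1}\,dr\cdot N^{\beta^2/2}$ produces the advertised $\mathcal{O}(N^{\beta^2/2-1}\log N)$ bound, with the logarithm arising from the boundary region where the merging and separated regimes overlap and have to be matched.

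The main obstacle is assembling a single estimate that is valid uniformly in $(\theta,\theta')$ across all scales. In the separated analysis, one needs small-norm estimates whose constants do not blow up as the singularities approach each other, while in the merging analysis the Painlev\'e~V parametrix must be matched with the outer model on an annulus of intermediate radius. Producing this matching, and verifying that the Painlev\'e transcendent governing the merging has the required boundedness properties for the full subcritical range $\beta\in[\sqrt{2},2)$, is precisely the hard analytic content of \cite[Theorems 1.11 and 1.15]{ck}, which is why the proof is outsourced to that paper rather than reproduced here.
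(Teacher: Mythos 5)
The paper itself offers no proof of this statement: it is recorded as a direct citation of \cite[Theorems 1.11 and 1.15]{ck}, and your proposal ultimately does the same. Your high-level description of the Claeys--Krasovsky Riemann--Hilbert strategy is broadly accurate: the Heine--Szeg\H o reduction to a Toeplitz determinant with two Fisher--Hartwig singularities each of parameter $\beta/2$, confluent hypergeometric local parametrices and the Widom-type prefactor $|e^{i\theta}-e^{i\theta'}|^{-\beta^2/2}$ in the separated regime, and the Painlev\'e~V merging parametrix in the coalescing regime.

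There is, however, a concrete glitch in your part~2 accounting. The pointwise merged-regime bound you write, $(Nd(\theta,\theta'))^{\beta^2/2}$ times a uniformly bounded $\sigma$-prefactor, tends to $0$ as $d\to0$, whereas the ratio actually saturates at order $N^{\beta^2/2}$ when the two singularities coincide (this is exactly the ratio of $N^{\beta^2}$ for a single singularity of strength $\beta$ to $(N^{\beta^2/4})^2$ from the two one-point normalizers). So the Painlev\'e $\sigma$-correction is not uniformly bounded in $Nd$; it is precisely what interpolates between $N^{\beta^2/2}$ at $d\lesssim 1/N$ and $d^{-\beta^2/2}$ for $1/N\lesssim d\leq N^{\delta-1}$. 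The correct calculation integrates the envelope $\min\bigl(N^{\beta^2/2},\,d^{-\beta^2/2}\bigr)$ against $dr$ over $r\leq N^{\delta-1}$, giving $N^{\beta^2/2-1}$ from the inner piece and, from the outer piece, $N^{\beta^2/2-1}$ for $\beta>\sqrt{2}$ but $\mathcal{O}(\log N)$ for the endpoint $\beta=\sqrt{2}$ (where $\beta^2/2=1$ and $\int_{1/N}^{N^{\delta-1}} r^{-1}\,dr$ produces the logarithm). The $\log N$ thus comes from this borderline exponent, not from matching the two parametrices. Also, the expression $\int_0^{N^{\delta-1}} r^{\beta^2/2}\,r^{-1}\,dr\cdot N^{\beta^2/2}$ in your proposal does not give the stated bound as written. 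Since the paper simply cites \cite{ck}, none of this contradicts the text, but if you mean the sketch to indicate how \cite{ck} implies the two estimates, these points need tightening.
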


Finally we also point out the following basic result which can be proven e.g. from  \cite[the proof of Lemma 6.5]{js} -- we omit the details.

\begin{lemma}\label{le:logsum}
Let $M\in \Z_+$ and $d(\theta,\theta')$ be as in \eqref{eq:ddef}. Then

\begin{equation}
\sum_{j=1}^M \frac{\cos j(\theta-\theta')}{j}=\min(\log^{+} d(\theta,\theta')^{-1},\log M)+\mathcal{O}(1),
\end{equation}

\noindent where $\log^{+}x=\max(0,\log x)$ and $\mathcal{O}(1)$ denotes a quantity that is bounded in $M\in \Z_+$ and $\theta,\theta'\in[0,2\pi]$.

\end{lemma}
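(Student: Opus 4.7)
The plan is to reduce to a classical estimate on the partial sums of the Fourier series of $\phi\mapsto -\log|2\sin(\phi/2)|$ and split into two regimes according to whether $d(\theta,\theta')$ is larger or smaller than $1/M$. By $2\pi$-periodicity of $\cos$ and the definition of $d=d(\theta,\theta')$, we may set $\phi:=\theta-\theta'\in[-\pi,\pi]$ and write $S_M(\phi):=\sum_{j=1}^M \frac{\cos(j\phi)}{j}$; observe that $|\phi|\asymp d$ for the values we care about (up to a factor of $\pi$ near the endpoints, which is absorbed into the $\mathcal{O}(1)$), and $S_M(\phi)=S_M(-\phi)$, so we may assume $\phi\in[0,\pi]$.

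First I would treat the regime $d\leq 1/M$, which corresponds to $\min(\log^+ d^{-1},\log M)=\log M$. For any $j\leq M$ we have $|j\phi|\leq 1$, so $|\cos(j\phi)-1|\leq (j\phi)^2/2$, which gives
\begin{equation*}
\Bigl|S_M(\phi)-\sum_{j=1}^M\frac{1}{j}\Bigr|\leq \frac{\phi^2}{2}\sum_{j=1}^M j\leq \frac{(M\phi)^2}{4}\leq \frac{1}{4}.
\end{equation*}
Since $\sum_{j=1}^M 1/j=\log M+\mathcal{O}(1)$, this yields the claimed asymptotics in this regime.

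Next I would treat the regime $d>1/M$. The starting point is the classical identity (valid for $\phi\in(0,2\pi)$)
\begin{equation*}
\sum_{j=1}^\infty \frac{\cos(j\phi)}{j}=-\log|2\sin(\phi/2)|,
\end{equation*}
together with the elementary estimate $-\log|2\sin(\phi/2)|=\log^+(1/d)+\mathcal{O}(1)$ uniformly for $\phi\in[0,\pi]$ (one just compares $2\sin(\phi/2)$ to $\phi$ away from $0$, and notes that for $\phi$ bounded away from $0$ the left-hand side is bounded). It remains to control the tail $R_M(\phi):=\sum_{j>M}\frac{\cos(j\phi)}{j}$. For this I would use summation by parts: the partial sums $D_n(\phi):=\sum_{j=1}^n \cos(j\phi)=\mathrm{Re}\,\frac{e^{i\phi}(1-e^{in\phi})}{1-e^{i\phi}}$ are bounded by $|\sin(\phi/2)|^{-1}\leq C/d$ uniformly in $n$, and $1/j$ is monotone decreasing to $0$. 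Abel summation then yields
\begin{equation*}
|R_M(\phi)|\leq \frac{C}{d}\cdot \frac{2}{M},
\end{equation*}
which is $\mathcal{O}(1)$ precisely because $d>1/M$. Combining this with the infinite-sum identity gives $S_M(\phi)=\log^+(1/d)+\mathcal{O}(1)=\min(\log^+ d^{-1},\log M)+\mathcal{O}(1)$ in this regime, finishing the proof.

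There is no real obstacle: both regimes reduce to one-line estimates, with the only mild technical point being the uniformity of the implicit constants in $M$ and $\phi$, which is immediate from the explicit bounds above. The only thing to double-check is the matching of the threshold $d=1/M$ between the two regimes, but the estimates in the two cases overlap at that threshold (both give $\log M+\mathcal{O}(1)$), so no additional argument is needed.
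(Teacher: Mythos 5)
Your proof is correct. The paper itself gives no argument for this lemma, merely citing \cite[proof of Lemma 6.5]{js} and omitting details, so there is nothing in the paper to compare against; your two-regime decomposition (Taylor-expanding $\cos$ for $d\leq 1/M$, and comparing to the classical Fourier series $\sum_{j\geq 1}\cos(j\phi)/j=-\log|2\sin(\phi/2)|$ with an Abel-summed tail bound for $d>1/M$) is the standard elementary route and all the estimates go through uniformly as you claim. One microscopic arithmetic slip: $\frac{\phi^2}{2}\sum_{j=1}^M j=\frac{\phi^2 M(M+1)}{4}$, which under $M\phi\leq 1$ is bounded by $\frac{1}{2}$ rather than $\frac{1}{4}$ (since $M(M+1)\leq 2M^2$); this of course does not affect the $\mathcal{O}(1)$ conclusion.
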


\section{Proof of Theorem \ref{th:main} assuming the exponential estimates}\label{sec:mainproof}

In this section, we'll prove Theorem \ref{th:main} assuming Proposition \ref{pr:testimate}. We will also rely heavily on Proposition \ref{pr:2pfourier}, Theorem \ref{th:dik}, and Theorem \ref{th:ck}. This is essentially the same as the main argument of \cite{los}, though we formulate things in a slightly different way. We start by noting that it is a basic fact that a sequence of random measures $(\nu_N)$ on the unit circle $\T$ converge in law (with respect to the topology of weak convergence of measures) to a measure $\nu$ on the unit circle if the random variables $\int_\T \varphi d\nu_N$ converge in law to $\int_\T \varphi d\nu$ for each (deterministic) $\varphi\in C(\T,[0,\infty))$ (i.e. the space of continuous functions from $\T$ to $[0,\infty)$). For details, see e.g. \cite[Chapter 4]{kall}. To prove this in our case (where $\nu_N=\mu_{N,\beta}$ and $\nu=\mu_\beta$), we introduce the following notation.

\begin{definition}\label{def:split}
Let $\delta\in(0,1)$ and $\beta\in(0,2)$ be fixed and write $k_N=k_N(\delta)=\lfloor \log_2 N^{1-\delta}\rfloor$. Also let $l,M\in \Z_+$ be fixed and satisfy $2^{l}\leq M$. Moreover, let $\gamma\in(\beta,2)$ and $\varphi\in C(\T,[0,\infty))$ be fixed. Define

\begin{equation}\label{eq:E1}
E^{(1)}=E^{(1)}_{N,l,\gamma,\beta}=\int_0^{2\pi}\varphi(e^{i\theta})\mathbf{1}\lbrace \exists k\in \lbrace l,...,k_N\rbrace: X_{N,2^k}(\theta)>\gamma\E X_{N,2^k}(\theta)^2\rbrace \mu_{N,\beta}(d\theta),
\end{equation}

\begin{align}\label{eq:E2}
E^{(2)}&=E^{(2)}_{N,M,l,\gamma,\beta}\\
\notag &=\int_0^{2\pi}\varphi(e^{i\theta})\mathbf{1}\big\lbrace X_{N,2^k}(\theta)\leq \gamma\E X_{N,2^k}(\theta)^2,\ \forall k\in\lbrace l,...,k_N\rbrace\big\rbrace \mu_{N,\beta}(d\theta)\\
\notag &\quad -\int_0^{2\pi}\varphi(e^{i\theta})\mathbf{1}\big\lbrace X_{N,2^k}(\theta)\leq \gamma\E X_{N,2^k}(\theta)^2,\ \forall k\in\lbrace l,...,\lfloor \log_2 M\rfloor\rbrace\big\rbrace \mu_{N,\beta}^{(M)}(d\theta),
\end{align}

\noindent and 

\begin{equation}\label{eq:C}
G=G_{N,M,l,\gamma,\beta}=\int_0^{2\pi}\varphi(e^{i\theta})\mathbf{1}\big\lbrace X_{N,2^k}(\theta)\leq \gamma \E X_{N,2^k}(\theta)^2,\ \forall k\in\lbrace l,...,\lfloor\log_2 M\rfloor \rbrace\big\rbrace \mu_{N,\beta}^{(M)}(d\theta).
\end{equation}
\end{definition}

Note that $\int_0^{2\pi}\varphi(e^{i\theta})\mu_{N,\beta}(d\theta)=G+E^{(1)}+E^{(2)}$. The interpretation is that the $E$-terms are error terms and $G$ is our main term in that with a suitable limiting procedure related to $N,M,l$, the error terms vanish in a suitably strong sense and $G$ converges to the correct object. We break this up into steps.

\begin{lemma}\label{le:E1}
When we first let $N\to\infty$ and then $l\to \infty$, $E^{(1)}$ tends to zero in $L^1(\Prob)$. 
\end{lemma}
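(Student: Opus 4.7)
Since $E^{(1)}\geq 0$, it suffices to show $\limsup_{l\to\infty}\limsup_{N\to\infty}\E E^{(1)}=0$. The plan is a standard exponential-Chebyshev estimate on the tilted measure combined with a union bound, exactly in the spirit of \cite{los,berestycki}, made effective by Corollary \ref{cor:estimates}. First I would apply Fubini and the union bound to get
\begin{equation*}
\E E^{(1)}\;\leq\;\|\varphi\|_{\infty}\sum_{k=l}^{k_N}\int_0^{2\pi}\frac{\E\bigl[\mathbf{1}\{X_{N,2^k}(\theta)>\gamma\,v_k\}\,e^{\beta X_N(\theta)}\bigr]}{\E e^{\beta X_N(\theta)}}\frac{d\theta}{2\pi},
\end{equation*}
where $v_k:=\E X_{N,2^k}(\theta)^2$. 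A direct computation using the Diaconis--Shahshahani covariance $\E\,\Tr U_N^j\,\Tr U_N^{-j}=j$ for $j\leq N$ (and the fact that $2^k\leq 2^{k_N}\leq N^{1-\delta}$) gives the $\theta$-independent value $v_k=\tfrac12\sum_{j=1}^{2^k}\tfrac1j=\tfrac{k\log 2}{2}+O(1)$; in particular $v_k\to\infty$ as $k\to\infty$.

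Next, for any $\alpha>0$ the elementary bound $\mathbf{1}\{X_{N,2^k}(\theta)>\gamma v_k\}\leq e^{\alpha X_{N,2^k}(\theta)-\alpha\gamma v_k}$ together with \eqref{eq:1ptest} of Corollary \ref{cor:estimates} (which is uniform in $\theta$ by the rotational invariance of the Haar measure, and uniform in $2^k\leq N^{1-\delta}$) yields
\begin{equation*}
\frac{\E\bigl[\mathbf{1}\{X_{N,2^k}(\theta)>\gamma v_k\}\,e^{\beta X_N(\theta)}\bigr]}{\E e^{\beta X_N(\theta)}}\;\leq\;(1+o(1))\,e^{-\alpha\gamma v_k}\,e^{(\tfrac{\alpha^2}{2}+\alpha\beta)\,v_k}\;=\;(1+o(1))\,e^{v_k\bigl(\tfrac{\alpha^2}{2}+\alpha(\beta-\gamma)\bigr)},
\end{equation*}
with the $o(1)$ uniform in $\theta$ and in $k\in\{l,\dots,k_N\}$. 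Now optimize in $\alpha$: the minimum of $\tfrac{\alpha^2}{2}+\alpha(\beta-\gamma)$ is attained at $\alpha=\gamma-\beta>0$ (here I use the hypothesis $\gamma>\beta$) and equals $-\tfrac{(\gamma-\beta)^2}{2}$. Plugging $v_k=\tfrac{k\log 2}{2}+O(1)$ into the exponent yields a geometric bound of the form $C\cdot 2^{-k(\gamma-\beta)^2/4}$, with $C$ depending only on $\beta,\gamma$.

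Summing this geometric estimate over $k$ from $l$ to $k_N\leq \infty$ gives, for every $l$,
\begin{equation*}
\limsup_{N\to\infty}\E E^{(1)}\;\leq\;C\,\|\varphi\|_{\infty}\sum_{k=l}^{\infty}2^{-k(\gamma-\beta)^2/4}\;=\;O\bigl(2^{-l(\gamma-\beta)^2/4}\bigr),
\end{equation*}
which tends to $0$ as $l\to\infty$, completing the proof. There is no serious obstacle here: the proof is essentially bookkeeping around the single point estimate \eqref{eq:1ptest}. The only points requiring mild care are (i) checking that $v_k$ grows linearly in $k$ so that the exponential decay $e^{-v_k(\gamma-\beta)^2/2}$ becomes geometric in $k$, and (ii) ensuring that the $o(1)$ in Corollary \ref{cor:estimates} is uniform simultaneously in $\theta$ and in the scale index $k\in\{l,\dots,k_N\}$, which it is because \eqref{eq:1ptest} is uniform in $K\leq N^{1-\delta}$ and the tilted one-point expectation is $\theta$-invariant by the rotational invariance of the Haar measure on $\mathrm{U}(N)$.
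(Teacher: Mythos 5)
Your proof is correct and is essentially the same as the paper's argument: a union bound, an exponential Chebyshev bound, reduction to a one-point exponential moment estimate (Corollary \ref{cor:estimates}), and a geometric series in $k$. The only cosmetic difference is that you present the Chernoff parameter $\alpha$ as a free variable and then optimize, whereas the paper directly substitutes $\alpha=\gamma-\beta$; both choices lead to the same exponent $-\tfrac{(\gamma-\beta)^2}{4}k$ and the same conclusion.
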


\begin{proof}
This is a consequence of Proposition \ref{pr:testimate} applied to the case $\beta_1=\beta$, $\beta_2=0$, $\alpha_1=\gamma-\beta$, $\alpha_2=0$, and $\mathcal{T}=0$. By a simple union bound as well as the bound $\mathbf{1}\lbrace x>0\rbrace\leq e^{(\gamma-\beta) x}$ (as $\gamma>\beta$), we can write 

\begin{equation*}
\mathbf{1}\left\lbrace \exists k\in \lbrace l,...,k_N\rbrace: X_{N,2^k}(\theta)>\gamma\E X_{N,2^k}(\theta)^2\right\rbrace \leq \sum_{k=l}^{k_N} e^{(\gamma-\beta)(X_{N,2^k}(\theta)-\gamma\E X_{N,2^k}(\theta)^2)}.
\end{equation*}

First making use of the rotation invariance of the law of the eigenvalues of $U_N$, Proposition \ref{pr:testimate} (with the specialization described above), and the fact that $\E \Tr U_N^j \Tr U_N^k=\delta_{k,-j}\min(|k|,N)$ (see \cite{DS,DE}) one can write for large enough $N$

\begin{align*}
\E E^{(1)}&\leq ||\varphi||_\infty \sum_{k=l}^{k_N}\frac{\E e^{(\gamma-\beta)(X_{N,2^k}(0)-\gamma\E X_{N,2^k}(0)^2)+\beta X_N(0)}}{\E e^{\beta X_N(0)}}\\
&\leq  2 ||\varphi||_\infty \sum_{k=l}^{k_N}e^{\left(\frac{(\gamma-\beta)^2}{2}+(\gamma-\beta)\beta\right)\frac{1}{2}\sum_{j=1}^{2^k}\frac{1}{j}} e^{-(\gamma-\beta)\gamma \frac{1}{2}\sum_{j=1}^{2^k}\frac{1}{j}}\\
&\leq C \sum_{k=l}^\infty 2^{-\frac{(\gamma-\beta)^2}{4}k},
\end{align*}

\noindent where $C>0$ is a constant depending only on $\varphi$. This obviously tends to zero when $l\to\infty$.
\end{proof}

Our next task will be to show that $E^{(2)}$ converges to zero in this same limit, and we'll find it actually more convenient to prove convergence in $L^2(\Prob)$. To simplify notation slightly, we give names to some of the main objects we encounter. We'll start with an analogue of $X_{N,M}$ in the Gaussian setting --  namely a $N\to\infty$ limit of $X_{N,M}$.

\begin{definition}\label{def:gaussian}
For $M\in \Z_+$ and $\theta\in[0,2\pi]$, let 

\begin{equation*}
X^{(M)}(\theta)=\sum_{j=1}^{M}\frac{1}{2\sqrt{j}}(Z_j e^{-ij\theta}+Z_j^* e^{ij\theta}),
\end{equation*}

\noindent where $(Z_j)_{j=1}^\infty$ are i.i.d. standard complex Gaussians. Moreover, write 

\begin{equation*}
\Sigma^{(M)}(\theta-\theta')=\E X^{(M)}(\theta)X^{(M)}(\theta')=\sum_{j=1}^M \frac{\cos j(\theta-\theta')}{2j}.
\end{equation*} 
\end{definition}

\noindent We'll also write $\Prob$ for the distribution of the Gaussians $(Z_j)_{j=1}^\infty$. 

Next we'll introduce suitable biased probability measures both in the random matrix setting as well as in the Gaussian one.

\begin{definition}\label{def:bias}
For given $\theta,\theta'\in[0,2\pi]$ let

\begin{align}
\label{eq:qprob1}\Q_{N,1}^{(\theta,\theta')}(d\omega)&=\frac{e^{\beta X_N(\theta)+\beta X_N(\theta')}}{\E e^{\beta X_N(\theta)+\beta X_N(\theta')}}\Prob(d\omega)\\
\label{eq:qprob2}\Q_{N,M,2}^{(\theta,\theta')}(d\omega)&=\frac{e^{\beta X_N(\theta)+\beta X_{N,M}(\theta')}}{\E e^{\beta X_N(\theta)+\beta X_{N,M}(\theta')}}\Prob(d\omega)\\
\label{eq:qprob3}\Q_{N,M,3}^{(\theta,\theta')}(d\omega)&=\frac{e^{\beta X_{N,M}(\theta)+\beta X_{N,M}(\theta')}}{\E e^{\beta X_{N,M}(\theta)+\beta X_{N,M}(\theta')}}\Prob(d\omega)
\end{align}

\noindent and 

\begin{equation}\label{eq:qprobG}
\G_M^{(\theta,\theta')}(d\omega)=\frac{e^{\beta X^{(M)}(\theta)+\beta X^{(M)}(\theta')}}{\E e^{\beta X^{(M)}(\theta)}\E e^{\beta X^{(M)}(\theta')}}\Prob(d\omega).
\end{equation}

\end{definition}

We also introduce notation for barrier events appearing in $E^{(2)}$ and which will appear in the $N\to\infty$ limit of $E^{(2)}$ as well.

\begin{definition}\label{def:barrier}
For $\theta\in [0,2\pi]$, $l,M\in \Z_+$ satisfying $l\leq \lfloor \log_2 M\rfloor$ and $k_N$ as in Definition $\ref{def:split}$, let 

\begin{align*}
B_{N,l}(\theta)&=\big\lbrace X_{N,2^k}(\theta)\leq \gamma \E X_{N,2^k}(\theta)^2 \ \forall k\in \lbrace l,...,k_N\rbrace\big\rbrace \\
B_{N,l,M}(\theta)&=\big\lbrace X_{N,2^k}(\theta)\leq \gamma \E X_{N,2^k}(\theta)^2 \ \forall k\in \lbrace l,...,\lfloor\log_2 M\rfloor\rbrace\big\rbrace
\end{align*}

\noindent and in the Gaussian setting we find it useful to generalize slightly$:$ for $Y=(Y_k)_{k=1}^\infty\in \R^{\Z_+}$, let

\begin{align*}
B_{l,M}^G(\theta;Y)&=\big\lbrace X^{(2^k)}(\theta)\leq Y_{2^k} \ \forall k\in \lbrace l,...,\lfloor \log_2 M\rfloor\rbrace\big\rbrace.
\end{align*}
\end{definition}

Finally we'll give names to the integrals appearing when we expand the square while calculating $\E[E^{(2)}]^2$.

\begin{definition}\label{def:integrals}

For continuous $\varphi:\T\to[0,\infty)$, and fixed $l,M\in \Z_+$ satisfying $l\leq \lfloor \log_2 M\rfloor$, let

\begin{align}\label{eq:integrals}
I_{N,l,1}&=\int_{\T^2}\varphi(e^{i\theta})\varphi(e^{i\theta'}) \Q_{N,1}^{(\theta,\theta')}\left(B_{N,l}(\theta)\cap B_{N,l}(\theta')\right)\frac{\E e^{\beta X_N(\theta)+\beta X_N(\theta')}}{\E e^{\beta X_N(\theta)} \E e^{\beta X_N(\theta')}}\frac{d\theta}{2\pi}\frac{d\theta'}{2\pi}\\
I_{N,l,M,2}&=\int_{\T^2}\varphi(e^{i\theta})\varphi(e^{i\theta'})\Q_{N,M,2}^{(\theta,\theta')}(B_{N,l}(\theta)\cap B_{N,l,M}(\theta')) \frac{\E e^{\beta X_N(\theta)+\beta X_{N,M}(\theta')}}{\E e^{\beta X_N(\theta)} \E e^{\beta X_{N,M}(\theta')}}\frac{d\theta}{2\pi}\frac{d\theta'}{2\pi}\\
I_{N,l,M,3}&=\int_{\T^2}\varphi(e^{i\theta})\varphi(e^{i\theta'}) \Q_{N,M,3}^{(\theta,\theta')}\left(B_{N,l,M}(\theta)\cap B_{N,l,M}(\theta')\right)\frac{\E e^{\beta X_{N,M}(\theta)+\beta X_{N,M}(\theta')}}{\E e^{\beta X_{N,M}(\theta)} \E e^{\beta X_{N,M}(\theta')}}\frac{d\theta}{2\pi}\frac{d\theta'}{2\pi}.
\end{align}

Moreover, for fixed $\epsilon>0$ define

\begin{align}\label{eq:intsplit}
I_{N,l,1}^{(1)}(\epsilon)&=\int_{d(\theta,\theta')\geq \epsilon}\varphi(e^{i\theta})\varphi(e^{i\theta'}) \Q_{N,1}^{(\theta,\theta')}\left(B_{N,l}(\theta)\cap B_{N,l}(\theta')\right)\frac{\E e^{\beta X_N(\theta)+\beta X_N(\theta')}}{\E e^{\beta X_N(\theta)} \E e^{\beta X_N(\theta')}}\frac{d\theta}{2\pi}\frac{d\theta'}{2\pi}\\
I_{N,l,1}^{(2)}(\epsilon)&=\int_{d(\theta,\theta')< \epsilon}\varphi(e^{i\theta})\varphi(e^{i\theta'}) \Q_{N,1}^{(\theta,\theta')}\left(B_{N,l}(\theta)\cap B_{N,l}(\theta')\right)\frac{\E e^{\beta X_N(\theta)+\beta X_N(\theta')}}{\E e^{\beta X_N(\theta)} \E e^{\beta X_N(\theta')}}\frac{d\theta}{2\pi}\frac{d\theta'}{2\pi},
\end{align}

\noindent where we wrote again $d(\theta,\theta')=\min(|\theta-\theta'|,2\pi-|\theta-\theta'|)$.
\end{definition}

Note that 

\begin{equation*}
\E[E^{(2)}]^2=I_{N,l,1}^{(1)}(\epsilon)+I_{N,l,1}^{(2)}(\epsilon)-2I_{N,l,M,2}+I_{N,l,M,3}.
\end{equation*}

We now have all the required notation and results to understand the asymptotics of the different $I$-terms which will eventually show that for $\gamma$ close enough to $\beta$, $E^{(2)}$ tends to zero in $L^2(\Prob)$ when we first let $N\to\infty$, and then $M\to\infty$ for fixed large $l\in \Z_+$. We begin with $I_{N,l,M,3}$ as it is the simplest.

\begin{lemma}\label{le:I3}
For fixed $l,M\in \Z_+$ satisfying $l\leq \lfloor \log_2 M\rfloor$

\begin{align*}
\lim_{N\to\infty}&I_{N,l,M,3}=\int_{\T^2}\varphi(e^{i\theta})\varphi(e^{i\theta'})\Prob\left(B_{l,M}^G(\theta;Y)\cap B_{l,M}^G(\theta';Y)\right)
e^{\frac{\beta^2}{2}\sum_{j=1}^M \frac{\cos j(\theta-\theta')}{j}}\frac{d\theta}{2\pi}\frac{d\theta'}{2\pi},
\end{align*}

\noindent where 

\begin{equation}\label{eq:yseq}
Y_{2^k}=Y_{2^k}(\theta,\theta')=(\gamma-\beta)\E X^{(2^k)}(0)^2-\beta \E X^{(2^k)}(\theta)X^{(2^k)}(\theta').
\end{equation}
\end{lemma}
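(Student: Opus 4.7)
My plan is to separate the two $N$-dependent pieces inside $I_{N,l,M,3}$: the normalizing ratio of partition functions, and the biased probability of the barrier events. The first is handled directly by the second identity of Theorem \ref{th:dik}, which gives uniform convergence on $\T^2$ of
\begin{equation*}
\frac{\E e^{\beta X_{N,M}(\theta)+\beta X_{N,M}(\theta')}}{\E e^{\beta X_{N,M}(\theta)}\E e^{\beta X_{N,M}(\theta')}}\longrightarrow e^{\frac{\beta^2}{2}\sum_{j=1}^M\frac{\cos j(\theta-\theta')}{j}}.
\end{equation*}
In particular, the ratio is bounded uniformly in $N$ large. So it suffices to prove that the biased barrier probability $\Q_{N,M,3}^{(\theta,\theta')}(B_{N,l,M}(\theta)\cap B_{N,l,M}(\theta'))$ converges to $\Prob(B_{l,M}^G(\theta;Y)\cap B_{l,M}^G(\theta';Y))$ for almost every $(\theta,\theta')$, and then to invoke bounded convergence.

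For the biased probability I fix $\theta\neq\theta'$, set $L=2^{\lfloor\log_2 M\rfloor}\leq M$, and apply Proposition \ref{pr:2pfourier} (third identity) together with Remark \ref{rem:bias}. This yields that, under $\Q_{N,M,3}^{(\theta,\theta')}$, the vector $(\Tr U_N^j/\sqrt{j})_{j=1}^L$ converges in law to a Gaussian vector with mean $-\beta(e^{ij\theta}+e^{ij\theta'})/(2\sqrt{j})$ and standard complex Gaussian fluctuations $Z_j$. Since each $X_{N,2^k}(\phi)$ is a real continuous linear functional of this vector, joint convergence of the finite family $(X_{N,2^k}(\theta),X_{N,2^k}(\theta'))_{l\le k\le\lfloor\log_2 M\rfloor}$ to the corresponding shifted Gaussian family follows. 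A short trigonometric identity identifies the deterministic shift at $\phi\in\{\theta,\theta'\}$:
\begin{equation*}
-\tfrac{1}{2}\sum_{j=1}^{2^k}\tfrac{1}{j}\Bigl[-\tfrac{\beta}{2}e^{-ij\phi}(e^{ij\theta}+e^{ij\theta'})+\mathrm{c.c.}\Bigr]=\beta\Sigma^{(2^k)}(\phi-\theta)+\beta\Sigma^{(2^k)}(\phi-\theta').
\end{equation*}

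Combining this shift with $\E X_{N,2^k}(\phi)^2=\E X^{(2^k)}(0)^2=2\Sigma^{(2^k)}(0)$ (which holds as soon as $N\ge 2^k$), the event $\{X_{N,2^k}(\theta)\leq\gamma\E X_{N,2^k}(\theta)^2\}$ becomes, in the $N\to\infty$ limit,
\begin{equation*}
\{X^{(2^k)}(\theta)\leq(\gamma-\beta)\E X^{(2^k)}(0)^2-\beta\E X^{(2^k)}(\theta)X^{(2^k)}(\theta')\}=\{X^{(2^k)}(\theta)\le Y_{2^k}(\theta,\theta')\},
\end{equation*}
and the analogue for $\phi=\theta'$ follows by the same computation (exchanging the roles of $\theta$ and $\theta'$). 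Since the limiting barriers are finite intersections of half-spaces whose boundaries carry no mass under the non-degenerate Gaussian limit, the Portmanteau theorem yields the desired pointwise convergence of the biased probabilities for all $\theta\ne\theta'$. The main step requiring care is this shift computation and its match with \eqref{eq:yseq}; once that is in hand, bounded convergence — using the uniform bound from Theorem \ref{th:dik}, the fact that barrier probabilities lie in $[0,1]$, and $\varphi\in C(\T,[0,\infty))$ — closes the argument, with the diagonal $\{\theta=\theta'\}$ being a null set.
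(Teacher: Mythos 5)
Your argument is essentially the paper's own: uniform convergence from Theorem \ref{th:dik} lets you pass the $N\to\infty$ limit under the integral, Proposition \ref{pr:2pfourier} together with Remark \ref{rem:bias} gives the shifted Gaussian limit of the truncated fields under $\Q_{N,M,3}^{(\theta,\theta')}$, and the Portmanteau theorem (the barrier sets being continuity sets of the limiting Gaussian law) closes the argument. Two harmless slips worth noting: $\E X_{N,2^k}(\phi)^2=\E X^{(2^k)}(0)^2=\Sigma^{(2^k)}(0)$ rather than $2\Sigma^{(2^k)}(0)$ (recall $\Sigma^{(M)}(0)=\sum_{j=1}^M\tfrac{1}{2j}$), and for $\Q_{N,M,3}^{(\theta,\theta')}$ you want the first equality in Proposition \ref{pr:2pfourier} (both fields truncated), not the third --- neither affects your displayed conclusion, which matches \eqref{eq:yseq}.
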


\begin{proof}

 By the dominated convergence theorem and Theorem \ref{th:dik}, we see that (provided the limit below exists -- which we show shortly)

\begin{equation}
\lim_{N\to\infty}I_{N,M,3}=\int_{\T^2}\varphi(e^{i\theta})\varphi(e^{i\theta'})\left[
\lim_{N\to\infty}\Q_{N,M,3}^{(\theta,\theta')}(B_{N,l,M}(\theta)\cap B_{N,l,M}(\theta'))\right]e^{\frac{\beta^2}{2}\sum_{j=1}^M \frac{\cos j(\theta-\theta')}{j}}\frac{d\theta}{2\pi}\frac{d\theta'}{2\pi}.
\end{equation}

As, for a fixed $\theta$ and for $2^k\leq M$, $X_{N,2^k}(\theta)$ is a continuous function of $(\Tr U_N^j/\sqrt{j})_{j=1}^M$, we see from Proposition \ref{pr:2pfourier} (and in particular Remark \ref{rem:bias}) that under $\Q_{N,M,3}^{(\theta,\theta')}$, as $N\to\infty$

\begin{align*}
X_{N,2^k}(\theta)&\stackrel{d}{\to} \sum_{j=1}^{2^k}\frac{1}{2\sqrt{j}}(Z_j e^{-ij\theta}+Z_j^* e^{ij\theta})+\frac{\beta}{2}\sum_{j=1}^{2^k} \frac{1+\cos j(\theta-\theta')}{j}\\
&=X^{(2^k)}(\theta)+\beta \Sigma^{(2^k)}(0)+\beta \Sigma^{(2^k)}(\theta-\theta'). 
\end{align*}

In fact, this extends to joint convergence for all $k$ and fixed $\theta,\theta'$: under $\Q_{N,M,3}^{(\theta,\theta')}$, as $N\to\infty$ 

\begin{align*}
&(X_{N,2^k}(\theta),X_{N,2^j}(\theta'))_{k,j=1}^{\lfloor \log_2 M\rfloor}\\
&\stackrel{d}{\to}\left(X^{(2^k)}(\theta)+\beta \Sigma^{(2^k)}(0)+\beta \Sigma^{(2^k)}(\theta-\theta'),X^{(2^j)}(\theta')+\beta \Sigma^{(2^j)}(0)+\beta \Sigma^{(2^j)}(\theta-\theta')\right)_{k,j=1}^{\lfloor \log_2 M\rfloor},
\end{align*}

\noindent where we used the notation of Definition \ref{def:gaussian} for the covariance of $X^{(2^k)}$. Again using the fact that $\E \mathrm{Tr}U_N^j \Tr U_N^{-k}=\min(|j|,N)\delta_{j,k},$ we see that $\E X_{N,2^k}(\theta)^2=\Sigma^{(2^k)}(0)$. Thus by the Portmanteau theorem, we find that for fixed $\theta,\theta'\in[0,2\pi]$

\begin{align*}
\lim_{N\to\infty}&\Q_{N,M,3}^{(\theta,\theta')}(B_{N,l,M}(\theta)\cap B_{N,l,M}(\theta'))=\Prob\left(B_{l,M}^G(\theta;Y)\cap B_{l,M}^G(\theta';Y)\right)
\end{align*}

\noindent with $Y$ as in \eqref{eq:yseq}. The application of the Portmanteau theorem is justified by the fact that this is a continuity set for the joint law of $(X^{(2^k)}(\theta),X^{(2^j)}(\theta'))_{k,j=1}^{\lfloor \log_2 M\rfloor}$ since we have Gaussian random variables so lower dimensional sets have zero measure. Thus we are done.
\end{proof}

Consider now $I_{N,l,M,2}$. 

\begin{lemma}\label{le:I2}
For $l,M$ independent of $N$, satisfying $l\leq \lfloor \log_2 M\rfloor$

\begin{align*}
I_{N,l,M,2}=\int_{\T^2}\varphi(e^{i\theta})\varphi(e^{i\theta'})\Prob\left(B_{l,M}^G(\theta;Y)\cap B_{l,M}^G(\theta';Y)\right)
e^{\frac{\beta^2}{2}\sum_{j=1}^M \frac{\cos j(\theta-\theta')}{j}}\frac{d\theta}{2\pi}\frac{d\theta'}{2\pi}+\mathcal{E}_{l,N,M},
\end{align*}

\noindent where again

\begin{equation}
Y_{2^k}=Y_{2^k}(\theta,\theta')=(\gamma-\beta)\E X^{(2^k)}(0)^2-\beta \E X^{(2^k)}(\theta)X^{(2^k)}(\theta')
\end{equation}

\noindent and for a fixed $l\in\Z_+$

\begin{equation*}
\lim_{M\to\infty}\limsup_{N\to\infty}\left|\mathcal{E}_{l,N,M}\right|=0.
\end{equation*}

\begin{proof}
First of all, we observe that by Theorem \ref{th:dik}, we can use the dominated convergence theorem to take the $N\to\infty$ limit inside the integrals that we consider below.

Let us now begin by writing 

\begin{align}\label{eq:splitting}
\Q_{N,M,2}^{(\theta,\theta')}(B_{N,l}(\theta)\cap B_{N,l,M}(\theta'))&=\Q_{N,M,2}^{(\theta,\theta')}(B_{N,l,M}(\theta)\cap B_{N,l,M}(\theta'))\\
\notag &\quad -\Q_{N,M,2}^{(\theta,\theta')}(B_{N,\lfloor \log_2 M\rfloor+1}(\theta)^\mathsf{c}\cap B_{N,l,M}(\theta)\cap B_{N,l,M}(\theta')),
\end{align}

\noindent where the superscript $\mathsf{c}$ denotes the complement of a set.

The $B_{N,l,M}(\theta)\cap B_{N,l,M}(\theta')$-term can be treated exactly as in the proof of Lemma \ref{le:I3} (though now using different parts of Proposition \ref{pr:2pfourier} and Theorem \ref{th:dik}) to conclude that as $N\to\infty$, this converges to the main term in the statement of this lemma. 

Let us thus focus on the error term here. We'll again use simple bounds on the indicator functions here. We bound the indicator of the event of $(X_{N,2^j}(\theta),X_{N,2^k}(\theta'))_{j,k}$ staying under the barrier either by an indicator of an event where $X_{N,2^j}(\theta)$ is below the barrier at a very special $j$ depending $M,\theta,$ and $\theta'$, or then simply by one if this special $j$ is too small. More precisely, we define

\begin{equation}\label{eq:jtt}
j(\theta,\theta')=j_M(\theta,\theta')=\min\left(\left\lfloor \log_{2}^+ d(\theta,\theta')^{-1}\right\rfloor, \lfloor \log_2 M\rfloor \right),
\end{equation}

\noindent where $\log_{2}^+ x=\max(0,\log_2 x)$. We now use the following bound (which follows from the inequality $\mathbf{1}\lbrace x<0\rbrace\leq e^{-\gamma x}$): 

\begin{equation}\label{eq:charbound}
\mathbf{1}\lbrace B_{N,l,M}(\theta)\cap B_{N,l,M}(\theta')\rbrace\leq \begin{cases}
e^{-\gamma \left(X_{N,2^{j(\theta,\theta')}}(\theta)-\gamma \E X_{N,2^{j(\theta,\theta')}}(\theta)^2\right)}, &   d(\theta,\theta')\leq 2^{-l}\\
1, & d(\theta,\theta')>2^{-l}
\end{cases}.
\end{equation}

\noindent while we write again 

\begin{equation*}
\mathbf{1}_{B_{N,\lfloor \log_2 M\rfloor+1}(\theta)^\mathsf{c}}\leq \sum_{k=\lfloor \log_2 M\rfloor+1}^{k_N} e^{(\gamma-\beta)(X_{N,2^k}(\theta)-\gamma \E X_{N,2^k}(\theta)^2)}.
\end{equation*}

With these bounds, using the definition of $\Q_{N,M,2}^{(\theta,\theta')}$ and two applications of Proposition \ref{pr:testimate} -- or in fact Corollary \ref{cor:estimates}\footnote{To study the numerator, we apply the proposition to the case $\beta_1=\beta$, $\beta_2=0$, $\alpha_1=-\gamma$, $\alpha_2=\gamma-\beta$,  and $\mathcal{T}(z)=-\frac{\beta}{2}\sum_{k=1}^M \frac{1}{k}(e^{-ik\theta'}z^k+e^{ik\theta'}z^{-k})$ -- which corresponds to an application of \eqref{eq:1ptest} in Corollary \ref{cor:estimates}. Similarly to study the denominator, we take $\beta_1=\beta$, $\beta_2=\alpha_1=\alpha_2=0$, and $\mathcal{T}(z)=-\frac{\beta}{2}\sum_{k=1}^M \frac{1}{k}(e^{-ik\theta'}z^k+e^{ik\theta'}z^{-k})$, which also corresponds to \eqref{eq:1ptest} of Corollary \ref{cor:estimates} -- note that the coefficients of $\mathcal{T}$ are uniformly bounded in $\theta'$.} -- we see that for $d(\theta,\theta')\leq 2^{-l}$, as $N\to\infty$

\begin{align}\label{eq:I2eq}
&\Q_{N,M,2}^{(\theta,\theta')}(B_{N,\lfloor \log_2 M\rfloor+1}(\theta)^\mathsf{c}\cap B_{N,l,M}(\theta)\cap B_{N,l,M}(\theta'))\\
\notag &\leq \sum_{k=\lfloor \log_2 M\rfloor+1}^{k_N}\frac{\E e^{(\gamma-\beta)(X_{N,2^k}(\theta)-\gamma \E X_{N,2^k}(\theta)^2)-\gamma \left(X_{N,2^{j(\theta,\theta')}}(\theta)-\gamma \E X_{N,2^{j(\theta,\theta')}}(\theta)^2\right)+\beta X_N(\theta)+\beta X_{N,M}(\theta')}}{\E e^{\beta X_N(\theta)+\beta X_{N,M}(\theta')}}\\
\notag &=(1+\mathit{o}(1))e^{ \sum_{p=1}^{2^{j(\theta,\theta')}}\frac{\frac{\gamma^2}{2}-\gamma\beta+\gamma^2-\gamma(\gamma-\beta)}{2p}-\frac{\gamma\beta}{2}\sum_{p=1}^{2^{j(\theta,\theta')}}\frac{\cos p(\theta-\theta')}{p}+\frac{(\gamma-\beta)\beta}{2}\sum_{p=1}^{M}\frac{\cos p(\theta-\theta')}{p}}\\
\notag &\quad \times\sum_{k=\lfloor \log_2 M\rfloor+1}^{k_N}e^{\sum_{p=1}^{2^k}\frac{\frac{(\gamma-\beta)^2}{2}-(\gamma-\beta)\gamma+(\gamma-\beta)\beta}{2p}}.
\end{align}

Noting that 

\begin{equation*}
\sum_{p=1}^{2^{j(\theta,\theta')}}\frac{\cos p(\theta-\theta')}{p}=\sum_{p=1}^{2^{j(\theta,\theta')}}\frac{1}{p}+\mathcal{O}(1),
\end{equation*}

\noindent uniformly in $\theta,\theta'$ and the sum over $k$ can be estimated as in the proof of Lemma \ref{le:E1}, we thus find (using Lemma \ref{le:logsum}) that for some constant $C>0$ independent of $N,M,l,\theta,\theta'$

\begin{align*}
\Q_{N,M,2}^{(\theta,\theta')}&(B_{N,\lfloor \log_2 M\rfloor+1}(\theta)^\mathsf{c}\cap B_{N,l,M}(\theta)\cap B_{N,l,M}(\theta'))\leq C M^{-\frac{(\gamma-\beta)^2}{4}}\left[\min(M,d(\theta,\theta')^{-1})\right]^{\frac{\gamma^2}{4}-\frac{\beta^2}{2}}.
\end{align*}

\noindent  We now see from Theorem \ref{th:dik} and again Lemma \ref{le:logsum} that there exists some constant $C>0$ (again possibly different from the previous ones) not depending on $N,M,\theta,\theta'$ such that 

\begin{align*}
\Q_{N,M,2}^{(\theta,\theta')}&(B_{N,\lfloor \log_2 M\rfloor+1}(\theta)^\mathsf{c}\cap B_{N,l,M}(\theta)\cap B_{N,l,M}(\theta'))\frac{\E e^{\beta X_N(\theta)+\beta X_{N,M}(\theta')}}{\E e^{\beta X_N(\theta)}\E e^{\beta X_{N,M}(\theta')}}\\
&\leq C M^{-\frac{(\gamma-\beta)^2}{4}}\left[\min(M,d(\theta,\theta')^{-1})\right]^{\frac{\gamma^2}{4}}\leq C M^{-\frac{(\gamma-\beta)^2}{4}}d(\theta,\theta')^{-\frac{\gamma^2}{4}}.
\end{align*}

As $\gamma<2$, this is integrable over $\T^2$ so we see that 

\begin{align}\label{eq:I2lim1}
\lim_{M\to\infty}\limsup_{N\to\infty}\int_{d(\theta,\theta')\leq 2^{-l}}&\varphi(e^{i\theta})\varphi(e^{i\theta'})\Q_{N,M,2}^{(\theta,\theta')}(B_{N,\lfloor \log_2 M\rfloor+1}(\theta)^\mathsf{c}\cap B_{N,l,M}(\theta)\cap B_{N,l,M}(\theta'))\\
\notag &\times \frac{\E e^{\beta X_N(\theta)+\beta X_{N,M}(\theta')}}{\E e^{\beta X_N(\theta)}\E e^{\beta X_{N,M}(\theta')}}=0.
\end{align}

Let us now consider the $d(\theta,\theta')\geq 2^{-l}$ case. As indicated above (again using a suitable version of Proposition \ref{pr:testimate} or Corollary \ref{cor:estimates}), we approximate here simply by 

\begin{align*}
\Q_{N,M,2}^{(\theta,\theta')}&(B_{N,\lfloor \log_2 M\rfloor+1}(\theta)^\mathsf{c}\cap B_{N,l,M}(\theta)\cap B_{N,l,M}(\theta'))\\
&\leq \sum_{k=\lfloor \log_2 M\rfloor+1}^{k_N}\frac{\E e^{(\gamma-\beta)\left(X_{N,2^k}(\theta)-\gamma \E X_{N,2^k}(\theta)^2\right)+\beta X_N(\theta)+\beta X_{N,M}(\theta')}}{\E e^{\beta X_{N}(\theta)+\beta X_{N,M}(\theta')}}\\
&\lesssim \sum_{k=\lfloor \log_2 M\rfloor +1}^{k_N} 2^{\frac{(\gamma-\beta)^2}{4}k-\frac{\gamma(\gamma-\beta)}{2}k+\frac{\beta(\gamma-\beta)}{2}k} \left[\min(d(\theta,\theta')^{-1},M)\right]^{\frac{\beta(\gamma-\beta)}{2}}\\
&\lesssim M^{-\frac{(\gamma-\beta)^2}{4}}\left[\min(d(\theta,\theta')^{-1},M)\right]^{\frac{\beta(\gamma-\beta)}{2}}.
\end{align*}

Thus by Theorem \ref{th:dik},

\begin{align*}
\Q_{N,M,2}^{(\theta,\theta')}&(B_{N,\lfloor \log_2 M\rfloor+1}(\theta)^\mathsf{c}\cap B_{N,l,M}(\theta)\cap B_{N,l,M}(\theta'))\frac{\E e^{\beta X_N(\theta)+\beta X_{N,M}(\theta')}}{\E e^{\beta X_N(\theta)}\E e^{\beta X_{N,M}(\theta')}}\lesssim \frac{M^{-\frac{(\gamma-\beta)^2}{4}}}{d(\theta,\theta')^{\frac{\gamma\beta}{2}}}.
\end{align*}

As $l$ is fixed, the fact that this is not an integrable singularity is not problematic and we find for any fixed $l\in \Z_+$

\begin{align*}
\lim_{M\to\infty}\limsup_{N\to\infty}\int_{d(\theta,\theta')> 2^{-l}}&\varphi(e^{i\theta})\varphi(e^{i\theta'})\Q_{N,M,2}^{(\theta,\theta')}(B_{N,\lfloor \log_2 M\rfloor+1}(\theta)^\mathsf{c}\cap B_{N,l,M}(\theta)\cap B_{N,l,M}(\theta'))\\
&\times \frac{\E e^{\beta X_N(\theta)+\beta X_{N,M}(\theta')}}{\E e^{\beta X_N(\theta)}\E e^{\beta X_{N,M}(\theta')}}=0.
\end{align*}

This along with \eqref{eq:I2lim1}, \eqref{eq:splitting}, and the discussion after \eqref{eq:splitting} yield the claim.

\end{proof}
\end{lemma}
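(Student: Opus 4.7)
The plan is to reduce to the situation of Lemma \ref{le:I3} up to a controllable remainder by exploiting the set identity
\[ B_{N,l}(\theta) = B_{N,l,M}(\theta) \cap B_{N,\lfloor \log_2 M\rfloor+1}(\theta), \]
where the second factor carries exactly the barrier constraints at scales $k \in \{\lfloor \log_2 M\rfloor+1,\ldots,k_N\}$ that are not enforced by $B_{N,l,M}(\theta)$. Writing $\mathbf{1}\{B_{N,l}(\theta)\cap B_{N,l,M}(\theta')\}$ as $\mathbf{1}\{B_{N,l,M}(\theta)\cap B_{N,l,M}(\theta')\}$ minus $\mathbf{1}\{B_{N,\lfloor\log_2 M\rfloor+1}(\theta)^{\mathsf{c}}\cap B_{N,l,M}(\theta)\cap B_{N,l,M}(\theta')\}$ splits $I_{N,l,M,2}$ into a main part and an error part $\mathcal{E}_{l,N,M}$. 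The main part I would handle just as in Lemma \ref{le:I3}: Proposition \ref{pr:2pfourier} (in the form of Remark \ref{rem:bias}) yields joint convergence in distribution of $(X_{N,2^k}(\theta),X_{N,2^j}(\theta'))_{k,j\leq\lfloor\log_2 M\rfloor}$ under $\Q_{N,M,2}^{(\theta,\theta')}$ to the shifted Gaussian process $(X^{(2^k)}(\theta)+\beta\Sigma^{(2^k)}(0)+\beta\Sigma^{(2^k)}(\theta-\theta'),\ldots)$; Portmanteau at the Gaussian continuity set identifies the limiting barrier probability with $\Prob(B^G_{l,M}(\theta;Y)\cap B^G_{l,M}(\theta';Y))$ for $Y$ as in \eqref{eq:yseq}; and Theorem \ref{th:dik} pins the normalizing ratio down to $\exp(\frac{\beta^2}{2}\sum_{j=1}^M \cos j(\theta-\theta')/j)$ while simultaneously supplying an $N$-uniform integrable bound that legitimizes dominated convergence.

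For the error part I would combine two exponential-Markov bounds: the union bound
\[ \mathbf{1}\{B_{N,\lfloor\log_2 M\rfloor+1}(\theta)^{\mathsf{c}}\} \leq \sum_{k=\lfloor\log_2 M\rfloor+1}^{k_N} e^{(\gamma-\beta)(X_{N,2^k}(\theta)-\gamma\E X_{N,2^k}(\theta)^2)}, \]
and, only in the small-$d$ region $d(\theta,\theta')\leq 2^{-l}$, the extra tightening factor $\exp(-\gamma(X_{N,2^{j(\theta,\theta')}}(\theta)-\gamma\E X_{N,2^{j(\theta,\theta')}}(\theta)^2))$ with $j(\theta,\theta')=\min(\lfloor\log_2^+ d(\theta,\theta')^{-1}\rfloor,\lfloor\log_2 M\rfloor)$, which manufactures decay precisely at the scale where the covariance of $X_N(\theta)$ and $X_{N,M}(\theta')$ becomes non-negligible; on $d(\theta,\theta')>2^{-l}$ I would simply bound $\mathbf{1}\{B_{N,l,M}(\theta)\cap B_{N,l,M}(\theta')\}\leq 1$. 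Substituting these into $\Q_{N,M,2}^{(\theta,\theta')}(\cdots)\cdot\frac{\E e^{\beta X_N(\theta)+\beta X_{N,M}(\theta')}}{\E e^{\beta X_N(\theta)}\E e^{\beta X_{N,M}(\theta')}}$, I would evaluate the numerators and denominators via Corollary \ref{cor:estimates} (the $\mathcal{T}=0$ specializations of Proposition \ref{pr:testimate}), the normalizing ratio via Theorem \ref{th:dik}, and any sum $\sum \cos j(\theta-\theta')/j$ via Lemma \ref{le:logsum}.

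I expect the main obstacle to be the bookkeeping of the resulting exponents. A short calculation using Corollary \ref{cor:estimates} shows that the $k$-th summand in the Markov union bound contributes a net factor $2^{k[(\gamma-\beta)^2/4-\gamma(\gamma-\beta)/2+\beta(\gamma-\beta)/2]}=2^{-k(\gamma-\beta)^2/4}$, summable over $k>\lfloor\log_2 M\rfloor$ and producing the global gain $M^{-(\gamma-\beta)^2/4}$. Combined with the diagonal tightening and Theorem \ref{th:dik}, the pointwise bound on the integrand of the error part becomes $O(M^{-(\gamma-\beta)^2/4}\,d(\theta,\theta')^{-\gamma^2/4})$ for $d(\theta,\theta')\leq 2^{-l}$ --- integrable over $\T^2$ precisely because $\gamma<2$ --- and $O(M^{-(\gamma-\beta)^2/4}\,d(\theta,\theta')^{-\gamma\beta/2})$ for $d(\theta,\theta')>2^{-l}$, where the singularity is harmless since $l$ is fixed. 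In both regimes the prefactor $M^{-(\gamma-\beta)^2/4}$ drives the error to zero after sending first $N\to\infty$ and then $M\to\infty$, yielding the required vanishing of $\mathcal{E}_{l,N,M}$.
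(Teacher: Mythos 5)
Your proposal is correct and follows essentially the same route as the paper's proof: the same set-theoretic decomposition of the barrier event, the same Markov/exponential union bound supplemented by the diagonal tightening factor at scale $j(\theta,\theta')$, the same split into $d(\theta,\theta')\leq 2^{-l}$ and $d(\theta,\theta')>2^{-l}$, and the same exponent bookkeeping via Corollary \ref{cor:estimates}, Theorem \ref{th:dik}, and Lemma \ref{le:logsum} producing the decaying prefactor $M^{-(\gamma-\beta)^2/4}$ against integrable singularities $d(\theta,\theta')^{-\gamma^2/4}$ and $d(\theta,\theta')^{-\gamma\beta/2}$.
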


Consider finally $I_{N,l,1}$. Our claim about this is the following one.

\begin{lemma}\label{le:I1}
There exists a $\gamma=\gamma(\beta)$ $($but independent of $N$ or $M)$ such that for $l,M$ independent of $N$ and for $\delta>0$ small enough $($but independent of $N$, $M$, and $l)$

\begin{align*}
I_{N,l,1}=\int_{\T^2}\varphi(e^{i\theta})\varphi(e^{i\theta'})\Prob\left(B_{l,M}^G(\theta;Y)\cap B_{l,M}^G(\theta';Y)\right)
e^{\frac{\beta^2}{2}\sum_{j=1}^M \frac{\cos j(\theta-\theta')}{j}}\frac{d\theta}{2\pi}\frac{d\theta'}{2\pi}+\mathcal{E}_{l,N,M}',
\end{align*}

\noindent where again

\begin{equation}
Y_{2^k}=Y_{2^k}(\theta,\theta')=(\gamma-\beta)\E X^{(2^k)}(0)^2-\beta \E X^{(2^k)}(\theta)X^{(2^k)}(\theta')
\end{equation}

\noindent and for all fixed $l\in \Z_+$

\begin{equation*}
\lim_{M\to\infty}\limsup_{N\to\infty}|\mathcal{E}_{l,N,M}'|=0.
\end{equation*}
\end{lemma}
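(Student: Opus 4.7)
The proof will mirror the scheme of Lemma~\ref{le:I2}, but must be adapted to the fact that the biased measure $\Q_{N,1}^{(\theta,\theta')}$ is defined by two unrestricted copies of $X_N$, so the covariance ratio $\E e^{\beta X_N(\theta)+\beta X_N(\theta')}/(\E e^{\beta X_N(\theta)}\E e^{\beta X_N(\theta')})$ is singular on the diagonal; only Theorem~\ref{th:ck}, not the uniform Theorem~\ref{th:dik}, is available when $d(\theta,\theta')$ is small. Accordingly, the plan is to split the integration region at a fixed auxiliary scale $\epsilon>0$, treat the bulk $d(\theta,\theta')\geq\epsilon$ by the Lemma~\ref{le:I2}/Lemma~\ref{le:I3} decomposition, and control both $I_{N,l,1}$ and the near-diagonal piece of the stated main term separately by a direct barrier estimate that uses the full event $B_{N,l}$ (going all the way up to $k_N$), not merely $B_{N,l,M}$. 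The three limits are then taken in the order $N\to\infty$, $M\to\infty$, $\epsilon\to 0$.

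On $d(\theta,\theta')\geq\epsilon$ I would write
\begin{equation*}
\mathbf{1}_{B_{N,l}(\theta)\cap B_{N,l}(\theta')}=\mathbf{1}_{B_{N,l,M}(\theta)\cap B_{N,l,M}(\theta')}-R_{N,M}(\theta,\theta'),
\end{equation*}
where $R_{N,M}\geq 0$ is dominated (by inclusion--exclusion) by $\mathbf{1}_{B_{N,\lfloor\log_2 M\rfloor+1}(\theta)^{\mathsf{c}}}+\mathbf{1}_{B_{N,\lfloor\log_2 M\rfloor+1}(\theta')^{\mathsf{c}}}$. The main piece converges as $N\to\infty$ to the restriction of the stated main term to $d(\theta,\theta')\geq\epsilon$, by Proposition~\ref{pr:2pfourier} together with Remark~\ref{rem:bias} (joint convergence of the truncated fields under the bias), Theorem~\ref{th:dik} (uniform convergence of the covariance ratio on this set), and the Portmanteau theorem, exactly as in the proof of Lemma~\ref{le:I3}. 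The $R_{N,M}$-piece is estimated by a Chernoff union bound over scales $k\in\{\lfloor\log_2 M\rfloor+1,\ldots,k_N\}$: using $\mathbf{1}_{X_{N,2^k}(\theta)>\gamma\Sigma^{(2^k)}(0)}\leq e^{(\gamma-\beta)(X_{N,2^k}(\theta)-\gamma\Sigma^{(2^k)}(0))}$, Corollary~\ref{cor:estimates} produces a summand $\mathcal{O}(2^{-(\gamma-\beta)^2 k/4})$ after the cancellations familiar from Lemma~\ref{le:I2}; the geometric sum contributes $\mathcal{O}(M^{-(\gamma-\beta)^2/4})$, while Theorem~\ref{th:dik} supplies an integrable majorant $|e^{i\theta}-e^{i\theta'}|^{-\beta^2/2}$ on $d\geq\epsilon$, so the $R_{N,M}$ contribution vanishes as $M\to\infty$ for any fixed $\epsilon$.

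The main obstacle is the near-diagonal regime $d(\theta,\theta')<\epsilon$: there the Lemma~\ref{le:I2} decomposition breaks down, because under $\Q_{N,1}^{(\theta,\theta')}$ both $X_{N,2^k}(\theta)$ and $X_{N,2^k}(\theta')$ acquire a mean shift of size $2\beta\,\Sigma^{(2^k)}(0)$, making the event $B_{N,\lfloor\log_2 M\rfloor+1}^{\mathsf{c}}$ typical rather than rare. Instead I would exploit the full barrier $B_{N,l}$ itself via the Chernoff bound
\begin{equation*}
\mathbf{1}_{B_{N,l}(\theta)}\leq \exp\bigl\{-\eta\bigl(X_{N,2^{j^*}}(\theta)-\gamma\Sigma^{(2^{j^*})}(0)\bigr)\bigr\},\qquad \eta=2\beta-\gamma>0,
\end{equation*}
with $j^*=\min(\lfloor\log_2^+ d(\theta,\theta')^{-1}\rfloor, k_N)$. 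Corollary~\ref{cor:estimates} (applied with $\alpha_1=-\eta$, $\alpha_2=0$, $\beta_1=\beta_2=\beta$) combined with Lemma~\ref{le:logsum} (so that $\sum_{k=1}^{2^{j^*}}\cos k(\theta-\theta')/k=\sum_{k=1}^{2^{j^*}}1/k+\mathcal{O}(1)$ for this $j^*$) and the optimality of $\eta=2\beta-\gamma$ yield $\Q_{N,1}^{(\theta,\theta')}(B_{N,l}(\theta))\lesssim \max(d(\theta,\theta'),N^{\delta-1})^{(2\beta-\gamma)^2/4}$. On $N^{\delta-1}\leq d(\theta,\theta')<\epsilon$, Theorem~\ref{th:ck}, \eqref{eq:CK1}, produces an integrand bounded by $d(\theta,\theta')^{(2\beta-\gamma)^2/4-\beta^2/2}$, which integrates to $\mathcal{O}(\epsilon^{1+(2\beta-\gamma)^2/4-\beta^2/2})$; the exponent is positive as $\gamma\downarrow\beta$ precisely when $\beta^2/4<1$, i.e.\ when $\beta<2$. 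On the remaining sliver $d(\theta,\theta')<N^{\delta-1}$, Theorem~\ref{th:ck}, \eqref{eq:CK2}, combined with the Chernoff bound gives $\mathcal{O}(N^{\beta^2/2-1-(1-\delta)(2\beta-\gamma)^2/4}\log N)$, which vanishes as $N\to\infty$ under the same constraint. The analogous purely Gaussian computation---a Chernoff bound on $\mathbf{1}_{B_{l,M}^G(\theta;Y)}$ using the explicit covariance of $X^{(2^{j^*})}$---yields the same $\mathcal{O}(\epsilon^{c})$ bound for the near-diagonal piece of the stated main term, uniformly in large $M$. Assembling these estimates and letting, in order, $N\to\infty$, $M\to\infty$, and $\epsilon\to 0$ (with $\gamma$ chosen close enough to $\beta$ and $\delta>0$ small enough) finishes the proof.
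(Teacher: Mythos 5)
Your proposal is correct and follows the same essential strategy as the paper's: handle the bulk via the $B_{N,l,M}$ inclusion--exclusion decomposition of Lemma~\ref{le:I2}, and treat the near-diagonal regime by a direct Chernoff/barrier bound at the scale $\widetilde{j}(\theta,\theta')$ (or your $j^*$), combined with Theorem~\ref{th:ck} for the covariance ratio. The two genuine (if modest) differences are: (i) you split at a fixed auxiliary scale $\epsilon$ and append a third limit $\epsilon\to 0$, whereas the paper splits at the scale $1/M$ (which is fixed when $N\to\infty$ is taken first, so Theorem~\ref{th:dik} applies, and the vanishing as $M\to\infty$ comes for free without a separate limit); and (ii) you use the exponent $\eta=2\beta-\gamma$ in the Chernoff bound near the diagonal, with a clean heuristic explanation via the mean shift $\approx 2\beta\Sigma^{(2^{j^*})}(0)$, while the paper simply uses $\gamma$. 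Your exponent is the sharper one, but since $\gamma$ is taken close to $\beta$ both give $\approx -\beta^2/4$ and work for $\beta<2$, so this is an optimization rather than a necessity. One step you gloss over: after $N\to\infty$ on $\{d\geq\epsilon\}$, Theorem~\ref{th:dik} gives the kernel $|e^{i\theta}-e^{i\theta'}|^{-\beta^2/2}$, not the $e^{\frac{\beta^2}{2}\sum_{j\leq M}\cos j(\theta-\theta')/j}$ appearing in the stated main term; reconciling these requires a further $M\to\infty$ dominated-convergence step (pointwise $\sum_{j=1}^M\cos j(\theta-\theta')/j\to -\log|e^{i\theta}-e^{i\theta'}|$, domination uniform on $d\geq\epsilon$), which the paper carries out explicitly around \eqref{eq:gestimate}. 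This is easy to fill but should be stated, as writing "converges to the restriction of the stated main term" after the $N$-limit alone is not quite accurate.
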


\begin{proof}
Let us begin by writing $I_{N,l,1}=I_{N,l,1}^{(1)}(1/M)+I_{N,l,1}^{(2)}(1/M)$ as in the notation of Definition \ref{def:integrals}. We'll show that as $N\to\infty$, $I_{N,l,1}^{(1)}(1/M)$ behaves essentially as our main term while $I_{N,l,1}^{(2)}(1/M)$ tends to zero. Let us begin with the first claim.

We write

\begin{align*}
&\Q_{N,1}^{(\theta,\theta')}\left(B_{N,l}(\theta)\cap B_{N,l}(\theta')\right)\\
&\quad =\Q_{N,1}^{(\theta,\theta')}\left(B_{N,l,M}(\theta)\cap B_{N,l,M}(\theta')\right)-\Q_{N,1}^{(\theta,\theta')}\left(B_{N,l,M}(\theta)\cap B_{N,l,M}(\theta')\cap B_{N,\lfloor \log_2 M\rfloor+1}(\theta)^\mathsf{c}\right)\\
&\qquad -\Q_{N,1}^{(\theta,\theta')}\left(B_{N,l,M}(\theta)\cap B_{N,l,M}(\theta')\cap B_{N,\lfloor \log_2 M\rfloor+1}(\theta')^\mathsf{c}\right)\\
&\qquad +\Q_{N,1}^{(\theta,\theta')}\left(B_{N,l,M}(\theta)\cap B_{N,l,M}(\theta')\cap B_{N,\lfloor \log_2 M\rfloor+1}(\theta)^\mathsf{c}\cap B_{N,\lfloor \log_2 M\rfloor+1}(\theta')^\mathsf{c}\right).
\end{align*}

We wish to show that only the first term is relevant to the asymptotics (where first $N\to\infty$ and then $M\to\infty$) of $I_{N,l,1}^{(1)}(1/M)$. We note that the last term is of course bounded by the absolute value of the second term and by symmetry the second and third term give the same contribution, so it is enough to show that 

\begin{align*}
\lim_{M\to\infty}\limsup_{N\to\infty}&\int_{d(\theta,\theta')\geq 1/M}\varphi(e^{i\theta})\varphi(e^{i\theta'})\Q_{N,1}^{(\theta,\theta')}\left(B_{N,l,M}(\theta)\cap B_{N,l,M}(\theta')\cap B_{N,\lfloor \log_2 M\rfloor+1}(\theta)^\mathsf{c}\right)\\
&\quad \times\frac{\E e^{\beta X_N(\theta)+\beta X_N(\theta')}}{\E e^{\beta X_N(\theta)}\E e^{\beta X_N(\theta')}}\frac{d\theta}{2\pi}\frac{d\theta'}{2\pi}=0.
\end{align*}
This is handled essentially the same way as in the proof of Lemma \ref{le:I2}. The only differences are now that in \eqref{eq:I2eq}, instead of $e^{\sum_{p=1}^M \cos p(\theta-\theta')/p}$ in the exponential, one has $e^{\sum_{p=1}^k \cos p(\theta-\theta')/p}$, where $k\in \lbrace \lfloor \log_2 M\rfloor+1, ...,k_N\rbrace$. Also when applying Proposition \ref{pr:testimate}, we now have $\beta_1=\beta_2=\beta$. From Lemma \ref{le:logsum}, we see that since we are in the domain where $d(\theta,\theta')\geq 1/M$, this differs from $e^{\sum_{p=1}^M \cos p(\theta-\theta')/p}$ only by a uniformly bounded quantity. The second difference is that now 

\begin{equation*}
\lim_{N\to\infty}\frac{\E e^{\beta X_N(\theta)+\beta X_N(\theta')}}{\E e^{\beta X_N(\theta)}\E e^{\beta X_N(\theta')}}=|e^{i\theta}-e^{i\theta'}|^{-\frac{\beta^2}{2}}
\end{equation*}

\noindent instead of $e^{\frac{\beta^2}{2}\sum_{j=1}^M \frac{\cos j(\theta-\theta')}{j}}$, but again Lemma \ref{le:logsum} implies that these two are comparable in the regime $d(\theta,\theta')\geq 1/M$. With these modifications, Theorem \ref{th:dik}, and the dominated convergence theorem, one sees as in the proof of Lemma \ref{le:I1} and Lemma \ref{le:I2} that

\begin{equation*}
I_{N,l,1}^{(1)}(1/M)-\int_{d(\theta,\theta')\geq 1/M}\varphi(e^{i\theta})\varphi(e^{i\theta'})\Prob\left(B_{l,M}^G(\theta;Y)\cap B_{l,M}^G(\theta';Y)\right)
|e^{i\theta}-e^{i\theta'}|^{-\frac{\beta^2}{2}}\frac{d\theta}{2\pi}\frac{d\theta'}{2\pi}
\end{equation*}

\noindent converges to zero when we first let $N\to\infty$ and then $M\to\infty$. Let us next verify that 

\begin{align*}
\lim_{M\to\infty}&\Bigg[\int_{\T^2}\varphi(e^{i\theta})\varphi(e^{i\theta'})\Prob\left(B_{l,M}^G(\theta;Y)\cap B_{l,M}^G(\theta';Y)\right)\\
&\quad \times \left(\mathbf{1}(d(\theta,\theta')\geq 1/M)
|e^{i\theta}-e^{i\theta'}|^{-\frac{\beta^2}{2}}-e^{\frac{\beta^2}{2}\sum_{p=1}^M \frac{\cos p(\theta-\theta')}{p}}\right)\frac{d\theta}{2\pi}\frac{d\theta'}{2\pi}\Bigg]=0.
\end{align*}
If we could make use of the dominated convergence theorem, we would get this by taking the limit under the integral. To see that the dominated convergence theorem can be applied, one can proceed as in the proof of Lemma \ref{le:I2} namely choosing $j(\theta,\theta')$ as in \eqref{eq:jtt} and applying the bound \eqref{eq:charbound}, one finds as in the proof of Lemma \ref{le:I2} (except that one now has purely Gaussian estimates) that for $d(\theta,\theta')\leq 2^{-l}$ ($d(\theta,\theta')\to 0$ is the only place where one could have problems with integrability)

\begin{align}\label{eq:gestimate}
\Prob\left(B_{l,M}^G(\theta;Y)\cap B_{l,M}^G(\theta';Y)\right)&\leq C\left[\min(d(\theta,\theta)^{-1},M)\right]^{-\frac{\beta\gamma}{4}+\frac{3\gamma(\gamma-\beta)}{4}}.
\end{align}

\noindent as $\beta,\gamma<2$ and we can make $\gamma-\beta$ as small as we want, one can easily check that the use of the dominated convergence theorem is justified in a similar way as in the proof of Lemma \ref{le:I2}.

To conclude, we still need to know that $I_{N,l,1}^{(2)}(1/M)$ converges to zero when we first let $N\to\infty$ and then $M\to\infty$. The argument is again similar, though now instead of $j(\theta,\theta')$ from \eqref{eq:jtt}, we define 

\begin{equation}
\widetilde{j}(\theta,\theta')=\min(\lfloor \log_{2}^+d(\theta,\theta')^{-1}\rfloor,k_N).
\end{equation}

One then finds (once again using Proposition \ref{pr:testimate})

\begin{align*}
\Q_{N,1}^{(\theta,\theta')}\left(B_{N,l}(\theta)\cap B_{N,l}(\theta')\right)&\leq \frac{\E e^{-\gamma(X_{N,\widetilde{j}(\theta,\theta')}(\theta)-\gamma \E X_{N,\widetilde{j}(\theta,\theta')}(\theta)^2)+\beta X_N(\theta)+\beta X_N(\theta')}}{\E e^{\beta X_N(\theta)+\beta X_N(\theta')}}\\
&\leq C \left[\min(d(\theta,\theta')^{-1},N^{1-\delta})\right]^{\frac{3\gamma^2-4\beta \gamma}{4}}.
\end{align*}

From Theorem \ref{th:ck} we then conclude (for a large enough $M$), that there exists a constant $C$ independent of $N,M$ such that as $N\to\infty$, 

\begin{align*}
I_{N,l,1}^{(2)}(1/M)&\leq C\int_{N^{\delta-1}\leq d(\theta,\theta')\leq 1/M} d(\theta,\theta')^{\frac{-3\gamma^2+4\gamma\beta-2\beta^2}{4}}\frac{d\theta}{2\pi}\frac{d\theta'}{2\pi}\\
&\quad +N^{(1-\delta)\frac{3\gamma^2-4\gamma\beta}{4}}\int_{d(\theta,\theta')\leq N^{\delta-1}}\frac{\E e^{\beta X_N(\theta)+\beta X_N(\theta')}}{\E e^{\beta X_N(\theta)}\E e^{\beta X_N(\theta')}}\frac{d\theta}{2\pi}\frac{d\theta'}{2\pi}\\
&\leq C\int_{N^{\delta-1}\leq d(\theta,\theta')\leq 1/M} d(\theta,\theta')^{\frac{-3\gamma^2+4\gamma\beta-2\beta^2}{4}}\frac{d\theta}{2\pi}\frac{d\theta'}{2\pi}\\
&\quad +\mathcal{O}\left(\log N N^{(1-\delta)\frac{3\gamma^2-4\gamma\beta}{4}+\frac{\beta^2}{2}-1}\right).
\end{align*}

For $\gamma$ close enough to $\beta$, the singularity in the integral is integrable, so we see that as $M\to\infty$, the integral tends to zero. For the second term, we note that by taking $\gamma$ close enough to $\beta$ and $\delta$ close enough to zero (both independently of $N$), the exponent of $N$ is negative since $\beta<2$. We conclude that indeed $\lim_{M\to\infty}\limsup_{N\to\infty}I_{N,l,1}^{(2)}(1/M)=0$ and we are done.
\end{proof}

We are now in a position to prove Theorem \ref{th:main} (assuming all the relevant exponential estimates).

\begin{proof}[Proof of Theorem \ref{th:main}]
The strategy of the proof is to use the Portmanteau theorem along with some basic stochastic approximation arguments. Let $F\subset \R$ be closed and $\epsilon>0$. Let us write $F^\epsilon=\lbrace x\in \R: d(x,F)\leq \epsilon\rbrace$. As $\int_\T\varphi d\mu_{N,\beta}=G+E_1+E_2$, we have (by first using Markov for a fixed $N$ and then letting $N\to\infty$)

\begin{equation}\label{eq:p1}
\limsup_{N\to\infty}\Prob\left(\int_\T\varphi d\mu_{N,\beta}\in F\right)\leq \limsup_{N\to\infty}\Prob(G\in F^\epsilon)+\frac{2}{\epsilon}\limsup_{N\to\infty}\E |E^{(1)}|+\frac{2}{\epsilon}\limsup_{N\to\infty}\E |E^{(2)}|.
\end{equation}

By basic approximation, one can use the fact that as $N\to\infty$, $(\mathrm{Tr} U_N^j/\sqrt{j})_{j=1}^{M}$ converges in law to $(Z_j)_{j=1}^M$, where $Z_j$ are i.i.d. standard complex Gaussians, to deduce that when we let $N\to\infty$, $G$ converges in law to 

\begin{align*}
\widetilde{G}_{M,l}&=\int_0^{2\pi}\varphi(e^{i\theta})\mathbf{1}\left\lbrace X^{(2^k)}(\theta)\leq \gamma \E X^{(2^k)}(\theta)^2, \ \forall k\in\lbrace l,...,\lfloor \log_2 M\rfloor\rbrace\right\rbrace \frac{e^{\beta X^{(M)}(\theta)}}{\E e^{\beta X^{(M)}(\theta)}}\frac{d\theta}{2\pi}\\
&=\int_0^{2\pi}\varphi(e^{i\theta})\frac{e^{\beta X^{(M)}(\theta)}}{\E e^{\beta X^{(M)}(\theta)}}\frac{d\theta}{2\pi}\\
&\qquad -\int_0^{2\pi}\varphi(e^{i\theta})\mathbf{1}\left\lbrace \exists k\in\lbrace l,...,\lfloor \log_2 M\rfloor\rbrace: X^{(2^k)}(\theta)>\gamma \E X^{(2^k)}(\theta)^2 \right\rbrace \frac{e^{\beta X^{(M)}(\theta)}}{\E e^{\beta X^{(M)}(\theta)}}\frac{d\theta}{2\pi}\\
&=:\int_0^{2\pi}\varphi(e^{i\theta})\frac{e^{\beta X^{(M)}(\theta)}}{\E e^{\beta X^{(M)}(\theta)}}\frac{d\theta}{2\pi}+\mathsf{E}_{M,l}.
\end{align*}

\noindent The same argument as in the proof of Lemma \ref{le:E1} shows that $\lim_{l\to\infty}\limsup_{M\to\infty}\E|\mathsf{E}_{M,l}|=0$. On the other hand, if we realize our Gaussian random variables on the same probability space, $\int_0^{2\pi}\varphi(e^{i\theta})\frac{e^{\beta X^{(M)}(\theta)}}{\E e^{\beta X^{(M)}(\theta)}}\frac{d\theta}{2\pi}$ is a positive martingale, so it converges almost surely to a non-negative random variable which we call $\int_\T \varphi d\mu_\beta$.\footnote{It follows e.g. from \cite{js} or a modification of the argument of \cite{berestycki} (using in particular e.g. \eqref{eq:gestimate}) that this martingale is actually uniformly integrable and a non-trivial limit exists and with a standard separability argument, this can be used to define the non-trivial random measure $\mu_\beta$. We refer also to \cite{js} for other equivalent constructions of the measure $\mu_\beta$.} Thus taking $M\to\infty$ in \eqref{eq:p1}, we find

\begin{align}\label{eq:p2}
\limsup_{N\to\infty}\Prob\left(\int_\T\varphi d\mu_{N,\beta}\in F\right)&\leq \Prob\left(\int_\T \varphi d\mu_\beta\in F^{2\epsilon}\right)+\frac{1}{\epsilon}\limsup_{M\to\infty}\E|\mathsf{E}_{M,l}|+\frac{2}{\epsilon}\limsup_{N\to\infty}\E |E^{(1)}|\\
\notag &\quad +\frac{2}{\epsilon}\limsup_{M\to\infty}\limsup_{N\to\infty}\E |E^{(2)}|.
\end{align}

\noindent Now combining Lemma \ref{le:I3}, Lemma \ref{le:I2} and Lemma \ref{le:I1}, shows that the $E^{(2)}$-term vanishes.  Thus taking $l\to\infty$ of \eqref{eq:p2} and using Lemma \ref{le:E1} along with our remark that in such a limit also the $\mathsf{E}_{M,l}$ term converges to zero, we see that

$$
\limsup_{N\to\infty}\Prob\left(\int_\T\varphi d\mu_{N,\beta}\in F\right)\leq \Prob\left(\int_\T \varphi d\mu_\beta\in F^{2\epsilon}\right).
$$

\noindent Now letting $\epsilon\to 0$, 

$$
\limsup_{N\to\infty}\Prob\left(\int_\T\varphi d\mu_{N,\beta}\in F\right)\leq \Prob\left(\int_\T \varphi d\mu_\beta\in F\right)
$$

\noindent and we conclude by the Portmanteau theorem.

\end{proof}

\section{Connection between Haar distributed unitary matrices and Riemann-Hilbert problems}\label{sec:rmtrhp}

Our goal from now on will be to prove Proposition \ref{pr:testimate} -- which we finally do in Section \ref{sec:intdi}. We begin by reviewing how this is related to Riemann-Hilbert problems and then begin our analysis of the relevant Riemann-Hilbert problem. The first step in the connection is the well known Heine-Szeg\H{o} identity. Recall that we denote by $\T$ the unit circle. We'll write $(e^{i\theta_j})_{j=1}^N$ for the eigenvalues of $U_N$.

\begin{proposition}[Heine-Szeg\H{o}]\label{pr:hs}
Let $f\in L^1(\T,\C)$. Then

\begin{equation}\label{eq:hs}
\E \prod_{j=1}^N f(e^{i\theta_j})=\det\left(\widehat{f}_{j-k}\right)_{j,k=0}^{N-1}=:D_{N-1}(f),
\end{equation}

\noindent where 

\begin{equation}\label{eq:fourier}
\widehat{f}_j=\frac{1}{2\pi}\int_0^{2\pi}f(e^{i\theta})e^{-ij\theta}d\theta.
\end{equation}
\end{proposition}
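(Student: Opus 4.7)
The plan is to derive the identity via two classical moves. First, invoke the Weyl integration formula for the unitary group to pass from an expectation over $U(N)$ to an average over the eigenvalue angles. Second, apply Andr\'eief's identity (the continuous Cauchy--Binet formula) to convert the resulting integral into a determinant of Fourier coefficients.

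Concretely, by Weyl's formula the joint density of the arguments $(\theta_1,\ldots,\theta_N)$ of the eigenvalues of a Haar-distributed $U_N$ on $[0,2\pi]^N$ is
$$\frac{1}{N!}\prod_{1\le j<k\le N}|e^{i\theta_j}-e^{i\theta_k}|^2\,\frac{d\theta_1\cdots d\theta_N}{(2\pi)^N}.$$
The left-hand side of \eqref{eq:hs} is therefore the integral of this density against $\prod_{j=1}^N f(e^{i\theta_j})$. Using the Vandermonde identity $\prod_{j<k}(e^{i\theta_k}-e^{i\theta_j})=\det(e^{i(k-1)\theta_j})_{j,k=1}^N$ together with its complex conjugate, the squared Vandermonde factors as $\det(e^{i(k-1)\theta_j})_{j,k}\det(e^{-i(k-1)\theta_j})_{j,k}$. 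Absorbing $\prod_j f(e^{i\theta_j})$ into the rows of the first determinant by multilinearity puts the integrand in the exact shape required by Andr\'eief's identity.

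Applying Andr\'eief with the weight $d\theta/(2\pi)$ then turns the integral into the determinant of the $N\times N$ matrix whose $(k,l)$-entry is
$$\frac{1}{2\pi}\int_0^{2\pi}f(e^{i\theta})\,e^{i(k-1)\theta}e^{-i(l-1)\theta}\,d\theta=\widehat{f}_{l-k}.$$
Since $\det A=\det A^{\mathsf T}$ and the index set $\{1,\ldots,N\}$ versus $\{0,\ldots,N-1\}$ is purely cosmetic, this equals $D_{N-1}(f)$.

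There is no genuine obstacle here: the result is classical, and both ingredients are standard. If a self-contained proof of Andr\'eief's identity itself were wanted, one would just expand the two determinants as sums over $S_N$, interchange sum and integral, and collapse the double sum over $(\sigma,\tau)\in S_N\times S_N$ to a single sum over $\pi=\sigma\tau^{-1}$; the resulting factor of $N!$ cancels the $1/N!$ from the Weyl density. The $L^1$ hypothesis on $f$ is exactly what is needed to justify the use of Fubini at this step.
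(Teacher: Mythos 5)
Your proof is correct and follows precisely the route the paper indicates: the paper's proof is a one-line pointer to Andréief's identity combined with the Vandermonde (Weyl) form of the CUE eigenvalue density, which is exactly the argument you have written out in detail.
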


\noindent For a proof, one can use e.g. Andreief's identity (see \cite[Lemma 3.2.3]{agz}) and the Vandermonde representation of the law of $(e^{i\theta_j})_{j=1}^N$. 

The next step in our argument is to recall how such determinants are related to orthogonal polynomials.

\begin{definition}
Let $f\in L^1(\T)$ and assume that $f\geq 0$ almost everywhere. Assume further that $D_j(f)\neq 0$ for each $j\geq 0$. Writing $D_{-1}(f)=1$, define for $j\geq 0$

\begin{equation}\label{eq:gramdet}
p_{j}(z;f)=\frac{1}{\sqrt{D_{j}(f)D_{j-1}(f)}}\begin{vmatrix}
\widehat{f}_0 & \widehat{f}_{-1} & \cdots & \widehat{f}_{-j}\\
\widehat{f}_1 & \widehat{f}_0 & \cdots & \widehat{f}_{-j+1}\\
\vdots & \vdots & \ddots & \vdots \\
\widehat{f}_{j-1} & \widehat{f}_{j-2} & \cdots & \widehat{f}_{-1}\\
1 & z & \cdots & z^j
\end{vmatrix}=\chi_j(f)z^j+\mathrm{l.o.t.},
\end{equation}

\noindent where the interpretation is that for $j=0$, the determinant is replaced by the number $1$. Also we note that 

\begin{equation}\label{eq:chi}
\chi_j(f)=\sqrt{\frac{D_{j-1}(f)}{D_j(f)}}.
\end{equation}

\end{definition}

This is just the determinantal representation of the polynomials obtained by applying the Gram-Schmidt procedure on $L^2(\T,f(e^{i\theta})d\theta/2\pi)$ to the monomials. In our case, $f$ will be non-negative and zero only at finitely many points, so these polynomials exist. As the polynomials are constructed by Gram-Schmidt, they are orthonormal: for $0\leq k\leq j$

\begin{equation}\label{eq:ortho1}
\frac{1}{2\pi}\int_0^{2\pi}p_j(e^{i\theta};f) e^{-ik\theta}f(e^{i\theta})d\theta=\frac{1}{\chi_k(f)}\delta_{j,k}.
\end{equation}

\noindent Note that this is equivalent to 

\begin{equation}\label{eq:ortho2}
\frac{1}{2\pi}\int_0^{2\pi}p_j(e^{i\theta};f) \overline{p_k(e^{i\theta};f)}f(e^{i\theta})d\theta=\int_{\T}p_j(z;f)\overline{p_k(z;f)}f(z)\frac{dz}{2\pi i z}=\delta_{j,k}.
\end{equation}

\begin{remark}
By the telescopic structure of the product, and the fact that we defined $D_{-1}(f)=1$, \eqref{eq:chi} implies that

\begin{equation}\label{eq:dchi}
D_{N-1}(f)=\prod_{j=0}^{N-1}\chi_j(f)^{-2}.
\end{equation}
\end{remark}
A classical reference for further information on orthogonal polynomials is \cite{Sz}, and in particular, \cite[Chapter 11]{Sz} for polynomials on the unit circle.

Let us point out that for Proposition \ref{pr:testimate}, the relevant symbol $f$ is 

\begin{equation*}
e^{\mathcal{T}(z)-\sum_{j=1}^{K_1}\frac{\alpha_1}{2j}(z^j e^{-ij\theta}+z^{-j}e^{ij\theta})-\sum_{j=1}^{K_2}\frac{\alpha_2}{2j}(z^j e^{-ij\theta}+z^{-j}e^{ij\theta})}|z-e^{i\theta}|^{\beta_1}|z-e^{i\theta'}|^{\beta_2}.
\end{equation*}

\noindent To have our notation in closer agreement with that in \cite{ck}, we find it convenient to make use of the rotation invariance of of the law of $(e^{i\theta_j})_{j=1}^N$ and use a slightly different symbol (which still produces the same determinant). More precisely, if we define 

\begin{equation}\label{eq:phidef}
\phi=\phi(\theta,\theta')=\begin{cases}
\frac{\theta+\theta'}{2}, & |\theta-\theta'|\leq \pi\\
\frac{\theta+\theta'+2\pi}{2}, & |\theta-\theta'|\in(\pi,2\pi)
\end{cases},
\end{equation}

\noindent then one has for any $F:\T\to \C$

$$
\E \prod_{j=1}^N F(e^{i\theta_j})=\E\prod_{j=1}^N F(e^{i\phi+i\theta_j}).
$$

\noindent Motivated by this, let us make the following definition:

\begin{definition}\label{def:f}
Let $u=\frac{d(\theta,\theta')}{2}$, $\phi$ be as in \eqref{eq:phidef}, and let $V:\C\setminus \lbrace 0\rbrace\to \C$,

\begin{align}\label{eq:Vdef}
\notag V(z)&=V(z;\theta,\theta')\\
&=\begin{cases}
\mathcal{T}(e^{i\phi}z)-\sum_{j=1}^{K_1}\frac{\alpha_1(z^j e^{-iju} +z^{-j} e^{iju})}{2j}-\sum_{j=1}^{K_2}\frac{\alpha_2(z^j e^{-iju}+z^{-j} e^{iju})}{2j}, & \theta-\theta'\in A_1\\
\mathcal{T}(e^{i\phi}z)-\sum_{j=1}^{K_1}\frac{\alpha_1(z^j e^{iju} +z^{-j} e^{-iju})}{2j}-\sum_{j=1}^{K_2}\frac{\alpha_2(z^j e^{iju}+z^{-j} e^{-iju})}{2j}, & \theta-\theta'\in A_2
\end{cases}\\
\notag &=:\sum_{j\in \Z} V_j z^j,
\end{align}

\noindent where we wrote 

$$
A_1=(0,\pi)\cup (-2\pi,-\pi) \qquad \text{and} \qquad A_2=(-\pi,0)\cup (\pi,2\pi).
$$

\noindent Let us also write 

$$
\widetilde{\beta}_1=\widetilde{\beta}_1(\theta,\theta')=\begin{cases}
\beta_1, & \theta-\theta'\in A_1\\
\beta_2, & \theta-\theta'\in A_2
\end{cases} \qquad \text{and} \qquad \widetilde{\beta}_2=\widetilde{\beta}_2(\theta,\theta')=\begin{cases}
\beta_2, & \theta-\theta'\in A_1\\
\beta_1, & \theta-\theta'\in A_2
\end{cases} 
$$

\noindent and finally $f:\T\to [0,\infty)$

\begin{equation}\label{eq:fdef}
f(z)=f(z;\theta,\theta')=e^{V(z)}|z-e^{iu}|^{\widetilde{\beta}_1}|z-e^{-iu}|^{\widetilde{\beta}_2}.
\end{equation}
\end{definition}

\noindent It is a simple calculation to check that 

\begin{align*}
f(z)&=e^{\mathcal{T}(e^{i\phi}z)-\sum_{j=1}^{K_1}\frac{\alpha_1}{2j}((e^{i\phi}z)^j e^{-ij\theta}+(e^{i\phi}z)^{-j}e^{ij\theta})-\sum_{j=1}^{K_2}\frac{\alpha_2}{2j}((e^{i\phi}z)^j e^{-ij\theta}+(e^{i\phi}z)^{-j}e^{ij\theta})}\\
&\qquad \times |e^{i\phi}z-e^{i\theta}|^{\beta_1}|e^{i\phi}z-e^{i\theta'}|^{\beta_2},
\end{align*}

\noindent so by our discussion leading to Definition \ref{def:f},

$$
\E e^{\mathrm{Tr}\mathcal{T}(U_N)+\alpha_1 X_{N,K_1}(e^{i\theta})+\alpha_2 X_{N,K_2}(e^{i\theta})+\beta_1 X_N(e^{i\theta})+\beta_2 X_N(e^{i\theta'})}=D_{N-1}(f).
$$

\noindent From now on, we'll focus on this symbol and a deformation of it. The next step in the argument is to encode the polynomials \eqref{eq:gramdet} associated to the symbol \eqref{eq:fdef} into a Riemann-Hilbert problem. Let us first define the object that will turn out to be the unique solution to a suitable Riemann-Hilbert problem.

\begin{definition}\label{def:Ydef}
For $|z|\neq 1$, define $Y(z)=Y(z,j;f)$ by 

\begin{equation}\label{eq:Ydef}
Y(z)=\begin{pmatrix}
\frac{1}{\chi_j(f)}p_j(z;f) & \frac{1}{\chi_j(f)}\oint_{\T}\frac{p_j(w;f)}{w-z}\frac{f(w)dw}{2\pi i w^j}\\
-\chi_{j-1}(f)z^{j-1}\overline{p}_{j-1}(z^{-1};f) & -\chi_{j-1}(f)\oint_{\T}\frac{\overline{p}_{j-1}(w^{-1};f)}{w-z}\frac{f(w)dw}{2\pi i w}
\end{pmatrix},
\end{equation}

\noindent where $\overline{p}$ denotes the polynomial obtained by complex conjugating the coefficients of $p$: $\overline{p}(z)=\overline{p(\overline{z})}$.

\end{definition}

The fundamental realization of Fokas, Its, and Kitaev \cite{fik} was that $Y$ can be recovered as a solution of a Riemann-Hilbert problem. 

\begin{proposition}[Fokas, Its, and Kitaev]\label{pr:Yrhp}
$Y(z)=Y(z,j;f)$ is the unique solution to the following Riemann-Hilbert problem

\begin{itemize}[leftmargin=0.5cm]
\item[1.] $Y:\C\setminus \T\to \C^{2\times 2}$ is analytic.
\item[2.] $Y$ has continuous boundary values on $\T$. If we denote by $Y_+$ the limit from inside the circle, and $Y_-$ the limit from outside, then 

\begin{equation}\label{eq:Yjump}
Y_+(z)=Y_-(z)\begin{pmatrix}
1 & z^{-j} f(z)\\
0 & 1
\end{pmatrix}.
\end{equation}

\item[3.] As $z\to\infty$, 

\begin{equation}\label{eq:Yasy}
Y(z)=(I+\mathcal{O}(z^{-1}))z^{j\sigma_3},
\end{equation}

\noindent where 

\begin{equation*}
\sigma_3=\begin{pmatrix}
1 & 0\\
0 & -1
\end{pmatrix}\qquad {and} \qquad z^{j\sigma_3}=\begin{pmatrix}
z^j & 0\\
0 & z^{-j}
\end{pmatrix}.
\end{equation*}
\end{itemize}
\end{proposition}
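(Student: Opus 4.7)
The plan is the standard Fokas--Its--Kitaev argument: verify that the matrix $Y(z)$ from Definition \ref{def:Ydef} satisfies the three RHP conditions, and then prove uniqueness via a $\det = 1$ observation and Liouville. Analyticity of $Y$ on $\C\setminus\T$ is immediate, as $Y_{11}, Y_{21}$ are polynomials and $Y_{12}, Y_{22}$ are Cauchy integrals of densities supported on $\T$.

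For the jump condition, the polynomial entries are continuous across $\T$, while for the Cauchy transforms the Sokhotski--Plemelj formula gives the discontinuity. Orienting $\T$ counterclockwise so that the interior limit $Y_+$ agrees with the standard SP convention (and using $\frac{dw}{2\pi i w} = \frac{d\theta}{2\pi}$), one computes
\begin{align*}
Y_{12,+}(z) - Y_{12,-}(z) &= \frac{p_j(z;f)\, f(z)}{\chi_j(f)\, z^j} = Y_{11}(z)\, z^{-j} f(z),\\
Y_{22,+}(z) - Y_{22,-}(z) &= -\chi_{j-1}(f)\, \overline{p}_{j-1}(z^{-1};f)\, \frac{f(z)}{z} = Y_{21}(z)\, z^{-j} f(z),
\end{align*}
which is exactly the action of the jump matrix $\begin{pmatrix} 1 & z^{-j} f(z) \\ 0 & 1 \end{pmatrix}$.

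The asymptotic at infinity combines a degree count with orthogonality \eqref{eq:ortho1}. Since $\chi_j(f)$ is the leading coefficient of $p_j(z;f)$, one obtains $Y_{11}(z) z^{-j} = 1 + \mathcal{O}(z^{-1})$; the polynomial $z^{j-1} \overline{p}_{j-1}(z^{-1};f)$ has degree at most $j-1$, so $Y_{21}(z) = \mathcal{O}(z^{j-1})$, i.e. $Y_{21}(z) z^{-j} = \mathcal{O}(z^{-1})$. For the Cauchy integrals, expand $\frac{1}{w-z} = -\sum_{k\geq 0} w^k z^{-k-1}$. In the expansion of $Y_{12}(z)$ each coefficient is, up to prefactor, $\frac{1}{2\pi}\int_0^{2\pi} p_j(e^{i\theta}) e^{-i(j-k-1)\theta} f(e^{i\theta})\, d\theta$, which vanishes for $k = 0,\ldots,j-1$ by \eqref{eq:ortho1}, yielding $Y_{12}(z) = \mathcal{O}(z^{-j-1})$. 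For $Y_{22}(z)$, the analogous coefficient (after converting $\frac{dw}{2\pi i}$ to $\frac{w\, d\theta}{2\pi}$ and using that $f$ and $\chi_{j-1}$ are real) reduces to the complex conjugate of the inner product of $p_{j-1}$ against $e^{-ik\theta}$; this vanishes for $k = 0,\ldots,j-2$ and contributes $z^{-j}$ at $k = j-1$, giving $Y_{22}(z) = z^{-j} + \mathcal{O}(z^{-j-1})$. Altogether $Y(z) z^{-j\sigma_3} = I + \mathcal{O}(z^{-1})$.

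For uniqueness, since the jump matrix has determinant $1$, $\det Y$ has no jump on $\T$ and extends to an entire function; the asymptotic forces $\det Y = 1 + \mathcal{O}(z^{-1})$, so $\det Y \equiv 1$ by Liouville. Thus any solution is invertible on $\C$, and if $Y^{(1)}, Y^{(2)}$ are two solutions then $R := Y^{(1)}(Y^{(2)})^{-1}$ is continuous across $\T$ (the two jumps cancel), entire, and tends to $I$ at infinity, so $R \equiv I$ by Liouville, hence $Y^{(1)} = Y^{(2)}$. The only substantive, though still routine, obstacle is the asymptotic verification of the second column: one must carefully track how the factor of $w^{-j}$ inside $Y_{12}$ and the factor of $w^{-1}$ inside $Y_{22}$ interact with the geometric series, so that the surviving Fourier integrals can be reindexed to line up with the orthogonality relations \eqref{eq:ortho1} and produce the precise cancellation giving the correct order of vanishing.
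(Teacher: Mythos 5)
Your argument is correct and is exactly the standard Fokas--Its--Kitaev verification the paper gestures at (it omits the details, referring to Deift's book): analyticity is immediate, the jump follows from Sokhotski--Plemelj, the asymptotics at $\infty$ follow from the degree count in the first column and from expanding the Cauchy kernel and using orthogonality \eqref{eq:ortho1} in the second, and uniqueness follows from $\det Y\equiv 1$ plus Liouville. The one point you leave implicit, which the paper does flag, is that one must know the boundary values of the second column are actually continuous on all of $\T$ (including at $e^{\pm iu}$, where $f$ is merely H\"older of exponent $\min(\widetilde\beta_1,\widetilde\beta_2,1)$); this is needed both to invoke Sokhotski--Plemelj and to conclude that $\det Y$ and $Y^{(1)}(Y^{(2)})^{-1}$ extend to entire functions.
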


\begin{remark}
The notation $\mathcal{O}$ above refers to behavior when varying $z$. The implicit constants can increase with $j,N,\beta$, etc.
\end{remark}

To be precise, Fokas, Its, and Kitaev did not consider this specific RHP, but the realization about the connection between orthogonal polynomials and RHPs is due to them. Using Liouville's theorem, it's fairly easy to check that if a solution $Y$ exists, then $\det Y=1$ and further that the solution is unique. That the solution is given by $Y(z,j;f)$ requires expanding the Cauchy kernel and using orthogonality. Continuity of the boundary values follows essentially from the fact that the Hilbert transform preserves H\"older continuity. Details in slightly different cases can be found in e.g. Deift's book \cite{deift}, whence we omit further details.

When studying this type of problems, it's typical that using suitable differential identities, one can reduce the problem of understanding the asymptotics of all $Y(z,j;f)$ (which is needed to understand the asymptotics of $\chi_j$), to solving only one Riemann-Hilbert problem -- that for $Y(z,N,f_t)$ where $f_t$ is a deformation of $f$. We'll make use of the fact that for $V=0$ (actually for $V$ smooth enough and independent of $N$) the asymptotics were studied in \cite{ck}. We'll build on their results using \cite[Proposition 3.3]{dik2}. We define an interpolation between $V$ and $0$ - which translates into one between $f$ and $|z-e^{iu}|^{\widetilde{\beta}_1} |z-e^{-iu}|^{\widetilde{\beta}_2}$.

\begin{proposition}[Deift, Its, and Krasovsky]\label{pr:di}
For $t\in[0,1]$ and $z\in \T$, let  

\begin{equation}
V_t(z)=\log\left(1-t+te^{V(z)}\right), 
\end{equation}

\noindent and 

\begin{equation}
f_t(z)=e^{V_t(z)}e^{-V(z)}f(z)=e^{V_t(z)}|z-e^{iu}|^{\widetilde{\beta}_1} |z-e^{-iu}|^{\widetilde{\beta}_2},
\end{equation}

\noindent so that $V_0(z)=0$, $V_1(z)=V(z)$, $f_0(z)=|z-e^{iu}|^{\widetilde{\beta}_1} |z-e^{-iu}|^{\widetilde{\beta}_2}$, and $f_1(z)=f(z)$. 

\vspace{0.3cm}

Now in \eqref{eq:Ydef}, set $j=N$, $f=f_t$, and write $Y(z,t)=Y(z,N;f_t)$. Then 

\begin{align}\label{eq:di}
\frac{\partial}{\partial t}&\log D_{N-1}(f_t)=\frac{1}{2\pi i }\oint_\T z^{-N}\left(Y_{11}(z,t)\frac{\partial Y_{21}(z,t)}{\partial z}-Y_{21}(z,t)\frac{\partial Y_{11}(z,t)}{\partial z}\right)\frac{\partial f_t(z)}{\partial t}dz.
\end{align}
\end{proposition}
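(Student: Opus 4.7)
The plan is to compute $\partial_t\log D_{N-1}(f_t)$ via two ingredients: the orthonormality of $p_j(\cdot;f_t)$, which produces a Christoffel--Darboux integral against $\partial_tf_t$, and the Szegő CD identity on $\T$, which identifies the diagonal CD kernel with the Wronskian in the first column of the RHP matrix $Y$.

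First I would use \eqref{eq:dchi} to write $\log D_{N-1}(f_t)=-2\sum_{j=0}^{N-1}\log\chi_j(f_t)$. Differentiating $\int|p_j(e^{i\theta};f_t)|^2f_t(e^{i\theta})\,d\theta/(2\pi)=1$ in $t$, one observes that the degree-$j$ part of $\partial_tp_j$ is $(\partial_t\chi_j)z^j$ while all lower-degree terms are orthogonal to $p_j$ with weight $f_t$; combined with \eqref{eq:ortho1} and $\chi_j>0$ this yields $2\partial_t\log\chi_j=-\int|p_j|^2\partial_tf_t\,d\theta/(2\pi)$, whence
\begin{equation*}
\partial_t\log D_{N-1}(f_t)=\int_0^{2\pi}K_N(e^{i\theta},e^{i\theta};f_t)\,\partial_tf_t(e^{i\theta})\,\frac{d\theta}{2\pi},
\end{equation*}
with $K_N(z,z;f_t)=\sum_{j=0}^{N-1}|p_j(z;f_t)|^2$ the diagonal Christoffel--Darboux kernel.

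The core step is then to establish the pointwise identity
\begin{equation*}
K_N(z,z;f_t)=z^{1-N}\bigl[Y_{11}(z,t)\partial_zY_{21}(z,t)-Y_{21}(z,t)\partial_zY_{11}(z,t)\bigr]\quad\text{for }z\in\T.
\end{equation*}
Starting from the Szegő CD identity $K_N(z,w)(1-\bar wz)=\overline{p_N^*(w)}p_N^*(z)-\overline{p_N(w)}p_N(z)$ with $p_N^*(z)=z^N\overline{p_N(1/\bar z)}$, and using $\overline{p_N(w)}=w^{-N}p_N^*(w)$ and $\overline{p_N^*(w)}=w^{-N}p_N(w)$ valid on $|w|=1$, the numerator becomes $w^{-N}[p_N(w)p_N^*(z)-p_N^*(w)p_N(z)]$ and the denominator rewrites as $(w-z)/w$. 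A single application of L'Hospital at $w=z$ yields $K_N(z,z;f_t)=z^{1-N}[p_N'p_N^*-p_N(p_N^*)']$. To replace $p_N^*$ by $p_{N-1}^*$, as needed because $Y_{21}$ involves $p_{N-1}^*$, I would apply the Szegő recurrence in the form $p_N^*=\rho_{N-1}^{-1}p_{N-1}^*-\alpha_{N-1}p_N$ with $\rho_{N-1}=\chi_N/\chi_{N-1}$; the $\alpha_{N-1}$-contributions cancel because $p_N'p_N-p_Np_N'=0$, leaving $K_N(z,z;f_t)=(\chi_{N-1}/\chi_N)\,z^{1-N}[p_N'p_{N-1}^*-p_N(p_{N-1}^*)']$. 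Since Definition \ref{def:Ydef} at $j=N$, $f=f_t$ gives $Y_{11}=\chi_N^{-1}p_N$ and, via the identification $z^{N-1}\bar p_{N-1}(z^{-1})=p_{N-1}^*(z)$, $Y_{21}=-\chi_{N-1}p_{N-1}^*$, a direct computation of the Wronskian produces precisely the claimed expression.

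Finally, substituting the Wronskian identity back into the display of Step 1 and using $d\theta/(2\pi)=dz/(2\pi iz)$ on $\T$ converts the real integral into the contour integral in \eqref{eq:di}. The main technical hurdle is the L'Hospital evaluation on the diagonal $w=z\in\T$, where the raw Szegő formula is indeterminate $0/0$ and so requires care with the $w^{-N+1}$ prefactor and with the direction of approach; both the orthogonality computation of Step 1 and the algebraic reduction via the Szegő recurrence in Step 2 are then routine.
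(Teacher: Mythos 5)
The paper does not prove this proposition itself: it is stated as a quoted result from Deift, Its, and Krasovsky, namely \cite[Proposition 3.3]{dik2}. Your reconstruction is correct and matches the strategy of the cited source: differentiate $\log D_{N-1}(f_t)=-2\sum_{j<N}\log\chi_j(f_t)$ together with the orthonormality of the $p_j$ to obtain $\partial_t\log D_{N-1}=\int_\T K_N(z,z;f_t)\,\partial_tf_t\,\frac{d\theta}{2\pi}$, then identify the diagonal Christoffel--Darboux kernel on $\T$ with $z^{1-N}\bigl(Y_{11}\partial_zY_{21}-Y_{21}\partial_zY_{11}\bigr)$ via the Szeg\H{o} CD formula and the recurrence eliminating $p_N^*$ in favour of $p_{N-1}^*$ and $p_N$. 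The one worry you flag at the end is not actually a hurdle: $p_N(w)p_N^*(z)-p_N^*(w)p_N(z)$ is a polynomial in $w$ vanishing at $w=z$, so the quotient by $w-z$ is itself a polynomial and the diagonal value is fixed by pure algebra, independent of the direction of approach.
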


\begin{remark}
The differentiability of $\log D_{N-1}(f_t)$ can be seen for example from the representation in terms of $\chi_j$ and the determinantal representation of $\chi_j$. Also note that $Y_{11}$ and $Y_{21}$ are polynomials, so they don't have any jump across $\T$ and e.g. the quantity $\partial_z Y_{11}$ is well defined on $\T$.
\end{remark}

To analyze $D_{N-1}(f)$, we thus need a good understanding of what $Y(z,N,f_t)$ looks like on $\T$. The idea is to analyze this function in the $N\to\infty$ limit by solving the Riemann-Hilbert problem asymptotically. The way this is usually done is by transforming it into a problem where the jump matrix is asymptotically close to the identity matrix (when $N\to\infty$) and as $z\to \infty$, the sought function also converges to the identity matrix. In such a case, the problem can be expressed in terms of a suitable singular integral equation which can then be solved in terms of a Neumann series. Before going into transforming the problem, we'll have to discuss analytic continuation of $f_t(z)$. We need this as part of the transformation procedure involves deforming the jump contour so we will need to know what $f$ looks like off of $\T$.

\section{\texorpdfstring{Analytic continuation of $f_t$}{Analytic continuation of f}}\label{sec:cont}

Note that $V$ is a Laurent polynomial, so it continues analytically to $\C\setminus \lbrace 0\rbrace$. Thus $e^{V_t(z)}=1-t+te^{V(z)}$ also continues analytically into $\C\setminus\lbrace 0\rbrace$. Analytically continuing $f_t$ thus becomes an issue of analytically continuing the quantities $|z-e^{\pm iu}|^{\widetilde{\beta}_1/\widetilde{\beta}_2}$. To do this, define

\begin{equation}\label{eq:szegod}
\mathcal{D}(z)=\exp\left(\frac{1}{2\pi i}\int_{\T}\frac{\log f(\xi)}{\xi-z}d\xi\right).
\end{equation}

\noindent This is analytic inside and outside of $\T$. As in  \cite[(4.9) and (4.10)]{dik1}, one can write for $|z|<1$

\begin{equation}\label{eq:din}
\mathcal{D}(z)=e^{\sum_{j=0}^\infty V_j z^j}\frac{(z-e^{iu})^{\widetilde{\beta}_1/2}}{e^{i\widetilde{\beta}_1 u/2}e^{i\widetilde{\beta}_1 \pi/2}}\frac{(z-e^{-iu})^{\widetilde{\beta}_2/2}}{e^{-i\widetilde{\beta}_2 u/2}e^{i\widetilde{\beta}_2\pi/2}}=:\mathcal{D}_{in}(z)
\end{equation}

\noindent and for $|z|>1$

\begin{equation}\label{eq:dout}
\mathcal{D}(z)^{-1}=e^{\sum_{j=-\infty}^{-1}V_j z^j}\frac{(z-e^{iu})^{\widetilde{\beta}_1/2}}{z^{\frac{\widetilde{\beta}_1}{2}}}\frac{(z-e^{-iu})^{\widetilde{\beta}_2/2}}{z^{\frac{\widetilde{\beta}_2}{2}}}=:\mathcal{D}_{out}(z)^{-1},
\end{equation}

\noindent where the branches are fixed by the following conditions: in both cases the branch of $(z-e^{iu})^{\widetilde{\beta}_1/2}$ is fixed by the condition that $\mathrm{arg}(z-e^{iu})=2\pi$ on the line going from $e^{iu}$ to the right parallel to the real axis. The branch cut is the line $e^{iu}\times[1,\infty)$. For $(z-e^{-iu})^{\widetilde{\beta}_2/2}$, the situation is identical. In the latter case, the cut of $z^{\widetilde{\beta}_1/2}$ is the line $e^{iu}\times [0,\infty)$, and one has $\mathrm{arg}(z)\in(u,2\pi+u)$. For the $-u$-term one has the analogous conditions. We point out that these functions continue analytically (apart from the branch cuts) past $\T$.

Note that with this construction, if we denote by $+$ the boundary value as a limit from inside the unit disk and by $-$ the limit from outside of the disk, we see that for $z\in \T$

\begin{equation}\label{eq:fdrep}
f(z)=\mathcal{D}_{in,+}(z)\mathcal{D}_{out,-}(z)^{-1},
\end{equation}

\noindent and the function on the right hand side continues analytically to $\C$ with branch cuts along $e^{\pm iu}[0,\infty)$. Call this function $f(z)$.

We also introduce a $t$-dependent version of $\mathcal{D}$. For $|z|<1$, let 

\begin{equation}\label{eq:dint}
\mathcal{D}_t(z)=e^{\sum_{j=0}^\infty V_j(t) z^j}\frac{(z-e^{iu})^{\widetilde{\beta}_1/2}}{e^{i\widetilde{\beta}_1 u/2}e^{i\widetilde{\beta}_1 \pi/2}}\frac{(z-e^{-iu})^{\widetilde{\beta}_2/2}}{e^{-i\widetilde{\beta}_2 u/2}e^{i\widetilde{\beta}_2\pi/2}}=:\mathcal{D}_{t,in}(z)
\end{equation}

\noindent and for $|z|>1$

\begin{equation}\label{eq:doutt}
\mathcal{D}_t(z)^{-1}=e^{\sum_{j=-\infty}^{-1}V_j(t) z^j}\frac{(z-e^{iu})^{\widetilde{\beta}_1/2}}{z^{\frac{\widetilde{\beta}_1}{2}}}\frac{(z-e^{-iu})^{\widetilde{\beta}_2/2}}{z^{\frac{\widetilde{\beta}_2}{2}}}=:\mathcal{D}_{t,out}(z)^{-1},
\end{equation}

\noindent with the same conventions for the branch cuts as before as well as the notation $V_j(t)$ denoting the Fourier coefficients of $V_t$.

To see that this is well defined, we'll want to also continue $V_t(z)$ analytically to a neighborhood of $\T$. The size of this neighborhood will play a significant role in our analysis so we emphasize it's definition. 

\begin{definition}\label{def:kappadef}

Let $\varepsilon>0$ be small but fixed $($shortly taken to possibly depend on the compact set where $(\mathcal{T}_j)_{j=1}^M$ from Proposition \ref{pr:testimate} lie along with on the quantities $\alpha_1,\alpha_2$ from Proposition \ref{pr:testimate}$)$ and $K_2,M$ as in Proposition \ref{pr:testimate}. Then define

\begin{equation}\label{eq:kappadef}
\kappa=\varepsilon [\max(K_2,M^2)]^{-1}.
\end{equation}
\end{definition}

We point out that $\varepsilon$ here is independent of $\epsilon$ which appeared in Section \ref{sec:defs}. Let us show that $V_t$ is analytic in a $3\kappa$-neighborhood of $\T$ ($3$ here could be any constant, but later on we will define new contours and objects strictly inside of this set and we find it to be notationally convenient to have $3$ here).

\begin{lemma}\label{le:vtholo}
For small enough $\varepsilon>0$, which depends only on the compact subset of $\C^M$ where $(\mathcal{T}_j)_{j=1}^M$ lie and $\alpha_1,\alpha_2$,  the function 

\begin{equation*}
z\mapsto V_t(z)=\log(1-t+te^{V(z)})
\end{equation*}

\noindent is analytic in $\lbrace z\in \C: ||z|-1|<3\kappa\rbrace$ for arbitrary $t\in[0,1]$.
\end{lemma}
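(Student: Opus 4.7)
The plan is to reduce analyticity of $V_t$ to showing that $1-t+te^{V(z)}$ avoids the branch cut $(-\infty,0]$ of the principal logarithm, and in fact stays in the open right half-plane. Since $V$ is a Laurent polynomial, it is analytic on $\C\setminus\{0\}$; for sufficiently small $\varepsilon$ the annulus $\{||z|-1|<3\kappa\}$ sits well inside this domain, so $e^{V(z)}$ is automatically analytic there. It therefore suffices to produce a constant $C$ (depending only on the allowed range of $\mathcal T$ and on $\alpha_1,\alpha_2$) such that
\begin{equation*}
|\Im V(z)| < \pi/3 \qquad \text{for all } z \text{ with } ||z|-1|\leq 3\kappa,
\end{equation*}
because then $\Re e^{V(z)} = e^{\Re V(z)}\cos\Im V(z)>0$, so $\Re(1-t+te^{V(z)})>0$ uniformly in $t\in[0,1]$.

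The key step is an elementary estimate of the form
\begin{equation*}
|V(z)-V(z/|z|)|\leq C\varepsilon,
\end{equation*}
combined with the observation that $V(z/|z|)$ is real, which forces $\Im V(z)$ to be the imaginary part of this difference. To get the estimate I would write $r=|z|$, use $|r^k-1|\leq |k|\cdot |r-1|\cdot \max(r,r^{-1})^{|k|-1}\leq 4|k|\kappa\, e^{4|k|\kappa}$ for $|k|\leq K_2$, and split the sum $V(z)-V(z/|z|)=\sum_k V_k(z/|z|)^k(|z|^k-1)$ into its two origins. The $\mathcal T$-part contributes at most $\|\mathcal T\|_\infty\sum_{|k|\leq M}|k|\cdot 4\kappa\,e^{4\varepsilon}\lesssim M^2\kappa\lesssim\varepsilon$, using $\kappa\leq\varepsilon/M^2$; the $\alpha_1,\alpha_2$-parts contribute at most $(|\alpha_1|+|\alpha_2|)\sum_{|k|\leq K_2}\frac{1}{2|k|}\cdot 4|k|\kappa\lesssim K_2\kappa\lesssim \varepsilon$, using $\kappa\leq\varepsilon/K_2$. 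The bound $|k|\kappa\leq\varepsilon$ for all $|k|\leq K_2$ is exactly what the choice $\kappa=\varepsilon/\max(K_2,M^2)$ buys us, and it keeps the exponential factor $e^{4|k|\kappa}$ bounded.

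Finally, real-valuedness of $V$ on $\T$ has to be verified from the explicit formula in Definition \ref{def:f}: the $\mathcal T$-term is real on $\T$ because $\mathcal T$ is a real Laurent polynomial and $e^{i\phi}z\in\T$ for $z\in\T$, while each pair $z^je^{\mp iju}+z^{-j}e^{\pm iju}$ reduces to $2\cos(j\arg z\mp ju)$ on $\T$. Taking $\varepsilon$ so small that $C\varepsilon<\pi/3$ completes the argument. I do not expect any real obstacle here; the only thing one has to be careful with is to track that the constant $C$ depends only on the compact set in $\C^M$ containing the coefficients $(\mathcal T_j)$ and on $\alpha_1,\alpha_2$, not on $N$, $M$, $K_1$, $K_2$, $\theta$, $\theta'$, or $t$, which is precisely the uniformity claimed.
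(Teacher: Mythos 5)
Your proposal is correct and follows essentially the same route as the paper: show $V$ is real on $\T$, bound the oscillation of $V$ across the thin annulus $\{||z|-1|<3\kappa\}$ using the choice $\kappa=\varepsilon/\max(K_2,M^2)$, conclude $|\Im V(z)|$ is small (hence $\Re e^{V(z)}>0$), and deduce that $1-t+te^{V(z)}$ stays in the right half-plane so the principal logarithm is analytic. The only cosmetic difference is that you estimate $|V(z)-V(z/|z|)|$ directly via $||z|^k-1|$, whereas the paper bounds $\sup|V'|$ on the annulus and multiplies by the distance $3\kappa$; both yield the same $\mathcal{O}(\varepsilon)$ control with the same dependence of $\varepsilon$ on the compact set for $(\mathcal{T}_j)$ and on $\alpha_1,\alpha_2$.
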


\begin{proof}
The proof consists of very crude estimates. It's enough to show that $1-t+t e^{V(z)}$ is non-zero for all $t\in[0,1]$ in this domain. To do this, let us look at the derivative of $V$: directly from \eqref{eq:Vdef} -- the definition of $V$, we see that for some positive numbers $C$ and $\widetilde{C}$ only depending on $\alpha_1,\alpha_2$ and the compact set where the coefficients of $\mathcal{T}$ lie,

\begin{equation*}
\sup_{||z|-1|<3\kappa}|V'(z)|\leq C M\sup_{|j|\leq M}|\mathcal{T}_j|\sum_{k=1}^M(1+3\kappa)^{j} + C\sum_{j=1}^{K_2}\left(1+3\kappa\right)^j\leq \widetilde{C}[\max(K_2,M^2)].
\end{equation*}

\noindent Combining this with the fact that for $z\in \T$, $\mathrm{Im}(V(z))=0$, we see that

\begin{equation*}
\sup_{||z|-1|<3\kappa}|\mathrm{Im}(V(z))|\leq \widetilde{C}\varepsilon.
\end{equation*}

\noindent Thus for a small enough $\varepsilon>0$, the real part of $e^{V(z)}$ stays positive and the function $1-t+t e^{V(z)}$ can't have zeroes. Thus $V_t(z)$ is analytic in the relevant domain.
\end{proof}

From now on, we'll consider $\varepsilon>0$ small enough (but fixed) so that the above analyticity condition is satisfied. To conclude this section, we record a result we'll need later on when estimating the jump matrices of our deformed problem. 

\begin{lemma}\label{le:Eest}

As $N\to\infty$, 

\begin{equation*}
\sup_{||z|-1|\leq 3\kappa}\max\left(|\mathcal{D}_{t,in}(z)\mathcal{D}_{t,out}(z)|,|\mathcal{D}_{t,in}(z)\mathcal{D}_{t,out}(z)|^{-1}\right)=\mathcal{O}(1)
\end{equation*}

\noindent uniformly in $t\in[0,1]$ and $\theta,\theta'\in[0,2\pi]$ $($though not necessarily in $M)$.
\end{lemma}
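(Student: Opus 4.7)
The strategy is to reduce the sup over the annulus to a computation on $\T$ plus a controlled perturbation. Using the explicit formulas \eqref{eq:dint} and \eqref{eq:doutt}, the factors $(z-e^{\pm iu})^{\widetilde{\beta}_{1,2}/2}$ are chosen with identical branches in both $\mathcal{D}_{t,in}$ and $\mathcal{D}_{t,out}^{-1}$ and therefore cancel in the product, yielding
\[
\mathcal{D}_{t,in}(z)\mathcal{D}_{t,out}(z)=e^{V_t^+(z)-V_t^-(z)}\,z^{(\widetilde{\beta}_1+\widetilde{\beta}_2)/2}\,e^{-i(\widetilde{\beta}_1-\widetilde{\beta}_2)u/2-i(\widetilde{\beta}_1+\widetilde{\beta}_2)\pi/2},
\]
where $V_t^+(z):=\sum_{j\geq 0}V_j(t)z^j$ and $V_t^-(z):=\sum_{j<0}V_j(t)z^j$ denote the nonnegative and negative Fourier parts of the Laurent series of $V_t$ in its annulus of analyticity. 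Since $|z|^{(\widetilde{\beta}_1+\widetilde{\beta}_2)/2}=\mathcal{O}(1)$ on $\lbrace ||z|-1|\leq 3\kappa\rbrace$ and the remaining phase has unit modulus, the lemma reduces to showing $|\mathrm{Re}(V_t^+(z)-V_t^-(z))|=\mathcal{O}(1)$ uniformly.

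Since $V_t$ is real on $\T$, the identity $V_{-k}(t)=\overline{V_k(t)}$ yields $V_t^-(e^{i\theta})=\overline{V_t^+(e^{i\theta})}-V_0(t)$, and hence $\mathrm{Re}(V_t^+-V_t^-)(e^{i\theta})=V_0(t)$, the mean of $V_t$ on $\T$. To bound $V_0(t)$ uniformly in $t\in[0,1]$, I exploit the convexity of $\phi_t(x):=\log(1-t+te^x)$: the tangent-line bound $\phi_t(x)\geq tx$ at $x=0$ gives $V_0(t)\geq t\,V_0=0$ since $V$ has zero constant Fourier coefficient by \eqref{eq:Vdef}, while the elementary inequality $\phi_t(x)\leq\max(0,x)$ yields $V_0(t)\leq\|V\|_{L^1(\T)}\lesssim\|V\|_{L^2(\T)}$, and the explicit form of $V$ combined with Parseval and the decay of its Fourier coefficients gives $\|V\|_{L^2(\T)}=\mathcal{O}(1)$ uniformly in $K_1,K_2,\theta,\theta'$.

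For $z=re^{i\theta}$ with $||z|-1|\leq 3\kappa$, the Laurent expansion yields
\[
\mathrm{Re}\bigl(V_t^+(z)-V_t^-(z)\bigr)-V_0(t)=2\sum_{k\geq 1}\sinh(k\log r)\bigl[\mathrm{Re}\,V_k(t)\cos(k\theta)-\mathrm{Im}\,V_k(t)\sin(k\theta)\bigr].
\]
The bound $|\sinh(k\log r)|\leq k|\log r|\cosh(k|\log r|)$ together with the choice $\kappa=\varepsilon/\max(K_2,M^2)$ keeps $k|\log r|\leq 3\varepsilon$ for $k\leq K_2$ and hence $\cosh=\mathcal{O}(1)$ there, while applying Lemma \ref{le:vtholo} with a marginally smaller $\varepsilon$ provides analyticity of $V_t$ in a slightly enlarged annulus, giving Cauchy-type geometric decay of $|V_k(t)|$ for $k\gg K_2$. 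The main obstacle is this last step: the transcendental function $V_t=\log(1-t+te^V)$ admits no explicit coefficient bound of the form $|V_k(t)|\lesssim 1/k$, so the Fourier sum must be split carefully by the size of $k$, with uniform $L^2$-control on $V_t$ (inherited from the pointwise inequality $|\phi_t(x)|\leq|x|+\log 2$) handling moderate $k$ and Cauchy decay handling large $k$. Their balance against the scale of $\kappa$ is precisely what dictates the factor $\max(K_2,M^2)$ in Definition \ref{def:kappadef}.
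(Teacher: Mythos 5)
Your reduction is the same as the paper's: the singular factors $(z-e^{\pm iu})^{\widetilde{\beta}_{1,2}/2}$ cancel between $\mathcal{D}_{t,in}$ and $\mathcal{D}_{t,out}$ because the same branches are used, which leaves $|z|^{(\widetilde{\beta}_1+\widetilde{\beta}_2)/2}=\mathcal{O}(1)$ times $e^{V_0(t)+\sum_{j\geq 1}(V_j(t)z^j-V_{-j}(t)z^{-j})}$, and one has to bound the real part of the exponent. Your treatment of $V_0(t)$ via convexity of $x\mapsto\log(1-t+te^x)$ (tangent line at $0$ for the lower bound, $\phi_t(x)\leq\max(0,x)$ for the upper bound, then $\|V\|_{L^1}\lesssim\|V\|_{L^2}=\mathcal{O}(1)$ by Parseval) is cleaner than what the paper does there; the paper instead uses the pointwise profile of $V$ from Lemma \ref{le:logsum} together with $\log(1+e^x)\leq 1+|x|$ and the split $t\lessgtr 1/2$. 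Either argument is fine.

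The gap is exactly where you say it is: the bound on $\mathrm{Re}\sum_{j\geq 1}(V_j(t)z^j-V_{-j}(t)z^{-j})$ away from $\T$. You correctly observe that on $\T$ this real part is zero, and you correctly write the off-$\T$ expansion in terms of $\sinh(k\log r)$, but the subsequent sketch is not a proof. Two specific problems. First, for moderate $k\leq\max(K_2,M)$, the $L^2$ control on $V_t$ that you propose is not sufficient: with $|\sinh(k\log r)|\lesssim k\kappa$ and only $\sum_k|V_k(t)|^2=\mathcal{O}(1)$, Cauchy--Schwarz gives $\kappa K_2^{3/2}$, which can diverge. What is actually needed is a control like $\sum_k k^2|V_k(t)|^2\lesssim K_2$ (equivalently $\|V_t'\|_{L^2(\T)}^2\lesssim K_2$, which one does get from $|V_t'|\leq|V'|$ on $\T$ since $te^V/(1-t+te^V)\in[0,1]$ there, together with $\|V'\|_{L^2}^2\lesssim K_2$); Cauchy--Schwarz in that form gives $\kappa K_2\leq\varepsilon$. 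Second, for large $k$ the Cauchy estimate is at the edge of the analyticity strip: as stated, Lemma \ref{le:vtholo} only gives analyticity in the open annulus $||z|-1|<3\kappa$, and a Cauchy contour at radius $1+2\kappa$ produces a growing factor $\bigl(\tfrac{1+3\kappa}{1+2\kappa}\bigr)^k$ which swamps the $\log K_2$-size bound on $\sup|V_t|$. One needs to work with $\varepsilon$ strictly below the admissible threshold of Lemma \ref{le:vtholo} so that the Cauchy contour can be pushed outside the target annulus, and this needs to be done quantitatively. The paper sidesteps all of this by invoking ``straightforward modifications of \cite[proof of Lemma 5.6]{abb}'' rather than reproving it; your proposal gestures at the relevant ideas but, as you acknowledge yourself (``the main obstacle''), does not close them, and as written the moderate-$k$ step would actually fail.
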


\begin{proof}
Let us begin by noting that for $||z|-1|\leq 3\kappa$,

$$
\left(\mathcal{D}_{t,in}(z)\mathcal{D}_{t,out}(z)\right)^{\pm 1}=\mathcal{O}(1) e^{\pm V_0(t)\pm \sum_{j=1}^\infty (V_j(t)z^j-V_{-j}(t)z^{-j})}
$$

\noindent where $\mathcal{O}(1)$ is uniform in $z$ and $t$. Now the boundedness of $\mathrm{Re} \sum_{j=1}^\infty (V_j(t)z^j-V_{-j}(t)z^{-j})$ can be proven with straightforward modifications of \cite[proof of Lemma 5.6]{abb}. Thus it remains to analyze 

$$
V_0(t)=\int_0^{2\pi}\log\left(1-t+te^{V(e^{i\phi})}\right)\frac{d\phi}{2\pi}.
$$

\noindent To do this, let us first note that from the definition of $V$ \eqref{eq:Vdef} and Lemma \ref{le:logsum}, we see that 

\begin{equation}\label{eq:Vest}
V(e^{i\phi})=-\alpha_1\min(\log^+ d(\phi,\pm u)^{-1},\log K_1)-\alpha_2\min(\log^+ d(\phi,\pm u)^{-1},\log K_2)+\mathcal{O}(1),
\end{equation}

\noindent where $\mathcal{O}(1)$ is uniform in $u,\phi,K_1$, and $K_2$, and the sign of $u$ depends on $\theta,\theta'$ as in \eqref{eq:Vdef}. Now using the elementary inequality 

$$
\log(1+e^x)\leq 1+|x|
$$

\noindent valid for all $x\in \R$, we find (noting that as $V$ is real on $\T$, so that $e^V$ is non-negative, implying that $1-t+te^V\leq 1+e^V$)

$$
V_0(t)\leq \int_0^{2\pi}(1+|V(e^{i\phi})|)\frac{d\phi}{2\pi}=\mathcal{O}(1),
$$

\noindent where the last estimate is uniform in everything relevant and follows from \eqref{eq:Vest} easily. For a lower bound, note that we can write e.g. for $t\in[0,1/2]$, $\log(1-t+te^V)\geq \log(1-t)\geq -\log 2$ and for $t\in[1/2,1]$, $\log(1-t+te^V)\geq \log t+V\geq -\log 2-|V|$ and again from \eqref{eq:Vest}, one concludes that $V_0(t)\geq \mathcal{O}(1)$ with the required uniformity.
\end{proof}

We will now begin transforming our Riemann-Hilbert problem into a form where it can be solved asymptotically.

\section{Transforming the Riemann-Hilbert problem}\label{sec:trans}

Our first transformation will be a rather trivial one -- it will normalize the behavior of $Y(z,t)$ as $z\to\infty$. Our next transformation will be less trivial. It involves introducing further jump contours to our problem -- a procedure often called opening lenses. Moreover, we'll actually have to do this in two different ways -- essentially depending on whether $u=\mathcal{O}(\kappa)$ or not. We have some room in this definition and we'll say that when  $u<\kappa/2$, we're in the small $u$ case and when $u \geq \kappa/2$, we'll say that we're in the large $u$ case.

\subsection{The first transformation}

The first transformation one usually performs when analyzing RHPs such as the one $Y$ satisfies is to normalize the behavior of $Y$ at infinity. More precisely, one defines

\begin{equation}\label{eq:Tdef}
T(z,t)=\begin{cases}
Y(z,t), & |z|<1\\
Y(z,t) z^{-N\sigma_3}, & |z|>1
\end{cases}.
\end{equation}

The following result is standard and immediate from Proposition \ref{pr:Yrhp} so we omit the proof.

\begin{lemma}\label{le:Trhp}
For each $t\in[0,1]$, $T(\cdot,t)$ is the unique solution to the following RHP$:$

\begin{itemize}[leftmargin=0.5cm]
\item[1.] $T(\cdot,t):\C\setminus \T\to \C^{2\times 2}$ is analytic.
\item[2.] $T(\cdot,t)$ has continuous boundary values on $\T$ and they satisfy the jump condition

\begin{equation}
T_+(z,t)=T_-(z,t)\begin{pmatrix}
z^N & f_t(z)\\
0 & z^{-N}
\end{pmatrix}, \qquad z\in \T.
\end{equation}

\item[3.] As $z\to\infty$, 

\begin{equation}
T(z,t)=I+\mathcal{O}(z^{-1}).
\end{equation}
\end{itemize}
\end{lemma}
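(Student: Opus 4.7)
The plan is to verify each of the three properties directly from Proposition \ref{pr:Yrhp} and the defining formula \eqref{eq:Tdef}, and then dispose of uniqueness by a standard Liouville-type argument. Since the transformation $Y \mapsto T$ is a piecewise right multiplication by the analytic, invertible matrix $z^{-N\sigma_3}$ (outside the unit disk) or the identity (inside), essentially no work is needed for analyticity: $Y(\cdot,t)$ is analytic on $\C\setminus\T$ by Proposition \ref{pr:Yrhp}, and $z \mapsto z^{-N\sigma_3}$ is analytic on $\lbrace|z|>1\rbrace$, so $T(\cdot,t)$ inherits analyticity on $\C\setminus\T$.

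For the jump condition, I would take limits from each side of $\T$. For $z\in\T$, the $+$ limit comes from inside the disk, where $T=Y$, while the $-$ limit comes from outside, where $T = Y\, z^{-N\sigma_3}$. Combining this with the jump relation \eqref{eq:Yjump} for $Y$ at level $j=N$ (applied to the symbol $f_t$) gives
\begin{equation*}
T_+(z,t) = Y_+(z,t) = Y_-(z,t)\begin{pmatrix} 1 & z^{-N} f_t(z)\\ 0 & 1 \end{pmatrix} = T_-(z,t)\, z^{N\sigma_3}\begin{pmatrix} 1 & z^{-N} f_t(z)\\ 0 & 1\end{pmatrix},
\end{equation*}
and a one-line matrix multiplication shows that this last product equals $\begin{pmatrix} z^N & f_t(z)\\ 0 & z^{-N}\end{pmatrix}$, which is exactly the jump matrix appearing in the statement. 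The behavior at infinity is equally direct: as $|z|\to\infty$ we are in the regime $|z|>1$, so $T(z,t) = Y(z,t)\, z^{-N\sigma_3} = (I + \mathcal{O}(z^{-1}))\, z^{N\sigma_3}\, z^{-N\sigma_3} = I + \mathcal{O}(z^{-1})$ using \eqref{eq:Yasy}.

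For uniqueness, the standard argument is as follows. If $T^{(1)}$ and $T^{(2)}$ are two solutions of the RHP in the statement, one first checks that $\det T^{(i)}\equiv 1$: the jump matrix has determinant one, so $\det T^{(i)}$ has no jump across $\T$, is analytic on $\C\setminus\T$ with continuous boundary values, hence entire; and it tends to $1$ at infinity, so by Liouville it is identically $1$. Consequently $T^{(2)}$ is invertible, and the product $T^{(1)}(T^{(2)})^{-1}$ has no jump across $\T$ (the jumps of $T^{(1)}$ and $T^{(2)}$ are the same and cancel), is analytic on $\C\setminus\T$, and tends to $I$ at infinity; another application of Liouville forces $T^{(1)} = T^{(2)}$. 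No step is an obstacle here — the only mild point to double-check is that the transformation $z^{-N\sigma_3}$ on $\lbrace|z|>1\rbrace$ is genuinely analytic and invertible there (which it is, since $z\neq 0$), so no spurious singularity is introduced at the origin; everything else is a bookkeeping exercise.
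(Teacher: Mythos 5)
Your proof is correct and is exactly the "standard and immediate" verification the paper has in mind when it omits the argument: properties 1--3 are read off from Proposition \ref{pr:Yrhp} via the explicit transformation \eqref{eq:Tdef}, with the jump matrix produced by the one-line computation $z^{N\sigma_3}\begin{pmatrix}1 & z^{-N}f_t(z)\\0&1\end{pmatrix}=\begin{pmatrix}z^N & f_t(z)\\0&z^{-N}\end{pmatrix}$, and uniqueness follows from the usual two-step Liouville argument ($\det T\equiv 1$ by the determinant-one jump, then $T^{(1)}(T^{(2)})^{-1}\equiv I$). No gaps.
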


\subsection{The second transformation -- the small \texorpdfstring{$u$}{u} case}

In our second transformation, we'll want to alter our jump contour in such a way that our problem will become close to one which can be exactly solved. To do this in the small $u$ case, we need to consider a suitable neighborhood of the point $1$ (or actually, we consider its complement for now). We'll need to consider separately $T$ close to the point $1$ and far away from it -- the scale of close and far being given by $\kappa$. We'll also need to define a suitable set near $\T$ where we'll make use of the bounds we have for $V$: let

\begin{align}\label{eq:Ltildef}
\widetilde{L}_{{\mathrm{sm}}}&=\lbrace z\in \C: |z-1|>\kappa\rbrace\cap \left\lbrace z\in \C:1-\frac{2}{3}\kappa<|z|< 1+\frac{2}{3}\kappa\right\rbrace,
\end{align}

\noindent  For an illustration of the set $\widetilde{L}_{{\mathrm{sm}}}$, see Figure \ref{fig:L}.

\begin{figure}
\begin{center}
\begin{tikzpicture}[xscale=0.02,yscale=0.02]
\clip (-135,-135) rectangle (170,135); 

\fill [color=black] (0,0) circle (3);

\draw [black,thick,domain=15:345] plot ({80*cos(\x)}, {80*sin(\x)});
\draw [black,thick,domain=12:348] plot ({120*cos(\x)}, {120*sin(\x)});

\fill [color=black] (99,14) circle (3);
\fill [color=black] (99,-14) circle (3);

\node at (0,-100) {\small $\widetilde{L}_{{\mathrm{sm}}}$};
\node at (-10,0) {\small $0$};
\node at (120,15) {\tiny $e^{iu}$};
\node at (120,-15) {\tiny $e^{-iu}$};

\draw [black,thick,domain=55:138] plot ({100+30*cos(\x)}, {30*sin(\x)});

\draw [black,thick,domain=55:138] plot ({100+30*cos(\x)}, {-30*sin(\x)});
   
\end{tikzpicture}
\begin{tikzpicture}[xscale=0.02,yscale=0.02]
\clip (-135,-135) rectangle (170,135);

\draw [black,thick,domain=15:345] plot ({80*cos(\x)}, {80*sin(\x)});
\draw [black,thick,domain=12:348] plot ({120*cos(\x)}, {120*sin(\x)});

\fill [color=black] (99,14) circle (3);
\fill [color=black] (99,-14) circle (3);

\node at (120,15) {\tiny $e^{iu}$};
\node at (120,-11) {\tiny $e^{-iu}$};

\draw[->,thick] (-80,2) -- (-80,-2);
\draw[->,thick] (-120,2) -- (-120,-2);

\node at (-70,0) {\small $+$};
\node at (-90,0) {\small $-$};

\node at (-110,0) {\small $+$};
\node at (-130,0) {\small $-$};

\node at (0,-100) {\small ${L_{{\mathrm{sm}}}}$};

\draw [black, thick] ({100+30*cos(55)}, { 30*sin(55)}) -- (99,14);
\draw [black, thick] ({100+30*cos(138)}, { 30*sin(138)}) -- (99,14);

\draw [black, thick] ({100+30*cos(55)}, {-30*sin(55)}) -- (99,-14);
\draw [black, thick] ({100+30*cos(138)}, {-30*sin(138)}) -- (99,-14);

\end{tikzpicture}
\end{center}
\vspace{-0.3cm}
\caption{Left: A caricature  of the set $\widetilde{L}_{{\mathrm{sm}}}$. Right: a caricature of the set $L_{{\mathrm{sm}}}$ along with the orientation of its boundary contour for the $S$-RHP. }
\label{fig:L}
\end{figure}
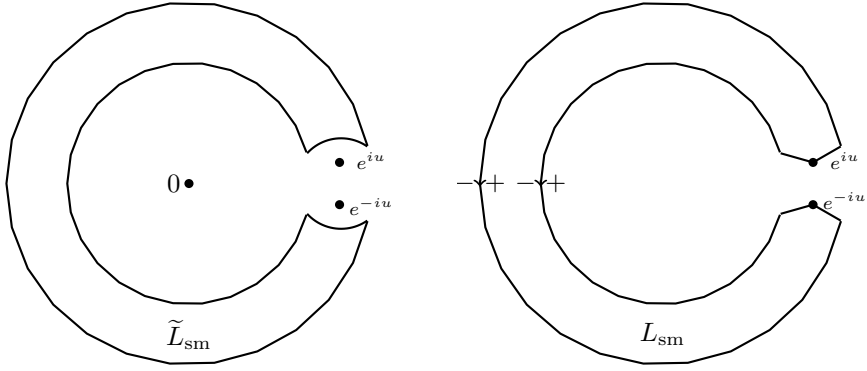

To obtain a suitable jump contour for our deformed RHP, we'll want to connect $\widetilde{L}_{{\mathrm{sm}}}$ to $e^{iu}$ and $e^{-iu}$ in a specific way. We won't be too precise right now, but we take two curves going from $e^{iu}$ to the two closest points on $\overline{\widetilde{L}}_{{\mathrm{sm}}}\cap \lbrace |z|=1\pm \frac{2}{3}\kappa\rbrace$, and similarly for $e^{-iu}$. The contour will turn out to be such that it doesn't intersect the cut of $f$ apart from at the points $e^{\pm i u}$. We add the (interior of the) domain bounded by these curves to $\widetilde{L}_{{\mathrm{sm}}}$ and call the new set $L_{{\mathrm{sm}}}$. Again, for a caricature of the set, see Figure \ref{fig:L}.

We now make our next transformation. Let

\begin{equation}
S_{{\mathrm{sm}}}(z,t)=\begin{cases}
T(z,t), & z\notin L_{{\mathrm{sm}}}\\
T(z,t)\begin{pmatrix}
1 & 0\\
z^{-N}f_t(z)^{-1} & 1
\end{pmatrix}, & z\in L_{{\mathrm{sm}}}\ \mathrm{and} \ |z|>1\\
T(z,t)\begin{pmatrix}
1 & 0\\
-z^{N}f_t(z)^{-1} & 1
\end{pmatrix}, & z\in L_{{\mathrm{sm}}}\ \mathrm{and} \ |z|<1
\end{cases}.
\end{equation}

Again this function can be characterized in terms of a Riemann-Hilbert problem, and we omit the proof. We choose the orientation of $\partial L_{{\mathrm{sm}}}$ in such a way that the side of the origin is always the $+$-side. See Figure \ref{fig:L}. For $\T$ we choose the same orientation as before: the side of the origin is the $+$-side.

\begin{lemma}\label{le:Srhp}
For $t\in[0,1]$, $S_{{\mathrm{sm}}}(\cdot,t)$ is the unique solution to the following problem.

\begin{itemize}[leftmargin=0.5cm]
\item[1.] $S_{{\mathrm{sm}}}(\cdot,t):\C \setminus (\partial L_{{\mathrm{sm}}}\cup \T)\to \C^{2\times 2}$ is analytic.
\item[2.] $S_{\mathrm{sm}}(\cdot,t)$ has continuous boundary values on $[\partial L_{{\mathrm{sm}}}\cup \T]\setminus\lbrace e^{iu},e^{-iu}\rbrace$, which we denote by $S_{\mathrm{sm},\pm}(\cdot,t)$, where $+$ corresponds to the limit from the left and $-$ to the limit from the right. These satisfy the following jump conditions$:$

For $z\in \T\cap L_{{\mathrm{sm}}}$, 

\begin{equation}\label{eq:Sjump1}
S_{{\mathrm{sm}},+}(z,t)=S_{{\mathrm{sm}},-}(z,t)\begin{pmatrix}
0 & f_t(z)\\
-f_t(z)^{-1} & 0
\end{pmatrix}.
\end{equation}

For $z\in \T\setminus (L_{{\mathrm{sm}}}\cup \lbrace e^{iu},e^{-iu}\rbrace)$

\begin{equation}\label{eq:Sjump2}
S_{{\mathrm{sm}},+}(z,t)=S_{{\mathrm{sm}},-}(z,t)\begin{pmatrix}
z^N & f_t(z)\\
0 & z^{-N}
\end{pmatrix}.
\end{equation}

For $z\in \partial L_{{\mathrm{sm}}}\cap\lbrace |z|<1\rbrace$

\begin{equation}\label{eq:Sjump3}
S_{{\mathrm{sm}},+}(z,t)=S_{{\mathrm{sm}},-}(z,t)\begin{pmatrix}
1 & 0\\
z^N f_t(z)^{-1} & 1
\end{pmatrix}
\end{equation}

\noindent and for $z\in \partial L_{{\mathrm{sm}}}\cap \lbrace |z|>1\rbrace$

\begin{equation}\label{eq:Sjump4}
S_{{\mathrm{sm}},+}(z,t)=S_{{\mathrm{sm}},-}(z,t)\begin{pmatrix}
1 & 0\\
z^{-N}f_t(z)^{-1} & 1
\end{pmatrix}.
\end{equation}

\item[3.] As $z\to\infty$, $S_{{\mathrm{sm}}}(z,t)=I+\mathcal{O}(z^{-1})$.

\item[4.] As $z\to e^{i u}$

\begin{equation}\label{eq:Ssingasy}
S_{{\mathrm{sm}}}(z,t)=\begin{cases}
\begin{pmatrix}
\mathcal{O}(|z-e^{iu}|^{-\widetilde{\beta}_1}) & \mathcal{O}(1)\\
\mathcal{O}(|z-e^{iu}|^{-\widetilde{\beta}_1}) & \mathcal{O}(1)
\end{pmatrix}, & z\in L_{{\mathrm{sm}}}\setminus \T\\
\mathcal{O}(1), & z\in \C\setminus (\overline{L}_{{\mathrm{sm}}}\cup \T)
\end{cases}
\end{equation}

\noindent and as $z\to e^{-iu}$

\begin{equation}\label{eq:Ssingasy12}
S_{{\mathrm{sm}}}(z,t)=\begin{cases}
\begin{pmatrix}
\mathcal{O}(|z-e^{-iu}|^{-\widetilde{\beta}_2}) & \mathcal{O}(1)\\
\mathcal{O}(|z-e^{-iu}|^{-\widetilde{\beta}_2}) & \mathcal{O}(1)
\end{pmatrix}, & z\in L_{{\mathrm{sm}}}\setminus \T\\
\mathcal{O}(1), & z\in \C\setminus (\overline{L}_{{\mathrm{sm}}}\cup \T)
\end{cases}.
\end{equation}

\end{itemize}
\end{lemma}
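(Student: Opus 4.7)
The proof will follow the standard pattern for verifying lens-opening transformations in Riemann-Hilbert analysis. The plan is to check each of items 1--4 as a direct consequence of the definition of $S_{\mathrm{sm}}(z,t)$, the RHP for $T$ established in Lemma \ref{le:Trhp}, and the analytic continuation of $f_t$ developed in Section \ref{sec:cont}; the uniqueness part is a separate Liouville-type argument.

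First I would verify analyticity off of $\partial L_{\mathrm{sm}}\cup \T$. Since $T(\cdot,t)$ is analytic in $\C\setminus \T$, and since $L_{\mathrm{sm}}$ has been constructed explicitly so as not to cross the branch cuts $e^{\pm iu}[1,\infty)$ of the analytic continuation of $f_t$ from Section \ref{sec:cont} (meeting those cuts only at the endpoints $e^{\pm iu}$), the factor $f_t(z)^{-1}$ is holomorphic in $L_{\mathrm{sm}}\setminus\T$, and thus $S_{\mathrm{sm}}(\cdot,t)$ is holomorphic on each of the three regions into which $\partial L_{\mathrm{sm}}\cup \T$ divides the plane. The continuity of boundary values follows from continuity of $f_t$ and of $T_{\pm}$.

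Second I would verify the jump relations. The jumps on $\partial L_{\mathrm{sm}}$ in \eqref{eq:Sjump3}--\eqref{eq:Sjump4} are immediate from the definition of $S_{\mathrm{sm}}$ since $T$ is analytic across $\partial L_{\mathrm{sm}}$ there, so the jump of $S_{\mathrm{sm}}$ is exactly the inverse of the triangular factor inserted on one side. The jump \eqref{eq:Sjump2} on the part of $\T$ outside $L_{\mathrm{sm}}$ is simply the jump of $T$, since $S_{\mathrm{sm}}=T$ there. For the jump \eqref{eq:Sjump1} on $\T\cap L_{\mathrm{sm}}$ one does the little matrix identity: writing $A=\bigl(\begin{smallmatrix} 1 & 0\\ z^{-N}f_t^{-1} & 1\end{smallmatrix}\bigr)$ for the outer factor, $B=\bigl(\begin{smallmatrix} z^N & f_t\\ 0 & z^{-N}\end{smallmatrix}\bigr)$ for the $T$-jump, and $C=\bigl(\begin{smallmatrix} 1 & 0\\ -z^Nf_t^{-1} & 1\end{smallmatrix}\bigr)$ for the inner factor, one checks by direct multiplication that $A^{-1}BC=\bigl(\begin{smallmatrix} 0 & f_t\\ -f_t^{-1} & 0\end{smallmatrix}\bigr)$, which is exactly \eqref{eq:Sjump1}. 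The asymptotic behavior $S_{\mathrm{sm}}(z,t)=I+\mathcal{O}(z^{-1})$ as $z\to\infty$ is immediate since $S_{\mathrm{sm}}=T$ outside of $L_{\mathrm{sm}}$ and $T$ satisfies this. Finally, the singularity statements \eqref{eq:Ssingasy}--\eqref{eq:Ssingasy12} near $e^{\pm iu}$ follow by inserting the local behavior $f_t(z)^{-1}=\mathcal{O}(|z-e^{\pm iu}|^{-\widetilde{\beta}_{1,2}})$ (coming from \eqref{eq:fdef} and the analytic continuation) into the definition of $S_{\mathrm{sm}}$ inside $L_{\mathrm{sm}}$, while outside $L_{\mathrm{sm}}$ we have $S_{\mathrm{sm}}=T$ and $T$ is bounded near those points since the polynomial entries are entire and the Cauchy integrals in the second column are continuous up to the boundary.

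For uniqueness, I would use the standard argument: each jump matrix above has determinant one, so $\det S_{\mathrm{sm}}(\cdot,t)$ has no jump across $\partial L_{\mathrm{sm}}\cup \T$ away from $\lbrace e^{\pm iu}\rbrace$, is bounded at infinity by the normalization, and has at worst $|z-e^{\pm iu}|^{-\widetilde{\beta}_{1,2}}$-type growth with $\widetilde{\beta}_{1,2}<2$, which is removable since the isolated singularities are non-essential and $\det S_{\mathrm{sm}}\equiv 1$ by Liouville. Given two solutions $S_{\mathrm{sm}}$ and $\widetilde{S}_{\mathrm{sm}}$, their ratio $S_{\mathrm{sm}}\widetilde{S}_{\mathrm{sm}}^{-1}$ is then analytic across all contours, has at worst an integrable singularity at $e^{\pm iu}$ (using $\widetilde{\beta}_{1,2}<2$), tends to $I$ at infinity, and is hence identically $I$ by removable singularity and Liouville.

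The main obstacle in writing this out carefully is the bookkeeping of orientations and the verification that the set $L_{\mathrm{sm}}$ can indeed be drawn so as to avoid the branch cuts of $f_t$ while still connecting to $e^{\pm iu}$ within the annulus where $V_t$ is analytic (Lemma \ref{le:vtholo}). This is geometric but not difficult once one notes that the cuts have been chosen to emanate radially outward from $e^{\pm iu}$ in \eqref{eq:din}--\eqref{eq:dout}, so the lens can be drawn transversally to them. The matrix identity for the jump on $\T\cap L_{\mathrm{sm}}$ is the only real computation and is routine.
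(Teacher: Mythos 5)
Your proposal follows the same standard route that the paper indicates (the paper itself omits the proof, merely flagging that uniqueness is the only non-automatic point and that it requires using the singularity bounds \eqref{eq:Ssingasy}--\eqref{eq:Ssingasy12}). Your computation of the jump on $\T\cap L_{\mathrm{sm}}$, namely $A^{-1}BC=\bigl(\begin{smallmatrix}0&f_t\\-f_t^{-1}&0\end{smallmatrix}\bigr)$ with the stated $A,B,C$, is correct given the orientation convention (origin is always the $+$-side), and items 1, 3, 4 are indeed read off directly from the definition of $S_{\mathrm{sm}}$, the $T$-RHP, and the local form of $f_t$ near $e^{\pm iu}$.

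The one place where your argument is imprecise is the removability step in the uniqueness claim. Saying that $S_{\mathrm{sm}}\widetilde S_{\mathrm{sm}}^{-1}$ ``has at worst an integrable singularity (using $\widetilde\beta_{1,2}<2$)'' and that the singularities of $\det S_{\mathrm{sm}}$ are ``removable since they are non-essential'' does not close the argument: a growth bound $\mathcal{O}(|z-e^{\pm iu}|^{-\widetilde\beta})$ with $\widetilde\beta\in[1,2)$ is compatible with a simple pole, and ``non-essential'' plus ``bounded by a power $<2$'' is exactly the pole case. (Moreover the lemma as stated imposes no upper bound on $\widetilde\beta_{1,2}$; the $<2$ only comes from the downstream application.) The correct way to finish -- and the reason the paper insists on \eqref{eq:Ssingasy}--\eqref{eq:Ssingasy12} -- is to use the \emph{second} branch of those bounds: both $\det\widetilde S_{\mathrm{sm}}$ and $S_{\mathrm{sm}}\widetilde S_{\mathrm{sm}}^{-1}$ are $\mathcal{O}(1)$ as $z\to e^{\pm iu}$ through the open sector $\C\setminus(\overline{L}_{\mathrm{sm}}\cup\T)$. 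A function holomorphic in a punctured disk, of at most polynomial growth (so no essential singularity), and bounded along a sequence approaching the puncture cannot have a pole either; hence the singularity is removable, and then Liouville gives $\det\widetilde S_{\mathrm{sm}}\equiv 1$ and $S_{\mathrm{sm}}\widetilde S_{\mathrm{sm}}^{-1}\equiv I$. With this correction the argument is complete.
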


We point out here that for example on $\partial L_{{\mathrm{sm}}}\cap \lbrace |z|=1+ \frac{2}{3}\kappa\rbrace$, the jump matrix is $I+\mathcal{O}(|z|^{-N}f_t(z)^{-1})=I+\mathcal{O}(e^{-cN^\delta})$ for some fixed $c>0$ and we should expect that this part of the jump contour is somehow negligible in our RH-analysis. While most of the lemma is a straightforward consequence of the RHP for $Y$ and $T$, we mention that to see uniqueness of the solution to the problem, one needs to use \eqref{eq:Ssingasy} and \eqref{eq:Ssingasy12}, to see that if $S$ and $\widetilde{S}$ are two solutions, then any possible isolated singularities of $S\widetilde{S}^{-1}$ at $e^{\pm iu}$ are removable.

\subsection{The second transformation -- the large \texorpdfstring{$u$}{u} case}

The problem with the previous transformation is that we would like that on most of $\T$, one would have the jump condition \eqref{eq:Sjump1} instead of \eqref{eq:Sjump2}, but when $u$ grows, the portion of $\T$ where one has the jump \eqref{eq:Sjump2} becomes relevant. The reason to prefer \eqref{eq:Sjump1} is that solving the corresponding approximate RHP is much simpler and we get better estimates. We remedy this by instead of considering a $\kappa$-neighborhood of the point $1$, we consider disjoint $\kappa/4$-neighborhoods of the points $e^{\pm iu}$. We'll also need to rescale our enlargement of $\T$ slightly.  More precisely, we define

\begin{align}\label{eq:Ltildefla}
\widetilde{L}_{{\mathrm{la}}}&=\lbrace z\in \C: d(z,e^{\pm iu})>\kappa/4\rbrace\cap \left\lbrace z\in \C:1-\frac{1}{6}\kappa<|z|< 1+\frac{1}{6}\kappa\right\rbrace,
\end{align}

\noindent For an illustration of the set $\widetilde{L}_{{\mathrm{la}}}$, see Figure \ref{fig:L2}.

\begin{figure}
\begin{center}
\begin{tikzpicture}[xscale=0.02,yscale=0.02]
\clip (-135,-135) rectangle (170,135); 

\fill [color=black] (0,0) circle (3);

\draw [black,thick,domain=45:315] plot ({80*cos(\x)}, {80*sin(\x)});
\draw [black,thick,domain=42:318] plot ({120*cos(\x)}, {120*sin(\x)});

\draw [black,thick,domain=-15:15] plot ({80*cos(\x)}, {80*sin(\x)});
\draw [black,thick,domain=-19:19] plot ({120*cos(\x)}, {120*sin(\x)});

\fill [color=black] (86,50) circle (3);
\fill [color=black] (86,-50) circle (3);

\node at (0,-100) {\small $\widetilde{L}_{{\mathrm{la}}}$};
\node at (-10,0) {\small $0$};
\node at (90,65) {\small $e^{iu}$};
\node at (90,-65) {\small $e^{-iu}$};

\draw [black,thick,domain=82:168] plot ({86+30*cos(\x)}, {50+ 30*sin(\x)});
\draw [black,thick,domain=192:278] plot ({86+30*cos(\x)}, {-50+ 30*sin(\x)});

\draw [black,thick,domain=252:338] plot ({86+30*cos(\x)}, {50+ 30*sin(\x)});
\draw [black,thick,domain=22:108] plot ({86+30*cos(\x)}, {-50+ 30*sin(\x)});

\end{tikzpicture}
\begin{tikzpicture}[xscale=0.02,yscale=0.02]
\clip (-135,-135) rectangle (170,135);

\draw [black,thick,domain=45:315] plot ({80*cos(\x)}, {80*sin(\x)});
\draw [black,thick,domain=42:318] plot ({120*cos(\x)}, {120*sin(\x)});

\draw [black,thick,domain=-15:15] plot ({80*cos(\x)}, {80*sin(\x)});
\draw [black,thick,domain=-19:19] plot ({120*cos(\x)}, {120*sin(\x)});

\fill [color=black] (86,50) circle (3);
\fill [color=black] (86,-50) circle (3);
\draw[->,thick] (-80,2) -- (-80,-2);
\draw[->,thick] (-120,2) -- (-120,-2);

\draw[->,thick] (80,-2) -- (80,2);
\draw[->,thick] (120,-2) -- (120,2);

\node at (-70,0) {\small $+$};
\node at (-90,0) {\small $-$};

\node at (-110,0) {\small $+$};
\node at (-130,0) {\small $-$};

\node at (70,0) {\small $+$};
\node at (90,0) {\small $-$};

\node at (110,0) {\small $+$};
\node at (130,0) {\small $-$};

\node at (0,-100) {\small ${L_{{\mathrm{la}}}}$};
\node at (110,65) {\small $e^{iu}$};
\node at (110,-65) {\small $e^{-iu}$};

\draw [black, thick] ({86+30*cos(193)}, {-50+ 30*sin(193)}) -- (86,-50);
\draw [black, thick] ({86+30*cos(275)}, {-50+ 30*sin(275)}) -- (86,-50);

\draw [black, thick] ({86+30*cos(85)}, {50+ 30*sin(85)}) -- (86,50);
\draw [black, thick] ({86+30*cos(167)}, {50+ 30*sin(167)}) -- (86,50);

\draw [black, thick] ({86+30*cos(22)}, {-50+ 30*sin(22)}) -- (86,-50);
\draw [black, thick] ({86+30*cos(108)}, {-50+ 30*sin(108)}) -- (86,-50);

\draw [black, thick] ({86+30*cos(252)}, {50+ 30*sin(252)}) -- (86,50);
\draw [black, thick] ({86+30*cos(338)}, {50+ 30*sin(338)}) -- (86,50);

\end{tikzpicture}
\end{center}
\vspace{-0.3cm}
\caption{Left: A caricature  of the set $\widetilde{L}_{{\mathrm{la}}}$. Right: a caricature of the set $L_{{\mathrm{la}}}$ along with the orientation of its boundary contour for the $S$-RHP.}
\label{fig:L2}
\end{figure}

Again, to obtain a suitable jump contour for our deformed RHP, we connect $\widetilde{L}_{{\mathrm{la}}}$ to $e^{iu}$ and $e^{-iu}$ in a suitable way. We'll be more precise later, but we take four curves going from $e^{iu}$ to the four closest points on $\overline{\widetilde{L}}_{{\mathrm{la}}}\cap \lbrace |z|=1\pm \kappa/6\rbrace$, and similarly for $e^{-iu}$. Again the contour won't intersect the cut of $f$ apart from at the points $e^{\pm i u}$. We add the (interior of the) domain bounded by these curves to $\widetilde{L}_{{\mathrm{la}}}$ and call the new set $L_{{\mathrm{la}}}$ (see Figure \ref{fig:L2}).

In the large $u$ case the next transformation is then

\begin{equation}
S_{{\mathrm{la}}}(z,t)=\begin{cases}
T(z,t), & z\notin L_{{\mathrm{la}}}\\
T(z,t)\begin{pmatrix}
1 & 0\\
z^{-N}f_t(z)^{-1} & 1
\end{pmatrix}, & z\in L_{{\mathrm{la}}}\ \mathrm{and} \ |z|>1\\
T(z,t)\begin{pmatrix}
1 & 0\\
-z^{N}f_t(z)^{-1} & 1
\end{pmatrix}, & z\in L_{{\mathrm{la}}}\ \mathrm{and} \ |z|<1
\end{cases}.
\end{equation}

For the RHP,  we again choose the orientation of $\partial L_{{\mathrm{la}}}$ in such a way that the side of the origin is always the $+$-side (Figure \ref{fig:L2}) and for $\T$  we choose the origin to be on the $+$-side. The relevant RHP is now the following.

\begin{lemma}\label{le:Srhpla}
For $t\in[0,1]$, $S_{{\mathrm{la}}}(\cdot,t)$ is the unique solution to the following problem.

\begin{itemize}[leftmargin=0.5cm]
\item[1.] $S_{{\mathrm{la}}}(\cdot,t):\C \setminus (\partial L_{{\mathrm{la}}}\cup \T)\to \C^{2\times 2}$ is analytic.
\item[2.] For $z\in \T\cap L_{{\mathrm{la}}}$, 

\begin{equation}\label{eq:Sjump1la}
S_{{\mathrm{la}},+}(z,t)=S_{{\mathrm{la}},-}(z,t)\begin{pmatrix}
0 & f_t(z)\\
-f_t(z)^{-1} & 0
\end{pmatrix}.
\end{equation}

For $z\in \T\setminus (L_{{\mathrm{la}}}\cup \lbrace e^{iu},e^{-iu}\rbrace)$

\begin{equation}\label{eq:Sjump2la}
S_{{\mathrm{la}},+}(z,t)=S_{{\mathrm{la}},-}(z,t)\begin{pmatrix}
z^N & f_t(z)\\
0 & z^{-N}
\end{pmatrix}.
\end{equation}

For $z\in \partial L_{{\mathrm{la}}}\cap\lbrace |z|<1\rbrace$

\begin{equation}\label{eq:Sjump3la}
S_{{\mathrm{la}},+}(z,t)=S_{{\mathrm{la}},-}(z,t)\begin{pmatrix}
1 & 0\\
z^N f_t(z)^{-1} & 1
\end{pmatrix}
\end{equation}

\noindent and for $z\in \partial L_{{\mathrm{la}}}\cap \lbrace |z|>1\rbrace$

\begin{equation}\label{eq:Sjump4la}
S_{{\mathrm{la}},+}(z,t)=S_{{\mathrm{la}},-}(z,t)\begin{pmatrix}
1 & 0\\
z^{-N}f_t(z)^{-1} & 1
\end{pmatrix}.
\end{equation}

\item[3.] As $z\to\infty$, $S_{{\mathrm{la}}}(z,t)=I+\mathcal{O}(z^{-1})$.

\item[4.] As $z\to e^{iu}$,

\begin{equation}\label{eq:Ssingasyla}
S_{{\mathrm{sm}}}(z,t)=\begin{cases}
\begin{pmatrix}
\mathcal{O}(|z-e^{iu}|^{-\widetilde{\beta}_1}) & \mathcal{O}(1)\\
\mathcal{O}(|z-e^{iu}|^{-\widetilde{\beta}_1}) & \mathcal{O}(1)
\end{pmatrix}, & z\in L_{{\mathrm{sm}}}\setminus \T\\
\mathcal{O}(1), & z\in \C\setminus (\overline{L}_{{\mathrm{sm}}}\cup \T)
\end{cases}
\end{equation}

\noindent and as $z\to e^{-iu}$

\begin{equation}\label{eq:Ssingasyla12}
S_{{\mathrm{sm}}}(z,t)=\begin{cases}
\begin{pmatrix}
\mathcal{O}(|z-e^{-iu}|^{-\widetilde{\beta}_2}) & \mathcal{O}(1)\\
\mathcal{O}(|z-e^{-iu}|^{-\widetilde{\beta}_2}) & \mathcal{O}(1)
\end{pmatrix}, & z\in L_{{\mathrm{sm}}}\setminus \T\\
\mathcal{O}(1), & z\in \C\setminus (\overline{L}_{{\mathrm{sm}}}\cup \T)
\end{cases}.
\end{equation}

\end{itemize}
\end{lemma}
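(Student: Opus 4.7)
The plan is to mirror the proof of Lemma~\ref{le:Srhp} essentially verbatim; the only structural difference between the large-$u$ and small-$u$ transformations is that $L_{\mathrm{la}}$ consists of two disjoint lens regions (one around each of $e^{\pm iu}$) rather than a single lens. Since $S_{\mathrm{la}}$ is defined by the same block-triangular multipliers as $S_{\mathrm{sm}}$, every algebraic identity carries over without change, and only the bookkeeping of which portion of the contour receives which jump matrix is different.

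First I would verify analyticity on $\C\setminus(\partial L_{\mathrm{la}}\cup\T)$: this follows from the analyticity of $T$ on $\C\setminus\T$ given by Lemma~\ref{le:Trhp}, together with the analytic and zero-free extension of $f_t$ to the $3\kappa$-neighborhood of $\T$ provided by Section~\ref{sec:cont} and Lemma~\ref{le:vtholo}, noting that $L_{\mathrm{la}}$ lies strictly inside this neighborhood by construction. Continuity of boundary values away from $\{e^{\pm iu}\}$ is inherited from $T$ and from the continuity of $f_t^{\pm 1}$.

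Next I would compute the jumps. On $\partial L_{\mathrm{la}}\cap\{|z|\neq 1\}$ the function $T$ is analytic across the contour, so the jump of $S_{\mathrm{la}}$ is exactly the lower-triangular multiplier used in its definition, producing \eqref{eq:Sjump3la}--\eqref{eq:Sjump4la}. On $\T\setminus L_{\mathrm{la}}$ no multiplier acts on either side and \eqref{eq:Sjump2la} reduces to the untransformed $T$-jump. On $\T\cap L_{\mathrm{la}}$ one uses the factorization
\begin{equation*}
\begin{pmatrix} z^N & f_t(z)\\ 0 & z^{-N}\end{pmatrix}
=\begin{pmatrix} 1 & 0 \\ z^{-N}f_t(z)^{-1} & 1\end{pmatrix}\begin{pmatrix} 0 & f_t(z) \\ -f_t(z)^{-1} & 0\end{pmatrix}\begin{pmatrix} 1 & 0 \\ z^{N}f_t(z)^{-1} & 1\end{pmatrix},
\end{equation*}
whose outer factors are precisely the multipliers applied on the two sides of $\T\cap L_{\mathrm{la}}$, leaving the antidiagonal matrix of \eqref{eq:Sjump1la} as the residual $S_{\mathrm{la}}$-jump. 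The normalization at infinity is immediate since $S_{\mathrm{la}}\equiv T$ on the complement of the bounded set $L_{\mathrm{la}}$. The endpoint behavior \eqref{eq:Ssingasyla}--\eqref{eq:Ssingasyla12} is checked in two cases: outside the lens $S_{\mathrm{la}}=T$ is bounded near $e^{\pm iu}$ (the entries of $Y$, hence of $T$, are polynomials or Cauchy transforms along $\T$ which remain bounded away from the branch points inside the lens), while inside the lens the $f_t(z)^{-1}$ factor in the multiplier contributes magnitude $\asymp|z-e^{\pm iu}|^{-\widetilde{\beta}_{1,2}}$ and, through matrix multiplication, populates only the first column with this singular factor.

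Finally, for uniqueness I would form $R=S\,\widetilde S^{-1}$ for any two solutions $S,\widetilde S$. The jump matrices all have determinant $1$, so $\det S$ is analytic across $\partial L_{\mathrm{la}}\cup\T$, has at worst an integrable isolated singularity at $e^{\pm iu}$ (automatically removable, using $\widetilde{\beta}_j<2$), and tends to $1$ at infinity; hence $\det S\equiv 1$, and likewise for $\widetilde S$, so $\widetilde S^{-1}$ is well defined. All jumps then cancel in $R$, which is therefore analytic on $\C\setminus\{e^{\pm iu}\}$. The column structure in \eqref{eq:Ssingasyla}--\eqref{eq:Ssingasyla12}, transposed for $\widetilde S^{-1}$ via the cofactor formula, yields a sufficiently weak growth estimate at each of $e^{\pm iu}$ to force the two isolated singularities of $R$ to be removable by the standard Fisher--Hartwig endpoint argument (as in, e.g., \cite{deift,dik1}); then $R\to I$ at infinity and Liouville gives $R\equiv I$. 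The main (albeit routine) technical point is this removability step, where the restriction $\widetilde{\beta}_j<2$ and the sharp column structure of the endpoint estimates genuinely enter; the rest is a line-by-line transcription of the small-$u$ analysis.
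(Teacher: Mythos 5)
Your proposal is correct and follows the same (standard) approach that the paper indicates without writing out, namely the remark after Lemma~\ref{le:Srhp} about removable singularities of $S\widetilde S^{-1}$; the paper gives no separate argument for the large-$u$ case at all. Two small points are worth tightening. First, $L_{\mathrm{la}}$ is actually a single connected set (the thin annulus $\widetilde L_{\mathrm{la}}$ together with the two four-ray star regions attached at $e^{\pm iu}$), not two disjoint lens regions: the real structural change relative to the small-$u$ case is only that the contour approaches $e^{iu}$ and $e^{-iu}$ through separate stars rather than through a single wedge at $1$, and this does not affect any step of your argument. Second, the parenthetical ``at worst an integrable isolated singularity ... automatically removable, using $\widetilde\beta_j<2$'' is too quick: area-integrability does not preclude a simple pole, and for $\widetilde\beta_j\in[1,2)$ (which does occur here, since $\beta\geq\sqrt 2$ in Theorem~\ref{th:main}) the one-sided bound $\mathcal{O}(|z-e^{\pm iu}|^{-\widetilde\beta_j})$ from inside $L_{\mathrm{la}}$ does not by itself force removability. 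The clean version uses both halves of \eqref{eq:Ssingasyla}--\eqref{eq:Ssingasyla12}: multiplying by $(z-e^{\pm iu})^{2}$ and applying Riemann's theorem (using $\widetilde\beta_j<2$) shows the singularity of $R=S\widetilde S^{-1}$, and likewise of $\det S$, is at worst a simple pole, and the $\mathcal{O}(1)$ bound outside $\overline L_{\mathrm{la}}\cup\T$ then excludes the pole. You do flag that the column structure and the restriction $\widetilde\beta_j<2$ ``genuinely enter,'' so the ingredients are all present; the logic just needs to be made explicit since neither the integrability claim nor the one-sided estimate alone closes the argument.
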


The next step is to find approximate solutions to the RHPs satisfied by $S_{{\mathrm{sm}}}$ and $S_{{\mathrm{la}}}$. These approximate solutions are often called parametrices.

\section{Parametrices}\label{sec:param}

We will search for different approximate solutions in different domains. When we are not too close to the singularities, we'll simply approximate our RHPs by the jump condition \eqref{eq:Sjump1}. The solution to this RHP is known as the global parametrix. On the other hand, when we are very close to the singularities, we'll focus on solving an approximate problem here -- the solution to this problem is called the local parametrix. As the jump contours are different in the small $u$ and large $u$ cases, we'll have to treat them separately when searching for the local parametrix.

We begin with defining the global parametrix.

\subsection{The global parametrix}

Define the function (see \eqref{eq:dint} and \eqref{eq:doutt})

\begin{equation}\label{eq:globaldef}
\mathcal{N}(z,t)=\begin{cases}
\mathcal{D}_{t,in}(z)^{\sigma_3}\begin{pmatrix}
0 & 1\\
-1 & 0
\end{pmatrix}, & |z|<1\\
\mathcal{D}_{t,out}(z)^{\sigma_3}, & |z|>1
\end{cases}.
\end{equation}

Again, it is standard that $\mathcal{N}$ satisfies a Riemann-Hilbert problem. We leave the proof of the following lemma to the reader. 

\begin{lemma}\label{le:globalrhp}

$\mathcal{N}(\cdot,t)$ solves the following problem.

\begin{itemize}[leftmargin=0.5cm]
\item[1.] $\mathcal{N}(\cdot,t):\C\setminus \T\to \C^{2\times 2}$ is analytic.
\item[2.] For $z\in \T\setminus \lbrace e^{iu},e^{-iu}\rbrace$,

\begin{equation}\label{eq:globaljump}
\mathcal{N}_+(z,t)=\mathcal{N}_-(z,t)\begin{pmatrix}
0 & f_t(z)\\
-f_t(z)^{-1} & 0
\end{pmatrix}.
\end{equation}

\item[3.] As $z\to\infty$

\begin{equation}\label{eq:globalasy}
\mathcal{N}(z,t)=I+\mathcal{O}(z^{-1}).
\end{equation}
\end{itemize}
\end{lemma}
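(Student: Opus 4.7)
The plan is a direct verification; this is the standard global parametrix for Toeplitz determinants with Fisher--Hartwig singularities and the entire content of the lemma is the Szeg\H{o}-type factorization
\[
f_t(z) = \mathcal{D}_{t,in,+}(z)\,\mathcal{D}_{t,out,-}(z)^{-1}, \qquad z\in\T\setminus\{e^{\pm iu}\},
\]
which is the exact analogue of \eqref{eq:fdrep} with $V$, $\mathcal{D}$ replaced by $V_t$, $\mathcal{D}_t$. It holds for the same reason: $V$ is real on $\T$, hence so is $V_t(e^{i\theta})=\log(1-t+te^{V(e^{i\theta})})$, and the branch conventions in \eqref{eq:dint}--\eqref{eq:doutt} are identical to those in \eqref{eq:din}--\eqref{eq:dout}, so the boundary values of $\mathcal{D}_{t,in}$ and $\mathcal{D}_{t,out}^{-1}$ multiply to $e^{V_t(z)}|z-e^{iu}|^{\widetilde{\beta}_1}|z-e^{-iu}|^{\widetilde{\beta}_2}=f_t(z)$, exactly as for $t=1$.

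Granting this factorization, item (1) is immediate: as recorded after \eqref{eq:dout}, the branch cuts of $\mathcal{D}_{t,in}$ lie on $e^{\pm iu}[1,\infty)\subset\{|z|\geq 1\}$ and those of $\mathcal{D}_{t,out}$ on $e^{\pm iu}[0,1]\subset\{|z|\leq 1\}$, so each piece of $\mathcal{N}(\cdot,t)$ is analytic on its open region. Item (2) is a direct $2\times 2$ matrix computation: for $z\in\T\setminus\{e^{\pm iu}\}$,
\[
\mathcal{N}_-(z,t)\begin{pmatrix} 0 & f_t(z)\\ -f_t(z)^{-1} & 0\end{pmatrix}=\begin{pmatrix} 0 & \mathcal{D}_{t,out,-}(z)f_t(z)\\ -\mathcal{D}_{t,out,-}(z)^{-1}f_t(z)^{-1} & 0\end{pmatrix}=\begin{pmatrix} 0 & \mathcal{D}_{t,in,+}(z)\\ -\mathcal{D}_{t,in,+}(z)^{-1} & 0\end{pmatrix},
\]
which equals $\mathcal{D}_{t,in,+}(z)^{\sigma_3}\begin{pmatrix} 0 & 1\\ -1 & 0\end{pmatrix}=\mathcal{N}_+(z,t)$.

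Finally, for item (3), $\mathcal{N}(z,t)=\mathcal{D}_{t,out}(z)^{\sigma_3}$ for $|z|>1$, so it suffices to verify that $\mathcal{D}_{t,out}(z)=1+\mathcal{O}(z^{-1})$ as $z\to\infty$. From \eqref{eq:doutt}, the exponential prefactor $e^{\sum_{j\leq-1}V_j(t)z^j}$ is $1+\mathcal{O}(z^{-1})$, and with the stated branches $(z-e^{\pm iu})^{\widetilde{\beta}_i/2}/z^{\widetilde{\beta}_i/2}=(1-e^{\pm iu}/z)^{\widetilde{\beta}_i/2}=1+\mathcal{O}(z^{-1})$ for $i=1,2$, giving the claim. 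There is no serious obstacle; the lemma is a verification, and the only delicate point is the bookkeeping of branches needed to transfer the Szeg\H{o} factorization from $f$ to $f_t$, which is explicit in Section \ref{sec:cont}.
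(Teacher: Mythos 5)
Your argument is correct, and since the paper explicitly leaves the proof of this lemma to the reader (calling it "standard"), there is no paper proof to compare against. You correctly reduce the jump condition to the Szeg\H{o}-type factorization $f_t(z)=\mathcal{D}_{t,in,+}(z)\mathcal{D}_{t,out,-}(z)^{-1}$, which follows from \eqref{eq:dint}--\eqref{eq:doutt} exactly as \eqref{eq:fdrep} does from \eqref{eq:din}--\eqref{eq:dout} because the Fisher--Hartwig factor and branch conventions are unchanged and only $V$ is replaced by $V_t$; and the matrix computation transferring this scalar identity to the $2\times 2$ jump is a one-line check, as you carry out. The analyticity in item (1) and the $z\to\infty$ asymptotics in item (3) are verified as you say, with the one additional (implicit) observation that the Laurent tails $e^{\sum_{j\geq 0}V_j(t)z^j}$ and $e^{\sum_{j\leq -1}V_j(t)z^j}$ converge respectively on $\{|z|<1+3\kappa\}$ and $\{|z|>1-3\kappa\}$ by Lemma \ref{le:vtholo}, so the two exponential factors are analytic on the required domains; this is worth stating but does not affect correctness.
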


Note that the jump condition of $\mathcal{N}$ is the same as \eqref{eq:Sjump1}.

\subsection{The local parametrix in the small \texorpdfstring{$u$}{u} case}

Our local parametrix will rely heavily on work in \cite{ck}. It will be built using functions satisfying certain model Riemann-Hilbert problems. For a detailed analysis, we refer to \cite{ck}, but we review the definitions and some properties of them in Appendix \ref{app:A}. 

Let us first introduce a change of coordinates that will zoom into a neighborhood of the singularities:

\begin{equation}\label{eq:zetadef}
\zeta(z):=\frac{1}{u}\log z,
\end{equation}

\noindent where we take the principal branch of the logarithm. Note that $\zeta$ maps the arc $\lbrace e^{is},|s|\leq u\rbrace$ into $[-i,i]$. 

We can now be more concrete about how we choose $\partial L_{{\mathrm{sm}}}$ near $e^{\pm iu}$. We choose it so that under $\zeta$ it is mapped onto parts of the rays $i+e^{i\frac{\pi}{2}\pm i\frac{\pi}{4}}\times \R_+$ and $-i+e^{-i\frac{\pi}{2}\pm i\frac{\pi}{4}}\times \R_+$. As $\zeta$ is conformal, we see with this choice that e.g. $\partial L_{{\mathrm{sm}}}$ does not intersect the cut of $f$ except at $e^{\pm iu}$.

We'll also introduce an analytic function whose role is to ensure that near the boundary of a $\kappa$-neighborhood of the point $1$, our local parametrix looks like the global one. Let 

\begin{equation}\label{eq:Edef}
E(z,t):=\begin{pmatrix}
0 & 1\\
1& 0
\end{pmatrix}\left[\mathcal{D}_{t,in}(z)\mathcal{D}_{t,out}(z)\right]^{-\frac{1}{2}\sigma_3}\widehat{P}^{(\infty)}(\zeta(z))^{-1},
\end{equation}

\noindent where $\widehat{P}^{(\infty)}$ is defined in \eqref{eq:Phidef} and for $\mathcal{D}_{t,in/out}$ see \eqref{eq:dint} and \eqref{eq:doutt}. Moreover, the branch of the root is chosen so that the cuts of $[\mathcal{D}_{t,in}(z)\mathcal{D}_{t,out}(z)]^{-1/2}$ are $e^{\pm iu}\times(0,\infty)$. Using the definition of $\mathcal{D}_{t,in/out}$ and $\widehat{P}^{(\infty)}$ one can easily check that this function is analytic in a $\kappa$-neighborhood of $1$. We omit the proof.

In addition to the function $\Phi$ from Appendix \ref{sec:phi}, we'll also need the following function to build our local parametrix: for $z\notin e^{\pm iu}\times[0,\infty)$ and $z\notin(-\infty,0]$ let

\begin{equation}\label{eq:Wdef}
W(z,t):=\begin{cases}
-z^{\frac{N}{2}\sigma_3}f_t(z)^{-\frac{\sigma_3}{2}}\sigma_3, & |z|<1\\
z^{\frac{N}{2}\sigma_3}f_t(z)^{\frac{\sigma_3}{2}}\begin{pmatrix}
0 & 1\\
1 & 0
\end{pmatrix},& |z|>1
\end{cases}
\end{equation}
where $z^{N/2}=e^{\frac{N}{2}\log z}$ where the branch of the log is the principal one, and the branch of $f_t(z)^{\pm 1/2}$ is chosen so that the cuts are on $e^{\pm iu}(0,\infty)$.

Finally we define our local parametrix:

\begin{equation}\label{eq:localdef}
P(z,t)=E(z,t)\Phi\left(\zeta(z),-2iNu\right)W(z,t),
\end{equation}

\noindent where as mentioned, $\Phi$ is defined in Appendix \ref{sec:phi}.

Using these definitions, and our definition of the set $L_{\mathrm{sm}}$, one can check that in a $\kappa$-neighborhood of $1$, $P$ has the same jumps as $S_{{\mathrm{sm}}}$ (namely \eqref{eq:Sjump2}, \eqref{eq:Sjump3}, and \eqref{eq:Sjump4}). Moreover, they have same behavior near $e^{\pm iu}$ -- \eqref{eq:Ssingasy}. We omit the details.

\subsection{The local parametrix in the large \texorpdfstring{$u$}{u} case}

The construction is now similar to that in the small $u$ case, but we essentially treat the singularities separately. We again use the same change of coordinates $z\mapsto \zeta(z)$. The role of $\widehat{P}^{(\infty)}$ is played by the functions

\begin{equation*}
\Omega_1(z)=\begin{cases}
e^{i\frac{\widetilde{\beta}_1}{4}\pi\sigma_3}, & \mathrm{Im}[\zeta(z)]>1\\
e^{-i\frac{\widetilde{\beta}_1}{4}\pi\sigma_3}, & \mathrm{Im}[\zeta(z)]<1
\end{cases} \qquad \mathrm{and} \qquad \Omega_2(z)=\begin{cases}
e^{i\frac{\widetilde{\beta}_2}{4}\pi\sigma_3}, & \mathrm{Im}[\zeta(z)]>-1\\
e^{-i\frac{\widetilde{\beta}_2}{4}\pi\sigma_3}, & \mathrm{Im}[\zeta(z)]<-1
\end{cases}.
\end{equation*}

Note that e.g. $\mathrm{Im}[\zeta(z)]>\pm 1$ is equivalent to $\mathrm{arg}(z)>\pm u$. The role of the function $E$ is now played by the functions (for $j=1,2$)

\begin{equation*}
\widetilde{E}_j(z,t)=\begin{pmatrix}
0 & 1\\
1 & 0
\end{pmatrix}(\mathcal{D}_{t,in}(z)\mathcal{D}_{t,out}(z))^{-\frac{\sigma_3}{2}}\Omega_j(z)^{-1}e^{-\frac{Nu i}{2}\sigma_3},
\end{equation*}
where the branch of the root is chosen as for \eqref{eq:Edef}.

In reference to Appendix \ref{app:Am}, we define the function $\widetilde{M}_j(\lambda)$ to be the function $M(\lambda)=M(\lambda,\beta)$ from Appendix \ref{app:Am}, with the difference that we replace the $\beta$ by $\beta_j/2$, so $\widetilde{M}_j(\lambda)=M(\lambda,\beta_j/2)$. Finally we define for $j=1,2$

\begin{equation*}
\widetilde{P}_j(z,t)=\widetilde{E}_j(z,t)\widetilde{M}_j(Nu (\zeta(z)\mp i))\Omega_j(z)W(z,t),
\end{equation*}

\noindent where $W$ is as in \eqref{eq:Wdef} and the sign in $\mp$ is such that for $j=1$, we choose the $-$ sign and for $j=2$, we chose the $+$ sign. 

Again we can now be more precise about what $\partial L_{{\mathrm{la}}}$ looks like near $e^{\pm iu}$ --  we choose it so that in a $\kappa/4$-neighborhood of $e^{\pm iu}$, the different parts of $\partial L_{{\mathrm{la}}}$ are mapped onto parts of the rays $e^{k\pi i/4}\times \R_+$ with $k=1,3,5,7$. One can check that with this choice, $\widetilde{P}_1$ has the same jump structure as $S_{{\mathrm{la}}}$ in a $\kappa/4$-neighborhood of $e^{iu}$ and $\widetilde{P}_2$ has the same jump structure as $S_{{\mathrm{la}}}$ in a $\kappa/4$-neighborhood of $e^{-iu}$. Finally we mention that using the explicit form of $\widetilde{M}$ from \cite[Section 4.2.1]{cik}, one can check that $\widetilde{P}_1(z,t)$ has the same asymptotic behavior as $S(z,t)$ as $z\to e^{iu}$ (and similarly $\widetilde{P}_2$ as $z\to e^{-iu}$). We omit the details.

\section{The final transformation}\label{sec:final}

In our final transformation we make use of these approximate solutions. To solve the final RHP asymptotically, we'll need estimates for its jump matrices and we'll derive these estimates in this section. Again we'll need to discuss the small and large $u$ situations separately. 

\subsection{The small \texorpdfstring{$u$}{u} case}

Let $U=\lbrace z: |z-1|<\kappa\rbrace$. We then define 

\begin{equation}
R_{{\mathrm{sm}}}(z,t)=\begin{cases}
S_{{\mathrm{sm}}}(z,t)\mathcal{N}(z,t)^{-1}, & z\in \C\setminus \overline{U}\\
S_{{\mathrm{sm}}}(z,t) P(z,t)^{-1}, & z\in U
\end{cases}.
\end{equation}

By construction, $P$ has the same jump contours and jump matrices as $S_{{\mathrm{sm}}}$ in $U$. It follows that $R_{{\mathrm{sm}}}$ only has jumps across $\partial U$ and $\partial L_{{\mathrm{sm}}}\setminus \overline{U}$. Moreover, from the asymptotic behavior of $S_{{\mathrm{sm}}}$ and $P$ near $e^{\pm iu}$, it follows that the possible isolated singularities of $R_{{\mathrm{sm}}}$ are not strong enough to be poles or essential singularities, so $R_{{\mathrm{sm}}}$ is analytic in $\C\setminus (\partial U\cup (\partial L_{{\mathrm{sm}}}\setminus \overline{U}))$.

We orient $\partial U$ so that the inside of the disk is the $-$ side and the outside of it is the $+$ side. We orient $\partial L_{{\mathrm{sm}}}\setminus \overline{U}$ the same way as before. See Figure \ref{fig:smjump} for a sketch of the jump contour. The RHP associated to $R_{{\mathrm{sm}}}$ is then the following one.

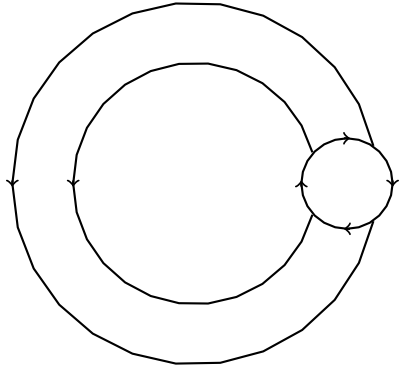
\begin{figure}
\begin{center}
\begin{tikzpicture}[xscale=0.02,yscale=0.02]
\clip (-135,-135) rectangle (170,135);

\draw [black,thick,domain=15:345] plot ({80*cos(\x)}, {80*sin(\x)});
\draw [black,thick,domain=12:348] plot ({120*cos(\x)}, {120*sin(\x)});

\draw[->,thick] (-80,2) -- (-80,-2);
\draw[->,thick] (-120,2) -- (-120,-2);

\draw[->,thick] (70,-2) -- (70,2);
\draw[->,thick] (130,2) -- (130,-2);

\draw[->,thick] (98,30) -- (102,30);
\draw[->,thick] (102,-30) -- (98,-30);

\draw [black,thick,domain=0:360] plot ({100+30*cos(\x)}, {30*sin(\x)});

\end{tikzpicture}
\end{center}
\vspace{-0.3cm}
\caption{The jump contour and orientation of $R_{{\mathrm{sm}}}$. The left side of the contour is the $+$ side.}
\label{fig:smjump}
\end{figure}

\begin{lemma}\label{le:Rrhpsm}
$R_{{\mathrm{sm}}}$ is the unique solution to the following RHP.

\begin{itemize}[leftmargin=0.5cm]
\item[1.] $R_{{\mathrm{sm}}}:\C\setminus (\partial U\cup (\partial L_{{\mathrm{sm}}}\setminus \overline{U}))\to \C^{2\times 2}$ is analytic.
\item[2.] $R_{{\mathrm{sm}}}$ has continuous boundary values on $(\partial U\cup (\partial L_{{\mathrm{sm}}}\setminus \overline{U}))\setminus (\partial U\cap \partial L_{\mathrm{sm}})$ and these satisfy the following jump conditions: for $z\in \partial U\setminus \partial L_{{\mathrm{sm}}}$:

\begin{equation}\label{eq:Rsmjump1}
R_{{\mathrm{sm}},+}(z,t)=R_{{\mathrm{sm}},-}(z,t)P(z,t)\mathcal{N}(z,t)^{-1},
\end{equation}

\noindent for $z\in \partial L_{{\mathrm{sm}}}\lbrace |z|<1\rbrace\setminus\overline{U}$:

\begin{equation}\label{eq:Rsmjump2}
R_{{\mathrm{sm}},+}(z,t)=R_{{\mathrm{sm}},-}(z,t)\mathcal{N}(z,t)\begin{pmatrix}
1 & 0\\
z^N f_t(z)^{-1} & 1
\end{pmatrix}\mathcal{N}(z,t)^{-1},
\end{equation}

\noindent and for $z\in \partial L_{{\mathrm{sm}}}\lbrace |z|>1\rbrace\setminus\overline{U}$:

\begin{equation}\label{eq:Rsmjump3}
R_{{\mathrm{sm}},+}(z,t)=R_{{\mathrm{sm}},-}(z,t)\mathcal{N}(z,t)\begin{pmatrix}
1 & 0\\
z^{-N} f_t(z)^{-1} & 1
\end{pmatrix}\mathcal{N}(z,t)^{-1}.
\end{equation}

\item[3.] As $z\to \infty$, $R_{{\mathrm{sm}}}(z,t)=I+\mathcal{O}(z^{-1})$.
\item[4.] $R_{\mathrm{sm}}(z,t)$ remains bounded as $z\to w\in\partial U\cap \partial L_{\mathrm{sm}}$.
\end{itemize}
\end{lemma}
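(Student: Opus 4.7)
The plan is to verify each of the four properties of the Riemann-Hilbert problem in turn, exploiting that $\mathcal{N}$ and $P$ were constructed precisely so that they absorb the jumps of $S_{{\mathrm{sm}}}$ in the appropriate regions. The verification is largely a bookkeeping exercise given Lemma \ref{le:Srhp}, Lemma \ref{le:globalrhp}, and the construction of $P$ in Section \ref{sec:param}; the only genuinely delicate point is the removability of the isolated singularities of $R_{{\mathrm{sm}}}$ at $e^{\pm iu}$.

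First I would check analyticity of $R_{{\mathrm{sm}}}$ off the claimed contour $\partial U \cup (\partial L_{{\mathrm{sm}}} \setminus \overline{U})$. On $\T \setminus \overline{U}$, which by construction of $\widetilde{L}_{{\mathrm{sm}}}$ lies entirely inside $L_{{\mathrm{sm}}}$ (points on $\T$ at distance $>\kappa$ from $1$ belong to $\widetilde{L}_{{\mathrm{sm}}}$), both $S_{{\mathrm{sm}}}$ and $\mathcal{N}$ satisfy the jump \eqref{eq:Sjump1}/\eqref{eq:globaljump}, so these cancel and $R_{{\mathrm{sm}}} = S_{{\mathrm{sm}}} \mathcal{N}^{-1}$ extends analytically across this part of $\T$. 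Inside $U$, the construction of $P$ in Section \ref{sec:param} guarantees that $P$ has exactly the same jump contour and jump matrices as $S_{{\mathrm{sm}}}$ on $(\T \cup \partial L_{{\mathrm{sm}}}) \cap U$, so $R_{{\mathrm{sm}}} = S_{{\mathrm{sm}}} P^{-1}$ is analytic in $U \setminus \{e^{\pm iu}\}$.

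To handle the isolated singularities at $e^{\pm iu}$, I would compare the local behavior \eqref{eq:Ssingasy}--\eqref{eq:Ssingasy12} of $S_{{\mathrm{sm}}}$ with the corresponding local behavior of $P$ coming from the explicit form of $\Phi$ in Appendix \ref{sec:phi}. The parametrix $P$ is designed to reproduce exactly the singular column structure of $S_{{\mathrm{sm}}}$ at these points, and since $\det P \equiv 1$, the singular row of $P^{-1}$ is arranged so that after multiplication the $|z - e^{\pm iu}|^{-\widetilde{\beta}_j}$ blow-ups cancel; consequently $R_{{\mathrm{sm}}}$ is bounded in a punctured neighborhood of each of $e^{\pm iu}$, and Riemann's theorem on removable singularities extends it analytically through these points. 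This is the step I expect to require the most care, since the needed cancellation is not visible from the abstract jump structure alone but requires input from the explicit form of the model solution $\Phi$ built in the appendix.

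The jump conditions \eqref{eq:Rsmjump1}--\eqref{eq:Rsmjump3} on the remaining contour then follow by direct computation from the piecewise definition of $R_{{\mathrm{sm}}}$: on $\partial U \setminus \partial L_{{\mathrm{sm}}}$ the function $S_{{\mathrm{sm}}}$ is analytic and the jump comes entirely from the switch between $P^{-1}$ and $\mathcal{N}^{-1}$, giving \eqref{eq:Rsmjump1}, while on the lens contour outside $\overline{U}$ the function $\mathcal{N}$ is analytic across the contour and the jumps \eqref{eq:Sjump3}--\eqref{eq:Sjump4} of $S_{{\mathrm{sm}}}$ get conjugated by $\mathcal{N}$ to yield \eqref{eq:Rsmjump2}--\eqref{eq:Rsmjump3}. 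The asymptotic behavior $R_{{\mathrm{sm}}}(z,t) = I + \mathcal{O}(z^{-1})$ as $z \to \infty$ is immediate from the corresponding asymptotics of $S_{{\mathrm{sm}}}$ (Lemma \ref{le:Srhp}) and of $\mathcal{N}$ (Lemma \ref{le:globalrhp}). Boundedness at the four intersection points of $\partial U$ and $\partial L_{{\mathrm{sm}}}$ holds because all of $S_{{\mathrm{sm}}}$, $P$, and $\mathcal{N}$ remain bounded there (these points lie at distance of order $\kappa$ from $e^{\pm iu}$, away from the singularities). Finally, uniqueness is the standard Liouville argument: if $\widetilde{R}$ is another solution, then $R_{{\mathrm{sm}}}\widetilde{R}^{-1}$ is analytic across the jump contour (same jumps cancel), has removable singularities at the four intersection points by condition (4), is therefore entire, and tends to $I$ at infinity, hence equals $I$.
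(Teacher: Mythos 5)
Your proposal takes the same route as the paper's own (two-sentence) argument preceding the lemma: the jump contour reduces to $\partial U \cup (\partial L_{\mathrm{sm}} \setminus \overline{U})$ because $\mathcal{N}$ and $P$ absorb the jumps of $S_{\mathrm{sm}}$ in their respective domains, the isolated singularities at $e^{\pm iu}$ are shown removable, and the remaining conditions (jump computations, behavior at infinity, boundedness at the corners, uniqueness by Liouville) are routine. You fill in those routine steps correctly.

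The one place where your justification is imprecise is exactly where you flag caution. The deduction ``since $\det P \equiv 1$, the singular row of $P^{-1}$ is arranged so that after multiplication the blow-ups cancel; consequently $R_{\mathrm{sm}}$ is bounded'' is not valid as stated: the magnitude bounds \eqref{eq:Ssingasy}--\eqref{eq:Ssingasy12}, combined with the adjugate form of $P^{-1}$, give only $S_{\mathrm{sm}}P^{-1} = \mathcal{O}(|z - e^{\pm iu}|^{-\widetilde{\beta}_j})$, and for $\widetilde{\beta}_j \geq 1$ (which occurs for $\beta \in [\sqrt{2},2)$) this is strong enough to admit a pole. What is actually needed is that $S_{\mathrm{sm}}$ and $P$ share the \emph{same} local factorization, (analytic) $\times$ (explicit singular matrix), the singular factor being supplied by condition 4 of Definition \ref{def:psi} and transported through the construction \eqref{eq:localdef}; only then do the singular parts cancel identically rather than merely in magnitude. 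The paper is equally terse on this (``not strong enough to be poles or essential singularities''), so you are not out of line with it, but the reason is the exact local match with $\Phi$, not the determinant condition.
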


Our goal is to show that the jump matrix of $R_{{\mathrm{sm}}}$ is uniformly small so we can solve the problem asymptotically through the standard small norm machinery. Let us consider first the jump across $\partial L_{{\mathrm{sm}}}\setminus \overline{U}$.

\begin{lemma}\label{le:smjumpl}
There exists a $c>0$ which is independent of $N$, $z$, $t$, and $\theta,\theta'$ such that uniformly in $z\in \partial L_{{\mathrm{sm}}}\setminus \overline{U}$, $t\in[0,1]$, $\theta,\theta'$

\begin{equation*}
R_{{\mathrm{sm}},-}(z,t)^{-1}R_{{\mathrm{sm}},+}(z,t)=I+\mathcal{O}(e^{-cN^\delta})
\end{equation*}

\noindent uniformly as $N\to\infty$.
\end{lemma}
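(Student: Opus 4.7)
The plan is to directly compute the conjugated jump matrices in \eqref{eq:Rsmjump2} and \eqref{eq:Rsmjump3}, exploit the analytic identity $f_t(z)=\mathcal{D}_{t,in}(z)\mathcal{D}_{t,out}(z)^{-1}$ coming from \eqref{eq:fdrep} applied to $f_t$, and then combine this with Lemma \ref{le:Eest} and a crude bound on $|z|^{\pm N}$. First I would observe that since we are in the small $u$ regime ($u<\kappa/2$) and the curves of $\partial L_{\mathrm{sm}}$ joining $\widetilde{L}_{\mathrm{sm}}$ to $e^{\pm iu}$ sit in a $\kappa$-neighborhood of $1$, the set $\partial L_{\mathrm{sm}}\setminus\overline{U}$ is contained in the two circular arcs on $|z|=1\pm\tfrac{2}{3}\kappa$. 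In particular this set is bounded away from $e^{\pm iu}$ and avoids the branch cuts $e^{\pm iu}\times[1,\infty)$ of $\mathcal{D}_{t,in}$ and $e^{\pm iu}\times[0,\infty)$ of $\mathcal{D}_{t,out}$ by the way the contour was chosen in Section \ref{sec:trans}, so all the manipulations below are legitimate.

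Next I would carry out the matrix calculation. Plugging \eqref{eq:globaldef} into \eqref{eq:Rsmjump2} and \eqref{eq:Rsmjump3} and using $\sigma_3$-conjugation, a short computation would give, for $z$ on the inner arc $|z|=1-\tfrac{2}{3}\kappa$,
\begin{equation*}
\mathcal{N}(z,t)\begin{pmatrix}1 & 0 \\ z^{N} f_t(z)^{-1} & 1\end{pmatrix}\mathcal{N}(z,t)^{-1}=I+\begin{pmatrix}0 & -z^{N} f_t(z)^{-1}\mathcal{D}_{t,in}(z)^{2} \\ 0 & 0\end{pmatrix},
\end{equation*}
and for $z$ on the outer arc $|z|=1+\tfrac{2}{3}\kappa$,
\begin{equation*}
\mathcal{N}(z,t)\begin{pmatrix}1 & 0 \\ z^{-N} f_t(z)^{-1} & 1\end{pmatrix}\mathcal{N}(z,t)^{-1}=I+\begin{pmatrix}0 & 0 \\ z^{-N} f_t(z)^{-1}\mathcal{D}_{t,out}(z)^{-2} & 0\end{pmatrix}.
\end{equation*}
Substituting $f_t=\mathcal{D}_{t,in}\mathcal{D}_{t,out}^{-1}$, the nontrivial entries simplify to $-z^{N}\mathcal{D}_{t,in}(z)\mathcal{D}_{t,out}(z)$ and $z^{-N}(\mathcal{D}_{t,in}(z)\mathcal{D}_{t,out}(z))^{-1}$ respectively; by Lemma \ref{le:Eest} the factor $(\mathcal{D}_{t,in}\mathcal{D}_{t,out})^{\pm 1}$ is $\mathcal{O}(1)$ uniformly in $z\in\partial L_{\mathrm{sm}}\setminus\overline{U}$, $t\in[0,1]$ and $\theta,\theta'$. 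Thus the whole problem reduces to bounding $|z|^{\pm N}$ on the two arcs.

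Finally, on the inner arc $|z|^{N}\le\exp(-\tfrac{2}{3}\kappa N)$ and symmetrically $|z|^{-N}\le\exp(-\tfrac{1}{2}\kappa N)$ on the outer arc (for $\kappa$ small). Recalling \eqref{eq:kappadef} and the hypotheses $K_{1}\le K_{2}\le N^{1-\delta}$ and $M$ fixed, we have $\max(K_{2},M^{2})\le N^{1-\delta}$ for all sufficiently large $N$, so $\kappa N\ge\varepsilon N^{\delta}$. Taking $c$ to be, say, $\varepsilon/3$ then yields the desired uniform bound $R_{\mathrm{sm},-}(z,t)^{-1}R_{\mathrm{sm},+}(z,t)=I+\mathcal{O}(e^{-cN^{\delta}})$. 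The main (and in fact only mildly nontrivial) point is checking that the identity $f_{t}=\mathcal{D}_{t,in}\mathcal{D}_{t,out}^{-1}$, initially a statement on $\T$, really does propagate as an analytic identity in the $3\kappa$-neighborhood of $\T$ where the jump contour sits; this however follows directly from the analyticity statements in Section \ref{sec:cont}, since both sides extend analytically across the portions of $\T$ not meeting the branch cuts, and the contour was chosen precisely to avoid those cuts.
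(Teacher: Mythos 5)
Your proposal is correct and follows essentially the same route as the paper: conjugate the triangular jump factor by $\mathcal{N}$, use the analytic identity $f_t=\mathcal{D}_{t,in}\mathcal{D}_{t,out}^{-1}$ to reduce the off-diagonal entry to $z^{\pm N}(\mathcal{D}_{t,in}\mathcal{D}_{t,out})^{\pm 1}$, and combine Lemma~\ref{le:Eest} with the bound $|z|^{\mp N}=\mathcal{O}(e^{-cN^{\delta}})$ on the arcs $|z|=1\pm\tfrac{2}{3}\kappa$. The only difference is that you carry out both the $|z|<1$ and $|z|>1$ computations and spell out the constant-chasing, whereas the paper does one case and says the other is similar.
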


\begin{proof}
Let us write the jump matrix of $R_{{\mathrm{sm}}}$ across this contour as

\begin{align*}
R_{{\mathrm{sm}},-}(z,t)^{-1}R_{{\mathrm{sm}},+}(z,t)=I+\mathcal{N}(z,t)\begin{pmatrix}
0 & 0\\
z^{\pm N} f_t(z)^{-1} & 0
\end{pmatrix}\mathcal{N}(z,t)^{-1}.
\end{align*}

\noindent where we have the $+$ sign inside the unit disk and the $-$ sign outside of the disk. For simplicity, let's focus on the case where $|z|=1+\frac{2}{3}\kappa$, the $|z|<1$-case is dealt with similarly. Here we have 

\begin{align*}
\mathcal{N}(z,t)\begin{pmatrix}
0 & 0\\
z^{-N} f_t(z)^{-1} & 0
\end{pmatrix}\mathcal{N}(z,t)^{-1}&=\begin{pmatrix}
0 & 0\\
\mathcal{D}_{t,out}(z)^{-2}z^{-N} f_t(z)^{-1} & 0
\end{pmatrix}\\
&=\begin{pmatrix}
0 & 0\\
\mathcal{D}_{t,in}(z)^{-1}\mathcal{D}_{t,out}(z)^{-1}z^{-N}  & 0
\end{pmatrix}.
\end{align*}

As $|z|=1+\frac{2}{3}\kappa\geq 1+C N^{\delta-1}$, we have for such $z$, $|z|^{-N}=\mathcal{O}(e^{-c N^\delta})$ for some suitable $c$ (which is independent of everything relevant). Thus from Lemma \ref{le:Eest}, we conclude the claim for $|z|>1$. As mentioned, the argument for $|z|<1$ is similar.
\end{proof}

Consider now the jump across $\partial U$. Here $S_{{\mathrm{sm}}}$ has no jumps and jump matrix describes how well the parametrices match on $\partial U$.

\begin{lemma}\label{le:smjumpu}
Uniformly in $z\in \partial U$, $t\in[0,1]$, $\theta,\theta'$

\begin{equation*}
R_{{\mathrm{sm}},-}(z,t)^{-1}R_{{\mathrm{sm}},+}(z,t)=I+\mathcal{O}(N^{-\delta})
\end{equation*}

\noindent as $N\to\infty$.
\end{lemma}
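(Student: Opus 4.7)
The approach is to exploit the matching between the local and global parametrices that was built into the construction of Section~\ref{sec:param}. On $\partial U$ the function $S_{\mathrm{sm}}$ has no jumps, so the only jump of $R_{\mathrm{sm}}$ there is the mismatch between $P$ and $\mathcal{N}$:
\begin{equation*}
R_{{\mathrm{sm}},-}(z,t)^{-1}R_{{\mathrm{sm}},+}(z,t)=P(z,t)\mathcal{N}(z,t)^{-1}, \qquad z\in\partial U.
\end{equation*}
The definitions of $E$, $\Phi$, $W$ and $\mathcal{N}$ in Section~\ref{sec:param} were arranged so that if in the definition of $P$ one replaces $\Phi(\zeta(z),-2iNu)$ by its large-parameter limit $\widehat{P}^{(\infty)}(\zeta(z))$, the resulting product $E\,\widehat{P}^{(\infty)}(\zeta)\,W$ coincides with $\mathcal{N}$ on $U$. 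Consequently
\begin{equation*}
P(z,t)\mathcal{N}(z,t)^{-1}=E(z,t)\bigl[\Phi(\zeta(z),-2iNu)\,\widehat{P}^{(\infty)}(\zeta(z))^{-1}\bigr]E(z,t)^{-1},
\end{equation*}
and the task reduces to controlling the bracketed factor uniformly on $\partial U$ and dealing with the conjugation by $E$.

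For the bracketed factor I would invoke the classical large-parameter asymptotics of $\Phi(\zeta,s)$ (recorded in Appendix~\ref{app:A}, following \cite{ck}): for $\zeta$ bounded away from the branch points $\pm i$, as $|s|\to\infty$,
\begin{equation*}
\Phi(\zeta,-2iNu)\,\widehat{P}^{(\infty)}(\zeta)^{-1}=I+\mathcal{O}\!\left(\frac{1}{Nu\,|\zeta^{2}+1|^{1/2}}\right)=I+\mathcal{O}\!\left(\frac{1}{N\,|\log z-iu|^{1/2}|\log z+iu|^{1/2}}\right).
\end{equation*}
On $\partial U$ we have $|z-1|=\kappa$ and, in the small-$u$ regime $u<\kappa/2$, $|\log z\mp iu|\geq c\kappa$ by the triangle inequality; hence the error is $\mathcal{O}(1/(N\kappa))$. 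Since $K_{2}\leq N^{1-\delta}$ and $M$ is independent of $N$, the choice $\kappa=\varepsilon/\max(K_{2},M^{2})$ from Definition~\ref{def:kappadef} gives $N\kappa\geq c\varepsilon N^{\delta}$, so
\begin{equation*}
\Phi(\zeta(z),-2iNu)\,\widehat{P}^{(\infty)}(\zeta(z))^{-1}-I=\mathcal{O}(N^{-\delta}).
\end{equation*}

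It remains to show the conjugation by $E$ does not destroy the estimate. From \eqref{eq:Edef}, $E$ is a product of a constant swap matrix, a power of $\mathcal{D}_{t,in}\mathcal{D}_{t,out}$, and $\widehat{P}^{(\infty)}(\zeta(z))^{-1}$. Lemma~\ref{le:Eest} bounds $[\mathcal{D}_{t,in}\mathcal{D}_{t,out}]^{\pm 1/2}$ by $\mathcal{O}(1)$ uniformly in $t\in[0,1]$ and $\theta,\theta'$ on $\partial U$, while $\widehat{P}^{(\infty)}(\zeta(z))^{\pm 1}$ is uniformly bounded for $\zeta(z)$ bounded away from $\pm i$, which is the case on $\partial U$. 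Hence $\|E\|\cdot\|E^{-1}\|=\mathcal{O}(1)$, and multiplying through yields $P\mathcal{N}^{-1}=I+\mathcal{O}(N^{-\delta})$ with the required uniformity in $t\in[0,1]$, $\theta,\theta'\in[0,2\pi]$, and $z\in\partial U$.

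The main obstacle I anticipate is keeping the implicit constants in the asymptotics of $\Phi$ uniform as $\zeta(z)$ drifts towards $\pm i$, which happens when $u$ is close to zero. This is why it is crucial that the natural small parameter is not $1/(Nu)$ but $1/(Nu|\zeta\pm i|)=1/|N\log z\mp iNu|$, and why the threshold $\kappa\leq\varepsilon/\max(K_{2},M^{2})$ was built into Definition~\ref{def:kappadef}: it forces $|\log z\mp iu|\gtrsim\kappa$ on $\partial U$ for every $u<\kappa/2$, so a single power of $N^{\delta}$ is gained uniformly in the small-$u$ regime.
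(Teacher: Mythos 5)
Your derivation of the jump as $R_{{\mathrm{sm}},-}^{-1}R_{{\mathrm{sm}},+}=P\mathcal{N}^{-1}$ is correct, and your observation that $E$ and $E^{-1}$ are uniformly bounded (via Lemma~\ref{le:Eest}) is also right. However, the central factorization you wrote down is wrong, and the error is not cosmetic.

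You claim $P(z,t)\mathcal{N}(z,t)^{-1}=E(z,t)\bigl[\Phi(\zeta(z),-2iNu)\,\widehat{P}^{(\infty)}(\zeta(z))^{-1}\bigr]E(z,t)^{-1}$, justified by the assertion that replacing $\Phi$ by $\widehat{P}^{(\infty)}$ in $P=E\Phi W$ yields $\mathcal{N}$. This is false. A direct computation with Definition~\ref{def:Ydef} (for $W$, $\mathcal{N}$, $E$) gives
\begin{equation*}
P(z,t)\mathcal{N}(z,t)^{-1}=E(z,t)\,\Phi(\zeta(z),-2iNu)\,e^{\frac{Nu}{2}\zeta(z)\sigma_3}\,\widehat{P}^{(\infty)}(\zeta(z))^{-1}\,E(z,t)^{-1},
\end{equation*}
with an extra factor $e^{\frac{Nu}{2}\zeta(z)\sigma_3}$ coming from $z^{\frac{N}{2}\sigma_3}=e^{\frac{Nu}{2}\zeta(z)\sigma_3}$ inside $W$. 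This factor is indispensable: $\Phi\widehat{P}^{(\infty)-1}=\Psi$, and the model problem gives $\Psi(\zeta,s)=(I+\mathcal{O}(\zeta^{-1}))e^{-\frac{is}{4}\zeta\sigma_3}$ as $\zeta\to\infty$. With $s=-2iNu$, the last factor is $e^{-\frac{Nu}{2}\zeta\sigma_3}$, which grows or decays like $e^{\pm cN\kappa}$ on $\partial U$. So the bracketed quantity in your formula is \emph{not} $I+o(1)$ -- it is exponentially large or small. It only becomes $I+\mathcal{O}(N^{-\delta})$ after the cancellation against the $e^{\frac{Nu}{2}\zeta\sigma_3}$ you dropped (and after using that $\widehat{P}^{(\infty)}$, being diagonal, commutes with that factor).

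The second gap is the asymptotic formula you invoke: $\Phi\widehat{P}^{(\infty)-1}=I+\mathcal{O}\bigl(1/(Nu|\zeta^{2}+1|^{1/2})\bigr)$ ``for $\zeta$ bounded away from $\pm i$, as $|s|\to\infty$.'' No such single-regime estimate appears in Appendix~\ref{app:A} or in~\cite{ck}, and it cannot hold as stated for the reason above. In the paper the argument genuinely splits into three regimes of $s=-2iNu$ -- small, bounded, and large -- and invokes the three separate estimates \eqref{eq:phiasy3}, \eqref{eq:phiasy1}, \eqref{eq:phiasy2}, each of which already has the compensating exponential built in. Your heuristic that ``the natural small parameter is $1/|N\log z\mp iNu|$'' does land in the right ballpark numerically ($N\kappa\gtrsim N^{\delta}$ on $\partial U$), but the route you propose to reach it does not go through: you must restore the $e^{\frac{Nu}{2}\zeta\sigma_3}$ factor and then, separately in each regime of $|s|=2Nu$, observe that it cancels the exponential in the appropriate asymptotic expansion of $\Phi$, and finally check that in the small-$|s|$ regime the relevant parameter is $\lambda=\frac{|s|}{2}(\zeta-i)$ rather than $s\zeta$.
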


\begin{proof}
Let us first note that from the definition of the global parametrix $\mathcal{N}$ and local parametrix $P$, we can write the jump matrix across $\partial U$ as 

\begin{equation*}
P(z,t)\mathcal{N}(z,t)^{-1}=E(z,t)\Phi(\zeta(z),-2iNu) e^{\frac{Nu}{2}\zeta(z)\sigma_3}\widehat{P}^{(\infty)}(\zeta(z))^{-1} E(z,t)^{-1}.
\end{equation*}

We now point out that from Lemma \ref{le:Eest}, $E$ and $E^{-1}$ are uniformly bounded on $\partial U$, so if we can show that uniformly on $\partial U$ (and in everything else) $\Phi(\zeta(z),-2iNu) e^{\frac{Nu}{2}\zeta(z)\sigma_3}\widehat{P}^{(\infty)}(\zeta(z))^{-1}=I+\mathcal{O}(N^{-\delta})$, we are done. To do this we wish to use the asymptotics of $\Phi$ (described in detail in \cite{ck} and summarized in our Appendix \ref{app:A}), which depend on whether $s=-2iNu$ tends to zero, is bounded, or tends to infinity as $N\to\infty$. 

This is slightly  lengthy, but follows quite directly from the results quoted in Appendix \ref{app:A}. Let us fix $c$ small enough and $C$ large enough (small enough and large enough being chosen so that we can use the small $|s|$ asymptotics of $\Phi(\zeta,s)$ for $|s|<c$ and large $|s|$ asymptotics of $\Phi(\zeta,s)$ for $|s|>C$). Consider first the case when $2Nu<c$. As for $z\in \partial U$, $|z-1|=\kappa$ we see that $|\zeta(z)|\asymp \kappa/u$ so in particular, $N^{\delta}\lesssim |\zeta(z)|$ (recall we are considering $u=\mathcal{O}(N^{-1})$ and always $N^{\delta-1}\lesssim \kappa$) where the implied constant is uniform in everything relevant. We thus are interested in the large $\zeta$ and small $s$ asymptotics of $\Phi(\zeta,s)$. We find from \eqref{eq:phiasy3} (writing $\lambda=(\zeta-i)|s|/2$ and noting that for $z\in\partial U$, $|(\zeta(z)-i)s|\asymp N\kappa)$ that for $Nu<c/2$ and $z\in \partial U$

\begin{align*}
\Phi(\zeta(z),-2iNu) e^{\frac{Nu}{2}\zeta(z)\sigma_3}\widehat{P}^{(\infty)}(\zeta(z))^{-1}&=e^{\frac{s}{4}\sigma_3}(I+\mathcal{O}((N\kappa)^{-1}))e^{-\frac{(\zeta(z)-i)|s|}{4}\sigma_3} e^{\frac{Nu}{2}\zeta(z)\sigma_3}\\
&=I+\mathcal{O}((N\kappa)^{-1})=I+\mathcal{O}(N^{-\delta})
\end{align*}

\noindent uniformly in everything relevant.

Consider next the situation where $c\leq Nu\leq C$. With similar reasoning as above,  we have $N^\delta\lesssim|\zeta(z)|$ for $z\in \partial U$. Now from \eqref{eq:phiasy1} we see that on $\partial U$

\begin{equation*}
\Phi(\zeta(z),-2iNu) e^{\frac{Nu}{2}\zeta(z)\sigma_3}\widehat{P}^{(\infty)}(\zeta(z))^{-1}=I+\mathcal{O}(|\zeta(z)|^{-1})=I+\mathcal{O}(N^{-\delta})
\end{equation*}

\noindent uniformly in everything relevant.

Consider finally $C\leq Nu\leq \kappa N/2$. To use the asymptotics of Appendix \ref{app:A}, we need to determine whether or not $\zeta(\partial U)$ intersects the region delimited by $\Gamma_5'$ and $\Gamma_5''$ (see Appendix \ref{app:A} for the description of these contours). To do this, let us deduce the inverse image of $\Gamma_5''$ under $\zeta$ ($\Gamma_5'$ is similar). We thus want to find those $z$ for which $u^{-1}\log z=(1-s)i+s$ for some $s\in[0,1]$ or $u^{-1}\log z=(1-s)(-i)+s$ for some $s\in[0,1]$. We note that this is true for $z=e^{\pm iu} e^{su(1\mp i)}$ so we see that $|z-1|=u\sqrt{s^{2}+(1-s)^{2}}+\mathcal{O}(u^{2})$ where the $\mathcal{O}(u^{2})$ is uniform in $s$. As $u\leq \kappa/2$, we see by choosing $\varepsilon$ in the definition of $\kappa$ small enough that for such $z$, $|z-1|<\kappa$ and $\partial U$ is outside the region delimited by $\Gamma_5'$ and $\Gamma_5''$.

Consider now the relevant asymptotics. From \eqref{eq:phiasy2} we find that for $z\in \partial U$

\begin{equation*}
\Phi(\zeta(z),-2iNu) e^{\frac{Nu}{2}\zeta(z)\sigma_3}\widehat{P}^{(\infty)}(\zeta(z))^{-1}=I+\mathcal{O}(|s\zeta(z)|^{-1}). 
\end{equation*}

Again we have for $z\in \partial U$ $|\zeta(z)|\asymp \kappa/u$ so $N^\delta\lesssim|s\zeta(z)|$ which yields the claim.
\end{proof}

\subsection{The large \texorpdfstring{$u$}{u} case} The basic idea is similar to the small $u$ case, but the differences in the local parametrices change things slightly. We now define $\widetilde{U}=\widetilde{U}_1\cup \widetilde{U}_2$, where $\widetilde{U}_1$ is the $\kappa/4$-neighborhood of $e^{iu}$ and $\widetilde{U}_2$ is the $\kappa/4$-neighborhood of $e^{-iu}$. We then go on and define 

\begin{equation}\label{eq:Rladef}
R_{{\mathrm{la}}}(z,t)=\begin{cases}
S_{{\mathrm{la}}}(z,t)\mathcal{N}(z,t)^{-1}, & z\in \C\setminus \overline{\widetilde{U}}\\
S_{{\mathrm{la}}}(z,t)\widetilde{P}_1(z,t)^{-1}, & z\in \overline{\widetilde{U}_1}\\
S_{{\mathrm{la}}}(z,t)\widetilde{P}_2(z,t)^{-1}, & z\in \overline{\widetilde{U}_2}
\end{cases}.
\end{equation}

\begin{figure}
\begin{center}
\begin{tikzpicture}[xscale=0.02,yscale=0.02]
\clip (-135,-135) rectangle (170,135);

\draw [black,thick,domain=45:315] plot ({80*cos(\x)}, {80*sin(\x)});
\draw [black,thick,domain=42:318] plot ({120*cos(\x)}, {120*sin(\x)});

\draw [black,thick,domain=-15:15] plot ({80*cos(\x)}, {80*sin(\x)});
\draw [black,thick,domain=-19:19] plot ({120*cos(\x)}, {120*sin(\x)});

\draw[->,thick] (-80,2) -- (-80,-2);
\draw[->,thick] (-120,2) -- (-120,-2);

\draw[->,thick] (107,71) -- (108,70);
\draw[->,thick] (65,70) -- (66,71);
\draw[->,thick] (108,30) -- (107,29);
\draw[->,thick] (66,29) -- (65,30);

\draw[->,thick] (108,-70)--(107,-71) ;
\draw[->,thick] (66,-71)--(65,-70) ;
\draw[->,thick] (107,-29)--(108,-30) ;
\draw[->,thick]  (65,-30)--(66,-29) ;

\draw [black,thick,domain=0:360] plot ({86+30*cos(\x)}, {50+ 30*sin(\x)});
\draw [black,thick,domain=0:360] plot ({86+30*cos(\x)}, {-50+ 30*sin(\x)});

\end{tikzpicture}
\end{center}
\vspace{-0.3cm}
\caption{The jump contour and its orientation for $R_{{\mathrm{la}}}$. The left side of the contour is the $+$ side.}
\label{fig:Rlajump}
\end{figure}
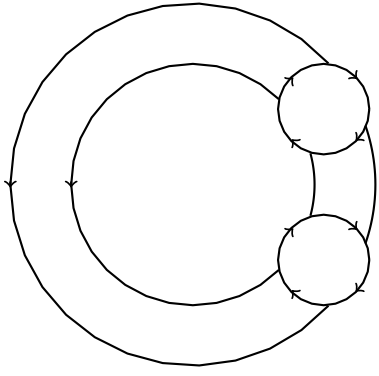

Again $R_{{\mathrm{la}}}$ is analytic off of $(\partial L_{{\mathrm{la}}}\setminus \overline{\widetilde{U}})\cup \partial \widetilde{U}$ and if we choose the orientation of $\partial L_{{\mathrm{la}}}\setminus \overline{\widetilde{U}}$ as before and the orientation of $\partial \widetilde{U}$ such that the inside of the disks is the $-$ side, $R_{{\mathrm{la}}}$ satisfies the following RHP:

\begin{lemma}\label{le:Rrhpla}
$R_{{\mathrm{la}}}$ is characterized by the following RHP.

\begin{itemize}[leftmargin=0.5cm]
\item[1.] $R_{{\mathrm{la}}}:\C\setminus (\partial \widetilde{U}\cup (\partial L_{{\mathrm{la}}}\setminus \overline{\widetilde{U}}))\to \C^{2\times 2}$ is analytic.
\item[2.] $R_{{\mathrm{la}}}$ has continuous boundary values on $(\partial \widetilde{U}\cup (\partial L_{{\mathrm{la}}}\setminus \overline{\widetilde{U}}))\setminus (\partial L_{\mathrm{la}}\cap \partial \widetilde{U})$ and they satisfy the following jump conditions: for $z\in \partial \widetilde{U}_j\setminus \partial L_{{\mathrm{la}}}$

\begin{equation}\label{eq:Rlajump1}
R_{{\mathrm{la}},+}(z,t)=R_{{\mathrm{la}},-}(z,t)\widetilde{P}_j(z,t)\mathcal{N}(z,t)^{-1}
\end{equation}

\noindent and for $z\in \partial L_{{\mathrm{la}}}\setminus \overline{\widetilde{U}}$

\begin{equation}\label{eq:Rlajump2}
R_{{\mathrm{la}},+}(z,t)=R_{{\mathrm{la}},-}(z,t)\mathcal{N}(z,t)\begin{pmatrix}
1 & 0\\
z^{-N} f_t(z)^{-1} & 1
\end{pmatrix}\mathcal{N}(z,t)^{-1}.
\end{equation}

\item[3.] As $z\to \infty$, $R_{{\mathrm{la}}}(z,t)=I+\mathcal{O}(z^{-1})$.
\item[4.] As $z\to w\in \partial L_{\mathrm{la}}\cap \partial \widetilde{U}$, $R_{\mathrm{la}}(z,t)$ remains bounded.
\end{itemize}
\end{lemma}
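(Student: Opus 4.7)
The proof is the standard ``post-parametrix'' verification in the steepest descent method, and the work has essentially already been loaded into the construction of $\mathcal{N}$ and $\widetilde{P}_j$; what remains is bookkeeping. The plan is to check each of the four items in turn and then uniqueness.

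First I would verify analyticity. Away from $\overline{\widetilde{U}}$, the function $R_{\mathrm{la}} = S_{\mathrm{la}} \mathcal{N}^{-1}$ can only have jumps along the contours of $S_{\mathrm{la}}$ or of $\mathcal{N}$. The only such contour outside $\overline{\widetilde{U}}$ is $(\T \cap L_{\mathrm{la}}) \cup (\partial L_{\mathrm{la}} \setminus \overline{\widetilde{U}})$, and on $\T \cap L_{\mathrm{la}}$ the jump of $S_{\mathrm{la}}$ given by \eqref{eq:Sjump1la} coincides with that of $\mathcal{N}$ in \eqref{eq:globaljump}, so these cancel when we compute $S_{\mathrm{la},-}^{-1} \mathcal{N}_{-} \mathcal{N}_{+}^{-1} S_{\mathrm{la},+} = I$. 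Inside $\widetilde{U}_j$ the function $R_{\mathrm{la}} = S_{\mathrm{la}} \widetilde{P}_j^{-1}$; the whole point of the construction of the local parametrices (recorded at the end of Section~\ref{sec:param}) was that $\widetilde{P}_j$ has exactly the jump structure of $S_{\mathrm{la}}$ on $\T \cap \widetilde{U}_j$ (both sides of $e^{\pm iu}$, including the short arc of $\T$ outside $L_{\mathrm{la}}$) and on $\partial L_{\mathrm{la}} \cap \widetilde{U}_j$, so all jumps inside $\widetilde{U}_j$ cancel. Finally I would remove the potential isolated singularities at $z = e^{\pm iu}$: the endpoint behavior \eqref{eq:Ssingasyla}--\eqref{eq:Ssingasyla12} for $S_{\mathrm{la}}$ matches the endpoint behavior of $\widetilde{P}_j$ given by the explicit form of $\widetilde{M}_j$ (the confluent hypergeometric model from \cite{cik}), so $S_{\mathrm{la}} \widetilde{P}_j^{-1}$ has at worst an integrable singularity at $e^{\pm iu}$ and, being bounded matrix--valued and analytic off an isolated point, the singularity is removable.

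Next, the jump conditions in item 2 follow directly from the definition \eqref{eq:Rladef}: on $\partial \widetilde{U}_j \setminus \partial L_{\mathrm{la}}$ the ``$-$'' side is outside $\widetilde{U}_j$ where $R_{\mathrm{la}} = S_{\mathrm{la}} \mathcal{N}^{-1}$ while the ``$+$'' side is inside where $R_{\mathrm{la}} = S_{\mathrm{la}} \widetilde{P}_j^{-1}$, giving \eqref{eq:Rlajump1} after cancelling the common factor $S_{\mathrm{la}}$ (which has no jump across $\partial \widetilde{U}_j$). On $\partial L_{\mathrm{la}} \setminus \overline{\widetilde{U}}$ both sides use the global parametrix, so the jump is
\[
R_{\mathrm{la},+} = S_{\mathrm{la},+} \mathcal{N}^{-1} = S_{\mathrm{la},-} J_{S_{\mathrm{la}}} \mathcal{N}^{-1} = R_{\mathrm{la},-} \mathcal{N} J_{S_{\mathrm{la}}} \mathcal{N}^{-1},
\]
with $J_{S_{\mathrm{la}}}$ read off from \eqref{eq:Sjump3la}--\eqref{eq:Sjump4la}. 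On the $|z|<1$ portion of $\partial L_{\mathrm{la}}$ one can use the identity $z^{2N} f_t(z)^{-1} \cdot z^{-N} f_t(z)^{-1} \cdot $ (implicit in the symmetry of the lens opening) to rewrite both into the single form \eqref{eq:Rlajump2}; alternatively one just states the jump in two cases, but the form in the lemma is obtained after using $z^N \to z^{-N}$ via the $|z|<1 \leftrightarrow |z|>1$ symmetry of the lens. For item 3, both $S_{\mathrm{la}}$ and $\mathcal{N}$ are $I + \mathcal{O}(z^{-1})$ at infinity (Lemma~\ref{le:Srhpla}(3) and Lemma~\ref{le:globalrhp}(3)), and so is their product. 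For item 4, at a corner $w \in \partial L_{\mathrm{la}} \cap \partial \widetilde{U}$ both $S_{\mathrm{la}}$ and $\widetilde{P}_j$ are bounded (the corner stays at positive distance from $e^{\pm iu}$ by construction), so the boundedness of $R_{\mathrm{la}}$ follows from boundedness of $\widetilde{P}_j^{-1}$ and $\mathcal{N}^{-1}$, which in turn follows from $\det \widetilde{P}_j \equiv \det \mathcal{N} \equiv 1$ together with boundedness of the entries.

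Uniqueness is the standard RHP argument: if $R$ and $R'$ are two solutions, then $H = R(R')^{-1}$ has no jump across the contour (the common jump cancels), is bounded near the finitely many corner points (item 4) and hence analytic there by the removable singularity theorem, and tends to $I$ at infinity; by Liouville $H \equiv I$. The one place where I would be careful is precisely the cancellation of jumps inside $\widetilde{U}_j$ on the arc of $\T$ that lies in $\widetilde{U}_j \setminus L_{\mathrm{la}}$: there $S_{\mathrm{la}}$ still has the \eqref{eq:Sjump2la} jump involving $z^{\pm N}$, so one must check that $\widetilde{P}_j$, built out of $\widetilde{M}_j$ and the factors $\Omega_j, \widetilde{E}_j, W$, indeed carries this same jump. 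This is the main (though routine) verification and was the reason for the specific choice of $W$ in \eqref{eq:Wdef} and of the ``constant'' sectorial factors $\Omega_j$; once that matching is observed, everything else falls into place mechanically.
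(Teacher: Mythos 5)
The paper states this lemma without proof immediately after \eqref{eq:Rladef}, treating the verification as routine exactly as it did for $R_{\mathrm{sm}}$ (see the two sentences preceding Lemma~\ref{le:Rrhpsm}). Your argument supplies the missing routine steps and its overall structure is correct: cancellation of the jumps on $\T\cap L_{\mathrm{la}}$ because \eqref{eq:Sjump1la} matches \eqref{eq:globaljump}, cancellation inside $\widetilde U_j$ because $\widetilde P_j$ was designed to carry the jumps of $S_{\mathrm{la}}$ there, removal of the isolated singularities at $e^{\pm iu}$ from the matching endpoint behavior of $\widetilde M_j$ and $S_{\mathrm{la}}$, the normalization at infinity, boundedness at the corners, and the Liouville-type uniqueness argument. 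This is exactly the standard post-parametrix check and is the proof the authors chose not to write out.

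However, one passage is wrong. The jump across $\partial L_{\mathrm{la}}\setminus\overline{\widetilde U}$ is not a single formula: from \eqref{eq:Sjump3la} and \eqref{eq:Sjump4la} the conjugated $(2,1)$-entry is $z^{N}f_t(z)^{-1}$ on $\partial L_{\mathrm{la}}\cap\{|z|<1\}$ and $z^{-N}f_t(z)^{-1}$ on $\partial L_{\mathrm{la}}\cap\{|z|>1\}$, exactly the two cases written out in the small-$u$ analogue \eqref{eq:Rsmjump2}--\eqref{eq:Rsmjump3} and referred to with the $z^{\pm N}$ notation in the proof of Lemma~\ref{le:smjumpl}. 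The statement of Lemma~\ref{le:Rrhpla} records only the $|z|>1$ case. Your attempt to force agreement on $|z|<1$ via an ``identity $z^{2N}f_t^{-1}\cdot z^{-N}f_t^{-1}$'' and a ``$|z|<1\leftrightarrow|z|>1$ symmetry of the lens'' is incoherent --- no such identity or symmetry exists, and the jump matrices on the two sides genuinely differ. The correct response is to note that the lemma's statement omits the $|z|<1$ case (as the small-$u$ analogue makes clear the paper intends both), not to invent a spurious algebraic reduction; a nonexistent ``symmetry'' that seems to make a mismatch disappear should be treated as a red flag, not a tool.
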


Let us now estimate the jump matrices. The asymptotics related to the jump across $\partial L_{{\mathrm{la}}}\setminus \overline{\widetilde{U}}$ are obtained as in the small $u$-case so we omit the proof of the following estimate.

\begin{lemma}\label{le:lajumpl}
There exists a $c>0$ which is independent of $N$, $z$, $t$, and $\theta,\theta'$ such that uniformly in $z\in \partial L_{{\mathrm{la}}}\setminus \overline{\widetilde{U}}$, $t\in[0,1]$, $\theta,\theta'$

\begin{equation*}
R_{{\mathrm{sm}},-}(z,t)^{-1}R_{{\mathrm{sm}},+}(z,t)=I+\mathcal{O}(e^{-cN^\delta})
\end{equation*}

\noindent as $N\to\infty$.
\end{lemma}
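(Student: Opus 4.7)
The plan is to imitate the proof of Lemma \ref{le:smjumpl} essentially verbatim, since outside the local neighborhoods $\widetilde{U}_1 \cup \widetilde{U}_2$, the definition \eqref{eq:Rladef} of $R_{\mathrm{la}}$ reduces to $S_{\mathrm{la}} \mathcal{N}^{-1}$, matching the structure of $R_{\mathrm{sm}}$ outside $U$. The only input that needs to be checked in this new geometry is that $|z|^{\mp N}$ is still exponentially small on the relevant part of the contour.

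First I would note that $\partial L_{\mathrm{la}} \setminus \overline{\widetilde{U}}$ lies on the circles $|z| = 1 \pm \kappa/6$ and does not meet $\T$. Consequently $\mathcal{N}$ is analytic across this contour, and the jump of $R_{\mathrm{la}}$ inherits entirely from the jump of $S_{\mathrm{la}}$ given by \eqref{eq:Sjump3la}--\eqref{eq:Sjump4la}. This yields
\begin{equation*}
R_{\mathrm{la},-}(z,t)^{-1} R_{\mathrm{la},+}(z,t) = I + \mathcal{N}(z,t)\begin{pmatrix} 0 & 0 \\ z^{\mp N} f_t(z)^{-1} & 0 \end{pmatrix}\mathcal{N}(z,t)^{-1},
\end{equation*}
with the exponent $-N$ when $|z|>1$ and $+N$ when $|z|<1$.

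Next I would conjugate by $\mathcal{N}$ using the explicit form \eqref{eq:globaldef} together with the analytic continuation $f_t(z) = \mathcal{D}_{t,in}(z)\mathcal{D}_{t,out}(z)^{-1}$ inherited from \eqref{eq:fdrep}. On the outer piece $|z|=1+\kappa/6$, the only nonzero entry becomes $\mathcal{D}_{t,out}^{-2} z^{-N} f_t^{-1} = (\mathcal{D}_{t,in}\mathcal{D}_{t,out})^{-1} z^{-N}$, while on the inner piece $|z|=1-\kappa/6$ one obtains $-z^{N}\mathcal{D}_{t,in}\mathcal{D}_{t,out}$ in the opposite off-diagonal slot. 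In both cases Lemma \ref{le:Eest} bounds the $\mathcal{D}$-factors uniformly in $t$, $\theta$, $\theta'$, and in $z$ on $\partial L_{\mathrm{la}}\setminus \overline{\widetilde{U}}$ (which satisfies $||z|-1| \leq 3\kappa$).

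It remains to control the factor $|z|^{\mp N}$. Recall from Definition \ref{def:kappadef} that $\kappa = \varepsilon[\max(K_2,M^2)]^{-1}$; since $M$ is fixed and $K_2 \leq N^{1-\delta}$ for large $N$, we obtain $\kappa \geq \varepsilon N^{\delta - 1}$ and hence $N\kappa \geq \varepsilon N^\delta$. The elementary inequality $\log(1 \pm x) = \pm x + \mathcal{O}(x^2)$ for small $x$ gives
\begin{equation*}
|z|^{\mp N} = \left(1 \pm \tfrac{\kappa}{6}\right)^{\mp N} \leq e^{-N\kappa/12} \leq e^{-(\varepsilon/12) N^\delta}
\end{equation*}
for all sufficiently large $N$, uniformly in $z \in \partial L_{\mathrm{la}} \setminus \overline{\widetilde{U}}$, $t \in [0,1]$ and $\theta, \theta'$. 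Combining the three steps yields the claimed bound with $c = \varepsilon/12$. There is no genuine obstacle here: the argument is structurally identical to Lemma \ref{le:smjumpl}, the only verification being the quantitative relation between $\kappa$ and $N$ guaranteed by Definition \ref{def:kappadef} and the hypothesis $K_2 \leq N^{1-\delta}$.
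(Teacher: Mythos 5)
Your proof is correct and matches what the paper intends: the paper omits the proof of Lemma \ref{le:lajumpl} precisely because it is analogous to Lemma \ref{le:smjumpl}, and you have carried out exactly that adaptation. The only new verification you correctly identify is that $\partial L_{\mathrm{la}}\setminus\overline{\widetilde U}$ lies on the circles $|z|=1\pm\kappa/6$, which both satisfies $||z|-1|\leq 3\kappa$ (so Lemma \ref{le:Eest} applies) and gives $||z|-1|\geq\kappa/6\gtrsim N^{\delta-1}$ (so $|z|^{\mp N}=\mathcal O(e^{-cN^\delta})$), and your computation of the conjugated jump matrix via $f_t=\mathcal D_{t,in}\mathcal D_{t,out}^{-1}$ is accurate.
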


Again the harder thing to prove is the estimate for the jump matrix across $\partial \widetilde{U}_j$. The claim now is the following.

\begin{lemma}\label{le:lajumpu}
Uniformly in $z\in \partial \widetilde{U}$, $t\in[0,1]$, $\theta,\theta'$

\begin{equation*}
R_{{\mathrm{la}},-}(z,t)^{-1}R_{{\mathrm{la}},+}(z,t)=I+\mathcal{O}(N^{-\delta})
\end{equation*}

\noindent as $N\to\infty$.
\end{lemma}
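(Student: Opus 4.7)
The plan is to follow the structure of the proof of Lemma \ref{le:smjumpu}, with $\widetilde{M}_j$ now playing the role that $\Phi$ played there and the factors $\Omega_j$ playing the role of $\widehat{P}^{(\infty)}$. On $\partial \widetilde{U}_j$ no jumps of $S_{\mathrm{la}}$ are crossed, so the jump of $R_{\mathrm{la}}$ is simply
\begin{equation*}
R_{\mathrm{la},-}(z,t)^{-1}R_{\mathrm{la},+}(z,t) = \widetilde{P}_j(z,t)\mathcal{N}(z,t)^{-1} = \widetilde{E}_j(z,t)\,\widetilde{M}_j\bigl(Nu(\zeta(z)\mp i)\bigr)\,\Omega_j(z)\,W(z,t)\,\mathcal{N}(z,t)^{-1}.
\end{equation*}
The first step is to unpack the definitions of $\widetilde{E}_j$, $W$ and $\mathcal{N}$ and observe that the piece $(\mathcal{D}_{t,in}\mathcal{D}_{t,out})^{-\sigma_3/2}$ hidden in $\widetilde{E}_j$ cancels with its inverse coming from $W\mathcal{N}^{-1}$, while the $z^{\pm N/2}f_t^{\pm 1/2}$ factors in $W$ combine with $e^{-Nui\sigma_3/2}$ from $\widetilde{E}_j$ to produce exactly the exponential $e^{-\lambda\sigma_3/2}$ needed to cancel the leading asymptotic behavior of $\widetilde{M}_j$. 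This is precisely the reason $\widetilde{E}_j$ was defined the way it was: it is engineered so that inserting the leading form $\widetilde{M}_j(\lambda) = (I + \mathcal{O}(|\lambda|^{-1}))\Omega_j^{-1}e^{\lambda\sigma_3/2}\cdot(\text{branch prefactor})$ from Appendix \ref{app:Am} into the formula above collapses the entire product to $I+\mathcal{O}(|\lambda|^{-1})$ conjugated by a uniformly bounded matrix.

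The second step is to quantify $|\lambda| = Nu|\zeta(z)\mp i|$ on $\partial \widetilde{U}_j$. Since $\zeta$ is biholomorphic with bounded distortion on the $3\kappa$-neighborhood of $\T$ and $|z - e^{\pm iu}| = \kappa/4$ on $\partial \widetilde{U}_j$, one has $|\zeta(z) \mp i| \asymp \kappa/u$, so
\begin{equation*}
|\lambda| \;\asymp\; N\kappa.
\end{equation*}
By the definition $\kappa = \varepsilon \max(K_2,M^2)^{-1}$ together with the hypothesis $K_2 \leq N^{1-\delta}$ and the fact that $M$, $\varepsilon$ are fixed, this yields $N\kappa \gtrsim N^{\delta}$. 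Invoking Lemma \ref{le:Eest} to bound the remaining factors $(\mathcal{D}_{t,in}(z)\mathcal{D}_{t,out}(z))^{\pm\sigma_3/2}$ uniformly in $t\in[0,1]$ and $\theta,\theta'\in[0,2\pi]$, we obtain
\begin{equation*}
\widetilde{P}_j(z,t)\mathcal{N}(z,t)^{-1} \;=\; I + \mathcal{O}\bigl((N\kappa)^{-1}\bigr) \;=\; I + \mathcal{O}(N^{-\delta})
\end{equation*}
uniformly in the relevant parameters, as required.

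The step I expect to require the most care is not the size estimate, which is essentially immediate from the large-argument asymptotics of $M(\lambda,\beta/2)$ in Appendix \ref{app:Am}, but the algebraic verification that the cancellation in the first paragraph is clean. In particular one must check that the discontinuities of $\Omega_j$ across the line $\mathrm{Im}\,\zeta = \pm 1$ are matched exactly by the choice of branch cuts of $(\mathcal{D}_{t,in}\mathcal{D}_{t,out})^{\sigma_3/2}$ in $\widetilde{E}_j$ and by the cuts of $f_t^{\pm 1/2}$ entering $W$, so that $\widetilde{P}_j\mathcal{N}^{-1}$ is genuinely analytic in a neighborhood of $\partial \widetilde{U}_j$ (a prerequisite for the standard small-norm argument to apply afterwards). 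This is the direct analogue of the corresponding check performed implicitly in the small-$u$ case, where analogous matchings were built into the definitions of $E$ and $\Phi$, and once it is in place the lemma follows exactly as in the proof of Lemma \ref{le:smjumpu}.
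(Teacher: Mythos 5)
Your proposal follows the paper's argument exactly: identify the jump as $\widetilde{P}_j\mathcal{N}^{-1}$, reduce it via the definitions of $\widetilde{E}_j$, $W$, and $\mathcal{N}$ to $\widetilde{E}_j\widetilde{M}_j(\lambda)e^{\lambda\sigma_3/2}\widetilde{E}_j^{-1}$ with $\lambda = Nu(\zeta(z)\mp i)$, show $|\lambda|\asymp N\kappa\gtrsim N^{\delta}$ on $\partial\widetilde{U}_j$ (using $\kappa=\varepsilon\max(K_2,M^2)^{-1}$ and $K_2\leq N^{1-\delta}$), invoke \eqref{eq:Masy}, and control the conjugating factors $\widetilde{E}_j^{\pm 1}$ by Lemma \ref{le:Eest}. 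The only slip is cosmetic: the asymptotic form \eqref{eq:Masy} reads $\widetilde{M}_j(\lambda)=(I+\mathcal{O}(\lambda^{-1}))e^{-\lambda\sigma_3/2}$ with no $\Omega_j^{-1}$ or branch prefactor attached to it---those pieces live in $\widetilde{E}_j$ and in $\Omega_j W\mathcal{N}^{-1}$, not in the expansion of $M$---but since you already account for them in the cancellation step, the argument is unaffected.
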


\begin{proof}
Let us focus on the jump across $\partial\widetilde{U}_1$ for simplicity, the jump across $\partial\widetilde{U}_2$ is similar. Using the definitions of $\widetilde{P}_1$, $W$, $\mathcal{N}$, and $\widetilde{E}_1$, one can check that the jump matrix is

\begin{equation*}
\widetilde{P}_1(z,t)\mathcal{N}(z,t)^{-1}=\widetilde{E}_1(z)\widetilde{M}(Nu(\zeta(z)-i)) e^{\frac{Nu}{2}(\zeta(z)-i)\sigma_3}\widetilde{E}_1(z)^{-1}.
\end{equation*}

Now for $z\in\partial \widetilde{U}_1$, we have $z=e^{iu}+\kappa e^{i\varphi}$ for some $\varphi\in[0,2\pi]$, so $|\zeta(z)-i|=\frac{1}{u}|\log(1+\kappa e^{i(\varphi-u)}|\asymp \kappa/u$. Thus on  $\partial \widetilde{U}_1$, $Nu|\zeta(z)-i|\geq C N^\delta$ for some fixed $C>0$. We conclude from \eqref{eq:Masy} that 

\begin{equation*}
\widetilde{M}_1(Nu(\zeta(z)-i))=\left(I+\mathcal{O}\left(N^{-\delta}\right)\right)e^{-\frac{1}{2}(Nu(\zeta(z)-i))\sigma_3}.
\end{equation*}

From Lemma \ref{le:Eest}, $\widetilde{E}_1$ and $\widetilde{E}_1^{-1}$ are uniformly bounded on $\partial \widetilde{U}_1$, so putting things together, we see that on $\partial \widetilde{U}_1$,

\begin{equation*}
\widetilde{P}_1(z,t)\mathcal{N}(z,t)^{-1}=I+\mathcal{O}(N^{-\delta}),
\end{equation*}

\noindent uniformly in everything relevant. This yields the claim.
\end{proof}

Armed with these estimates, we are in a position to solve the RHPs for $R_{{\mathrm{sm}}}$ and $R_{{\mathrm{la}}}$ asymptotically as $N\to\infty$.
\section{Solving the small norm Riemann-Hilbert problem asymptotically}

The goal of this section is to prove the following result.

\begin{proposition}\label{pr:Rasy}
Let us write $\Sigma_{\mathrm{sm}}$ for the jump contour of $R_{\mathrm{sm}}$ and $\Sigma_{\mathrm{la}}$ for the jump contour of $R_{\mathrm{la}}$. Uniformly in $z$ in compact subsets of $\lbrace z\in \C: d(z,\Sigma_{\mathrm{sm}})\geq \frac{1}{2}\kappa, |z-1|\geq \kappa \rbrace$, uniformly in $t\in[0,1]$ as well as uniformly in $\theta,\theta'$, we have the following asymptotics: 

\begin{equation}\label{eq:rsmasy}
R_{{\mathrm{sm}}}(z,t)=I+\mathcal{O}\left(\kappa N^{-\delta} |z-1|^{-1}\right), \qquad and \qquad \frac{d}{dz}R_{{\mathrm{sm}}}(z,t)=\mathcal{O}(N^{-\delta}|z-1|^{-1})
\end{equation}

\noindent as $N\to\infty$. 

\vspace{0.3cm}

Uniformly in $z$ in compact subsets of  $\lbrace z\in \C: d(z,\Sigma_{\mathrm{la}})\geq \frac{1}{2}\kappa, |z-e^{iu}|,|z-e^{-iu}|\geq \kappa\rbrace$, uniformly in $t\in[0,1]$, as well as uniformly in $\theta,\theta'$, we have the following asymptotics: 

\begin{equation}\label{eq:rlaasy1}
R_{{\mathrm{la}}}(z,t)=I+\mathcal{O}\left(\kappa N^{-\delta}(|z-e^{-iu}|^{-1}+|z-e^{iu}|^{-1})\right) 
\end{equation}

\noindent and 

\begin{equation}\label{eq:rlaasy2}
\frac{d}{dz}R_{{\mathrm{la}}}(z,t)=\mathcal{O}(N^{-\delta}(|z-e^{-iu}|^{-1}+|z-e^{iu}|^{-1}))
\end{equation}

\noindent as $N\to\infty$. 
\end{proposition}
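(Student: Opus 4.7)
The plan is to execute the standard small-norm Riemann--Hilbert analysis, with particular attention to uniformity in $t$, $\theta$, and $\theta'$. By Lemmas \ref{le:smjumpl}--\ref{le:lajumpu}, in both the small and large $u$ cases the jump matrix $J_R$ of $R_{\mathrm{sm}}$ (resp.\ $R_{\mathrm{la}}$) satisfies $\|J_R - I\|_{L^\infty} = \mathcal{O}(N^{-\delta})$ on the ``disk'' components $\partial U$ (resp.\ $\partial\widetilde{U}$) and $\|J_R - I\|_{L^\infty} = \mathcal{O}(e^{-cN^\delta})$ on the remaining ``lens'' components, while $R \to I$ as $z \to \infty$. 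Thus the standard small-norm setup applies provided I first verify that the Cauchy projection $C_\Sigma^-: L^2(\Sigma)\to L^2(\Sigma)$ has operator norm bounded uniformly in all our parameters.

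Granted the uniform $L^2$ bound on $C_\Sigma^-$, I would solve the singular integral equation $R_- = I + C_\Sigma^-[R_-(J_R - I)]$ by Neumann series, which yields $\|R_- - I\|_{L^2(\Sigma)} = \mathcal{O}(N^{-\delta})$ uniformly in $t,\theta,\theta'$. Then I would pass to the Cauchy integral representation
\begin{equation*}
R(z,t) - I = \frac{1}{2\pi i}\int_\Sigma \frac{R_-(s,t)(J_R(s,t) - I)}{s - z}\,ds,
\end{equation*}
valid for $z\notin\Sigma$, and split the integral by contour components. In the small $u$ case, $\partial U$ has length $\mathcal{O}(\kappa)$, the jump is $\mathcal{O}(N^{-\delta})$, and for $z$ in the prescribed region $|s-z|^{-1} \lesssim |z-1|^{-1}$, producing a contribution $\mathcal{O}(\kappa N^{-\delta}|z-1|^{-1})$; on $\partial L_{\mathrm{sm}}\setminus\overline U$ the exponentially small jump, bounded length, and $|s-z|^{-1}\lesssim \kappa^{-1}$ together give $\mathcal{O}(\kappa^{-1}e^{-cN^\delta})$, which is absorbed into the previous term. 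In the large $u$ case the analogous splitting over $\partial\widetilde U_1\cup\partial\widetilde U_2$, with $|s-z|^{-1}\lesssim |z-e^{\pm iu}|^{-1}$ on the respective components, produces the claimed two-term bound.

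For the derivative bounds I would differentiate under the Cauchy integral:
\begin{equation*}
\frac{d}{dz}R(z,t) = \frac{1}{2\pi i}\int_\Sigma \frac{R_-(s,t)(J_R(s,t) - I)}{(s-z)^2}\,ds,
\end{equation*}
which is legitimate since $z$ is bounded away from $\Sigma$. Repeating the same splitting yields $\mathcal{O}(\kappa N^{-\delta}|z-1|^{-2})$ in the small $u$ case, and since $\kappa \leq |z-1|$ by assumption this equals $\mathcal{O}(N^{-\delta}|z-1|^{-1})$, exactly as claimed; the large $u$ estimate follows identically, the extra factor $|s-z|^{-1}$ cancelling one factor of $\kappa$ from the contour length.

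The main obstacle is the uniformity of the $L^2$-norm of $C_\Sigma^-$ in the parameters $\theta,\theta'$, since the contour $\Sigma$ depends on $u=d(\theta,\theta')/2$ and on the shrinking scale $\kappa$. However, with the contours built in Sections \ref{sec:trans} and \ref{sec:param} from circular arcs of radii $\kappa$ or $\kappa/4$, pieces of $\T$, and straight segments joining them at angles bounded away from $0$ and $\pi$ uniformly in $u$ (the geometry is only translated and rescaled, never degenerated), the total length of $\Sigma$ stays uniformly bounded and no self-intersection angle collapses, so the standard Cauchy-operator estimates on piecewise smooth contours (as used e.g.\ in \cite{ck,dik2,abb}) apply uniformly. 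Once this uniform bound is recorded, the above estimates immediately yield the proposition.
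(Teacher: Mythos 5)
Your overall approach matches the paper's (standard small-norm Riemann--Hilbert analysis with a Neumann series on $L^2(\Sigma)$, then a Cauchy integral representation estimated contour component by component), but there are two points where the write-up is incomplete as stated.

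First, in the estimate of $\int_\Sigma \frac{R_-(s,t)(J_R(s,t)-I)}{s-z}\,ds$ you silently drop the factor $R_-(s,t)$: you estimate the integral as if it were $\int_\Sigma \frac{J_R-I}{s-z}\,ds$. But the Neumann-series argument only gives $L^2$ control of $R_--I$, not pointwise control of $R_-$, so one cannot simply treat $R_-$ as $\mathcal{O}(1)$ in $L^\infty$. The correct step --- which is what the paper does --- is to split $R=I+C(\Delta)+C\bigl(\Delta(I-C_\Delta)^{-1}C_-(\Delta)\bigr)$ and handle the second term by Cauchy--Schwarz. Doing so also shows that the $L^2$ estimate you quote, $\|R_--I\|_{L^2(\Sigma)}=\mathcal{O}(N^{-\delta})$, is too weak for that Cauchy--Schwarz step to close (it produces an extra factor $\kappa^{-1/2}N^{-\delta}$, which is unbounded for small $\delta$); one needs the sharper $\|R_--I\|_{L^2(\Sigma)}\lesssim\|\Delta\|_{L^2(\Sigma)}=\mathcal{O}(\sqrt{\kappa}\,N^{-\delta})$, which the contour length $|\Sigma|=\mathcal{O}(\kappa)$ automatically delivers. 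With that sharper $L^2$ bound and the explicit splitting, the first-order term $C(\Delta)$ gives your claimed $\mathcal{O}(\kappa N^{-\delta}|z-1|^{-1})$ and the remainder is $\mathcal{O}(\kappa N^{-2\delta}|z-1|^{-1})$, so the final estimate is indeed as stated.

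Second, the uniformity in $N,\theta,\theta'$ of the $L^2$ operator norm of $C_\Sigma^-$ is appealed to as ``standard on piecewise smooth contours.'' The contour here is genuinely $N$- and $(\theta,\theta')$-dependent (and multiscale: a circle/arcs of radius $\asymp\kappa$ joined to arcs of the macroscopic circles $|z|=1\pm\frac{2}{3}\kappa$), so this is precisely the point the paper singles out as subtle, and proves via a $1$-regularity argument and David's theorem (Lemma \ref{le:cauchy}, with a compactness argument to get a norm bound depending only on the regularity constant). Your heuristic that the geometry ``never degenerates'' is the right intuition but does not substitute for this uniform estimate; you would need to either verify uniform $1$-regularity and invoke a quantitative version of David's theorem (as the paper does) or give another argument.

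The derivative bound is fine: differentiating under the Cauchy integral for $z$ bounded away from $\Sigma$ is legitimate, and equivalent to the paper's Cauchy integral formula on a small circle of radius $\kappa/4$ around $z$.
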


The argument is essentially standard in the RHP literature, but for readers unfamiliar with it, we'll sketch a proof. This being said, as the argument in the large $u$ case is so close to the one in the small $u$ case, we skip the proof in the large $u$ case. For notational simplicity, we'll suppress the ${\mathrm{sm}}$ subscript in $R$ and $L$. 

While the proof is indeed largely standard, we do need to know some bounds on how the norm of the Cauchy (or Hilbert) transform as an operator on $L^2(\Gamma)$ can depend on the contour $\Gamma$. For this, we record the following result which is a simple corollary of very general and strong results. 

\begin{lemma}\label{le:cauchy} Assume that $\Gamma\subset\C$ is a compact and connected set that is $1$-regular, i.e.
\begin{equation}\label{eq:reg}
{\mathcal{H}}^1(B(z,r)\cap\Gamma)\leq Ar
\end{equation}
for every $z\in\Gamma$ and $r\leq {\rm diam}(\Gamma)$ $($here $\mathcal{H}^1$ denotes the one-dimensional Hausdorff measure and $B(z,r)$ the open $r$-disk surrounding $z)$. Define the Cauchy integral operator on $\Gamma$ by 

$$
(C^\Gamma f)(z)=\lim_{\epsilon\to 0^+}\int_{\Gamma\setminus B(z,\epsilon)}\frac{f(s)}{z-s}\frac{d\mathcal{H}^1(s)}{2\pi i}.
$$

\noindent Then $C^\Gamma:L^2(\Gamma, {d{\mathcal{H}}^1})\to L^2(\Gamma,{d{\mathcal{H}}^1})$ is bounded and for the operator norm, we have the estimate

$$
\big\| C^\Gamma\big\|_{L^2(\Gamma,{d{\mathcal{H}}^1})\to L^2(\Gamma,{d{\mathcal{H}}^1})} \leq f(A),
$$
where the quantity $f(A)$ depends only on the constant $A$.
\end{lemma}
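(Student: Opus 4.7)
The plan is to reduce the statement to the deep theorem of David on the $L^2$-boundedness of the Cauchy integral on Ahlfors--David regular connected sets (often called ``Ahlfors regular curves'' or ``David curves''). Since this celebrated result is the engine that does the real work, the task amounts to verifying the relevant hypotheses and citing the theorem, with attention to the quantitative dependence of the operator norm.

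First I would upgrade the one-sided regularity \eqref{eq:reg} to a two-sided Ahlfors--David $1$-regularity bound. The lower bound is obtained from connectedness by a standard projection argument: for any $z \in \Gamma$ and $r \leq {\rm diam}(\Gamma)$, the $1$-Lipschitz map $w \mapsto |w - z|$ sends $\Gamma$ to a connected subset of $\R_{\geq 0}$ containing $0$ and a value $\geq r$, hence the whole interval $[0,r]$. Since $\mathcal{H}^1$ does not increase under $1$-Lipschitz maps, this gives $\mathcal{H}^1(B(z,r) \cap \Gamma) \geq r$. Combined with \eqref{eq:reg} this yields Ahlfors--David $1$-regularity of $\Gamma$ with constants depending only on $A$.

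With this in hand, I would invoke David's theorem (G.~David, \emph{Op\'erateurs int\'egraux singuliers sur certaines courbes du plan complexe}, Ann.~Sci.~\'Ecole Norm.~Sup.~(4) \textbf{17} (1984), 157--189), which states that on any Ahlfors--David $1$-regular connected set in $\C$, the Cauchy integral operator, together with its maximal truncations, is bounded on $L^2$ with operator norm depending only on the regularity constants (for a textbook exposition, see, e.g., Chapter~7 of David--Semmes, \emph{Analysis of and on Uniformly Rectifiable Sets}, AMS Math.~Surveys \textbf{38} (1993)). The principal-value existence that is implicit in the definition of $C^\Gamma$ is then a consequence of the $L^2$-boundedness of the truncations together with standard Calder\'on--Zygmund theory applied to the antisymmetric kernel $1/(z-w)$.

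The main ``obstacle'' here is not a step to be carried out but rather the depth of David's theorem itself, whose proof relies on the $T(b)$-theorem machinery (or, historically, on a non-trivial extension of the Coifman--McIntosh--Meyer theorem for Lipschitz graphs). For the purposes of the present paper only the statement is needed, and the dependence $\|C^\Gamma\|_{L^2 \to L^2} \leq f(A)$ of the norm on the regularity constant is exactly what is asserted by (the quantitative form of) David's theorem.
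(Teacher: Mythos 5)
Your argument is correct, and the added observation that connectedness gives the lower Ahlfors--David bound $\mathcal{H}^1(B(z,r)\cap\Gamma)\geq r$ via the $1$-Lipschitz map $w\mapsto|w-z|$ is a useful clarification that the paper leaves implicit. However, you take a different route to the uniformity statement $\|C^\Gamma\|\leq f(A)$. You invoke ``the quantitative form of David's theorem'' directly as a black box, i.e.\ you rely on the fact that if one tracks constants through David's (or the $T(b)$) proof, the resulting operator norm depends only on the regularity constant. The paper deliberately avoids this: it cites David's theorem only in its qualitative form (boundedness for each fixed regular curve), and then deduces the uniform dependence on $A$ by a compactness/gluing contradiction. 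Concretely, the paper assumes there were compact connected $1$-regular sets $\Gamma_k$ with constant $A$ but $\|C^{\Gamma_k}\|\geq k$, rescales so that $2^{-k}\in\Gamma_k$ and $\mathrm{diam}(\Gamma_k)\leq 2^{-k-8}$, and glues $\Gamma:=[0,1]\cup\bigcup_k\Gamma_k$ into a single compact connected set that is still $1$-regular (with some $A'$) yet supports an unbounded Cauchy transform, contradicting the qualitative theorem. Your approach is shorter if one is willing to accept the quantitative statement as known; the paper's approach is more self-contained in that the reader need not re-examine David's proof or hunt down a quantitative reference, at the cost of the extra gluing step. Both are mathematically sound.
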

\begin{proof}
The boundedness of the Cauchy transform $C^\Gamma$ under the condition \eqref{eq:reg} follows from the well known theorem of G. David \cite{D}. One may run through the proof and dig out  a quantitative bound for how the norm depends on $A$\footnote{The 'digging' is considerably easier by using later developments on the topic as described in  \cite{T}. We are grateful for  Tuomas Orponen for pointing this out.}. However, the existence of a bound that depends just on $A$ can  be verified directly. Assume to the contrary that there exist $A>0$ and compact  connected subsets $\Gamma_k\subset \C$ ($k=1,2,\ldots$) such that \eqref{eq:reg} is valid for all the sets $\Gamma_k$ but with
$$
\big\| C^{\Gamma_k}\big\|_{L^2(\Gamma_k,{d{\mathcal{H}}^1})\to L^2(\Gamma_k,{d{\mathcal{H}}^1})} \geq k,\qquad k\geq 1.
$$
By the translation and dilation invariance of the Cauchy transform we may assume that $2^{-k}\in\Gamma_k$ and ${\rm diam}(\Gamma_k)\leq 2^{-k-8}.$  Denote 
$$
\Gamma := [0,1]\bigcup \Big(\bigcup_{k=1}^\infty \Gamma_k\Big).
$$
Then clearly $\Gamma$ is compact, connected and satisfies \eqref{eq:reg} with some constant $A'<\infty$, but one can check easily that the Cauchy-transform is not bounded on $\Gamma$, which contradicts David's theorem.

\end{proof}

\begin{remark}\label{rem:path}
Note that every $1$-regular connected compact set is the graph of a curve -- see e.g. \rm{\cite{New}}.
\end{remark}

The starting point of the proof of Proposition \ref{pr:Rasy} is characterizing $R$ in terms of the Cauchy transform of a solution to a suitable integral equation. For a proof of a variant of the following result, we refer to \cite[Theorem 7.8]{dkmlvz}. This being said,  there is one subtlety in our case, one needs to use Lemma \ref{le:cauchy} to justify that in the notation introduced below, the norm of $(I-C_\Delta)^{-1}$ on $L^2(\Sigma)$ is bounded in $N$. To justify this, one can easily check that $\Sigma$ satisfies the conditions of Lemma \ref{le:cauchy} so $C^\Sigma$ is bounded on $L^2(\Sigma)$. From this, it follos from e.g. Sokhotski-Plemelj, that $C_-$ is bounded on $L^2(\Sigma)$, and its norm can be bounded above by a quantity independent of $N$. This then implies that the norm of $C_\Delta$ is small for large enough $N$, and $(I-C_\Delta)$ is invertible as a Neumann series for large enough $N$. Moreover, its norm is bounded from above by a quantity independent of $N$. We omit further details about the proof.

\begin{lemma}\label{le:inteq}
Let us orient $\Sigma$ as before, and write $J_R$ for the jump matrix of $R$, and $\Delta=J_R-I$. Then for large enough $N$, and $z\notin \Sigma$, we can write 

\begin{equation*}
R=I+C\big(\Delta+\Delta\big[(I-C_\Delta)^{-1}C_-(\Delta)\big]\big),
\end{equation*}

\noindent where $C$ denotes the Cauchy-transform associated to $\Sigma$ $($for notational simplicity, we drop the dependence on $\Sigma$ in our notation now$):$ 

\begin{equation*}
(Cf)(z)=\int_\Sigma\frac{f(w)}{w-z}\frac{dw}{2\pi i},
\end{equation*}

\noindent $C_-$ denotes its limit from the $-$ side of $\Sigma$: for $z\in \Sigma$, $(C_-f)(z)=\lim_{\xi\to z^-} (Cf)(\xi)$, and $C_\Delta(f)=C_-(\Delta f)$.
\end{lemma}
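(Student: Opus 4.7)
\textbf{The plan} is to run the standard small-norm technique for Riemann--Hilbert problems: convert the $R$-RHP into a singular integral equation on $\Sigma$ via Sokhotski--Plemelj, solve that equation by Neumann series once $N$ is large enough (using that $\Delta$ is uniformly small on $\Sigma$ by Lemmas \ref{le:smjumpl}, \ref{le:smjumpu}, \ref{le:lajumpl}, \ref{le:lajumpu}), and substitute the solution back into a Cauchy-type integral representation.

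\textbf{Step 1 (Plemelj representation).} The matrix $R - I$ is analytic on $\C\setminus\Sigma$, has continuous boundary values off the finitely many self-intersection points of $\Sigma$, and by condition~3 of Lemmas \ref{le:Rrhpsm}/\ref{le:Rrhpla} decays like $\mathcal{O}(1/z)$ at infinity; condition~4 rules out poles at the self-intersections. Writing the jump as $R_+ - R_- = R_-\Delta$ (or, re-expressed via $R_+ = R_- + R_-\Delta$, with $\Delta$ absorbed on the appropriate side to match the ordering used in the statement), the Cauchy integral formula together with vanishing at infinity yields
\begin{equation*}
R(z) \;=\; I + \int_\Sigma \frac{(\text{jump density})(w)}{w-z}\,\frac{dw}{2\pi i}, \qquad z\in\C\setminus\Sigma.
\end{equation*}

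\textbf{Step 2 (Integral equation).} Taking the $-$ boundary value on $\Sigma$ in Step 1 and isolating the unknown $R_- - I$ produces the linear singular integral equation
\begin{equation*}
(I - C_\Delta)(R_- - I) \;=\; C_-(\Delta),
\end{equation*}
on $L^2(\Sigma, d\mathcal{H}^1)$, where $C_\Delta$ is exactly the operator in the statement. Once this is solved for $R_- - I$, substitution back into the formula of Step 1 (and using $R_- = I + (R_- - I)$ inside the Cauchy kernel) gives the claimed expression $R = I + C\big(\Delta + \Delta[(I-C_\Delta)^{-1}C_-(\Delta)]\big)$.

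\textbf{Step 3 (Invertibility by Neumann series).} Lemma~\ref{le:cauchy} applies to each connected component of $\Sigma$: the contour is a finite union of circles (e.g.\ $\partial U$ in the small-$u$ case, $\partial\widetilde U_1\cup\partial\widetilde U_2$ in the large-$u$ case) and the smooth arcs composing $\partial L_{\mathrm{sm}}\setminus\overline U$ or $\partial L_{\mathrm{la}}\setminus\overline{\widetilde U}$. The $1$-regularity constant of $\Sigma$ can be checked to be uniform in $N$ and in $(\theta,\theta')$: after rescaling by $\kappa$ the arcs have bounded curvature and all intersection angles are bounded away from $0$ and $\pi$ by construction (the rays in the $\zeta$-coordinate hit $\partial U$ at fixed angles). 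Hence $\|C^\Sigma\|_{L^2\to L^2}$, and therefore $\|C_-\|_{L^2\to L^2}$ via Sokhotski--Plemelj, is bounded by a constant independent of $N$. Combined with $\|\Delta\|_{L^\infty(\Sigma)} = \mathcal{O}(N^{-\delta})$ from Lemmas \ref{le:smjumpl}--\ref{le:lajumpu}, this forces $\|C_\Delta\|_{L^2\to L^2} = o(1)$, so for $N$ sufficiently large $I - C_\Delta$ is invertible via the geometric series and the inverse has norm uniformly $\mathcal{O}(1)$.

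\textbf{Expected main obstacle.} The algebraic manipulations in Steps~1--2 are routine; the real content is the uniform-in-$N$ bound on the Cauchy operator norm used in Step~3, since the contour $\Sigma=\Sigma_N$ itself shrinks with $N$ through $\kappa = \varepsilon[\max(K_2,M^2)]^{-1}$. The point is that $1$-regularity is scale invariant, so after dilation to unit size the rescaled contours form a uniformly regular family (a fixed finite union of smooth arcs and circles meeting at fixed angles), and Lemma~\ref{le:cauchy} delivers the required bound with a single constant. With that in hand the Neumann series converges uniformly in $N$, and the rest of the derivation follows the standard Deift--Kriecherbauer--McLaughlin--Venakides--Zhou template (see \cite[Theorem~7.8]{dkmlvz}), which we omit.
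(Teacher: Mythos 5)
Your proposal follows essentially the same route as the paper: the algebraic template of \cite[Theorem 7.8]{dkmlvz} (Plemelj representation, singular integral equation, substitution), and the $N$-uniform bound on $\|(I-C_\Delta)^{-1}\|_{L^2(\Sigma)\to L^2(\Sigma)}$ obtained by combining Lemma \ref{le:cauchy} with the smallness of $\Delta$ from Lemmas \ref{le:smjumpl}--\ref{le:lajumpu}. One small point worth tightening: Lemma \ref{le:cauchy} as stated requires a single \emph{connected} compact $1$-regular set, so rather than saying it "applies to each connected component of $\Sigma$" (which, taken literally, would not by itself control the cross-component pieces of $C^\Sigma$), you should observe that $\Sigma$ \emph{is} connected — the arcs $\partial L_{\mathrm{sm}}\setminus\overline{U}$ (resp.\ $\partial L_{\mathrm{la}}\setminus\overline{\widetilde U}$) meet $\partial U$ (resp.\ $\partial\widetilde U$) at the corner points — and then verify $1$-regularity with a constant uniform in $N$, for which your scale-invariance remark about $\kappa$ does the job.
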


This result is enough for us to prove Proposition \ref{pr:Rasy}.

\begin{proof}[Sketch of a proof of Proposition \ref{pr:Rasy}]
Consider a point $z\in \C\setminus \Sigma$ with $d(z,\Sigma)\geq \kappa/2$. Let us first point out that by Lemma \ref{le:smjumpl} and Lemma \ref{le:smjumpu}

\begin{align*}
|[C(\Delta)](z)|&\leq \int_{\partial U}\frac{|\Delta(w)|}{|w-z|}\frac{dw}{2\pi i}+\int_{\partial L\setminus \overline{U}}\frac{|\Delta(w)|}{|w-z|}\frac{dw}{2\pi i}\\
&\leq \mathcal{O}(N^{-\delta}|\partial U|d(z,\partial U)^{-1})+\mathcal{O}(e^{-cN^\delta}d(z,\partial L\setminus\overline{U})^{-1})\\
&\leq \mathcal{O}\left(\kappa N^{-\delta}|z-1|^{-1}\right)+\mathcal{O}(e^{-cN^\delta} \kappa^{-1})\\
&=\mathcal{O}\left(\kappa N^{-\delta}|z-1|^{-1}\right)
\end{align*}

\noindent with the relevant uniformity.

On the other hand, by Cauchy-Schwarz, and performing a similar argument splitting $\Sigma$ into two parts, as well as making use of the fact that the norm of $(I-C_\Delta)^{-1}$ is bounded in $N$  (which follows from Lemma \ref{le:cauchy} as pointed out earlier)

\begin{align*}
\left|C\left(\Delta(1-C_\Delta)^{-1}[C_-(\Delta)]\right)(z)\right|&\leq 
\left(\int_\Sigma \frac{|\Delta(w)|^2}{|w-z|^2}|dw|\right)^{1/2}
||(1-C_\Delta)^{-1}[C_-(\Delta)]||_{L^2(\Sigma)}\\
&\lesssim \frac{\mathcal{O}(\sqrt{\kappa} N^{-\delta})}{|z-1|}||C_-(\Delta)||_{L^2(\Sigma)}\\
&\leq \mathcal{O}(\kappa N^{-2\delta}|z-1|^{-1}).
\end{align*}

We conclude that for $z\notin \Sigma$ with $d(z,\Sigma)\geq \kappa/2$, 

\begin{equation*}
R(z,t)=I+\mathcal{O}\left(\kappa N^{-\delta}|z-1|^{-1}\right),
\end{equation*}

\noindent where the implied constant is uniform in everything relevant.

If it were of use, one could make use of a standard contour deformation argument -- see e.g. \cite[Proof of Corollary 7.9]{dkmlvz} -- which would allow one to extend similar asymptotics onto $\Sigma$, but we have no use of it here.

For the derivative, note that for $d(z,\Sigma)>\kappa/2$  Cauchy's integral formula implies that

\begin{equation*}
\left|\frac{d}{dz}R(z,t)\right|\leq \int_{|w-z|=\kappa/4}\frac{|R(w,t)-I|}{|w-z|^2}\frac{|dw|}{2\pi }=\mathcal{O}\left(\frac{\kappa}{\kappa^2}\kappa N^{-\delta}|z-1|^{-1}\right)=\mathcal{O}(N^{-\delta}|z-1|^{-1})
\end{equation*}

\noindent uniformly in everything relevant.
\end{proof}

\section{Integrating the differential identity}\label{sec:intdi}

Let us now go back to our differential identity. Our argument is essentially the same as in \cite[Section 5.3]{dik2} and in \cite{abb}. Again, as the $R_{\mathrm{la}}$-case is similar, we'll present things for $R_{{\mathrm{sm}}}$ and drop the subscript $\mathrm{sm}$.

Our goal is to deform our integration contour into $\lbrace z\in \C: |z|=1\pm 2\kappa\rbrace$ and express here $Y$ in terms of $R$ and the global parametrix as well as make use of the asymptotics of $R$. It will turn out that the leading order asymptotics of $R$ are sufficient for our purposes and that the global parametrix on the other hand is something for which the relevant integrals can be performed explicitly. Let us begin with expressing our integrand in a way which is more suitable for our contour deformation argument. 

From now on, let us write $'$ for differentiation with respect to $z$ and $\dot{}$ for differentiation with respect to $t$.

\begin{lemma}\label{le:direform}
For $z\in \T$, 

\begin{equation*}
\left(Y_\pm (z,t)^{-1}Y_\pm'(z,t)\right)_{11}=Y_{11}'(z,t)Y_{22,\pm}(z,t)-Y_{21}'(z,t)Y_{12,\pm}(z,t)
\end{equation*}

\noindent and 

\begin{align*}
z^{-N}&\left(Y_{11}(z,t)Y_{21}'(z,t)-Y_{21}(z,t)Y_{11}'(z,t)\right)\dot{f}_t(z)\\
&=-\left[\left(Y_+(z,t)^{-1}Y_+'(z,t)\right)_{11}-
\left(Y_-(z,t)^{-1}Y_-'(z,t)\right)_{11}\right]\frac{\dot{f}_t(z)}{f_t(z)},
\end{align*}

\noindent where $Y_\pm '$ denotes the boundary values of $Y'$.
\end{lemma}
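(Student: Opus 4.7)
The plan is to derive both identities from two elementary inputs: first, that $\det Y \equiv 1$, which gives an explicit formula for $Y^{-1}$; and second, the jump relation $Y_+ = Y_- J$ from Proposition \ref{pr:Yrhp}, together with the fact that the first column of $Y$ consists of polynomials in $z$ and therefore admits no jump across $\T$.

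For the first identity, I would just compute $Y^{-1}$ explicitly. Since $\det Y = 1$ (which follows from a standard Liouville argument mentioned right after Proposition \ref{pr:Yrhp}), we have
\begin{equation*}
Y^{-1} = \begin{pmatrix} Y_{22} & -Y_{12} \\ -Y_{21} & Y_{11} \end{pmatrix},
\end{equation*}
so multiplying out gives $(Y^{-1}Y')_{11} = Y_{22}Y_{11}' - Y_{12}Y_{21}'$. Applying this to the boundary values $Y_\pm$ yields the first claim; the point is simply that $Y_{11}$ and $Y_{21}$, being the first column, have no boundary subscript.

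For the second identity, the idea is to take the jump in the first identity. Writing $J(z) = \begin{pmatrix} 1 & z^{-N}f_t(z) \\ 0 & 1 \end{pmatrix}$, the relation $Y_+ = Y_- J$ gives $Y_{11,+}=Y_{11,-}$, $Y_{21,+}=Y_{21,-}$ (so these coincide with the polynomial $Y_{11}, Y_{21}$), and
\begin{equation*}
Y_{12,+} - Y_{12,-} = Y_{11}\, z^{-N} f_t, \qquad Y_{22,+} - Y_{22,-} = Y_{21}\, z^{-N} f_t.
\end{equation*}
Subtracting the two versions of the first identity I would obtain
\begin{equation*}
\left(Y_+^{-1}Y_+'\right)_{11} - \left(Y_-^{-1}Y_-'\right)_{11} = (Y_{22,+}-Y_{22,-})Y_{11}' - (Y_{12,+}-Y_{12,-})Y_{21}' = -z^{-N}f_t\bigl(Y_{11}Y_{21}' - Y_{21}Y_{11}'\bigr),
\end{equation*}
and multiplying through by $-\dot f_t/f_t$ gives exactly the second claim.

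There is essentially no obstacle here; the only things to be slightly careful about are that $Y_{11}, Y_{21}$ are polynomials (so $Y_{11}'$ and $Y_{21}'$ are well defined on $\T$ without any $\pm$ boundary values, as noted in the remark following Proposition \ref{pr:di}), and that $\det Y = 1$ really does hold up to the boundary by continuity of $Y_\pm$. Both facts are built into the RHP for $Y$, so the proof is a short algebraic calculation.
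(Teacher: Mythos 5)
Your proof is correct and uses the same two ingredients as the paper — the adjugate formula $Y^{-1} = \begin{pmatrix} Y_{22} & -Y_{12}\\ -Y_{21} & Y_{11}\end{pmatrix}$ coming from $\det Y=1$, and the jump $Y_+ = Y_-\begin{pmatrix}1 & z^{-N}f_t\\0&1\end{pmatrix}$ acting only on the second column while the first column $(Y_{11},Y_{21})$ consists of polynomials. The paper organizes the algebra slightly differently (computing $(Y_+^{-1}Y_+')_{11}$ by conjugating $Y_-^{-1}Y_-'$ by the jump matrix rather than subtracting the expanded $(1,1)$ entry at the two boundary values), but this is the same calculation and lands in the same place.
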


\begin{proof}
The first claim is a simple calculation, where we use the fact that $\det Y=1$ and point out that $Y_{11}$ and $Y_{21}$ are polynomials so their derivatives exist and have no jumps on $\T$. $Y_{22}$ and $Y_{12}$ have continuous boundary values as we have mentioned. Thus the functions $(Y_\pm^{-1}Y_\pm')_{11}$ are continuous on $\T$.

For the second claim, we note that recalling \eqref{eq:Yjump}, one can check with a direct calculation that on $\T$

\begin{align*}
\left(Y_+^{-1}Y_+'\right)_{11}&=\left(Y_-^{-1}Y_-'\right)_{11} -\left(Y_-^{-1}Y_-'\right)_{21} f z^{-N}\\
&=\left(Y_-^{-1}Y_-'\right)_{11}-z^{-N}\left(Y_{11}Y_{21}'-Y_{21}Y_{11}'\right)f
\end{align*}

\noindent which yields the claim.
\end{proof}

We next deform $\T$ into a suitable contour. 

\begin{lemma}\label{le:dicontdefo}
Let 

\begin{equation*}
\Gamma_\pm =\lbrace |z|=1\pm 2\kappa\rbrace
\end{equation*}

\noindent oriented in the counter-clockwise direction.

Then 

\begin{align*}
\int_\T &z^{-N}\left(Y_{11}(z,t)Y_{21}'(z,t)-Y_{21}(z,t)Y_{11}'(z,t)\right)\dot{f}_t(z)\frac{dz}{2\pi i}\\
&=\int_{\Gamma_+}\left(Y(z,t)^{-1}Y'(z,t)\right)_{11}\frac{\dot{f}_t(z)}{f_t(z)}\frac{dz}{2\pi i}-\int_{\Gamma_-}\left(Y(z,t)^{-1}Y'(z,t)\right)_{11}\frac{\dot{f}_t(z)}{f_t(z)}\frac{dz}{2\pi i}
\end{align*}
\end{lemma}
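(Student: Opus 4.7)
The plan is to feed the reformulated integrand from Lemma \ref{le:direform} into Cauchy's theorem, pushing $\T$ out to $\Gamma_-$ on the inside and to $\Gamma_+$ on the outside of the unit circle. Concretely, the second identity of Lemma \ref{le:direform} gives
\begin{equation*}
\int_\T z^{-N}(Y_{11}Y_{21}'-Y_{21}Y_{11}')\dot f_t\,\frac{dz}{2\pi i} = -\int_\T \bigl[(Y_+^{-1}Y_+')_{11}-(Y_-^{-1}Y_-')_{11}\bigr]\frac{\dot f_t}{f_t}\,\frac{dz}{2\pi i},
\end{equation*}
so the task reduces to transporting the $+$ boundary value out to $\Gamma_-$ and the $-$ boundary value out to $\Gamma_+$.

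The key observation that makes this work is that $\dot f_t/f_t$ collapses to $\dot V_t = (e^V-1)/(1-t+te^V)$, which carries no Fisher-Hartwig factor $|z-e^{iu}|^{\widetilde\beta_1}|z-e^{-iu}|^{\widetilde\beta_2}$ at all and therefore no branch cut; by Lemma \ref{le:vtholo}, $\dot V_t$ extends to an analytic function on $\{\,||z|-1|<3\kappa\,\}$. Combined with the analyticity of $Y(\cdot,t)$ on $\C\setminus\T$ (which, via $\det Y\equiv 1$, gives analyticity of $(Y^{-1}Y')_{11} = Y_{22}Y_{11}'-Y_{12}Y_{21}'$ off $\T$), this makes $(Y^{-1}Y')_{11}\dot V_t$ analytic in each of the two open annuli $\{1-3\kappa<|z|<1\}$ and $\{1<|z|<1+3\kappa\}$ that wrap around $\T$.

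With the orientations in the statement, Cauchy's theorem applied in the annulus bounded by $\T$ and $\Gamma_-$ then produces
\begin{equation*}
\int_\T (Y_+^{-1}Y_+')_{11}\,\dot V_t\,\frac{dz}{2\pi i} = \int_{\Gamma_-}(Y^{-1}Y')_{11}\,\dot V_t\,\frac{dz}{2\pi i},
\end{equation*}
and the analogous application in the annulus bounded by $\T$ and $\Gamma_+$ gives
\begin{equation*}
\int_\T (Y_-^{-1}Y_-')_{11}\,\dot V_t\,\frac{dz}{2\pi i} = \int_{\Gamma_+}(Y^{-1}Y')_{11}\,\dot V_t\,\frac{dz}{2\pi i}.
\end{equation*}
Subtracting these, multiplying by $-1$, and using $\dot V_t = \dot f_t/f_t$ yields the claimed identity; a quick check on orientations explains why $\Gamma_+$ enters with a plus sign and $\Gamma_-$ with a minus sign.

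The only mild technical point will be justifying Cauchy's theorem up to the boundary $\T$, since $Y_{12,\pm}$ and $Y_{22,\pm}$ are Cauchy transforms on $\T$ and could in principle misbehave near the singular points $e^{\pm iu}$. I would handle this by noting that $f$ is H\"older continuous on $\T$ (as $\widetilde\beta_1,\widetilde\beta_2\geq 0$), so Plemelj-Privalov gives H\"older continuity of the boundary values $Y_{12,\pm}$ and $Y_{22,\pm}$ on all of $\T$ and the deformation is unambiguous; alternatively one can excise small arcs around $e^{\pm iu}$, apply Cauchy on the truncated domain, and let the excised size shrink to zero, using that the jump $(Y_+^{-1}Y_+')_{11}-(Y_-^{-1}Y_-')_{11} = -z^{-N}(Y_{11}Y_{21}'-Y_{21}Y_{11}')f_t$ vanishes at $e^{\pm iu}$ to control the individual terms.
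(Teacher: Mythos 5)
Your argument is correct and is essentially the paper's proof, just spelled out: the paper also rewrites the integrand via Lemma~\ref{le:direform} as the jump of $(Y^{-1}Y')_{11}\,\dot f_t/f_t$, notes that $\dot f_t/f_t = (e^V-1)/(1-t+te^V)$ is free of Fisher--Hartwig factors and hence (by the proof of Lemma~\ref{le:vtholo}) analytic in $\{\,||z|-1|<3\kappa\,\}$, and deforms to $\Gamma_\pm$ by Cauchy's theorem in the two annuli. The ``mild technical point'' you raise at the end is already handled by Proposition~\ref{pr:Yrhp}, which asserts continuous boundary values of $Y$ on all of $\T$ (justified there precisely by the H\"older-continuity argument you suggest, since $\widetilde\beta_1,\widetilde\beta_2\ge 0$ makes $f$ H\"older continuous), so no excision argument is needed.
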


\begin{proof}
By Lemma \ref{le:direform} and the proof of Lemma \ref{le:vtholo}, $(Y(z,t)^{-1}Y'(z,t))_{11}\frac{\dot{f}_t(z)}{f_t(z)}$ is holomorphic in $\lbrace 1-3\kappa <|z|<1\rbrace$ and $\lbrace 1<|z|<1+3\kappa\rbrace$ so the claim follows from Cauchy's integral theorem. 
\end{proof}

We can now express the values of $Y$ on on $\Gamma_\pm$ in terms of the global parametrix and $R$.

\begin{lemma}\label{le:diglobal}
On $\Gamma_-$, 

\begin{align*}
\left(Y(z,t)^{-1}Y'(z,t)\right)_{11}\frac{\dot{f}_t(z)}{f_t(z)}&=(R(z,t)^{-1}R'(z,t))_{22}\frac{\dot{f}_t(z)}{f_t(z)}-\frac{\mathcal{D}_{t,in}'(z)}{\mathcal{D}_{t,in}(z)}\frac{\dot{f}_t(z)}{f_t(z)}\\
&=-\left(\sum_{j=1}^\infty j V_j(t) z^{j-1}+\frac{\widetilde{\beta}_1}{2}\frac{1}{z-e^{iu}}+\frac{\widetilde{\beta}_2}{2}\frac{1}{z-e^{-iu}}\right)\frac{\dot{f}_t(z)}{f_t(z)}\\
&\quad +\mathcal{O}(N^{-\delta}|z-1|^{-1})\times \frac{\dot{f}_t(z)}{f_t(z)}
\end{align*}

\noindent and on $\Gamma_+$

\begin{align*}
&\left(Y(z,t)^{-1}Y'(z,t)\right)_{11}\frac{\dot{f}_t(z)}{f_t(z)}\\
&=(R(z,t)^{-1}R'(z,t))_{11}\frac{\dot{f}_t(z)}{f_t(z)}+\frac{\mathcal{D}_{t,out}'(z)}{\mathcal{D}_{t,out}(z)}\frac{\dot{f}_t(z)}{f_t(z)}+\frac{N}{z}\frac{\dot{f}_t(z)}{f_t(z)}\\
&=\left(\sum_{j=1}^\infty j V_{-j}(t) z^{-j-1}-\frac{\widetilde{\beta}_1}{2}\frac{1}{z-e^{iu}}-\frac{\widetilde{\beta}_2}{2}\frac{1}{z-e^{-iu}}+\frac{2N+\widetilde{\beta}_1+\widetilde{\beta}_2}{2z}+\mathcal{O}(N^{-\delta}|z-1|^{-1})\right)\frac{\dot{f}_t(z)}{f_t(z)},
\end{align*}

\noindent where the $\mathcal{O}(N^{-\delta}|z-1|^{-1})$-terms are uniform in everything relevant.

\end{lemma}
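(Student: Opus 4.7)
The plan is to unwind the chain of transformations $Y\to T\to S\to R$ on each of the contours $\Gamma_\pm$ (noting that by our choice of $\kappa$, both $\Gamma_\pm$ lie outside of both $L_{{\mathrm{sm}}}$ and $U$, so the transformations $S\to T$ and $R\to S$ are trivial there) and then compute the logarithmic derivative that appears in the integrand. Concretely, on $\Gamma_-$ we have $|z|<1$, so $T(z,t)=Y(z,t)$ and $S(z,t)=T(z,t)=Y(z,t)$, while $R(z,t)=S(z,t)\mathcal{N}(z,t)^{-1}$ gives
\begin{equation*}
Y(z,t)=R(z,t)\,\mathcal{D}_{t,in}(z)^{\sigma_3}\begin{pmatrix}0&1\\-1&0\end{pmatrix}.
\end{equation*}
On $\Gamma_+$ we have $|z|>1$, so $T(z,t)=Y(z,t)z^{-N\sigma_3}$ and $S(z,t)=T(z,t)$, which combined with the global parametrix gives
\begin{equation*}
Y(z,t)=R(z,t)\,\mathcal{D}_{t,out}(z)^{\sigma_3}z^{N\sigma_3}.
\end{equation*}

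The next step is a direct computation of $(Y^{-1}Y')_{11}$ using the chain rule $Y^{-1}Y'=B^{-1}(R^{-1}R')B+B^{-1}B'$, where $B$ is the factor multiplying $R$ from the right. On $\Gamma_-$ write $B=D^{\sigma_3}J$ with $D=\mathcal{D}_{t,in}$ and $J=\left(\begin{smallmatrix}0&1\\-1&0\end{smallmatrix}\right)$. A short calculation using $J^{-1}\sigma_3 J=-\sigma_3$ shows that $B^{-1}B'=-(D'/D)\sigma_3$ and that conjugation of $R^{-1}R'$ by $B$ simply swaps the diagonal entries on the diagonal while rescaling the off-diagonal entries; in particular
\begin{equation*}
(Y^{-1}Y')_{11}=(R^{-1}R')_{22}-\frac{\mathcal{D}_{t,in}'(z)}{\mathcal{D}_{t,in}(z)}\quad\text{on }\Gamma_-.
\end{equation*}
On $\Gamma_+$ the matrix $B=(\mathcal{D}_{t,out}z^N)^{\sigma_3}$ is diagonal, so conjugation preserves diagonal entries and $B^{-1}B'=\bigl(\mathcal{D}_{t,out}'/\mathcal{D}_{t,out}+N/z\bigr)\sigma_3$. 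This immediately yields
\begin{equation*}
(Y^{-1}Y')_{11}=(R^{-1}R')_{11}+\frac{\mathcal{D}_{t,out}'(z)}{\mathcal{D}_{t,out}(z)}+\frac{N}{z}\quad\text{on }\Gamma_+,
\end{equation*}
which gives the first equalities claimed in the statement after multiplying by $\dot{f}_t/f_t$.

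The second equalities follow from two routine ingredients. First, from the explicit formulas \eqref{eq:dint}--\eqref{eq:doutt} one takes logarithmic derivatives term by term: for $\mathcal{D}_{t,in}$ one gets exactly
\begin{equation*}
\frac{\mathcal{D}_{t,in}'(z)}{\mathcal{D}_{t,in}(z)}=\sum_{j=1}^\infty jV_j(t)z^{j-1}+\frac{\widetilde\beta_1/2}{z-e^{iu}}+\frac{\widetilde\beta_2/2}{z-e^{-iu}},
\end{equation*}
and a similar computation for $\mathcal{D}_{t,out}$, after relabelling $j\mapsto -k$ in the Laurent series and combining the $(\widetilde\beta_1+\widetilde\beta_2)/(2z)$ contribution with $N/z$, produces the stated expansion on $\Gamma_+$. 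Second, Proposition \ref{pr:Rasy} gives $R(z,t)=I+\mathcal{O}(\kappa N^{-\delta}|z-1|^{-1})$ and $R'(z,t)=\mathcal{O}(N^{-\delta}|z-1|^{-1})$ uniformly on $\Gamma_\pm$ (these contours sit at distance $\gtrsim \kappa$ from the jump contour), whence $R^{-1}R'=\mathcal{O}(N^{-\delta}|z-1|^{-1})$ with the claimed uniformity; this absorbs into the stated error term.

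There is no real obstacle here beyond careful bookkeeping: the one slightly delicate point is verifying that on $\Gamma_\pm$ one is indeed outside both the lens $L_{{\mathrm{sm}}}$ and the disk $U$ (so that none of the intermediate triangular factors contribute), which follows directly from our choices $|z|=1\pm 2\kappa$ together with $L_{{\mathrm{sm}}}\subset\{1-\tfrac{2}{3}\kappa<|z|<1+\tfrac{2}{3}\kappa\}$ and $U=\{|z-1|<\kappa\}$. Once this is observed, the rest is the straightforward algebra sketched above.
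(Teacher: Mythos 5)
Your proof is correct and takes essentially the same route as the paper: unwind $Y\to T\to S\to R$ on $\Gamma_\pm$ (noting these contours avoid $L_{\mathrm{sm}}$ and $U$), express $Y$ as $R$ times the global parametrix, compute $(Y^{-1}Y')_{11}$ by the chain rule and the conjugation identities for $\sigma_3$ and the off-diagonal permutation, and then substitute the explicit logarithmic derivatives of $\mathcal{D}_{t,\mathrm{in/out}}$ together with the $R$-asymptotics from Proposition \ref{pr:Rasy}. The paper's proof of this lemma is a terse one-sentence version of exactly this computation, so your write-up is simply a fleshed-out account of the same argument.
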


\begin{proof}
The formulas where $R$ appears follow from a direct calculation which simply expresses $Y$ in terms of $R$ and $\mathcal{N}$ (which is then expressed in terms of $\mathcal{D}_{t,in/out}$). The second set of identities then make use of the fact that on $\Gamma_\pm$ $R'=\mathcal{O}(N^{-\delta}|z-1|^{-1})$ and $R$ is bounded, as well as the definition of $\mathcal{D}_{t,in/out}$.
\end{proof}

We are now in a position where we can start calculating our integral over $t$. Let us first consider the $V_j(t)$ terms. For these, we note that if we define $g(z,t)=\int_{\T}\frac{V_t(w)}{w-z}\frac{dw}{2\pi i}$, then by Cauchy's integral theorem

\begin{align*}
\int_{\Gamma_-}\sum_{j=1}^\infty j V_j(t) z^{j-1} \frac{\dot{f}_t(z)}{f_t(z)}\frac{dz}{2\pi i }&+\int_{\Gamma_+}\sum_{j=1}^\infty j V_{-j}(t) z^{-j-1} \frac{\dot{f}_t(z)}{f_t(z)}\frac{dz}{2\pi i}\\
&=\int_{\T}(g_+'(z,t)+g_-'(z,t)) \frac{\dot{f}_t(z)}{f_t(z)}\frac{dz}{2\pi i}.
\end{align*}

\noindent The $t$-integral of this has in turn been calculated by Deift \cite[equations (86) and (87)]{deift2}:

\begin{equation*}
\int_0^1\int_{\T}(g_+'(z,t)+g_-'(z,t)) \frac{\dot{f}_t(z)}{f_t(z)}\frac{dz}{2\pi i}=\sum_{k=1}^\infty k V_k V_{-k}.
\end{equation*}

We record this argument as a lemma. 

\begin{lemma}[Deift]\label{le:deift}
\begin{equation*}
\int_0^1\int_{\Gamma_-}\sum_{j=1}^\infty j V_j(t) z^{j-1} \frac{\dot{f}_t(z)}{f_t(z)}\frac{dz}{2\pi i }+\int_{\Gamma_+}\sum_{j=1}^\infty j V_{-j}(t) z^{-j-1} \frac{\dot{f}_t(z)}{f_t(z)}\frac{dz}{2\pi i}dt=\sum_{j=1}^\infty j V_jV_{-j}.
\end{equation*}

\end{lemma}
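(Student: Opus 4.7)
The plan is essentially to follow Deift's calculation from \cite{deift2}, making the cancellation of boundary values transparent. Let $g(z,t) := \int_{\T} \frac{V_t(w)}{w-z}\frac{dw}{2\pi i}$ denote the Cauchy transform of $V_t$ (viewed on $\T$). Since by Lemma \ref{le:vtholo} the function $V_t$ is analytic in $\{\,||z|-1|<3\kappa\,\}$, we may expand $1/(w-z)$ in geometric series on the two sides of $\T$ and identify term-by-term with the Fourier coefficients $V_k(t)$. The outcome is
\begin{align*}
g(z,t) &= \sum_{k\geq 0} V_k(t)\,z^k \qquad \text{for } |z|<1,\\
g(z,t) &= -\sum_{k\geq 1} V_{-k}(t)\,z^{-k} \qquad \text{for } |z|>1,
\end{align*}
so that
\begin{equation*}
g_+'(z,t) = \sum_{k\geq 1} k V_k(t)\,z^{k-1}, \qquad g_-'(z,t) = \sum_{k\geq 1} k V_{-k}(t)\,z^{-k-1}.
\end{equation*}
These are precisely the two Laurent tails appearing on the left hand side of the lemma.

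Next, I would deform contours. Since $\dot f_t/f_t = \dot V_t$ is analytic in a neighborhood of the closed annulus $\{1-3\kappa\leq |z|\leq 1+3\kappa\}$ by Lemma \ref{le:vtholo}, and $g_+'$ is analytic on $|z|<1$ while $g_-'$ is analytic on $|z|>1$, Cauchy's theorem lets us push $\Gamma_-$ and $\Gamma_+$ to $\T$ (taking boundary values from the respective sides). Combining the two integrals,
\begin{equation*}
\int_{\Gamma_-} g_+'(z,t)\,\dot V_t(z)\frac{dz}{2\pi i} + \int_{\Gamma_+} g_-'(z,t)\,\dot V_t(z)\frac{dz}{2\pi i} = \int_{\T}\bigl(g_+'(z,t)+g_-'(z,t)\bigr)\dot V_t(z)\frac{dz}{2\pi i}.
\end{equation*}

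Now I would compute this last integral using the Fourier/Parseval identity on $\T$. Writing $z=e^{i\theta}$ and using $\int_{\T} z^{\ell}\,\dot V_t(z)\frac{dz}{2\pi i}=\dot V_{-(\ell+1)}(t)$ for any integer $\ell$ (justified by absolute convergence of the Fourier series of the smooth function $\dot V_t$), termwise integration yields
\begin{equation*}
\int_{\T}\bigl(g_+'+g_-'\bigr)\dot V_t\,\frac{dz}{2\pi i} = \sum_{k\geq 1} k\bigl(V_k(t)\dot V_{-k}(t)+V_{-k}(t)\dot V_k(t)\bigr) = \frac{d}{dt}\sum_{k\geq 1} k\, V_k(t)V_{-k}(t).
\end{equation*}
Integrating in $t\in[0,1]$ and using the boundary values $V_t\big|_{t=0}=0$ (so $V_k(0)=0$) and $V_t\big|_{t=1}=V$ (so $V_k(1)=V_k$) collapses this to $\sum_{k\geq 1} kV_kV_{-k}$, as claimed.

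The only nontrivial point is the exchange of infinite sums with the contour integrals and with the $t$-derivative. This is not a real obstacle: $V_t$ is real analytic in an annulus of width independent of the summation index, so the Fourier coefficients $V_k(t)$ decay exponentially in $|k|$, uniformly in $t\in[0,1]$, and the sums converge absolutely together with their $t$-derivatives. Since all the remaining steps are routine, the main substantive ingredient is the cancellation of the log derivative into the total derivative $\tfrac{d}{dt}\sum k V_k(t)V_{-k}(t)$, which is the content of Deift's identity.
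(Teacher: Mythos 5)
Your proof is correct and follows essentially the same approach as the paper: the same Cauchy transform $g(z,t)$, the same contour deformation of $\Gamma_\pm$ onto $\T$ justified by the analyticity of $V_t$ in the annulus, and the same reduction to $\int_\T (g_+'+g_-')\dot V_t\,\frac{dz}{2\pi i}$. The only difference is that the paper cites Deift's reference for the final $t$-integral, whereas you carry it out explicitly via Parseval (giving $\frac{d}{dt}\sum_k kV_k(t)V_{-k}(t)$) and the boundary conditions $V_k(0)=0$, $V_k(1)=V_k$.
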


Let us then consider the terms containing $\pm u$ as well as the $z^{-1}$-terms. This is precisely as in \cite{dik2}. The point being that for these terms the $t$-integral is easy to perform as $\int_0^1 \frac{\dot{f}}{f}dt=V$. Then by contour deformation, we can reduce the integrals into ones over $\T$.

\begin{lemma}\label{le:FH}
\begin{align*}
\int_0^1\int_{\Gamma_-}&\left[\frac{\widetilde{\beta}_1}{2}\frac{1}{z-e^{iu}}+\frac{\widetilde{\beta}_2}{2}\frac{1}{z-e^{-iu}}\right]\frac{\dot{f}_t(z)}{f_t(z)}\frac{dz}{2\pi i}dt\\
&+\int_0^1\int_{\Gamma_+}\left[-\frac{\widetilde{\beta}_1}{2}\frac{1}{z-e^{iu}}-\frac{\widetilde{\beta}_2}{2}\frac{1}{z-e^{-iu}}+\frac{\widetilde{\beta}_1+\widetilde{\beta}_2}{2z}+\frac{N}{z}\right]\frac{\dot{f}_t(z)}{f_t(z)}\frac{dz}{2\pi i}dt\\
&=-\frac{\widetilde{\beta}_1}{2}V(e^{iu})-\frac{\widetilde{\beta}_2}{2}V(e^{-iu}).
\end{align*}
\end{lemma}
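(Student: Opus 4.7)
The plan is to compute the two contour integrals essentially by inspection, using Fubini to swap the order of integration, the observation that $\dot f_t(z)/f_t(z)$ is a total $t$-derivative, and the residue theorem.

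First, by joint continuity of the integrand in $(t,z)$ on $[0,1]\times\Gamma_\pm$, I would apply Fubini to exchange the order of integration. Recall from Section~\ref{sec:cont} that the analytic continuation of $f_t$ factors as $f_t(z)=e^{V_t(z)}g(z)$, where the factor $g$ (the analytic continuation of $|z-e^{iu}|^{\widetilde\beta_1}|z-e^{-iu}|^{\widetilde\beta_2}$ through the Szegő-type functions $\mathcal D_{t,\mathrm{in}/\mathrm{out}}$) is independent of $t$. Consequently
\[
\frac{\dot f_t(z)}{f_t(z)}=\dot V_t(z),\qquad \int_0^1\frac{\dot f_t(z)}{f_t(z)}\,dt = V_1(z)-V_0(z)=V(z),
\]
since $V_0(z)=\log 1=0$ and $V_1(z)=V(z)$ by definition of $V_t$.

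Next, I would carry out the residue calculation for the remaining $z$-integrals. The function $V$ is by \eqref{eq:Vdef} a Laurent polynomial, hence meromorphic on $\C$ with at most a pole at $z=0$, and its constant Fourier coefficient vanishes ($V_0=0$) because $\mathcal T_0=0$ and each trigonometric term in \eqref{eq:Vdef} has zero mean. The contours $\Gamma_-$ and $\Gamma_+$ are both counter-clockwise, and the only poles of the integrands in the annular region between them are the simple ones at $z=e^{\pm iu}$. The residue theorem then gives
\[
\Bigl(\int_{\Gamma_+}-\int_{\Gamma_-}\Bigr)\frac{V(z)}{z-e^{\pm iu}}\frac{dz}{2\pi i}=V(e^{\pm iu}),\qquad \int_{\Gamma_+}\frac{V(z)}{z}\frac{dz}{2\pi i}=V_0=0.
\]
The coefficients of $(z-e^{\pm iu})^{-1}$ on $\Gamma_+$ and $\Gamma_-$ differ precisely by a sign, so after combining them the only surviving contributions are $-\tfrac{\widetilde\beta_1}{2}V(e^{iu})$ and $-\tfrac{\widetilde\beta_2}{2}V(e^{-iu})$, while the $\tfrac{\widetilde\beta_1+\widetilde\beta_2}{2z}$ and $\tfrac{N}{z}$ terms on $\Gamma_+$ vanish thanks to $V_0=0$. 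This reproduces the claimed right-hand side.

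There is no serious obstacle — the argument is a routine residue computation. The one point that warrants verification is that although $f_t$ itself carries branch cuts issuing from $e^{\pm iu}$ which do meet $\Gamma_+$, the quantity $\dot f_t/f_t=\dot V_t$ is single-valued and analytic on $\C\setminus\{0\}$ because the multiplicative branch-cut factor $g$ is $t$-independent and cancels out. Hence the residue theorem applies on the full counter-clockwise contours $\Gamma_\pm$ without modification.
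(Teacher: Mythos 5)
Your proposal is correct and follows essentially the same approach as the paper: both first reduce the $t$-integral to $\int_0^1 \dot f_t(z)/f_t(z)\,dt = V(z)$ and then evaluate the remaining contour integrals by Cauchy's theorem using $V_0=0$. The only cosmetic difference is that the paper expands $(z-e^{\pm iu})^{-1}$ in a geometric series and matches Fourier coefficients of $V$, whereas you apply the residue theorem directly on the annulus $\lbrace 1-2\kappa<|z|<1+2\kappa\rbrace$; these are the same computation.
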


\begin{proof}
As mentioned, we start with the fact that $\int_0^1 \frac{\dot{f}_t(z)}{f_t(z)}dt=\int_0^1\partial_t \log(1-t+te^{V(z)})dt=V(z)$. By contour deformation, we thus have

\begin{align*}
\int_0^1\int_{\Gamma_-}\frac{1}{z-e^{iu}}\frac{\dot{f}_t(z)}{f_t(z)}\frac{dz}{2\pi i}dt&=-e^{-iu}\sum_{k=0}^\infty e^{-iku}\int_{\Gamma_-} V(z) z^k\frac{dz}{2\pi i}\\
&=-\sum_{k=0}^\infty e^{-i(k+1)u}\int_{\T}V(z) z^{k+1}\frac{dz}{2\pi i z}\\
&=-\sum_{k=1}^\infty V_{-k}e^{-iku}  
\end{align*}

\noindent and 

\begin{align*}
\int_0^1\int_{\Gamma_+}\frac{1}{z-e^{iu}}\frac{\dot{f}_t(z)}{f_t(z)}\frac{dz}{2\pi i}dt&=\sum_{k=0}^\infty \int_{\Gamma_+} e^{iku}z^{-k-1}V(z)\frac{dz}{2\pi i}\\
&=\sum_{k=0}^\infty V_k e^{iku}.
\end{align*}

From this, we see also that the $z^{-1}$-terms yield something proportional to $V_0=0$. Combining these facts, we see the claim.
\end{proof}

Our remaining task is to estimate the integrals of $\mathcal{O}(N^{-\delta}|z-1|^{-1})\frac{\dot{f}}{f}$. This is essentially identical to the corresponding estimate in \cite{abb}. 

\begin{lemma}\label{le:dierror}
As $N\to\infty$, 

\begin{equation*}
\int_{\Gamma_\pm}\int_0^1\mathcal{O}(N^{-\delta}|z-1|^{-1})\left|\frac{\dot{f}_t(z)}{f_t(z)}\right| |dz|dt=\mathcal{O}(N^{-\delta}(\log N)^3).
\end{equation*}
\end{lemma}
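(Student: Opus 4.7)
The plan is to carry out the $t$-integration first and then bound the resulting spatial integral. From $V_t(z)=\log(1-t+te^{V(z)})$ we have
\[
\frac{\dot f_t(z)}{f_t(z)} \;=\; \partial_t V_t(z) \;=\; \frac{e^{V(z)}-1}{1-t+te^{V(z)}}.
\]
The proof of Lemma \ref{le:vtholo} shows that $|\mathrm{Im}\,V(z)|=O(\varepsilon)$ uniformly on $\Gamma_\pm$, so the real part of $e^{V(z)}$ is positive and $|1-t+te^{V(z)}|$ is comparable to its real analogue $1-t+te^{\mathrm{Re}\,V(z)}$ up to a multiplicative constant. Since for real $a$ a direct substitution gives $\int_0^1 |(e^a-1)/(1-t+te^a)|\,dt=|a|$, this yields
\[
\int_0^1 \left|\tfrac{\dot f_t(z)}{f_t(z)}\right|\,dt \;\lesssim\; |V(z)| + O(1)
\]
uniformly in $z\in\Gamma_\pm$ and in all other relevant parameters.

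Next, I would estimate $|V(z)|$ on $\Gamma_\pm$. From the definition of $V$ in \eqref{eq:Vdef} and Lemma \ref{le:logsum}, one has pointwise on $\T$
\[
|V(e^{i\phi})| \;\lesssim\; \min\bigl(\log^+ d(\phi,u)^{-1},\log K_2\bigr) + \min\bigl(\log^+ d(\phi,-u)^{-1},\log K_2\bigr) + O(1) \;\lesssim\; \log N.
\]
Using the derivative bound $\sup_{||z|-1|<3\kappa}|V'(z)| \lesssim \max(K_2,M^2) = \varepsilon/\kappa$ established in the proof of Lemma \ref{le:vtholo}, the same estimate transfers to $\Gamma_\pm$ (with $\pm u$ replaced by their radial projections) up to an additive $O(1)$ error coming from $|V(z)-V(z/|z|)|\lesssim \kappa\cdot(1/\kappa) = O(1)$.

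It thus remains to bound $\int_{\Gamma_\pm}(|V(z)|+1)/|z-1|\,|dz|$. Parametrizing $z=(1\pm 2\kappa)e^{i\phi}$ gives $|z-1|\asymp |\phi|+\kappa$ and $|dz|\asymp d\phi$. I would split the $\phi$-integral into three regimes: $|\phi|<\kappa$ (use $|V|\lesssim\log N$ and $|z-1|\asymp\kappa$, giving a contribution of $O(\log N)$), $\kappa<|\phi|<1/K_2$ (use $|V|\lesssim\log N$ and $|z-1|\asymp|\phi|$, noting $\log(1/(K_2\kappa))=O(1)$ since $K_2\kappa\leq \varepsilon$, giving $O(\log N)$), and $|\phi|>1/K_2$ (use $|V|\lesssim\log|\phi|^{-1}$, whose integral against $d\phi/|\phi|$ yields $(\log K_2)^2\lesssim(\log N)^2$). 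In the large-$u$ regime ($u \geq \kappa/2$), the potentially singular contributions of $V$ near $\phi=\pm u$ are harmless because $|z-1|$ is bounded below by a constant there, so a similar case analysis applies. Combining, we obtain a bound of $O((\log N)^2)$, comfortably within the claimed $O((\log N)^3)$. The main technical obstacle is verifying that the complex-valued nature of $V$ does not spoil the clean real-variable evaluation of the $t$-integral; this is precisely where the choice of $\varepsilon$ small enough in Definition \ref{def:kappadef} and Lemma \ref{le:vtholo} is used to control $|\mathrm{Im}\,V|$.
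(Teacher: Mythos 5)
Your argument is correct and takes a genuinely different, and in fact sharper, route than the paper's. The paper leaves the $t$-integral crude: it splits $[0,1]$ into two boundary windows of width $e^{-(\log N)^2}$ (handled by the brute bound $|\dot f_t/f_t|=O(N^{O(1)})$ times tiny measure) and a middle region where a dichotomy on the sign of $\alpha_1+\alpha_2$ gives either $|e^{V}|$ or $|e^{-V}|$ bounded below and hence $|\dot f_t/f_t|\lesssim \min(t^{-1},(1-t)^{-1})$; this yields $\int_0^1|\dot f_t/f_t|\,dt=O((\log N)^2)$ uniformly in $z$, which multiplied by $\int_{\Gamma_\pm}|z-1|^{-1}|dz|=O(\log N)$ gives the stated $(\log N)^3$. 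You instead observe the exact real-variable identity $\int_0^1|(e^a-1)/(1-t+te^a)|\,dt=|a|$, and then use the positivity of $\mathrm{Re}\,e^{V(z)}$ (from $|\mathrm{Im}\,V|=O(\varepsilon)$) to compare $|1-t+te^{V(z)}|$ with $1-t+te^{\mathrm{Re}\,V(z)}$, arriving at the pointwise bound $\int_0^1|\dot f_t(z)/f_t(z)|\,dt\lesssim |V(z)|+O(1)$. Because $|V(z)|$ itself integrates favourably against $|z-1|^{-1}\,|dz|$, this gives $O((\log N)^2)$, stronger than what is claimed (and strong enough). The trade-off is that your approach requires a slightly more careful bookkeeping in the complex-to-real comparison, in particular for the numerator $|e^V-1|$ versus $|e^{\mathrm{Re}\,V}-1|$: the extra term $e^{\mathrm{Re}\,V}|\mathrm{Im}\,V|$ contributes $O(\varepsilon(1+|V|))$ after integration and is absorbed into $|V|+O(1)$, but this should be spelled out since the two quantities are not comparable up to a constant when $\mathrm{Re}\,V\approx 0$. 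Two small inaccuracies that don't affect the conclusion: in the regime $|\phi|>1/K_2$ you should note that when $1/K_2<|\phi|<1/K_1$ the $\alpha_1$-term contributes $\log K_1\leq\log|\phi|^{-1}$ rather than $\log|\phi|^{-1}$ directly; and in the large-$u$ case $|z-1|$ near $\phi=\pm u$ is $\asymp u+\kappa$, not bounded below by a constant, but the uniform bound $|V|\lesssim\log N$ together with $\int_{\Gamma_\pm}|z-e^{\pm iu}|^{-1}|dz|\lesssim\log N$ covers this regime anyway.
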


\begin{proof}
Consider first the situation where $t\leq e^{-(\log N)^2}$. For such $t$, we note from the proof of Lemma \ref{le:vtholo} and the fact that on $\T$, $V(z)=\mathcal{O}(\log N)$,  that also on $\Gamma_\pm$, $V(z)=\mathcal{O}(\log N)$ (uniformly in $z$). Thus for $t\leq e^{-(\log N)^2}$,

\begin{equation*}
\frac{\dot{f}_t(z)}{f_t(z)}=\frac{e^{V(z)}-1}{1-t+t e^{V(z)}}=\mathcal{O}(N^{\mathcal{O}(1)}),
\end{equation*}

\noindent where $\mathcal{O}(1)$ is uniform in $t$ and $z$. Next we note that writing 

\begin{equation*}
\frac{\dot{f}_t(z)}{f_t(z)}=\frac{1-e^{-V(z)}}{(1-t)e^{-V(z)}+t },
\end{equation*}

\noindent we find the same bound for $t\geq 1-e^{-(\log N)^2}$.

Now for $e^{-(\log N)^2}\leq t\leq 1-e^{-(\log N)^2}$, we make use of the following reasoning: depending on the values of $\alpha_1,\alpha_2$, Lemma \ref{le:vtholo} and Lemma \ref{le:logsum} imply that either $|e^{V(z)}|$ is bounded from below by some positive constant (independent of $N$ and $u$), or $|e^{-V(z)}|$ is bounded from below by some similar positive constant. Let us consider the first case, we then have again from the proof of Lemma \ref{le:vtholo} that $\mathrm{Re}\ e^{V(z)}>0$ on $\Gamma_\pm$, so 

\begin{equation*}
\left|\frac{e^{V(z)}-1}{1-t+t e^{V(z)}}\right|\leq \frac{1}{t}\left|\frac{e^{V(z)}-1}{e^{V(z)}}\right|\leq \frac{C}{t}
\end{equation*}

\noindent for some constant $C$ which is uniform in everything relevant. If on the other hand $|e^{-V(z)}|$ is bounded from below by some positive constant, we write 

\begin{equation*}
\left|\frac{e^{V(z)}-1}{1-t+t e^{V(z)}}\right|= \left|\frac{1-e^{-V(z)}}{(1-t)e^{-V(z)}+t}\right|\leq \frac{1}{1-t}\left|\frac{1-e^{-V(z)}}{e^{-V(z)}}\right|\leq \frac{C}{1-t}.
\end{equation*}

 We thus find

\begin{align*}
\int_0^1\left|\frac{\dot{f}_t(z)}{f_t(z)}\right|dt&\leq \left(\int_0^{e^{-(\log N)^2}}+\int_{1-e^{-(\log N)^2}}^1\right)N^{\mathcal{O}(1)}dt+C\int_{e^{-(\log N)^2}}^{1-e^{-(\log N)^2}} \frac{dt}{\min(t,1-t)}\\
&=\mathcal{O}((\log N)^2).
\end{align*}

\noindent uniformly in $z$ as well as $\theta,\theta'$. We thus have to estimate the integral

\begin{align*}
\int_{\Gamma^\pm}|z-1|^{-1}|dz|&\asymp \int_{0}^1 \frac{1}{x+\kappa}dx\asymp \log \kappa^{-1}=\mathcal{O}(\log N).
\end{align*}

\end{proof}

This lets us finally prove Proposition \ref{pr:testimate}.

\begin{proof}[Proof of Proposition $\ref{pr:testimate}$]
Combining Proposition \ref{pr:di} with Lemma \ref{le:direform}, Lemma \ref{le:dicontdefo}, Lemma \ref{le:diglobal}, Lemma \ref{le:deift}, Lemma \ref{le:FH}, and Lemma \ref{le:dierror}, we see that 

\begin{align*}
&\frac{\E e^{\beta_1 X_N(\theta)+\beta_2 X_N(\theta')+\alpha_1 X_{N,K_1}(\theta)+\alpha_2 X_{N,K_2}(\theta)+\mathrm{Tr}\mathcal{T}(U_N)}}{\E e^{\beta X_N(\theta)+\beta X_N(\theta')}}\\
\quad &=\frac{D_{N-1}(f_1)}{D_{N-1}(f_0)}\\
\quad &=e^{\sum_{j=1}^\infty jV_j V_{-j}-\frac{\widetilde{\beta}_1}{2}V(e^{iu})-\frac{\widetilde{\beta}_2}{2}V(e^{-iu})}(1+o(1))\\
\end{align*}

\noindent where $o(1)$ is uniform in everything relevant. To see that this is precisely the claim, note that from the definition of $V$, \eqref{eq:Vdef}, as well as the definition of $\phi$, $\widetilde{\beta}_i$, and $u$ above and in Definition \ref{def:f}, we have 

\begin{align*}
\frac{\widetilde{\beta}_1 V(e^{iu})}{2}+\frac{\widetilde{\beta}_2V(e^{-iu}) }{2}&=\frac{\beta_1 \mathcal{T}(e^{i\theta})}{2}+\frac{\beta_2 \mathcal{T}(e^{i\theta'})}{2}-\alpha_1\sum_{j=1}^{K_1}\frac{\beta_1+\beta_2\cos 2j u}{2j}\\
&\qquad -\alpha_2\sum_{j=1}^{K_2}\frac{\beta_1+\beta_2\cos 2j u}{2j}\\
&=\frac{\beta_1 \mathcal{T}(e^{i\theta})}{2}+\frac{\beta_2 \mathcal{T}(e^{i\theta'})}{2}-\alpha_1\sum_{j=1}^{K_1}\frac{\beta_1+\beta_2\cos j(\theta-\theta')}{2j}\\
&\qquad -\alpha_2\sum_{j=1}^{K_2}\frac{\beta_1+\beta_2\cos j(\theta-\theta')}{2j}
\end{align*}

\noindent and 

\begin{align*}
\sum_{j=1}^\infty jV_j V_{-j}&=\sum_{j=1}^M j\mathcal{T}_j\mathcal{T}_{-j}+\frac{\alpha_1^2}{4}\sum_{j=1}^{K_1}\frac{1}{j}+\frac{\alpha_2^2}{4}\sum_{j=1}^{K_2}\frac{1}{j}+\frac{\alpha_1\alpha_2}{2}\sum_{j=1}^{K_1}\frac{1}{j}\\
&\quad -\frac{\alpha_1}{2}\sum_{j=1}^{\min(M,K_1)}\left(\mathcal{T}_j e^{ij\phi}e^{\pm iju}+\mathcal{T}_{-j}e^{-ij\phi}e^{\mp ij u}\right)\\
&\quad -\frac{\alpha_2}{2}\sum_{j=1}^{\min(M,K_2)}\left(\mathcal{T}_j e^{ij\phi}e^{\pm iju}+\mathcal{T}_{-j}e^{-ij\phi}e^{\mp ij u}\right)\\
&=\sum_{j=1}^M j\mathcal{T}_j\mathcal{T}_{-j}+\frac{\alpha_1^2}{4}\sum_{j=1}^{K_1}\frac{1}{j}+\frac{\alpha_2^2}{4}\sum_{j=1}^{K_2}\frac{1}{j}+\frac{\alpha_1\alpha_2}{2}\sum_{j=1}^{K_1}\frac{1}{j}\\
&\quad -\frac{\alpha_1}{2}\sum_{j=1}^{\min(M,K_1)}\left(\mathcal{T}_j e^{ij\theta}+\mathcal{T}_{-j}e^{-ij\theta}\right)-\frac{\alpha_2}{2}\sum_{j=1}^{\min(M,K_2)}\left(\mathcal{T}_j e^{ij\theta}+\mathcal{T}_{-j}e^{-ij\theta}\right),
\end{align*}

\noindent where the sign in $\pm u$ depends on $\theta,\theta'$ as in Definition \ref{def:f}. Combining all these facts finally yields Proposition \ref{pr:testimate}.
\end{proof}

\appendix

\section{The local parametrix \texorpdfstring{-- results from \cite{ck}}{}}\label{app:A}

In this appendix we give a very brief review of some of the results from \cite{ck} which are relevant to the local parametrix. We do not give any proofs but simply refer to the relevant parts of \cite{ck}. We first give definitions for the objects we are interested in and then describe the asymptotics we are interested in.

\subsection{Definition \texorpdfstring{of $\Psi$}{}}
A fundamental model Riemann-Hilbert problem underlying the analysis in \cite{ck} is introduced in \cite[Section 3]{ck}. We will consider a simplified version of it. The connection between the notational conventions is the following:
 $\beta_1^{\mathrm{CK}}=\beta_2^{\mathrm{CK}}=0$, $2\alpha_1^{\mathrm{CK}}=\beta_1^{\mathrm{ours}}$, and  $2\alpha_2^{\mathrm{CK}}=\beta_2^{\mathrm{ours}}$.  We now describe the RHP we are interested in.

\begin{definition}\label{def:psi}
For each $s\in(-i \R_+)$, let $z\mapsto \Psi(z,s)$ be the unique solution to the following Riemann-Hilbert problem$:$

Find a function $\Psi=\Psi(\zeta,s)$ such that

\begin{itemize}[leftmargin=0.5cm]

\item[1.] $\Psi(\cdot,s):\C\setminus \Gamma\to \C^{2\times 2}$ is analytic, where 

\begin{equation}\label{eq:gamma}
\begin{array}{lll}
\Gamma=\cup_{k=1}^{5}\Gamma_k, & \Gamma_1=i+e^{i\frac{\pi}{4}}\R_+, & \Gamma_2=i +e^{i\frac{3\pi}{4}}\R_+,\\
\Gamma_3=-i+e^{i\frac{5\pi}{4}}\R_+, & \Gamma_4=-i+e^{i\frac{7\pi}{4}}\R_+, & \Gamma_5=[-i,i]. \\
\end{array}
\end{equation}

\item[2.] $\Psi$ has continuous boundary values on $\Gamma\setminus\lbrace -i,i\rbrace$ and these satisfy the jump conditions$:$ for $\zeta\in \Gamma_k\setminus\lbrace -i,i\rbrace$,

\begin{equation}\label{eq:psijump}
\Psi_+(\zeta,s)=\Psi_-(\zeta,s)J_k,
\end{equation}

\noindent where $\Psi_+$ $(\Psi_-)$ denotes the limit of $\Psi$ from the left $($right$)$ of the contour $($the arrows in Figure $\ref{figure: Gamma}$ determine the orientation of the curves$)$, and 

\begin{equation}\label{eq:psij}
\begin{array}{ll}
 J_1=\begin{pmatrix}1&e^{\pi i\beta_1}\\0&1\end{pmatrix}, & J_2=\begin{pmatrix}1&0\\-e^{-\pi i\beta_1}&1\end{pmatrix}\\
J_3=\begin{pmatrix}1&0\\-e^{\pi i\beta_2}&1\end{pmatrix},&  J_4=\begin{pmatrix}1&e^{-\pi i\beta_2}\\0&1\end{pmatrix}\\
J_5=\begin{pmatrix}0&1\\-1&1\end{pmatrix}. &
\end{array}
\end{equation}
\item[3.] In all regions, 

\begin{equation}\label{eq:psiasy}
\Psi(\zeta,s)=\left(I+\Psi_1(s)\zeta^{-1}+\Psi_2(s)\zeta^{-2}+\mathcal{O}(|\zeta|^{-3})\right)e^{-\frac{is}{4}\zeta \sigma_3}
\end{equation}

\noindent as $\zeta\to \infty$.

\item[4.] For $\beta_1,\beta_2\notin \Z_+$, let 

\begin{equation*}
g=-\frac{1}{2i \sin(\pi \beta_1)}(e^{\pi i \beta_1}-1), \quad h=-\frac{1}{2i \sin(\pi \beta_2)}(1-e^{-i \pi \beta_2}),
\end{equation*}

\begin{equation*}
G_{III}=\begin{pmatrix}
1 & g\\
0 & 1
\end{pmatrix}, \quad G_I=G_{III}J_5^{-1}, \quad G_{II}=G_1 J_1,
\end{equation*}

\noindent and

\begin{equation*}
H_{III}=\begin{pmatrix}
1 & h\\
0 & 1
\end{pmatrix}, \quad H_{IV}=H_{III}J_3^{-1},\quad H_I=H_{IV}J_4^{-1}.
\end{equation*}

Then define $F_1=F_1(\zeta,s)$ in a neighborhood of $i$ by 

\begin{equation}\label{eq:psiati1}
\Psi(\zeta,s)=F_1(\zeta,s)(\zeta-i)^{\frac{\beta_1}{2}\sigma_3} G_j
\end{equation}

\noindent in regions $j=I,II,III$, where the branch cut of $(\zeta-i)^{\frac{\beta_1}{2}\sigma_3}$ is along $i+e^{\frac{3\pi i }{4}}(0,\infty)$ and $\mathrm{arg}(\zeta-i)\in(-5\pi/4, 3\pi /4)$, and $F_2=F_2(\zeta,s)$ in a neighborhood of $-i$ by 

\begin{equation}\label{eq:psiat-i1}
\Psi(\zeta,s)=F_2(\zeta,s)(\zeta+i)^{\frac{\beta_2}{2}\sigma_3} H_j
\end{equation}

\noindent in regions $j=I,III,IV$, where the branch cut now is along $-i+e^{\frac{5\pi i}{4}}(0,\infty)$ and $\mathrm{arg}(\zeta+i)\in(-3\pi/4,5\pi/4)$.

If $\beta\in\Z_+$, define $G_{III}=H_{III}=I$ and the other matrices through the jump matrices as before, and define in the region $j$, $F_1$ and $F_2$ $($with similar branch cuts$)$ by 

\begin{equation}\label{eq:psiati2}
\Psi(\zeta,s)=F_1(\zeta,s)(\zeta-i)^{\frac{\beta_1}{2}\sigma_3}\begin{pmatrix}
1 & \frac{1-e^{\pi i \beta_1}}{2\pi i e^{\pi i \beta_1}}\log(\zeta-i)\\
0 & 1
\end{pmatrix}G_j
\end{equation}

\noindent and

\begin{equation}\label{eq:psiat-i2}
\Psi(\zeta,s)=F_2(\zeta,s)(\zeta+i)^{\frac{\beta_2}{2}\sigma_3}\begin{pmatrix}
1 & \frac{e^{-\pi i \beta_2}-1}{2\pi i e^{-\pi i \beta_2}}\log(\zeta+i)\\
0 & 1
\end{pmatrix}H_j.
\end{equation}

Then these functions $F_1$ and $F_2$  must be analytic functions of $\zeta$ in some neighborhoods of $\pm i$.
\end{itemize}
\end{definition}

\begin{figure}[t]\label{fig:psijumps}
\begin{center}
    \setlength{\unitlength}{0.8truemm}
    \begin{picture}(110,100)(-10,-7.5)
    \put(50,60){\thicklines\circle*{.8}}
    \put(50,30){\thicklines\circle*{.8}}
    \put(51,56){\small $+i$}
    \put(51,31){\small $-i$}
    \put(50,60){\line(1,1){25}}
    \put(50,30){\line(1,-1){25}}
    \put(50,60){\line(-1,1){25}}
    \put(50,30){\line(-1,-1){25}}
    \put(50,30){\line(0,1){30}}
    \put(65,75){\thicklines\vector(1,1){.0001}}
    \put(65,15){\thicklines\vector(-1,1){.0001}}
    \put(35,75){\thicklines\vector(-1,1){.0001}}
    \put(50,47){\thicklines\vector(0,1){.0001}}
    \put(35,15){\thicklines\vector(1,1){.0001}}

    \put(74,88){\small $\begin{pmatrix}1&e^{\pi i\beta_1}\\0&1\end{pmatrix}$}
    \put(-2,88){\small $\begin{pmatrix}1&0\\-e^{-\pi i\beta_1}&1\end{pmatrix}$}
    \put(31,45){\small $\begin{pmatrix}0&1\\-1&1\end{pmatrix}$}
    \put(-2,0){\small $\begin{pmatrix}1&0\\-e^{\pi i\beta_2}&1\end{pmatrix}$}
    \put(74,0){\small $\begin{pmatrix}1&e^{-\pi i\beta_2}\\0&1\end{pmatrix}$}
    \put(8,43){III}
    \put(90,43){I}
    \put(47,7){IV}
    \put(47,75){II}
    \end{picture}
    \caption{The jump contour and jump matrices for $\Psi$ (a modification of  \cite[Figure 1]{ck}).}
    \label{figure: Gamma}
\end{center}
\end{figure}
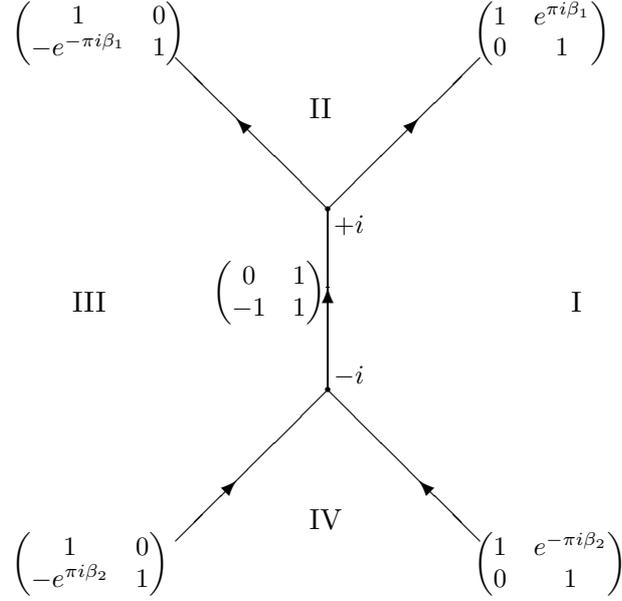

\begin{remark}
It was proven in {\rm{\cite{ck}}} that this problem has a unique solution. Uniqueness of a solution is argued at the end of {\rm{\cite[Section 3.1]{ck}}} and existence is argued in {\rm{\cite[Section 3.4]{ck}}}.
\end{remark}

\subsection{\texorpdfstring{Bounded $s$ and large $\zeta$ asymptotics of $\Psi(\zeta,s)$}{Asymptotics 1}}

For bounded $s$, that is if we assume that there exist fixed $c,C\in(0,\infty)$ and $|s|\in(c,C)$, then the large $|\zeta|$ asymptotics of $\Psi(\zeta,s)$ are described by the following lemma. 

\begin{lemma}\label{le:psiasy1}
As $\zeta\to\infty$, 

\begin{equation*}
\Psi(\zeta,s)=(I+\mathcal{O}(\zeta^{-1}))e^{-i\frac{s}{4}\zeta\sigma_3},
\end{equation*}

\noindent where $\mathcal{O}(\zeta^{-1})$ is uniform in $|s|\in(c,C)$.
\end{lemma}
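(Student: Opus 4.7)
The plan is to reduce the lemma to continuous dependence of the RHP of Definition \ref{def:psi} on the parameter $s$. For each fixed $s$ with $|s|\in(c,C)$, item $3$ of Definition \ref{def:psi} already asserts that
$$\Psi(\zeta,s)=(I+\Psi_1(s)\zeta^{-1}+\mathcal{O}(|\zeta|^{-2}))e^{-is\zeta\sigma_3/4}$$
as $\zeta\to\infty$, so the pointwise statement is free of charge and the only content of the lemma is uniformity of the implicit constant on the compact parameter range $|s|\in[c,C]$.

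My approach would be to work with the normalized function $\widetilde\Psi(\zeta,s):=\Psi(\zeta,s)\,e^{is\zeta\sigma_3/4}$, which has the same jump contour $\Gamma$ but with conjugated jump matrices $\widetilde J_k(\zeta,s)=e^{-is\zeta\sigma_3/4}J_k e^{is\zeta\sigma_3/4}$ and satisfies $\widetilde\Psi(\zeta,s)\to I$ as $\zeta\to\infty$. Writing $s=-it$ with $t>0$, a direct computation shows that the off-diagonal entries of $\widetilde J_k$ on the unbounded rays $\Gamma_1,\dots,\Gamma_4$ pick up factors of the form $e^{\pm t\zeta/2}$; the orientation of those rays is chosen precisely so that each such factor decays exponentially as $|\zeta|\to\infty$, uniformly in $t\in[c,C]$. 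On the compact segment $\Gamma_5=[-i,i]$ the jump remains uniformly bounded in $s$, and the endpoint behavior at $\pm i$ is still governed by the model factors $F_1,F_2$ of item $4$ of Definition \ref{def:psi}.

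From here I would set up the standard singular integral equation $\widetilde\Psi=I+\mathcal{C}\big((\widetilde\Psi)_-(\widetilde J-I)\big)$ on a Banach space tailored to the $(\zeta\mp i)^{\beta_j/2}$ (and, when $\beta_j\in\Z_+$, logarithmic) endpoint behavior at $\pm i$, absorbed via the local factors $G_j, H_j$ and $F_1, F_2$. On the compact parameter range, $\widetilde J-I$ depends continuously on $s$ in the relevant norm, so the resolvent $(I-\mathcal{C}_{\widetilde J-I})^{-1}$ exists and is bounded uniformly in $s$. Expanding the Cauchy kernel $(w-\zeta)^{-1}=-\zeta^{-1}+\mathcal{O}(|\zeta|^{-2})$ as $\zeta\to\infty$ then yields $\widetilde\Psi(\zeta,s)=I+\Psi_1(s)\zeta^{-1}+\mathcal{O}(|\zeta|^{-2})$ with the error uniform in $|s|\in[c,C]$; multiplying back by $e^{-is\zeta\sigma_3/4}$ gives the claim.

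The main technical point will be the careful choice of the Banach space absorbing the endpoint singularities at $\pm i$, but this is precisely the content of the construction in \cite{ck}, where the dependence of the resulting solution operator on $s$ is continuous in any reasonable sense. In practice the lemma would therefore follow by citing the uniformity of that construction on compact parameter sets.
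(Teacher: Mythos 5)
Your overall strategy — normalize at infinity, pass to a singular integral equation for the boundary values, and use compactness of the parameter interval to get a uniform bound — matches the architecture of the paper's Appendix B argument. The differences lie in exactly the two places the paper works hardest.

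First, and most seriously, the endpoint singularities at $\pm i$. Since $\Psi(\zeta,s)$ behaves like $(\zeta\mp i)^{\pm\beta_j/2}$ there, the boundary values of $\widetilde\Psi$ are generically not in $L^2(\Gamma)$, so one cannot simply "set up the standard singular integral equation" on $L^2$. You propose to remedy this by working in "a Banach space tailored to the $(\zeta\mp i)^{\beta_j/2}$ endpoint behavior," and you claim that this is "precisely the content of the construction in \cite{ck}." That claim is not accurate, and it is exactly the reason the authors wrote a self-contained proof: they say explicitly that they "do not know of a reference for a proof directly applicable to the case at hand." The paper's actual fix is concrete and different: it passes from $U$ (your $\widetilde\Psi$, modulo the lensing of $\Gamma_5$) to a new unknown $Q$ by multiplying by the inverse of an explicit local parametrix $P^{(\pm i)}$ built from the confluent hypergeometric function $M$ of Appendix~\ref{app:Am}. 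The point of this conjugation — stated in the proof — is precisely that it removes the branch-point singularities at $\pm i$ and produces a function $Q$ whose boundary values on a nice contour $\Gamma_U$ are bounded and in $L^2$, so that standard $L^2$-Cauchy-transform theory applies. In your write-up the construction that would play this role (the "tailored Banach space," bounded Cauchy transform there, and a uniform Fredholm theory) is left entirely unargued.

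Second, a smaller logical gap: continuity of $s\mapsto\widetilde J(\cdot,s)-I$ does not by itself yield existence, let alone uniform boundedness, of the resolvent $(I-\mathcal C_{\widetilde J-I})^{-1}$. One first needs invertibility at each fixed $s$ — the paper establishes this constructively, writing down an explicit formula for the inverse in terms of the boundary values $Q_-$ of the (already-known-to-exist) solution — and only then does operator-norm continuity plus compactness of $|s|\in[c,C]$ upgrade pointwise invertibility to a uniform bound. As written, your proposal conflates these two steps. If you fill in the local parametrix regularization at $\pm i$ (or genuinely carry out a weighted-space theory, which is considerably more involved than a citation) and separate the invertibility step from the continuity step, the argument will close.
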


As mentioned in \cite{ck}, results like this (and much stronger ones) are typical for Painlev\'e RH-problems -- see e.g. \cite[Appendix A]{FIKN} -- and well known for experts. Nevertheless, as we do not know of a reference for a proof directly applicable to the case at hand, we present one in Appendix \ref{app:unif} for the convenience of readers less familiar with such issues.

\subsection{\texorpdfstring{Large $|s|$ asymptotics of $\Psi(\zeta,s)$}{Asymptotics 2}}\label{sec:appu}

Here we consider the asymptotics of $\Psi(\zeta,s)$ as $|s|\to \infty$. The analysis of this has been performed in \cite[Section 5]{ck}.

\begin{figure}
\begin{center}
    \setlength{\unitlength}{0.8truemm}
    \begin{picture}(110,95)(-10,-7.5)
    \put(50,60){\thicklines\circle*{.8}}
    \put(50,30){\thicklines\circle*{.8}}

    \put(65,45){\thicklines\circle*{.8}}
    \put(35,45){\thicklines\circle*{.8}}

    \put(57,44){\small $+1$}
    \put(37,44){\small $-1$}

    \put(43,58){\small $+i$}
    \put(43,29){\small $-i$}
    \put(50,60){\thicklines\circle*{.8}}
    \put(50,30){\thicklines\circle*{.8}}
    \put(50,60){\line(1,1){25}}
    \put(50,30){\line(1,-1){25}}
    \put(50,60){\line(-1,1){25}}
    \put(50,30){\line(-1,-1){25}}

\put(50,30){\line(1,1){15}}\put(50,30){\line(-1,1){15}}
\put(50,60){\line(1,-1){15}}\put(50,60){\line(-1,-1){15}}

\put(60,40){\thicklines\vector(1,1){.0001}}
    \put(57,53){\thicklines\vector(-1,1){.0001}}
\put(40,40){\thicklines\vector(-1,1){.0001}}
    \put(43,53){\thicklines\vector(1,1){.0001}}

\put(23,79){$\Gamma_2$} \put(74,79){$\Gamma_1$}
\put(23,10){$\Gamma_3$} \put(74,10){$\Gamma_4$}
\put(29,44){$\Gamma_5'$} \put(66,44){$\Gamma_5''$}

    \put(65,75){\thicklines\vector(1,1){.0001}}
    \put(65,15){\thicklines\vector(-1,1){.0001}}

    \put(35,75){\thicklines\vector(-1,1){.0001}}
    \put(35,15){\thicklines\vector(1,1){.0001}}


 \end{picture}
    \caption{The jump contour for $U$ - a slight modification of \cite[Figure 3]{ck}}
    \label{fig:Ujump}
\end{center}
\end{figure}
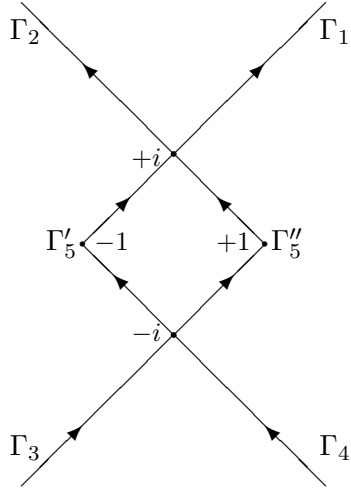

To study the $|s|\to\infty$ asymptotics of $\Psi(\zeta,s)$, one defines

\begin{equation}\label{eq:Udef}
U(\zeta,s)=\begin{cases}
\Psi(\zeta,s)e^{\frac{|s|}{4}\zeta\sigma_3 }, & \mathrm{outside \ the \ region \ delimited \  by \ } \Gamma_5' \ \mathrm{and} \ \Gamma_5''\\
\Psi(\zeta,s)\begin{pmatrix}
1 & 1\\
0 & 1
\end{pmatrix} e^{\frac{|s|}{4}\zeta\sigma_3 }, & \mathrm{in \ the \ right \ part \ of \ the \ region}\\
\Psi(\zeta,s)\begin{pmatrix}
1 & 0\\
1 & 1
\end{pmatrix} e^{\frac{|s|}{4}\zeta\sigma_3 }, & \mathrm{in \ the \ left \ part \ of \ the \ region}
\end{cases}.
\end{equation}

\noindent For the definition of the contours $\Gamma_5'$ and $\Gamma_5''$, see Figure \ref{fig:Ujump}. Then let $\mathcal{U}_\pm$ be small but fixed neighborhoods of $\pm i$. We write $\Gamma_U=\partial \mathcal{U}_+\cup \partial \mathcal{U}_{-} \cup [(\cup_{j=1}^4 \Gamma_j \cup\Gamma_5'\cup \Gamma_5'')\setminus (\overline{\mathcal{U}_+\cup \mathcal{U}_-})]$. Then the asymptotics we will need are the following. For a proof, see the discussion leading up to \cite[(5.25)]{ck}.

\begin{lemma}\label{le:Uasysbig}
As $s\to -i\infty$, for $\zeta\in \C\setminus (\overline{\mathcal{U}_+\cup \mathcal{U}_-}\cup \Gamma_U)$

\begin{equation*}
U(\zeta,s)=I+\mathcal{O}(|s|^{-1}(1+|\zeta|)^{-1}),
\end{equation*}

\noindent where the error is uniform in $\zeta\in \C\setminus (\overline{\mathcal{U}_+\cup \mathcal{U}_-}\cup \Gamma_U)$.
\end{lemma}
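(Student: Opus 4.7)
My plan is to apply the standard Deift--Zhou steepest descent / small norm RHP analysis. The transformation \eqref{eq:Udef} defining $U$ has been crafted precisely so that, after the substitution, all jumps of $U$ on $\Gamma_U$ (outside fixed neighborhoods of $\pm i$) are close to the identity as $|s|\to\infty$, and $U(\zeta,s)\to I$ as $\zeta\to\infty$. The first step is therefore to compute the jump matrix $J_U$ of $U$ across each component of $\Gamma_U$ and to verify that it is exponentially close to $I$ away from $\mathcal{U}_\pm$. For the unbounded rays $\Gamma_1,\Gamma_2,\Gamma_3,\Gamma_4$, conjugating the triangular matrices $J_1,\ldots,J_4$ of \eqref{eq:psij} by $e^{|s|\zeta\sigma_3/4}$ multiplies the off-diagonal entry by $e^{\mp |s|\zeta/2}$, and the rays were chosen so that $\pm\mathrm{Re}(\zeta)>0$ with a linear growth in $|\zeta|$ along the appropriate contour; hence $J_U=I+\mathcal{O}(e^{-c|s|(1+|\zeta|)})$ there. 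On the lens boundary $\Gamma_5'\cup \Gamma_5''$ the factorization $J_5=\begin{pmatrix}1&0\\1&1\end{pmatrix}\begin{pmatrix}0&1\\-1&0\end{pmatrix}\begin{pmatrix}1&0\\1&1\end{pmatrix}$ is used to ``open the lens'', and an analogous computation shows the jumps again decay exponentially once one is outside $\mathcal{U}_\pm$, the constant parts of the interior jump being absorbed into the middle factor which extends continuously across $\Gamma_5$.

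Second, I would construct local parametrices $P_\pm(\zeta,s)$ on $\mathcal{U}_\pm$ that solve exactly the same jump conditions as $U$ inside $\mathcal{U}_\pm$ and match the identity on $\partial\mathcal{U}_\pm$ up to an error of order $|s|^{-1}$. Near $\pm i$ the prescribed singular behavior of $\Psi$, coming from \eqref{eq:psiati1}--\eqref{eq:psiat-i2}, combined with the exponential factor $e^{|s|\zeta\sigma_3/4}$, leads to a canonical model RHP of confluent hypergeometric type (a Painlev\'e~V parametrix as in \cite{ck}). After rescaling $\lambda=|s|(\zeta\mp i)/2$, this model admits an explicit solution $M(\lambda,\beta_j/2)$ (the same building block that appears in Appendix \ref{app:Am} of this paper), whose large-$\lambda$ asymptotics give $P_\pm(\zeta,s)=I+\mathcal{O}((|s|(1+|\zeta|))^{-1})$ uniformly on $\partial\mathcal{U}_\pm$.

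Third, I would set
\begin{equation*}
R(\zeta,s)=\begin{cases}U(\zeta,s)P_\pm(\zeta,s)^{-1}, & \zeta\in\mathcal{U}_\pm,\\ U(\zeta,s), & \zeta\in\C\setminus\overline{\mathcal{U}_+\cup\mathcal{U}_-}.\end{cases}
\end{equation*}
Then $R$ has no jumps on $\Gamma_U\cap\overline{\mathcal{U}_\pm}$, its jump on $\partial\mathcal{U}_\pm$ is $P_\pm P_\mp^{-1}\cdot\text{(global)}^{-1}=I+\mathcal{O}(|s|^{-1})$, and its jump on the remainder of $\Gamma_U$ is the exponentially small $J_U$ from the first step; moreover $R(\zeta,s)=I+\mathcal{O}(\zeta^{-1})$ at infinity. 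The standard small-norm RHP theory then gives $R=I+C_{\Gamma_U}(\Delta+\Delta(1-C_\Delta)^{-1}C_-\Delta)$ with $\Delta=J_R-I$, and the Cauchy-operator bounds together with the decay of $\Delta$ yield the pointwise estimate $R(\zeta,s)=I+\mathcal{O}(|s|^{-1}(1+|\zeta|)^{-1})$ uniformly in $\zeta\in\C\setminus(\overline{\mathcal{U}_+\cup\mathcal{U}_-}\cup\Gamma_U)$. Since $U=R$ there, the claim follows.

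The main obstacle is the construction of the local parametrices $P_\pm$ with the correct matching on $\partial\mathcal{U}_\pm$: one needs an explicit solvable model RHP whose large-parameter asymptotics provide the $\mathcal{O}(|s|^{-1})$ matching to the identity, and the prefactors relating the model coordinate $\lambda=|s|(\zeta\mp i)/2$ to $\zeta$ must be chosen so as to respect the full singular structure \eqref{eq:psiati1}--\eqref{eq:psiat-i2}. This is precisely the content of \cite[Section~5]{ck}, which we invoke directly.
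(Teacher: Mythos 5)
Your outline is correct and follows the same route as the paper, which itself contents itself with citing the discussion leading up to \cite[(5.25)]{ck} rather than reproducing it; your sketch is a faithful summary of what \cite[Section 5]{ck} does (open the lens, build local confluent-hypergeometric/Painlev\'e parametrices at $\pm i$ from the $M$-function, reduce to a small-norm RHP, and solve via the usual Cauchy-operator machinery).

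One arithmetic slip worth fixing: the factorization you write for $J_5$ does not hold, since $\begin{pmatrix}1&0\\1&1\end{pmatrix}\begin{pmatrix}0&1\\-1&0\end{pmatrix}\begin{pmatrix}1&0\\1&1\end{pmatrix}=\begin{pmatrix}1&1\\0&1\end{pmatrix}\neq J_5=\begin{pmatrix}0&1\\-1&1\end{pmatrix}$. The lens opening already encoded in \eqref{eq:Udef} corresponds to the two-factor decomposition $J_5=\begin{pmatrix}1&1\\0&1\end{pmatrix}\begin{pmatrix}1&0\\-1&1\end{pmatrix}$, with the upper-triangular factor pushed into the right half of the lens and the lower-triangular one into the left half. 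As a result $U$ has \emph{no} jump at all across $\Gamma_5$ --- there is no residual ``middle factor'' to absorb --- and the only new jumps are $\begin{pmatrix}1&e^{-|s|\zeta/2}\\0&1\end{pmatrix}$ on $\Gamma_5''$ and $\begin{pmatrix}1&0\\-e^{|s|\zeta/2}&1\end{pmatrix}$ on $\Gamma_5'$, both exponentially close to $I$ once $\zeta$ is bounded away from $\pm i$. This is a bookkeeping correction only and does not affect the validity of your overall argument.
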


\subsection{Definition of \texorpdfstring{$M$}{M}}\label{app:Am}

Before going into the small $|s|$ asymptotics of $\Psi(\zeta,s)$, we need to describe a solution to an auxiliary RHP discussed in \cite[Section 4]{ck} where the discussion relies on \cite[Section 4.2.1]{cik}. The connection between the notation in \cite{ck} and ours is that $\alpha_{CK}=\beta_{ours}$ and $\beta_{CK}=0$. In \cite[Section 4.2.1]{cik}, the authors constructed an explicit function that satisfies the following RHP

{\it \begin{itemize}[leftmargin=0.5cm]
\item[1.] $M:\mathbb C\setminus e^{\pm\frac{\pi i}{4}}\mathbb R \to \mathbb C^{2\times 2}$ is analytic.
\item[2.] $M$ has continuous boundary values on $e^{\pm\frac{\pi i}{4}}\mathbb R\setminus \lbrace 0\rbrace$ and these satisfy the following jump conditions:

\begin{align}
\label{eq:Mjump1}M_+(\lambda)&=M_-(\lambda)\begin{pmatrix}
1 & e^{i\pi \beta}\\
0 & 1
\end{pmatrix}, \quad  \lambda\in e^{i\pi/4}\R_+\\
\label{eq:Mjump2}M_+(\lambda)&=M_-(\lambda)\begin{pmatrix}
1 & 0\\
-e^{-i\pi \beta} & 1
\end{pmatrix}, \quad  \lambda\in e^{3\pi i /4}\R_+\\
\label{eq:Mjump3}M_+(\lambda)&=M_-(\lambda)\begin{pmatrix}
1 & 0\\
e^{i\pi \beta} & 1
\end{pmatrix}, \quad  \lambda\in e^{5\pi i /4}\R_+\\
\label{eq:Mjump4}M_+(\lambda)&=M_-(\lambda)\begin{pmatrix}
1 & -e^{-i\pi \beta} \\
0& 1
\end{pmatrix}, \quad  \lambda\in e^{7\pi i /4}\R_+,
\end{align}

\noindent where all of the rays are oriented away from the origin $($see Figure $\ref{fig:M})$ and the $+$ side is on the left side of the ray.

\item[3.] In all sectors, as $\lambda\to \infty$

\begin{equation}\label{eq:Masy}
M(\lambda)=(I+M_1\lambda^{-1}+\mathcal{O}(\lambda^{-2}))e^{-\frac{1}{2}\lambda\sigma_3},
\end{equation}

\noindent where 

\begin{equation}\label{eq:M1}
M_1=\begin{pmatrix}
\beta^2 & -\beta \\
\beta & -\beta^2
\end{pmatrix}
\end{equation}
\end{itemize}}

We do not need the explicit form of this function -- simply that it exists and its asymptotic expansion is given by \eqref{eq:Masy}.

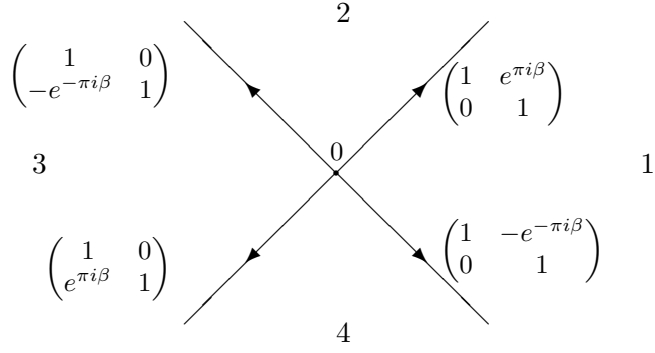
\begin{figure}
\begin{center}
    \setlength{\unitlength}{0.8truemm}
    \begin{picture}(100,48.5)(0,2.5)

    \put(49,27){\small $0$}
    \put(50,25){\thicklines\circle*{.8}}

    \put(50,25){\line(1,1){25}}
    \put(50,25){\line(1,-1){25}}
    \put(50,25){\line(-1,1){25}}
    \put(50,25){\line(-1,-1){25}}
    \put(65,40){\thicklines\vector(1,1){.0001}}
    \put(65,10){\thicklines\vector(1,-1){.0001}}
    \put(35,40){\thicklines\vector(-1,1){.0001}}
    \put(35,10){\thicklines\vector(-1,-1){.0001}}

    \put(67,37){\small $\begin{pmatrix}1&e^{\pi i\beta}\\0&1\end{pmatrix}$}
    \put(-4,40){\small $\begin{pmatrix}1&0\\-e^{-\pi i\beta}&1\end{pmatrix}$}
    \put(2,8){\small $\begin{pmatrix}1&0\\e^{\pi i\beta}&1\end{pmatrix}$}
    \put(67,11){\small $\begin{pmatrix}1&-e^{-\pi i\beta}\\0&1\end{pmatrix}$}

    \put(100,25){1}
    \put(50,50){2}
    \put(0,25){3}
    \put(50,-3){4}
    \end{picture}
    \caption{The jump contour and jump matrices for $M$.}
    \label{fig:M}
\end{center}
\end{figure}

\subsection{\texorpdfstring{Small $|s|$ asymptotics of $\Psi(\zeta,s)$}{Asymptotics 3}}

In this section, we discuss the small $|s|$ asymptotics of $\Psi(\zeta,s)$ as described in \cite[Section 6]{ck}. One begins by defining 

\begin{equation}\label{eq:psihat}
\widehat{\Psi}(\lambda,s)=e^{-\frac{s}{4}\sigma_3}\Psi\left(\frac{2}{|s|}\lambda+i\right).
\end{equation}

We also fix $\mathcal{U}_0$ -- a small neighborhood of origin, though we assume its size fixed and large enough to contain a closed disk of radius $|s|$ around the origin. In \cite[Section 6]{ck} (see in particular around \cite[(6.29)]{ck}) the following result is proven.

\begin{lemma}\label{le:psissmall}
As $s\to 0$

\begin{equation*}
\widehat{\Psi}(\lambda,s)M(\lambda)^{-1}=I+\mathcal{O}(\lambda^{-1}|s\log s|)
\end{equation*}

\noindent uniformly for $\lambda\in \C\setminus \overline{\mathcal{U}_0}$. 
\end{lemma}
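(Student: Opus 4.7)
The plan is to run the standard "normalize-at-infinity plus small-norm RHP" argument, adapted to the small-$|s|$ regime. First, I would record the Riemann--Hilbert problem satisfied by $\widehat{\Psi}(\lambda,s)=e^{-\frac{s}{4}\sigma_3}\Psi(\frac{2}{|s|}\lambda+i,s)$. Since $\zeta\mapsto\frac{|s|}{2}(\zeta-i)$ preserves angles, the jump contour in the $\lambda$-plane consists of the rays $e^{i\pi/4}\R_+$ and $e^{3i\pi/4}\R_+$ emerging from $0$ (images of $\Gamma_1,\Gamma_2$), the rays $-i|s|+e^{5i\pi/4}\R_+$ and $-i|s|+e^{7i\pi/4}\R_+$ emerging from $-i|s|$ (images of $\Gamma_3,\Gamma_4$), and the vertical segment $[-i|s|,0]$ (image of $\Gamma_5$). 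The jump matrices are the constants $J_1,\dots,J_5$ from Definition \ref{def:psi}. Using $s\in -i\R_+$ together with \eqref{eq:psiasy}, a direct computation shows $\widehat{\Psi}(\lambda,s)=(I+\mathcal{O}(\lambda^{-1}))e^{-\lambda\sigma_3/2}$ at infinity, matching the leading asymptotic \eqref{eq:Masy} of $M(\lambda)$.

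Next I set $R(\lambda,s)=\widehat{\Psi}(\lambda,s)M(\lambda)^{-1}$ for $\lambda\notin\overline{\mathcal{U}_0}$. With the parameter of $M$ chosen as $\beta_1/2$ (the Fisher--Hartwig exponent at $\zeta=i$), the jumps on $e^{i\pi/4}\R_+$ and $e^{3i\pi/4}\R_+$ coincide for $\widehat{\Psi}$ and $M$, so $R$ extends analytically across these rays outside $\mathcal{U}_0$. The remaining jump contours of $R$ are $\partial\mathcal{U}_0$ and the portions of the shifted rays $-i|s|+e^{5i\pi/4}\R_+$ and $-i|s|+e^{7i\pi/4}\R_+$ lying outside $\mathcal{U}_0$. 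On the shifted rays the jump of $R$ has the form $M_-\widehat{J}M_-^{-1}$ for a constant unimodular $\widehat{J}$, and substituting the large-$\lambda$ asymptotic $M_-(\lambda)\sim e^{-\lambda\sigma_3/2}$ shows that the nontrivial off-diagonal entry is multiplied by $e^{\pm\lambda}$. Since these rays lie in $\{\mathrm{Re}(\lambda)\leq -c\}$ for some fixed $c>0$ once we are outside $\mathcal{U}_0$, this contribution is exponentially small in $1/|s|$ and hence negligible.

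The main contribution---and the main obstacle---is the jump of $R$ across $\partial\mathcal{U}_0$. By the singularity structure \eqref{eq:psiati1}--\eqref{eq:psiat-i2}, $\widehat{\Psi}(\lambda,s)$ factors in a punctured neighborhood of $\lambda=0$ as $F_1\!\left(\frac{2\lambda}{|s|}+i,s\right)\left(\frac{2\lambda}{|s|}\right)^{\beta_1/2\,\sigma_3}G_j$, while $M(\lambda)$ factors as an analytic prefactor times $\lambda^{\beta_1/2\,\sigma_3}G_j$ with the same $G_j$. The $\lambda^{\pm\beta_1/2}$ factors and the constants $G_j$ cancel in $\widehat{\Psi}M^{-1}$, leaving only a scalar $(|s|/2)^{\mp\beta_1/2\,\sigma_3}$ (harmless for non-integer $\beta_1$) together with a ratio of analytic prefactors. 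The delicate step is bounding this prefactor ratio on $\partial\mathcal{U}_0$ by $I+\mathcal{O}(|s\log s|)$: the logarithm enters in the integer-$\beta_1$ case \eqref{eq:psiati2}, where the comparison $\log\!\left(\frac{2\lambda}{|s|}\right)-\log\lambda=-\log(|s|/2)$ produces an explicit logarithmic factor, and one combines it with the smooth dependence of $F_1$ on its arguments, using the fact (inherited from the RHP for $F_1$) that $F_1(\xi,s)$ extends analytically across $\xi=i$ and satisfies $F_1\!\left(\frac{2\lambda}{|s|}+i,s\right)-F_1(i,0)=\mathcal{O}(|s|+|\lambda|)$ uniformly for $\lambda\in\partial\mathcal{U}_0$. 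I expect this prefactor comparison, particularly the simultaneous control in $\lambda$ and $s$ and the resonance case $\beta_1\in\Z_+$, to be the technical heart of the argument.

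With the jump estimate $\|R_-^{-1}R_+-I\|_{L^\infty\cap L^2}=\mathcal{O}(|s\log s|)$ on the full jump contour of $R$, the standard small-norm argument---whose $L^2\to L^2$ Cauchy-operator bound is guaranteed by Lemma \ref{le:cauchy} applied to the fixed (in $s$) contour $\partial\mathcal{U}_0$ together with the two exponentially-suppressed rays---yields $R(\lambda,s)=I+\mathcal{O}(|s\log s|/\mathrm{dist}(\lambda,\Sigma_R))$. Since $\Sigma_R\subset \overline{\mathcal{U}_0}\cup\{\mathrm{Re}(\lambda)\leq -c\}$, for $\lambda\in \C\setminus \overline{\mathcal{U}_0}$ the distance is comparable to $|\lambda|$, giving $R(\lambda,s)=I+\mathcal{O}(\lambda^{-1}|s\log s|)$. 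Unwinding the definition of $R$ yields the claimed estimate.
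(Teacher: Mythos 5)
The paper does not give a standalone proof of this lemma: it simply cites Claeys--Krasovsky \cite[(6.29), (6.30), (6.14)]{ck} and the discussion at the end of their Section 6.3, noting that the stated uniform bound (with $\log s$ appearing only at integer resonances) can be read off from those formulas once one uses that $L(\lambda)$ and $L(\lambda)^{-1}$ are entire and $s$-independent. Your proposal instead re-derives the small-$s$ analysis from scratch, and it has a genuine conceptual gap at the crucial step.

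The problem is the jump estimate on $\partial\mathcal U_0$. You propose to bound $\widehat\Psi(\lambda,s)M(\lambda)^{-1}$ there using the local factorization \eqref{eq:psiati1}--\eqref{eq:psiat-i2} of $\Psi$ near $\zeta=i$. But that factorization (with $F_1$ analytic) only holds in a fixed-size neighbourhood of $\zeta=i$ in the $\zeta$-plane -- indeed $F_1$ acquires jumps across $\Gamma_3,\Gamma_4$ and a singularity at $-i$, all at distance $\asymp 1$ from $i$. In the rescaled $\lambda$-plane this neighbourhood has radius $\asymp |s|$ and shrinks to a point as $s\to0$. A fixed circle $\partial\mathcal U_0$ corresponds to $|\zeta-i|\asymp 1/|s|\to\infty$, far outside the domain of validity of the factorization; in particular the other singular point $\zeta=-i$ (which maps to $\lambda=-i|s|\in\mathcal U_0$) is enclosed, so there is no way to describe $\widehat\Psi$ on $\partial\mathcal U_0$ in terms of the local data at $\zeta=i$ alone. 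Relatedly, you only define $R$ outside $\mathcal U_0$, so ``the jump of $R$ across $\partial\mathcal U_0$'' is not yet a well-posed object -- one needs an inner parametrix inside $\mathcal U_0$ capturing both merging singularities, and the $|s\log s|$ mismatch comes from comparing that inner parametrix with $M$ on the boundary circle. Finally, the regularity of $F_1(\cdot,s)$ in $s$ near $s=0$ that you invoke (``$F_1(\tfrac{2\lambda}{|s|}+i,s)-F_1(i,0)=\mathcal O(|s|+|\lambda|)$'') is essentially the statement being proved, so assuming it is circular. Your treatment of the jump contours far from the origin (exponential suppression via $M(\lambda)\sim e^{-\lambda\sigma_3/2}$) is fine and matches the standard picture, but the central estimate on $\partial\mathcal U_0$ needs the Claeys--Krasovsky inner construction, which is precisely what the paper defers to.
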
 

We point out that this result does not appear in \cite{ck} in precisely this form. Perhaps the closest statement is \cite[(6.29) and (6.30)]{ck} which holds for $2\beta\neq 0,1,2,...$. Then using \cite[(6.14)]{ck} and the fact that $L(\lambda)$ and $L(\lambda)^{-1}$ are entire (a standard Liouville's theorem argument implies that $\det L=1$ so the entirety of $L(\lambda)^{-1}$ follows from that of $L$) and independent of $s$, one can easily check that the above asymptotics hold in the case $2\beta \neq 0,1,2...$. For integer values of $2\beta$, the argument is similar but makes use of the discussion at the end of \cite[Section 6.3]{ck}. In particular, the $\log s$ appears only in this case.

\subsection{The function \texorpdfstring{$\Phi$ and its asymptotics}{Phi and its asymptotics}}\label{sec:phi}

The actual function that appears in the construction of our local parametrix is the following one:

\begin{equation}\label{eq:Phidef}
\Phi(\zeta,s)=\Psi(\zeta,s)\times \begin{cases}
I, & \mathrm{Im}(\zeta)\in(-1,1)\\
e^{i\frac{\pi}{2}\beta_1\sigma_3}, & \mathrm{Im}(\zeta)>1\\
e^{-i\frac{\pi}{2}\beta_2\sigma_3}, & \mathrm{Im}(\zeta)<-1
\end{cases}=:\Psi(\zeta,s)\widehat{P}^{(\infty)}(\zeta).
\end{equation}

The fact that $\Psi$ is the unique solution to a RHP translates into $\Phi$ being the unique solution to a slightly different RHP. We state this as a lemma.

\begin{lemma}\label{le:Phirhp}
$\Phi$ is the unique solution to the following RHP$:$

\vspace{0.3cm}

Find a function $\Phi=\Phi(\zeta,s)$ such that

\begin{itemize}[leftmargin=0.5cm]
\item[1.] $\Phi(\cdot,s):\C\setminus \Gamma'\to \C^{2\times 2}$ is analytic. Here

\begin{equation}\label{eq:gammaprime}
\begin{array}{lll}
\Gamma'=\cup_{k=1}^{9}\Gamma_k, & \Gamma_1=i+e^{i\frac{\pi}{4}}\R_+, & \Gamma_2=i +e^{i\frac{3\pi}{4}}\R_+,\\
\Gamma_3=-i+e^{i\frac{5\pi}{4}}\R_+, & \Gamma_4=-i+e^{i\frac{7\pi}{4}}\R_+, & \Gamma_5=[-i,i],\\
 \Gamma_6=i+\R_+, & \Gamma_7=i-\R_+, & \Gamma_8=-i-\R_+,\\
  \Gamma_9=-i+\R_+. & &\\
\end{array}
\end{equation}

\item[2.] The jump conditions are $($the orientation of the contours is according to the arrows in Figure $\ref{fig:phijumps}):$

\begin{equation*}
\Phi_+(\zeta,s)=\Phi_-(\zeta,s)V_k, \quad \zeta\in \Gamma_k
\end{equation*}

\noindent where

\begin{equation}\label{eq:phijumpm}
\begin{array}{lll}
V_1=\begin{pmatrix}
1 & 1\\
0 & 1
\end{pmatrix}, & V_2=\begin{pmatrix}
1 & 0\\
-1 & 1
\end{pmatrix}, & V_3=V_2\\
V_4=V_1, & V_5=\begin{pmatrix}
0 & 1\\
-1 & 1
\end{pmatrix}, &  V_6=e^{\pi i \frac{\beta}{2}\sigma_3}
\\
V_7=V_6, & V_8=V_6, & V_9=V_6.
\end{array}
\end{equation}

\item[3.] As $\zeta\to \infty$, 

\begin{equation}\label{eq:phiasy}
\Phi(\zeta,s)=\left(I+\Psi_1(s)\zeta^{-1}+\Psi_2(s)\zeta^{-2}+\mathcal{O}(|\zeta|^{-3})\right)\widehat{P}^{(\infty)}(\zeta)e^{-\frac{|s|}{4}\zeta\sigma_3}.
\end{equation}

\item[4.] The behavior of $\Phi$ near $\pm i$ is determined by \eqref{eq:Phidef} -- the definition of $\Phi$ and the behavior of $\Psi$ near $\pm i$ $($i.e. \eqref{eq:psiati1} -- \eqref{eq:psiat-i2}$)$.
\end{itemize}
\end{lemma}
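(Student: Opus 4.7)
The plan is to verify directly that the function $\Phi(\zeta,s)=\Psi(\zeta,s)\widehat{P}^{(\infty)}(\zeta)$ defined by \eqref{eq:Phidef} satisfies the four conditions listed, and then to obtain uniqueness from a standard Liouville-type argument. The existence half is essentially an exercise in conjugation; the uniqueness half uses the controlled behavior of $\Phi$ near $\pm i$ coming from the corresponding property for $\Psi$.

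\emph{Step 1 (analyticity and jump contour).} Since $\Psi$ is analytic off $\Gamma=\bigcup_{k=1}^5\Gamma_k$ and $\widehat{P}^{(\infty)}$ is a piecewise constant diagonal matrix with discontinuities only across the horizontal lines $\mathrm{Im}\,\zeta=\pm 1$, the product $\Phi$ is analytic off $\Gamma'=\Gamma\cup\{\mathrm{Im}\,\zeta=\pm 1\}=\bigcup_{k=1}^9\Gamma_k$, as required.

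\emph{Step 2 (jump conditions).} On each of $\Gamma_1,\dots,\Gamma_4$, the factor $\widehat{P}^{(\infty)}$ takes the same constant value $P_k$ on both sides of the contour, so from $\Psi_+=\Psi_-J_k$ one obtains $\Phi_+=\Phi_-\bigl(P_k^{-1}J_kP_k\bigr)$. A direct calculation using the fact that conjugation by $e^{a\sigma_3}$ multiplies the $(1,2)$ entry by $e^{2a}$ and the $(2,1)$ entry by $e^{-2a}$ yields
\[
e^{-i\pi\beta_1\sigma_3/2}J_1e^{i\pi\beta_1\sigma_3/2}=\begin{pmatrix}1&1\\0&1\end{pmatrix}=V_1,\qquad
e^{-i\pi\beta_1\sigma_3/2}J_2e^{i\pi\beta_1\sigma_3/2}=\begin{pmatrix}1&0\\-1&1\end{pmatrix}=V_2,
\]
and analogously $V_3=V_2$, $V_4=V_1$ using $\widehat{P}^{(\infty)}=e^{-i\pi\beta_2\sigma_3/2}$ in the lower half. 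On $\Gamma_5\subset\{\mathrm{Im}\,\zeta\in(-1,1)\}$ the factor is the identity, so $V_5=J_5$. On $\Gamma_6,\dots,\Gamma_9$ the matrix $\Psi$ has no jump, and the jump is entirely that of $\widehat{P}^{(\infty)}$; with the orientation convention, one reads off $V_6=V_7=e^{i\pi\beta_1\sigma_3/2}$ and $V_8=V_9=e^{-i\pi\beta_2\sigma_3/2}$ (this matches the compressed $\beta$-notation used in the statement).

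\emph{Step 3 (asymptotics at infinity).} Inside each sector $\widehat{P}^{(\infty)}$ is a constant diagonal matrix, so it commutes with $\sigma_3$ and can be slid through $e^{-is\zeta\sigma_3/4}$. Since $s\in -i\mathbb{R}_+$ we have $-is=-|s|$, so the asymptotic \eqref{eq:psiasy} for $\Psi$ gives
\[
\Phi(\zeta,s)=(I+\Psi_1(s)\zeta^{-1}+\Psi_2(s)\zeta^{-2}+\mathcal{O}(|\zeta|^{-3}))\widehat{P}^{(\infty)}(\zeta)e^{-\tfrac{|s|}{4}\zeta\sigma_3},
\]
which is exactly \eqref{eq:phiasy}. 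The endpoint behavior near $\pm i$ is inherited from \eqref{eq:psiati1}--\eqref{eq:psiat-i2} because $\widehat{P}^{(\infty)}$ is a bounded constant factor in each of the regions $I,II,III,IV$.

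\emph{Step 4 (uniqueness).} Suppose $\Phi^{(1)},\Phi^{(2)}$ both solve the RHP. Their quotient $R:=\Phi^{(1)}(\Phi^{(2)})^{-1}$ has no jumps across $\Gamma'$ because both factors share the same jump matrices (and $\det\Phi^{(j)}\equiv 1$ by the standard Liouville argument applied across the jump contour, the jump matrices all having determinant $1$). Hence $R$ extends to an analytic function on $\mathbb{C}\setminus\{-i,+i\}$. The local expansions of $\Phi^{(j)}$ at $\pm i$ inherited from \eqref{eq:psiati1}--\eqref{eq:psiat-i2} show that any singularity of $R$ at $\pm i$ is at worst a fractional power of $|\zeta\mp i|$, and combining this with the boundedness of the entries of $R$ forces the singularities to be removable by the standard argument (a holomorphic function locally bounded by $|\zeta-\zeta_0|^{-\alpha}$ with $\alpha<1$ extends holomorphically). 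The asymptotic \eqref{eq:phiasy} shows $R(\zeta)\to I$ as $\zeta\to\infty$, so $R\equiv I$ by Liouville's theorem, proving $\Phi^{(1)}=\Phi^{(2)}$.

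The only mildly technical point is the removability step in Step 4: one must use that the matrices $G_j$ and $H_j$ appearing in \eqref{eq:psiati1}--\eqref{eq:psiat-i2} are the same for both solutions (they are intrinsic to the RHP, not to the particular solution), so the singular factors cancel in the product $\Phi^{(1)}(\Phi^{(2)})^{-1}$. Everything else is bookkeeping.
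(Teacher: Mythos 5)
The paper itself offers no proof of this lemma, treating it as an immediate consequence of the change of variables $\Phi=\Psi\widehat{P}^{(\infty)}$ (the sentence just before the lemma says the $\Psi$-RHP ``translates into $\Phi$ being the unique solution to a slightly different RHP''); your direct verification plus a Liouville-type uniqueness argument is exactly the reasoning the authors had in mind. One computational slip: with the rightward orientation of $\Gamma_8,\Gamma_9$ from Figure~\ref{fig:phijumps}, the $+$-side is $\mathrm{Im}\,\zeta>-1$ where $\widehat{P}^{(\infty)}=I$, and the $-$-side is $\mathrm{Im}\,\zeta<-1$ where $\widehat{P}^{(\infty)}=e^{-i\pi\beta_2\sigma_3/2}$, so $V_8=V_9=\bigl(e^{-i\pi\beta_2\sigma_3/2}\bigr)^{-1}=e^{+i\pi\beta_2\sigma_3/2}$, not $e^{-i\pi\beta_2\sigma_3/2}$ as you wrote; this is the $+$ sign displayed in the figure and in \eqref{eq:phijumpm}. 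Also, in Step~4 the parenthetical about $|\zeta-\zeta_0|^{-\alpha}$ with $\alpha<1$ is both unnecessary and not in general applicable (the $\beta_j$ are arbitrary nonnegative reals); the correct and cleaner route is the one you give in your final paragraph, namely that the explicit singular factors $(\zeta\mp i)^{\pm\beta_j\sigma_3/2}$ and the logarithmic factors cancel identically in $\Phi^{(1)}(\Phi^{(2)})^{-1}$ because the matrices $G_j,H_j$ are fixed by the RHP, after which $R$ is bounded near $\pm i$ and Riemann's removable singularity theorem applies directly.
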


\begin{figure}
\begin{center}
    \setlength{\unitlength}{0.8truemm}
    \begin{picture}(100,85)(0,2.5)

    \put(50,60){\thicklines\circle*{.8}}
    \put(50,30){\thicklines\circle*{.8}}
    \put(51,56){\small $i$}
    \put(51,32){\small $-i$}
    \put(50,60){\thicklines\circle*{.8}}
    \put(50,30){\thicklines\circle*{.8}}
    \put(35,60){\thicklines\vector(1,0){.0001}}
    \put(35,30){\thicklines\vector(1,0){.0001}}
    \put(69,60){\thicklines\vector(1,0){.0001}}
    \put(69,30){\thicklines\vector(1,0){.0001}}
    \put(50,60){\line(1,1){25}}
    \put(50,30){\line(1,-1){25}}
    \put(50,60){\line(-1,1){25}}
    \put(50,30){\line(-1,-1){25}}
    \put(50,30){\line(1,0){35}}
    \put(50,60){\line(1,0){35}}
    \put(50,30){\line(-1,0){35}}
    \put(50,30){\line(0,1){30}}
    \put(50,60){\line(-1,0){35}}
    \put(65,75){\thicklines\vector(1,1){.0001}}
    \put(65,15){\thicklines\vector(-1,1){.0001}}
    \put(50,47){\thicklines\vector(0,1){.0001}}
    \put(35,75){\thicklines\vector(-1,1){.0001}}
    \put(35,15){\thicklines\vector(1,1){.0001}}
    \put(74,79){\small $\begin{pmatrix}1&1\\0&1\end{pmatrix}$}
    \put(8,79){\small $\begin{pmatrix}1&0\\-1&1\end{pmatrix}$}
    \put(86,29){\small $e^{\pi i\frac{\beta_2}{2}\sigma_3}$}
    \put(86,59){\small $e^{\pi i\frac{\beta_1}{2}\sigma_3}$}
    \put(-6,29){\small $e^{\pi i\frac{\beta_2}{2}\sigma_3}$}
    \put(31,43){\small $\begin{pmatrix}0&1\\-1&1\end{pmatrix}$}
    \put(-5,59){\small $e^{\pi i\frac{\beta_1}{2}\sigma_3}$}
    \put(8,9){\small $\begin{pmatrix}1&0\\-1&1\end{pmatrix}$}
    \put(74,9){\small $\begin{pmatrix}1&1\\0&1\end{pmatrix}$}
    \end{picture}
    \caption{The jump contour $\Gamma'$ and the jump matrices for $\Phi$.}
    \label{fig:phijumps}
\end{center}
\end{figure}
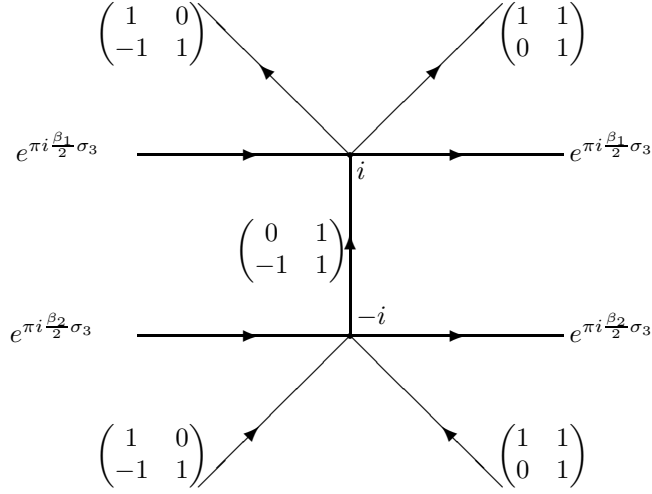

Combining the estimates concerning the asymptotic behavior for $\Phi$, i.e. Lemma \ref{le:psiasy1}, Lemma \ref{le:Uasysbig} (and \eqref{eq:Masy}), and 
Lemma \ref{le:psissmall}, we get the following estimates for the asymptotics of $\Phi$.

\begin{lemma}\label{le:phiasyall}
The function $\Psi(\zeta,s)$ has the following asymptotic behavior. 

\begin{itemize}[leftmargin=0.5cm]
\item[1.] For fixed $c,C\in(0,\infty)$

\begin{equation}\label{eq:phiasy1}
\Phi(\zeta,s)=(I+\mathcal{O}(\zeta^{-1}))e^{-i\frac{s}{4}\zeta\sigma_3}\widehat{P}^{(\infty)}(\zeta)
\end{equation}

\noindent uniformly for $|s|\in(c,C)$.

\item[2.] As $s\to -i\infty$ for $\zeta\in\C\setminus (\overline{\mathcal{U}_+\cup \mathcal{U}_-}\cup \Gamma_U)$

\begin{align}\label{eq:phiasy2}
\Phi(\zeta,s)&=(I+\mathcal{O}(|s|^{-1}(1+|\zeta|)^{-1})e^{-\frac{|s|}{4}\zeta\sigma_3}\\
\notag & \quad \times \begin{cases}
\widehat{P}^{(\infty)}(\zeta), & \mathrm{outside \ the \ region \ delimited \  by \ } \Gamma_5' \ \mathrm{and} \ \Gamma_5''\\
\begin{pmatrix}
1 & -1\\
0 & 1
\end{pmatrix}\widehat{P}^{(\infty)}(\zeta), & \mathrm{in \ the \ right \ part \ of \ the \ region}\\
\begin{pmatrix}
1 & 0\\
-1 & 1
\end{pmatrix}\widehat{P}^{(\infty)}(\zeta), & \mathrm{in \ the \ left \ part \ of \ the \ region}
\end{cases}
\end{align}

\noindent uniformly in $\zeta$.

\item[3.] As $s\to 0$ along the negative imaginary axis, 

\begin{equation}\label{eq:phiasy3}
\Phi\left(\frac{2}{|s|}\lambda+i\right)=e^{\frac{s}{4}\sigma_3}(I+\mathcal{O}(\lambda^{-1}))e^{-\frac{\lambda}{2}\sigma_3}\widehat{P}^{(\infty)}\left(\frac{2}{|s|}\lambda+i\right)
\end{equation}

\noindent uniformly for $\lambda\in \C\setminus \overline{\mathcal{U}_0}$.

\end{itemize}

\end{lemma}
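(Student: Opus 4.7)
My plan is to obtain all three asymptotic statements by unwinding the definition $\Phi(\zeta,s)=\Psi(\zeta,s)\widehat{P}^{(\infty)}(\zeta)$ from \eqref{eq:Phidef} and simply inserting the already-established asymptotics for $\Psi$ (or its close relatives $U$ and $\widehat{\Psi}$). Since $\widehat{P}^{(\infty)}$ is piecewise constant and equal to the identity in the strip $|\mathrm{Im}\,\zeta|<1$, multiplying by it on the right preserves all error terms of the form $I+(\text{small})$ with the same implicit constants. Thus the three claims are essentially rearrangements of Lemma \ref{le:psiasy1}, Lemma \ref{le:Uasysbig} (together with \eqref{eq:Masy}) and Lemma \ref{le:psissmall}, and there is no genuine analytic obstacle — the work is bookkeeping.

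For item~1 (bounded $|s|$), I will take Lemma~\ref{le:psiasy1}, which asserts that $\Psi(\zeta,s)=(I+\mathcal{O}(\zeta^{-1}))e^{-is\zeta\sigma_3/4}$ uniformly in $|s|\in(c,C)$, and multiply both sides on the right by $\widehat{P}^{(\infty)}(\zeta)$. Because $\widehat{P}^{(\infty)}$ is uniformly bounded (it takes only three constant matrix values), this gives \eqref{eq:phiasy1} immediately with the same implicit constant.

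For item~2 (large $|s|$), I will invert the definition of $U$ in \eqref{eq:Udef}. Depending on whether $\zeta$ lies outside the region bounded by $\Gamma_5'\cup\Gamma_5''$, in its right half, or in its left half, one has
\[
\Psi(\zeta,s)=U(\zeta,s)\,e^{-\tfrac{|s|}{4}\zeta\sigma_3}\,A(\zeta),
\]
where $A(\zeta)$ equals $I$, $\bigl(\begin{smallmatrix}1&-1\\0&1\end{smallmatrix}\bigr)$, or $\bigl(\begin{smallmatrix}1&0\\-1&1\end{smallmatrix}\bigr)$ respectively. Applying Lemma~\ref{le:Uasysbig} to replace $U(\zeta,s)$ by $I+\mathcal{O}(|s|^{-1}(1+|\zeta|)^{-1})$ and then right-multiplying by $\widehat{P}^{(\infty)}(\zeta)$ yields exactly \eqref{eq:phiasy2}. (The exponential factor $e^{-|s|\zeta\sigma_3/4}$ commutes through $\mathcal{O}(\cdot)$ acting on the left because we are only ever multiplying a scalar-valued error by a fixed matrix; more precisely, the error term is absorbed in an $I+(\text{small})$ on the left, independent of the exponential on the right.)

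For item~3 (small $|s|$), I will start from $\widehat{\Psi}(\lambda,s)=e^{-s\sigma_3/4}\Psi(2\lambda/|s|+i,\,s)$ from \eqref{eq:psihat}, so that
\[
\Phi\!\left(\tfrac{2}{|s|}\lambda+i,\,s\right)=e^{s\sigma_3/4}\,\widehat{\Psi}(\lambda,s)\,\widehat{P}^{(\infty)}\!\left(\tfrac{2}{|s|}\lambda+i\right).
\]
Lemma~\ref{le:psissmall} gives $\widehat{\Psi}(\lambda,s)=(I+\mathcal{O}(\lambda^{-1}|s\log s|))M(\lambda)$ uniformly for $\lambda\in\C\setminus\overline{\mathcal{U}_0}$, and \eqref{eq:Masy} gives $M(\lambda)=(I+\mathcal{O}(\lambda^{-1}))e^{-\lambda\sigma_3/2}$ as $\lambda\to\infty$. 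Composing these two expansions and absorbing the $|s\log s|$ factor (which is $o(1)$ as $s\to0$) into the simpler $\mathcal{O}(\lambda^{-1})$ error yields $\widehat{\Psi}(\lambda,s)=(I+\mathcal{O}(\lambda^{-1}))e^{-\lambda\sigma_3/2}$, and after inserting this and the factor $e^{s\sigma_3/4}$ on the left we obtain \eqref{eq:phiasy3}. The only point requiring mild care — and the closest thing to an ``obstacle'' — is making sure the two error estimates combine uniformly on $\C\setminus\overline{\mathcal{U}_0}$, which is clear since $\mathcal{U}_0$ is a fixed neighborhood of the origin and both $\lambda\mapsto M(\lambda)$ and $\lambda\mapsto e^{-\lambda\sigma_3/2}$ are uniformly controlled there.
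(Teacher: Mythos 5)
Your proposal is correct and matches the paper's approach exactly: the paper itself gives no separate proof, stating only that \eqref{eq:phiasy1}--\eqref{eq:phiasy3} follow by combining Lemma \ref{le:psiasy1}, Lemma \ref{le:Uasysbig} (with \eqref{eq:Masy}), and Lemma \ref{le:psissmall}, which is precisely the bookkeeping you carry out. The one small infelicity is the parenthetical remark about the exponential ``commuting through'' the error in item 2 -- in fact no commutation is needed, since the error from Lemma \ref{le:Uasysbig} already sits on the left as a factor of the form $I+\mathcal{O}(\cdot)$ and one only multiplies on the right by $e^{-|s|\zeta\sigma_3/4}A(\zeta)\widehat{P}^{(\infty)}(\zeta)$ -- but this does not affect the correctness of the argument.
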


\section{On the uniformity of the asymptotics -- proof of Lemma \ref{le:psiasy1}}\label{app:unif}

In this section we sketch the proof of the uniformity of the asymptotics of $\Psi(\zeta,s)$ for bounded $s$. We don't rely on general theory of Painlev\'e RH-problems, but make use of some explicit results from \cite{ck} as well as an argument mimicking the discussion in \cite[Section 7.5]{deift}. We consider the function $U$ from \eqref{eq:Udef}, though now we won't make any assumption of $|s|$ being large. Note that in terms of $U$, we can state Lemma \ref{le:psiasy1} e.g. as $\sup_{|\zeta|\geq 5,c\leq |s|\leq C}|\zeta(U(\zeta,s)-I)|<\infty$, where $|\cdot|$ denotes e.g. the Hilbert-Schmidt norm on matrices. We also recall the definition of $M=M(\lambda,\beta)$ from Appendix \ref{app:Am}.

We then define the parametrices 

$$
P^{(i)}(\zeta)=e^{-\frac{i|s|}{4}\sigma_3}e^{\frac{\pi i \beta_1}{4}\sigma_3}M\left(\frac{|s|}{2}(\zeta-i),\frac{\beta_1}{2}\right)e^{-\frac{\pi i \beta_1}{4}\sigma_3}e^{\frac{|s|}{4}\zeta\sigma_3}
$$

\noindent and 

$$
P^{(-i)}(\zeta)=e^{\frac{i|s|}{4}\sigma_3}e^{-\frac{\pi i \beta_2}{4}\sigma_3}M\left(\frac{|s|}{2}(\zeta+i),\frac{\beta_2}{2}\right)e^{\frac{\pi i \beta_2}{4}\sigma_3}e^{\frac{|s|}{4}\zeta\sigma_3}.
$$

\noindent Recall also the notation $\mathcal{U}_\pm$ and $\Gamma_U$ from Appendix \ref{sec:appu} and define:

$$
Q(\zeta)=\begin{cases}
U(\zeta), &\zeta\notin \overline{\mathcal{U}_+\cup \mathcal{U}_-}\\
U(\zeta)P^{(i)}(\zeta)^{-1}, & \zeta\in \mathcal{U}_+\\
U(\zeta)P^{(-i)}(\zeta)^{-1}, & \zeta\in \mathcal{U}_-
\end{cases}.
$$

\noindent One can then check from the RHP for $M$ that $Q$ has jumps only on $\Gamma_U$ and its boundary values are piecewise continuous and in $L^2(\Gamma_U)$ (this is in fact the point of this transformation -- it regularizes $U$ at $\pm i$ so that we can use more standard $L^2$-theory). Moreover, using the explicit form of $M$ from \cite[Section 4.2.1]{cik}, one can check that $Q$ has no isolated singularities in $\mathcal{U}_\pm$ so it is analytic in $\C\setminus \Gamma_U$ -- we omit the details. Also, we note that $Q(\zeta)=I+\mathcal{O}(\zeta^{-1})$ as $\zeta\to\infty$. Again, Lemma \ref{le:psiasy1} is equivalent to the uniformity of these asymptotics. Or more precisely, what we want to prove is that e.g. 

$$
\sup_{c\leq |s|\leq C}\sup_{|\zeta|\geq 5}\left|\zeta(Q(\zeta)-I)\right|<\infty,
$$

\noindent where again $|\cdot|$ denotes say the Hilbert-Schmidt (or Frobenius) norm.  To do this, let us look at the jumps of $Q$ in more detail. On $\partial \mathcal{U}_\pm\setminus (\cup_{i=1}^4{\Gamma}_i\cup\Gamma_5'\cup\Gamma_5'')$ (oriented in the counter-clockwise manner), we have 

$$
Q_+(\zeta)=Q_-(\zeta)P^{(\pm i)}(\zeta)^{-1}
$$

\noindent and using the jump conditions of $\Psi$, one can check that on $\Gamma_U\setminus \overline{\mathcal{U}_+\cup\mathcal{U}_-}$ we have

$$
Q_+(\zeta)=Q_-(\zeta)J_Q(\zeta)
$$

\noindent with 

\begin{align*}
J_Q(\zeta,s)&=e^{-\frac{|s|}{4}\zeta\sigma_3}J_k e^{\frac{|s|}{4}\zeta\sigma_3},\qquad \zeta\in \Gamma_k\setminus \overline{\mathcal{U}_+\cup\mathcal{U}_-},\ k=1,...,4\\
&=\begin{cases}
\begin{pmatrix}
1 & e^{\pm \pi i\beta_{1/2}}e^{-\frac{|s|}{2}\zeta}\\
0 & 1
\end{pmatrix}, & \zeta\in \Gamma_1/\Gamma_4\setminus \overline{\mathcal{U}_+\cup\mathcal{U}_-} \\
\begin{pmatrix}
1 & 0\\
 -e^{\pm \pi i\beta_{1/2}}e^{\frac{|s|}{2}\zeta} & 1
\end{pmatrix}, & \zeta\in \Gamma_2/\Gamma_3\setminus \overline{\mathcal{U}_+\cup\mathcal{U}_-} 
\end{cases},
\end{align*}

\noindent where the appropriate $\beta_i$ and sign $\pm$ are chosen depending on the part of the contour we are on, and 

\begin{align*}
J_Q(\zeta,s)&=\begin{cases}
e^{-\frac{|s|}{4}\zeta\sigma_3}\begin{pmatrix}
1 & 0\\
-1 & 1
\end{pmatrix}e^{\frac{|s|}{4}\zeta\sigma_3}, & \zeta\in \Gamma_5'\setminus \overline{\mathcal{U}_+\cup\mathcal{U}_-}\\
e^{-\frac{|s|}{4}\zeta\sigma_3}\begin{pmatrix}
1 & 1\\
0 & 1
\end{pmatrix}e^{\frac{|s|}{4}\zeta\sigma_3}, & \zeta\in \Gamma_5''\setminus \overline{\mathcal{U}_+\cup\mathcal{U}_-}
\end{cases}\\
&=\begin{cases}
\begin{pmatrix}
1 & 0\\
-e^{\frac{|s|}{2}\zeta} & 1
\end{pmatrix}, & \zeta\in \Gamma_5'\setminus \overline{\mathcal{U}_+\cup\mathcal{U}_-}\\
\begin{pmatrix}
1 & e^{-\frac{|s|}{2}\zeta}\\
0 & 1
\end{pmatrix}, & \zeta\in \Gamma_5''\setminus \overline{\mathcal{U}_+\cup\mathcal{U}_-}
\end{cases}.
\end{align*}

We then note that as $Q$ is normalized at infinity, for $\zeta\notin\Gamma_U$, we can write

\begin{align}\label{eq:plemelj}
Q(\zeta,s)=I+\int_{\Gamma_U} Q_-(z,s)\frac{J_Q(z,s)-I}{z-\zeta}\frac{dz}{2\pi i}.
\end{align}

\noindent This follows simply by Sokhotski-Plemelj and Liouville's theorem: one can check from Sokhotski-Plemelj, that the difference between the left hand side and the right hand side has no jumps, then after checking that points where the curve intersects itself can't be isolated singularities (implying that the difference between the RHS and LHS is an entire function), one sees that as both the LHS and RHS tend to $I$ as $\zeta\to \infty$, that difference must vanish identically by Liouville's theorem.

Now note that \eqref{eq:plemelj} can be written as

\begin{align*}
\zeta\left[Q(\zeta,s)-I\right]=\int_{\Gamma_U}\left[Q_-(z,s)-I\right]\left[J_Q(z,s)-I\right]\frac{\zeta}{z-\zeta}\frac{dz}{2\pi i}+\int_{\Gamma_U}\left[J_Q(z,s)-I\right]\frac{\zeta}{z-\zeta}\frac{dz}{2\pi i}.
\end{align*}

\noindent For simplicity, let us also assume that $\zeta$ is in such a direction that $\frac{|\zeta|}{|\zeta-z|}\leq A$ which is independent of $\zeta$ and $z\in \Gamma_U$ (if this were not true, then as the jump matrix is piecewise analytic, one could perform a contour deformation rotating the unbounded parts of the jump contours slightly, which would then reduce to this case). We then find 

\begin{align*}
\left|\zeta\left[Q(\zeta,s)-I\right]\right|&\leq A \left(\int_{\Gamma_U}\left|Q_-(z,s)-I\right|^2 |dz|\right)^{1/2}\left(\int_{\Gamma_U} \left|J_Q(z,s)-I\right|^2 |dz|\right)^{1/2}\\
&\quad +A\int_{\Gamma_U} \left|J_Q(z,s)-I\right| |dz|.
\end{align*}

\noindent As $|J_Q(z,s)-I|=\mathcal{O}(e^{-\frac{|s|}{2}|\mathrm{Re}(z)|})$, we see that the $J_Q$-integrals can be bounded by a finite constant depending only on $c,C$ (which determined the region where $s$ is). We will thus be done if we can show that 

\begin{align*}
s\mapsto \left(\int_{\Gamma_U}\left|Q_-(z,s)-I\right|^2|dz|\right)^{1/2} 
\end{align*}

\noindent is a continuous function. Note that as $Q_-(z,s)-I=\mathcal{O}(z^{-1})$ as $z\to \infty$, and is a piecewise continuous function, we see that at least $Q_-(z,s)-I$ is in $L^2(\Gamma_U)$. We will then be done if we can show that if $t\to s$, then $Q_-(\cdot,t)\to Q_-(\cdot,s)$ in $L^2(\Gamma_U)$. To prove this, let us rewrite the singular integral equation satisfied by $Q_-$ in a slightly different way. Let $C_J$ denote the operator $C_-^{\Gamma_U}(\cdot (J_Q-I))$, where $C_-^{\Gamma_U}$ denotes the boundary value of the Cauchy integral on $\Gamma_U$ taken from the right. Taking boundary values of \eqref{eq:plemelj}, we see that $Q_-$ satisfies on $\Gamma_U$ the singular  integral equation

\begin{align*}
Q_-=I+C_-(Q_-(J_Q-I))=I+C_J(Q_-)
\end{align*}

\noindent or 

\begin{align*}
(I-C_J)(Q_-)=I
\end{align*}

\noindent or put yet another way:

\begin{align*}
(I-C_J)(Q_--I)=C_J(I).
\end{align*}

\noindent Let us try to use this equation to invert $I-C_J$ on $L^2(\Gamma_U)$. Consider the equation $(I-C_J)f=g$ for $f\in L^2(\Gamma)$. Define

\begin{align*}
\mathcal{M}(\zeta,s)=\frac{1}{2\pi i}\int_{\Gamma_U} f(z)\frac{J_Q(z,s)-I}{z-\zeta}dz.
\end{align*}

\noindent By Sokhotski-Plemelj, we have for $\zeta\in \Gamma$

\begin{align*}
\mathcal{M}_+(\zeta,s)=[C_+ f(J_Q-I)](\zeta,s)=[C_J f](\zeta,s)+f(\zeta)(J_Q(\zeta,s)-I)=f(\zeta,s)J_Q(\zeta,s)-g(\zeta)
\end{align*}

\noindent or in other words, 

\begin{align}\label{eq:fzs}
f(\zeta,s)=\left[g(\zeta)+\mathcal{M}_+(\zeta,s)\right]J_Q(\zeta,s)^{-1}.
\end{align}

\noindent On the other hand, we see that 

\begin{align*}
\mathcal{M}_-(\zeta,s)=[C_J f](\zeta,s)=-g(\zeta)+f(\zeta,s)
\end{align*}

\noindent so we see that 

\begin{align*}
\mathcal{M}_+(\zeta,s)=(\mathcal{M}_-(\zeta,s)+g(\zeta))J_Q(\zeta,s)-g(\zeta)=\mathcal{M}_-(\zeta,s)J_Q(\zeta,s)+g(\zeta)(J_Q(\zeta,s)-I).
\end{align*}

Consider now $\mathcal{M}Q^{-1}$. This satisfies

\begin{align*}
\mathcal{M}_+(\zeta,s)Q_+(\zeta,s)^{-1}&=\left[\mathcal{M}_-(\zeta,s) J_Q(\zeta,s)+g(\zeta)(J_Q(\zeta,s)-I)\right]J_Q(\zeta,s)^{-1} Q_-(\zeta,s)^{-1}\\
&=\mathcal{M}_-(\zeta,s)Q_-(\zeta,s)^{-1}+g(\zeta)(J_Q(\zeta,s)-I)Q_+(\zeta,s)^{-1}.
\end{align*}

\noindent Thus by Sokhotski-Plemelj,

\begin{align*}
\mathcal{M}(\zeta,s)Q(\zeta,s)^{-1}=\int_{\Gamma_U}g(z)\frac{J_Q(z,s)-I}{z-\zeta}Q_+(z,s)^{-1}\frac{dz}{2\pi i}
\end{align*}

\noindent which implies by \eqref{eq:fzs}

\begin{align*}
f&=C_+(g (J_Q-I) Q_+^{-1}) Q_+ J_Q^{-1}+g J_Q^{-1}\\
&=C_+(g(I-J_Q^{-1})Q_-^{-1})Q_-+g J_Q^{-1}\\
&=C_-(g(I-J_Q^{-1})Q_-^{-1})Q_-+g(I-J_Q^{-1}))+g J_Q^{-1}\\
&=C_-(g(I-J_Q^{-1})Q_-^{-1})Q_-+ g.
\end{align*}

Now since $Q_-^{\pm 1}$ and $J_Q^{\pm 1}$ are bounded functions on $J_U$, one sees by the $L^2$-boundedness of $C_-$ that this is a bounded operator on $L^2(\Gamma_U)$ and we conclude that indeed $(I-C_J)$ is invertible on this space -- the inverse being $\mathrm{Id}+C_-(\cdot (I-J_Q^{-1})Q_-^{-1})Q_-$.

As $J_Q(\zeta,t)\to J_Q(\zeta,s)$ as $t\to s$ uniformly in $\zeta$ and one can then check easily that $C_{J(\cdot ,t)}\to C_{J(\cdot,s)}$ in the operator norm on $L^2(\Gamma_U)$, thus we also have $(I-C_{J(\cdot,t)})^{-1}\to (I-C_{J(\dot,s)})^{-1}$ in the operator norm $L^2(\Gamma_U)$. We then find 

\begin{align*}
\|Q_-(\cdot,t)-Q_-(\cdot,s)\|_{L^2(\Gamma_U)}&=\|(I-C_{J(\cdot,t)})^{-1}(C_{J(\cdot,t)}(I))-(I-C_{J(\cdot,s)})^{-1}(C_{J(\cdot,s)}(I))\|_{L^2(\Gamma_U)}\\
&\leq \|(I-C_{J(\cdot,t)})^{-1}-(I-C_{J(\cdot,s)})^{-1}\|_{L^2(\Gamma_U)\to L^2(\Gamma_U)}\|C_{J(\cdot,t)}(I)\|_{L^2(\Gamma_U)}\\
&\quad +\|(I-C_{J(\cdot,s)})^{-1}\|_{L^2(\Gamma_U)\to L^2(\Gamma_U)}\|C_{J(\cdot,t)}-C_{J(\dot,s)}\|_{L^2(\Gamma_U)\to L^2(\Gamma_U)}
\end{align*}

\noindent which tends to zero as $t\to s$ by our above discussion ($\|C_{J(\cdot,t)}(I)\|_{L^2(\Gamma_U)}$ is bounded in $t$ since we already saw it to be continuous). We are thus done.

\end{document}